\theoremstyle{plain}% default
\newtheorem{theorem}{Theorem}[section]
\newtheorem*{maintheorem}{Main Theorem}
\newtheorem{proposition}[theorem]{Proposition}		
\newtheorem{corollary}[theorem]{Corollary}
\newtheorem{lemma}[theorem]{Lemma}
\newtheorem{conjecture}{Conjecture}
\newtheorem*{BCOV}{BCOV conjecture at genus one}
\newtheorem*{BCOV-new}{Refined BCOV conjecture at genus one}
\newtheorem*{conjecture-intro}{Conjecture}
\newtheorem{definition}[theorem]{Definition}
\theoremstyle{remark}
\newtheorem{remark}[theorem]{Remark}
\def\and{\textrm{ and }}
\def\bcov{{\scriptscriptstyle{\rm{BCOV}}}}
\def\l2{{\scriptscriptstyle{\mathrm{L}^2}}}
\def\quillen{{\scriptscriptstyle{\mathrm{Q}}}}
\def\quillen{{\scriptscriptstyle{\textrm{Q}}}}
\newcommand{\taubcov}[1]{\tau_{\bcov}{( #1})}
\newcounter{tmp}
\newcommand{\Dbold}{{\bf D}}
\newcommand{\ov}{\overline}
\newcommand{\qbold}{{\bf q}}
\newcommand{\ABbb}{\mathbb A}
\newcommand{\CBbb}{\mathbb C}
\newcommand{\DBbb}{\mathbb D}
\newcommand{\EBbb}{\mathbb E}
\newcommand{\GBbb}{\mathbb G}
\newcommand{\PBbb}{\mathbb P}
\newcommand{\QBbb}{\mathbb Q}
\newcommand{\RBbb}{\mathbb R}
\newcommand{\TBbb}{\mathbb T}
\newcommand{\VBbb}{\mathbb V}
\newcommand{\WBbb}{\mathbb W}
\newcommand{\ZBbb}{\mathbb Z}
\newcommand{\Ccal}{\mathcal C}
\newcommand{\Ecal}{\mathcal E}
\newcommand{\Fcal}{\mathcal F}
\newcommand{\Hcal}{\mathcal H}
\newcommand{\Kcal}{\mathcal K}
\newcommand{\Mcal}{\mathcal M}
\newcommand{\Ocal}{\mathcal O}
\newcommand{\Vcal}{\mathcal V}
\newcommand{\Wcal}{\mathcal W}
\newcommand{\Xcal}{\mathcal X}
\newcommand{\Ycal}{\mathcal Y}
\newcommand{\Zcal}{\mathcal Z}
\newcommand{\GL}{\mathsf{GL}}
\DeclareMathOperator{\ACH}{\widehat{\operatorname{CH}}}
\DeclareMathOperator{\cHat}{\widehat{c}}
\DeclareMathOperator{\chHat}{\widehat{ch}}
\DeclareMathOperator{\tdHat}{\widehat{Td}}
\DeclareMathOperator{\GW}{GW}
\providecommand{\td}{\operatorname{Td}}
\providecommand{\ch}{\operatorname{ch}}
\providecommand{\vol}{\operatorname{vol}}
\providecommand{\hess}{\mathop{d\!d^c}\nolimits}
\providecommand{\res}{\operatorname{res}}
\providecommand{\Gr}{\operatorname{Gr}}
\providecommand{\Spec}{\operatorname{Spec}}
\DeclareMathOperator{\Hom}{Hom}
\DeclareMathOperator{\Aut}{Aut}
\DeclareMathOperator{\id}{id}
\DeclareMathOperator{\Sym}{Sym}
\DeclareMathOperator{\tr}{tr}
\DeclareMathOperator{\Div}{div}
\DeclareMathOperator{\ord}{ord}
\DeclareMathOperator{\an}{an}
\newcommand{\Pic}{{\rm Pic}}
\DeclareMathOperator{\Real}{Re}
\DeclareMathOperator{\Imag}{Im}
\DeclareMathOperator{\Res}{Res}
\DeclareMathOperator{\nt}{nt}
\DeclareMathOperator{\per}{per}
\DeclareMathOperator{\GRR}{\mathbf{GRR}}
\newcommand{\KS}{{\rm KS}}
\newcommand{\prim}{{\rm prim}}
\numberwithin{equation}{section}
\begin{document}
 %\title{On higher dimensional genus one mirror symmetry and the BCOV conjectures}
 \title{On genus one mirror symmetry in higher dimensions and~the~BCOV~conjectures}
\author{Dennis Eriksson}
\author{Gerard Freixas i Montplet}
\author{Christophe Mourougane}
\address{Dennis Eriksson \\ Department of Mathematics \\ Chalmers University of Technology and  University of Gothenburg}
\email{dener@chalmers.se}

\address{Gerard Freixas i Montplet \\ C.N.R.S. -- Institut de Math\'ematiques de Jussieu - Paris Rive Gauche}
\email{gerard.freixas@imj-prg.fr}

\address{Christophe Mourougane\\Institut de Recherche Math\'ematique de Rennes (IRMAR)}
\email{christophe.mourougane@univ-rennes1.fr}

\begin{abstract}
 The mathematical physicists Bershadsky--Cecotti--Ooguri--Vafa (BCOV) proposed, in a seminal article from '94,  a conjecture extending genus zero mirror symmetry to higher genera. With a view towards a refined formulation of the Grothendieck--Riemann--Roch theorem, we offer a mathematical  description of the BCOV conjecture at genus one. As an application of the arithmetic Riemann--Roch theorem of Gillet--Soul\'e and of our previous results on the BCOV invariant, we establish this conjecture for Calabi--Yau hypersurfaces in projective spaces.  Our contribution takes place on the $B$-side, and together with the work of Zinger on the $A$-side, it provides the first complete examples of the mirror symmetry program in higher dimensions. The case of quintic threefolds was studied by Fang--Lu--Yoshikawa. 
 Our approach also lends itself to arithmetic considerations of the BCOV invariant, and we study a Chowla--Selberg type theorem expressing it in terms of special $\Gamma$ values for certain Calabi--Yau manifolds with complex multiplication.  
\end{abstract}

\subjclass[2010]{Primary: 14J32, 14J33, 58J52. Secondary: 32G20.}
\keywords{Genus one mirror symmetry, Calabi--Yau manifolds, BCOV theory, variations of Hodge structures, arithmetic Riemann--Roch theorem}
\maketitle

%{dedication}
\begin{flushright}\begin{em}
 to Jean-Pierre Demailly, in memoriam.
    \end{em}
    \end{flushright}
%end{dedication}

\setcounter{tocdepth}{1}
\tableofcontents

\vspace*{14pt}
\section{Introduction}
\begingroup
\setcounter{tmp}{\value{theorem}}
\setcounter{theorem}{0}
\renewcommand\thetheorem{\Alph{theorem}}

The purpose of this article is to establish higher dimensional cases of genus one mirror symmetry, as envisioned by mathematical physicists Bershadsky--Cecotti--Ooguri--Vafa (henceforth abbreviated BCOV) in their influential paper \cite{BCOV}. Precisely, we relate the generating series of genus one Gromov--Witten invariants on Calabi--Yau hypersurfaces to an invariant of a mirror family, built out of holomorphic analytic torsions. The invariant, whose existence was conjectured in \emph{loc. cit.}, was mathematically defined and studied in our previous paper \cite{cdg2}. We refer to it as the BCOV invariant $\tau_{\bcov}$. 
In dimension 3, the construction of the BCOV invariant and its relation to mirror symmetry were established by Fang--Lu--Yoshikawa \cite{FLY}, relying on previous results by \cite{Zingerstvsred, Zingerreduced}. 

Our approach parallels the Kodaira--Spencer formulation of the Yukawa coupling in genus zero, and can be recast as a refined version of the Grothendieck--Riemann--Roch theorem \emph{\`a la} Deligne \cite{Deligne-determinant}. We hope this point of view will also be inspiring to study higher genus Gromov--Witten invariants and the $B$-side of mirror symmetry in dimension 3. In this setting, the $A$-side has received a lot of attention recently.

\subsection{The classical BCOV conjecture at genus one}\label{subsec:BCOV-conj-intro}
Let $X$ be a Calabi--Yau manifold of dimension~$n$. In this article, this will mean a complex projective connected manifold with trivial canonical sheaf.
We now briefly recall the BCOV program at genus one. 

On the one hand, on what is referred to as the $A$-side, we consider enumerative invariants associated to $X$. For this, recall first that for every curve class $\beta$ in $H_2(X, \ZBbb)$, there is a proper Deligne--Mumford stack of stable maps from genus $g$ curves to $X$, whose fundamental class is $\beta$:
\begin{displaymath}
    \overline{\Mcal}_{g}(X, \beta) = \left\lbrace f: C \to X\mid g(C)=g,\ f\ \text{ stable and }\ f_*[C] = \beta \right\rbrace.
\end{displaymath}
The virtual dimension of this stack can be computed to be (cf. \cite{GrWiAlGe}, in particular the introduction) $$\int_\beta c_1(X) + (\operatorname{dim}(X)-3)(1-g) = (\operatorname{dim}(X)-3)(1-g).$$ Whenever $\operatorname{dim}(X)=3$ or $g = 1$ this is of virtual dimension 0 and one can consider the Gromov--Witten invariants 
\begin{displaymath} 
    \GW_g(X,\beta) = \deg\ [\overline{\Mcal}_{g}(X, \beta)]^{\mathrm{vir}}  \in \QBbb.
\end{displaymath} 
Since the main focus of our paper is higher dimension, we henceforth impose $g=1$. One then defines the formal power series
\begin{equation}\label{eq:F1A-intro}
    F_1^A(\tau)= \frac{-1}{24} \int_X \mathrm{c}_{n-1}(X) \cap 2\pi i \tau+\sum_{\beta > 0} \GW_1(X, \beta) e^{2\pi i\langle \tau, \beta\rangle},
\end{equation}
where $\tau$ belongs to the complexified K\"ahler cone\footnote{If $\Kcal_{X}$ denotes the K\"ahler cone of $X$, we define $\Hcal_X$ as $ H^{1,1}_{\RBbb}(X)/H^{1,1}_{\ZBbb}(X) + i \Kcal_{X}$.} $\Hcal_{X}$, and $\beta$ runs over the non-zero effective curve classes.

On the other hand, on what is referred to as the $B$-side, BCOV introduced a spectral quantity $\Fcal_1^B$ built out of holomorphic Ray--Singer analytic torsions of a mirror Calabi--Yau manifold $X^{\vee}$. It depends on an auxiliary choice of a K\"ahler structure  $\omega$ on $X^{\vee}$, and can be recast as
\begin{displaymath}
    \Fcal_{1}^{B}(X^{\vee},\omega)=\prod_{0\leq p,q\leq n}(\det\Delta^{p,q}_{\ov{\partial}})^{(-1)^{p+q}pq},
\end{displaymath}
where $\det\Delta^{p,q}_{\ov{\partial}}$ is the $\zeta$-regularized determinant of the Dolbeault Laplacian acting on $A^{p,q}(X^{\vee})$.  
In our previous work \cite{cdg2} we normalized this quantity to make it independent of the choice of $\omega$:
\begin{displaymath}
    \taubcov{X^{\vee}}=C(X^{\vee},\omega)\cdot\Fcal_{1}^{B}(X^{\vee},\omega),
\end{displaymath}
for some explicit constant $C(X^\vee, \omega)$. Thus $\taubcov{X^{\vee}}$ only depends on the complex structure of the Calabi--Yau manifold, in accordance with the philosophy that the $B$-model only depends on variations of the complex structure on $X^{\vee}$. 
 
Mirror symmetry predicts that given $X$, there is a mirror family of Calabi--Yau manifolds over a punctured multi-disc around the origin  $\varphi\colon\Xcal^\vee \to \Dbold^\times = (\DBbb^\times)^d$, with maximally unipotent monodromies and $d = h^{1,1}(X)=h^{1}(T_{X^\vee})$.\footnote{Such families are also called \emph{large complex structure limits} of Calabi--Yau manifolds.} Here we denoted by $X^\vee$ any member of the mirror family. The $A$-side and the $B$-side should be related by a distinguished biholomorphism onto its image $\Dbold^{\times}\to\Hcal_{X}$, which is referred to as the mirror map and is denoted $q\mapsto\tau(q)$. The mirror map sends the origin of the multi-disc to infinity. Fixing a basis of ample classes on $X$, we can think of it as a change of coordinates on $\Dbold^{\times}$. In the special case of $d=1$, one such a map is constructed as a quotient of carefully selected periods in \cite{morrison-mirror}. 

\begin{BCOV} 
Let $X$ be a Calabi--Yau manifold and $\varphi\colon\Xcal^\vee \to \Dbold^\times$ a mirror family as above. 
\begin{enumerate}
    \item There is a procedure, called \emph{passing to the holomorphic limit}, to extract from $ \taubcov{\Xcal^\vee_{q}}$ as $q\to 0$ a holomorphic function $F_1^B(q)$.
    \item The functions $F_{1}^{A}$ and $F_{1}^{B}$ are related via the mirror map by
\begin{displaymath}
  F_1^B(q)=F_1^A(\tau(q)).
\end{displaymath}
\end{enumerate}
\end{BCOV}

Passing to the holomorphic limit is often interpreted as considering a Taylor expansion of $\taubcov{\Xcal^\vee_q}$ in $\tau(q)$ and $\overline{\tau(q)}$, and keeping the holomorphic part. In this article, we will instead use a procedure based on degenerations of Hodge structures. 

\subsection{Grothendieck--Riemann--Roch formulation of the BCOV conjecture at genus one}

The purpose of this subsection is to formulate a version of the BCOV conjecture producing the holomorphic function $F_1^B$ without any reference to spectral theory, holomorphic anomaly equations or holomorphic limits. Our formulation parallels the Hodge theoretic approach to the Yukawa coupling in 3-dimensional genus zero mirror symmetry: the key ingredients going into its construction are the Kodaira--Spencer mappings between Hodge bundles, and canonical trivializations of those.

To state a simplified form of our conjecture, we need to introduce the BCOV line bundle $\lambda_{\bcov}(\Xcal^{\vee}/\Dbold^{\times})$ of the mirror family $\varphi\colon\Xcal^{\vee}\to\Dbold^{\times}$. The BCOV line of a Calabi--Yau manifold $X^\vee$ is defined to be
\begin{equation*}
   \lambda_{\bcov}(X^{\vee}) = \bigotimes_{0\leq p,q\leq n} \det H^q(X^{\vee}, \Omega_{X^{\vee}}^p)^{(-1)^{p+q} p}.
\end{equation*}
For a family of Calabi--Yau manifolds it glues together to a holomorphic line bundle on the base. Also, we denote by $\chi$ the Euler characteristic of any fiber of $\varphi$ and by $K_{\Xcal^{\vee}/\Dbold^{\times}}$ the relative canonical bundle.

\begin{BCOV-new}\label{conj:BCOV-new}
Let $X$ be a Calabi-Yau manifold and $\varphi\colon\Xcal^{\vee} \to \Dbold^\times$ a mirror family as in \textsection \ref{subsec:BCOV-conj-intro}. 
\begin{enumerate}
   \item There exists a natural isomorphism of line bundles, 
\begin{equation}\label{eq:GRR-iso-intro}
    \GRR\colon\lambda_{\bcov}(\Xcal^{\vee}/\Dbold^{\times})^{\otimes 12 \kappa}\overset{\sim}{\longrightarrow} \varphi_{\ast}(K_{\Xcal^{\vee}/\Dbold^{\times}})^{\otimes\chi \kappa}, 
\end{equation}
together with natural trivializing sections of both sides. Here $\kappa$ is a non-zero integer which only depends on the relative dimension of $\varphi$. 
    \item In the natural trivializations, the isomorphism $\GRR$ can be expressed as a holomorphic function, which when written as $\exp\left((-1)^n F_1^B(q)\right)^{24 \kappa}$ satisfies
    \begin{displaymath}
        F_1^{B}(q)=F_1^A(\tau(q)).
    \end{displaymath}
\end{enumerate}
\end{BCOV-new}

The existence of some isomorphism in \eqref{eq:GRR-iso-intro} is provided by the Grothendieck--Riemann--Roch theorem in Chow theory, the key point of the conjecture being the naturality requirement. In fact, an influential program by Deligne \cite{Deligne-determinant} suggests that the codimension one part of the usual Grothendieck--Riemann--Roch equality can be lifted to a base change invariant isometry of line bundles, when equipped with natural metrics. An intermediate version of this exists via the arithmetic Riemann--Roch theorem of Gillet--Soul\'e \cite{GS:ARR}, which provides an equality of isometry classes of hermitian line bundles. Properly interpreted, this establishes a link between the BCOV invariant and a metric evaluation of \eqref{eq:GRR-iso-intro}.

A more detailed treatment of the formulation of the conjecture is given in Section \ref{sec:conjectures}, and examples related to the existing literature are also discussed. In fact, we focus on mirror families with a strong degeneration property formalized by Deligne in \cite{Delignemirror}, and expressed as a Hodge--Tate condition on the limiting Hodge structures of all the cohomology groups. In this case, from general principles in the theory of degenerations of Hodge structures, we can indeed construct natural trivializations of the line bundles in \eqref{eq:GRR-iso-intro}.

\subsection{Main results}
In this subsection we discuss the framework and statements of our results. For Calabi--Yau hypersurfaces in projective space, our main theorem settles the BCOV conjecture and its refinement.

Let $X$ be a Calabi--Yau hypersurface in $\PBbb^n_{\CBbb}$, with $n \geq 4$. Its complexified K\"ahler cone is one-dimensional, induced by restriction from that of the ambient projective space. The mirror family $f\colon \Zcal \to U$ can be realized using a crepant resolution of the quotient of the Dwork pencil
\begin{equation}\label{intro:Dworkpencil}
    x_0^{n+1}+\ldots + x_{n}^{n+1} - (n+1) \psi x_0  \ldots x_n = 0, \quad \psi \in U=\CBbb\setminus\mu_{n+1},
\end{equation} 
by the subgroup of $\GL_{n+1}(\CBbb)$ given by $G = \left \{  g\cdot (x_0, \ldots , x_n) = (\xi_0 x_0, \ldots , \xi_n x_n), \xi_i^{n+1}=1, \prod \xi_i =1 \right\}$. 
Moreover, $f\colon \Zcal \to U$ can be naturally extended across $\mu_{n+1}$ to a degeneration with ordinary double point singularities, sometimes referred to as a conifold degeneration.

The monodromy around $\psi= \infty$ is maximally unipotent and the properties of the limiting Hodge structure can be used to define a natural flag of homology cycles. Using this we can produce natural holomorphic trivializations $\widetilde{\eta}_k$, in a neighborhood of $\psi = \infty$, of the determinants of the primitive Hodge bundles $\det(R^k f_*\Omega_{\Zcal/U}^{n-1-k})_{\prim}$.\footnote{The primitive Hodge bundle $(R^{k}f_*\Omega_{\Zcal/U}^{n-1-k})_{\prim}$ is actually of rank one if $2k\neq n-1$.} These holomorphic trivializations have unipotent lower triangular period matrices. These sections have natural $L^2$ norms given by Hodge theory. The product $\otimes_{k=0}^{n-1}\widetilde{\eta}_{k}^{(n-1-k)(-1)^{n-1}}$ is the essential building block of a natural frame $\widetilde{\eta}_{\bcov}$ of $\lambda_{\bcov}(\Zcal/ U)$.

Finally, let $F_{1}^{A}(\tau(\psi))$ be the generating series defined as in \eqref{eq:F1A-intro}, for a general Calabi--Yau hypersurface $X\subset\PBbb^{n}_{\CBbb}$. Here $\psi \mapsto \tau(\psi)$ is the mirror map. 
Then our  main result (Theorem \ref{thm:BCOV-mirror} and Theorem \ref{thm:refined-BCOV-Dwork}) can be stated as follows: \footnote{To facilitate the comparison with the BCOV conjecture, notice that $X$ has now dimension $n-1$ instead of $n$.} 
\begin{maintheorem}\label{thm:main}
Let $n \geq 4$. Consider a Calabi--Yau hypersurface $X\subset\PBbb^{n}_{\CBbb}$ and the mirror family $f\colon\Zcal\to U$ above. 
\begin{enumerate}
    \item In a neighborhood of infinity, the BCOV invariant of $Z_{\psi}$ factors~as

\begin{displaymath}
    \taubcov{Z_{\psi}}=C\left|\exp\left((-1)^{n-1}F_{1}^{B}(\psi)\right)\right|^{4} \left(\frac{\|\widetilde{\eta}_{0}\|^{\chi(Z_{\psi})/12}_{\l2}}{\|\widetilde{\eta}_{\bcov}\|_{\l2}}\right)^2,
\end{displaymath}
where $F_{1}^{B}(\psi)$ is a multivalued holomorphic function with $F_{1}^{B}(\psi)=F_{1}^{A}(\tau(\psi))$ as formal series in  $\psi$, and $C$ is a positive constant;
    \item Up to a constant, the refined BCOV conjecture at genus one is true for $X$ and its mirror family, with the choices of trivializing sections $\widetilde{\eta}_{\bcov}$ and $\widetilde{\eta}_{0}$.
\end{enumerate}
\end{maintheorem}

Actually, the theorem also holds in the case of cubic curves (as follows from \textsection \ref{subsec:ApplicationKronecker}) and  quartic surfaces. We also show, more generally, in Proposition \ref{prop:refined-BCOV-K3} that the refined BCOV conjecture holds, up to a constant, for $K3$ surfaces.

The first part of the theorem extends to arbitrary dimensions previous work of Fang--Lu--Yoshikawa \cite[Thm. 1.3]{FLY} in dimension 3. 
In their approach, all the Hodge bundles have geometric meaning in terms of Weil--Petersson geometry and Kuranishi families. The lack thereof is an additional complication in our setting. 

To our knowledge, our theorem is the first complete example of higher dimensional mirror symmetry, of BCOV type at genus one, established in the mathematics literature. It confirms various instances that had informally been utilized for computational purposes, \emph{e.g.} \cite[Sec. 6]{Klemm-Pandharipande} in dimension 4.

We remark that there is an alternative approach to the BCOV theory, in arbitrary genera, provided by Costello and Li, described in their preprint \cite{CostelloLi}. It would be interesting to compare the results in this article with their program.

\subsection{Overview of proof of the main theorem}

\subsubsection*{Arithmetic Riemann--Roch} In the algebro-geometric setting, the arithmetic Riemann--Roch theorem from Arakelov theory allows us to compute the BCOV invariant of a family of Calabi--Yau varieties in terms of $L^2$ norms of auxiliary sections of Hodge bundles. This bypasses some arguments in former approaches, such as \cite{FLY}, based on the holomorphic anomaly equation (cf. \cite[Proposition 5.9]{cdg2}). It determines the BCOV invariant up to a meromorphic function, in fact a rational function.\footnote{This rational function compares to the so-called \emph{holomorphic ambiguity} in the physics literature.} The divisor of this rational function is encapsulated in the asymptotics of the $L^2$ norms and the BCOV invariant. In the special case when the base is a Zariski open set of $\PBbb^1_{\CBbb}$, as for the Dwork pencil \eqref{intro:Dworkpencil} and the mirror family, this divisor is determined by all but one point. Hence so is the function itself, up to constant. The arithmetic Riemann--Roch theorem simultaneously allows us to establish the existence of an isomorphism $\GRR$ as in \eqref{eq:GRR-iso-intro}.

\subsubsection*{Hodge bundles of the mirror family} The construction of the auxiliary sections is first of all based on a comparison of the Hodge bundles of the mirror family with the $G$-invariant part of the Hodge bundles on the Dwork pencil \eqref{intro:Dworkpencil}, explained in Section \ref{sec:dworkandmirrorfamilies}. Using the residue method of Griffiths we construct algebraic sections of the latter. These are then transported into sections $\eta_{k}$ of the Hodge bundles of the crepant resolution, \emph{i.e.} the mirror family. This leads us to a systematic geometric study of these sections in connection with Deligne extensions and limiting Hodge structures at various key points, notably at $\mu_{n+1}$ where ordinary double point singularities arise. We rely heavily on knowledge of the Yukawa coupling and our previous work in \cite[Sec. 2]{cdg2} on logarithmic Hodge bundles and semi-stable reduction. The arguments are elaborated in  Section \ref{sec:degHodgebund}.

\subsubsection*{Asymptotics of $L^2$ norms and the BCOV invariant} The above arithmetic Riemann-Roch reduction leads us to study the norm of the auxiliary sections outside of the maximally unipotent monodromy point, enabling us to focus on ordinary double points. Applying our previous result \cite[Thm. 4.4]{cdg2} to the auxiliary sections, we find that the behaviour of their $L^{2}$ norms is expressed in terms of monodromy eigenvalues, and the possible zeros or poles as determined by the geometric considerations of the preceding paragraph. The monodromy is characterized by the Picard--Lefschetz theorem. As for the asymptotics for the BCOV invariant, they were already accomplished in \cite[Thm. B]{cdg2}. This endeavor results in Theorem \ref{thm:pre-formula}, which is a description of the rational function occurring in the arithmetic Riemann--Roch theorem.

\subsubsection*{Connection to enumerative geometry} The BCOV conjecture suggests that we need to study the BCOV invariant close to $\psi=\infty$. However, the formula in Theorem \ref{thm:pre-formula} is not adapted to the mirror symmetry setting, for example the sections $\eta_k$ do not make any reference to $H^{n-1}_{\lim}$. 
We proceed to normalize  the $\eta_k$ by dividing by holomorphic periods, for a fixed basis of the weight filtration on the homology $(H_{n-1})_{\lim}$, to obtain the sections $\widetilde{\eta}_{k}$ of the main theorem. Rephrasing Theorem \ref{thm:pre-formula} with these sections, we thus arrive at an expression for the  $F_{1}^B$ in the theorem. Combined with results of Zinger \cite{Zingerstvsred, Zingerreduced}, this yields the relation to the generating series of Gromov--Witten invariants in the mirror coordinate. Lastly, the refined BCOV conjecture is deduced in this case through a reinterpretation of the BCOV invariant and the arithmetic Riemann--Roch theorem.

\subsection{Applications to Kronecker limit formulas}\label{subsec:ApplicationKronecker}
\subsubsection*{Classical first Kronecker limit formula} 
The simplest Calabi--Yau varieties are elliptic curves, which can conveniently be presented as $\CBbb/(\ZBbb+\tau \ZBbb)$, for $\tau$ in the Poincar\'e upper half-plane. The generating series \eqref{eq:F1A-intro} of Gromov--Witten invariants is then given by $-\frac{1}{24}\log \Delta(\tau)$, where $\Delta(\tau) = q \prod (1-q^n)^{24}$ and $q=e^{2\pi i\tau}$. The corresponding function $\Fcal_{1}^{B}$ is computed as $\exp(\zeta'_\tau (0))$, where 
\begin{displaymath} 
    \zeta_\tau(s) =(2\pi)^{-2s}\sum_{(m,n)\neq (0,0)} \frac{(\Imag\tau)^{s}}{|m+n\tau|^{2s}}.
\end{displaymath} 
The BCOV conjecture at genus one is deduced from the equality
\begin{equation} \label{eq:firstKronecker}
   \exp(-\zeta'_\tau (0)) = \frac{1}{(2\pi)^2} \Imag(\tau)|\Delta(\tau)|^{1/6}.
\end{equation}
This is a formulation of the first Kronecker limit formula, see \emph{e.g.} \cite[Intro.]{yoshikawa-discriminant}. In the mirror symmetry interpretation, the correspondence $\tau \mapsto q$ is the (inverse) mirror map. Equation \eqref{eq:firstKronecker} can be recovered from a standard application of the arithmetic Riemann--Roch theorem. In this vein, we will interpret all results of this shape as generalizations of the Kronecker limit formula. This includes the Theorem \ref{thm:pre-formula} cited above, as well as a Theorem \ref{thm:Kroneckerhypersurf} for Calabi--Yau hypersurfaces in Fano manifolds.

\subsubsection*{Chowla--Selberg formula} 
While being applicable to algebraic varieties over $\CBbb$, the Riemann--Roch theorem in Arakelov geometry has the further advantage of providing arithmetic information when the varieties are defined over $\QBbb$. 
The arithmetic Riemann--Roch theorem is  suited to evaluating the BCOV invariant of certain arithmetically defined Calabi--Yau varieties with additional automorphisms. As an example, for the special fibre $Z_{0}$ of our mirror family \eqref{intro:Dworkpencil}, Theorem \ref{thm:ChowlaSelberg} computes the BCOV invariant as a product of special values of the $\Gamma$ function. This is reminiscent of the Chowla--Selberg theorem \cite{Chowla-Selberg}, which derives from \eqref{eq:firstKronecker} an expression of the periods of a CM elliptic curve as a product of special $\Gamma$ values. Assuming deep conjectures of Gross--Deligne \cite{Gross}, we would be able to write any BCOV invariant of a CM Calabi--Yau manifold in such terms.

\endgroup

\subsection*{Acknowledgements}
The authors extend their heartfelt gratitude to Ken-Ichi Yoshikawa, for generously sharing his ideas and insights into BCOV invariants. Special thanks are also extended to Nicholas Shepherd--Barron, who explained Proposition \ref{prop:NSB} to us and allowed us to include its proof in the article. We thank the referee for the diligent reading and criticism of the article. It in particular helped us to write and expand Section \ref{subsec:distinguished-sections}, and correct many inaccuracies that appeared in the first version. The first author thanks Michael Bj\"orklund and Hjalmar Rosengren for discussions relating to Picard--Fuchs equations and their solutions.

\setcounter{theorem}{\thetmp}

\section{The BCOV invariant and the arithmetic Riemann--Roch theorem}\label{sec:BCOV-ARR}
In this section we describe a general method to express the BCOV invariant of a family of Calabi--Yau varieties in terms of $L^{2}$ norms of rational sections of determinants of Hodge bundles. The approach is based on the arithmetic Riemann--Roch theorem. As an example of application, we consider the case of the universal family of Calabi--Yau hypersurfaces in the projective space.

\subsection{K\"ahler manifolds and $L^2$ norms}
Let $X$ be a compact complex manifold. In this article, a hermitian metric on $X$ means a smooth hermitian metric on the holomorphic vector bundle $T_{X}$. Let $h$ be a hermitian metric on $X$. The Arakelov theoretic K\"ahler form attached to $h$ is given in local holomorphic coordinates by
\begin{equation}\label{eq:normal-omega}
    \omega=\frac{i}{2\pi}\sum_{j,k}h\left(\frac{\partial}{\partial z_{j}},\frac{\partial}{\partial z_{k}}\right)dz_{j}\wedge d\ov{z}_{k}.
\end{equation}

We assume that the complex hermitian manifold $(X,h)$ is  K\"ahler, that is the differential form $\omega$ is closed.
The hermitian metric $h$ induces hermitian metrics on the $\Ccal^{\infty}$ vector bundles of differential forms of type $(p,q)$, that we still denote $h$. Then,  the spaces $A^{p,q}(X)$ of global sections, inherit a $L^{2}$ hermitian inner product
\begin{equation}\label{eq:normal-L2}
    h_{\l2}(\alpha,\beta)=\int_{X}h(\alpha,\beta)\frac{\omega^{n}}{n!}.
\end{equation}

The coherent cohomology groups $H^{q}(X,\Omega^{p}_{X})$ can be computed as Dolbeault cohomology, that in turn can be computed in $A^{p,q}(X)$ by taking $\ov{\partial}$-harmonic representatives. Via this identification, $H^{q}(X,\Omega^{p}_{X})$ inherits a $L^{2}$ inner product. Similarly, the hermitian metric $h$ also induces hermitian metrics on the vector bundles and spaces of complex differential forms of degree $k$. The complex de Rham cohomology $H^{k}(X,\CBbb)$ has an induced $L^{2}$ inner product by taking $d$-harmonic representatives. The canonical Hodge decomposition
\begin{displaymath}
    H^{k}(X,\CBbb)\simeq \bigoplus_{p,q}^\perp H^{q}(X,\Omega_{X}^{p})
\end{displaymath}
is an isometry for the $L^{2}$ metrics. 

\subsection{The BCOV invariant}\label{subsec:BCOV-invariant}
We briefly recall the construction of the BCOV invariant \cite[Sec. 5]{cdg}. 
Let $X$ be a Calabi--Yau manifold of dimension $n$. Fix a K\"ahler metric $h$ on $X$, with K\"ahler form $\omega$ as in \eqref{eq:normal-omega}. 
Let $T(\Omega_{X}^{p},\omega)$ be the holomorphic analytic torsion of the vector bundle $\Omega_{X}^{p}$ of holomorphc differential $p$-forms endowed with the metric induced by $h$, and with respect to the K\"ahler form $\omega$ on $X$. The \emph{BCOV torsion} of $(X,\omega)$ is
\begin{displaymath}
    T(X,\omega)=\prod_{0\leq p\leq n}T(\Omega_{X}^{p},\omega)^{(-1)^{p}p}.
\end{displaymath}
Let $\Delta^{p,q}_{\ov{\partial}}$ be the Dolbeault Laplacian acting on $A^{p,q}(X)$, and $\det\Delta^{p,q}_{\ov{\partial}}$ its $\zeta$-regularized determinant (excluding the zero eigenvalue). Unraveling the definition of holomorphic analytic torsion, we find for the BCOV torsion
\begin{displaymath}
    T(X,\omega)=\prod_{0\leq p,q\leq n}(\det\Delta^{p,q}_{\ov{\partial}})^{(-1)^{p+q}pq}.
\end{displaymath}
It depends on the choice of the K\"ahler metric. A suitable normalization makes it independent of choices. For this purpose, we introduce two real valued quantities. For the first one, let $\eta$ be a basis of $H^{0}(X,K_{X})$, and define as in \cite[Sec. 4]{FLY}
\begin{equation}\label{eq:factorA}
    A(X,\omega)=\exp\left(-\frac{1}{12}\int_{X}(\log\varphi)\mathrm{c}_{n}(T_{X},h)\right),\quad\text{with}\quad \varphi=\frac{i^{n^{2}}\eta\wedge\ov{\eta}}{\|\eta\|_{\l2}^{2}} \frac{n!}{(2\pi\omega)^{n}}.
\end{equation}
For the second one, we consider the largest torsion free quotient of the cohomology groups $H^{k}(X,\ZBbb)$, denoted by $H^{k}(X,\ZBbb)_{\nt}$. These are lattices in the real cohomology groups $H^{k}(X,\RBbb)$. The latter have Euclidean structures induced from the $L^{2}$ inner products on the $H^{k}(X,\CBbb)$. 
We define $\vol_{\l2}(H^{k}(X,\ZBbb),\omega)$ to be the square of the covolume of the lattice $H^{k}(X,\ZBbb)_{\nt}$ with respect to this Euclidean structure, and we put
\begin{equation}\label{eq:factorB}
    B(X,\omega)=\prod_{0\leq k\leq 2n} \vol_{\l2}(H^{k}(X,\ZBbb),\omega)^{(-1)^{k+1}k/2}.
\end{equation}
The \emph{BCOV invariant} of $X$ is then defined to be
\begin{equation}\label{eq:def-BCOV}
    \tau_{\bcov}(X)=\frac{A(X,\omega)}{B(X,\omega)}T(X,\omega)\in\RBbb_{>0}.
\end{equation}
The BCOV invariant depends only on the complex structure of $X$ \cite[Prop. 5.8]{cdg2}. The definition \eqref{eq:def-BCOV} differs from that of  \cite[Def. 5.7]{cdg2} by a factor $(2\pi)^{n^{2}\chi(X)/2}$, due to the different choice of normalization of the $L^2$ metric:
\begin{equation*}
    \langle\alpha,\beta\rangle=
    \int_{X}h(\alpha,\beta)\frac{(2\pi\omega)^{n}}{n!}.
\end{equation*}

\subsection{The arithmetic Riemann--Roch theorem}\label{subsec:ARR}
In this subsection, we work over an arithmetic ring. This means an excellent regular domain $A$ together with a finite set $\Sigma$ of embeddings $\sigma \colon A \hookrightarrow \CBbb$, closed under complex conjugation. For example, $A$ could be a number field with the set of all its complex embeddings, or the complex field $\CBbb$. Denote by $K$ the field of fractions of $A$.

Let $X$ be an arithmetic variety, \emph{i.e.} a regular, integral, flat and quasi-projective scheme over $A$. For every embedding $\sigma\colon A\hookrightarrow\CBbb$, the base change $X_{\sigma}=X \times_{A, \sigma} \CBbb$ is a quasi-projective and smooth complex variety, whose associated analytic space $X_{\sigma}^{\an}$ is therefore a quasi-projective complex manifold. It is convenient to define $X^{\an}$ as the disjoint union of the $X_{\sigma}^{\an}$, indexed by $\sigma$. For instance, when $A$ is a number field, then $X^{\an}$ is the complex analytic space associated to $X$ as an arithmetic variety over $\QBbb$. Differential geometric objects on $X^{\an}$ such as line bundles, differential forms, metrics, etc. may equivalently be seen as collections of corresponding objects on the $X^{\an}_{\sigma}$. The complex conjugation induces an anti-holomorphic involution on $X^{\an}$, and it is customary in Arakelov geometry to impose some compatibility of the analytic data with this action. Let us now recall the definitions of the arithmetic Picard and first Chow groups of $X$.
\begin{definition}
A smooth hermitian line bundle on $X$ consists in a pair $(L,h)$, where
\begin{itemize}
    \item $L$ is a line bundle on $X$.
    \item $h$ is a smooth hermitian metric on the holomorphic line bundle $L^{\an}$ on $X^{\an}$ deduced from $L$, invariant under the action of the complex conjugation. Hence, $h$ is a conjugation invariant collection $\lbrace h_{\sigma}\rbrace_{\sigma\colon A\to\CBbb}$, where $h_{\sigma}$ is a smooth hermitian metric on the holomorphic line bundle $L_{\sigma}^{\an}$ on $X_{\sigma}^{\an}$ deduced from $L$ by base change and analytification. 
\end{itemize}
The set of isomorphism classes of hermitian line bundles $(L,h)$, with the natural tensor product operation, is a commutative group denoted by $\widehat{\Pic}(X)$ and called the arithmetic Picard group of $X$.
\end{definition}

\begin{definition} The first arithmetic Chow group $\ACH^{1}(X)$ of $X$ is the commutative group
\begin{itemize}
    \item generated by arithmetic divisors, \emph{i.e.} couples $(D, g_D)$, where $D$ is a Weil divisor on $X$ and $g_{D}$ is a Green current for the divisor $D^{\an}$, compatible with complex conjugation. Hence, by definition $g_{D}$ is a degree 0 current on $X^{\an}$ that is a $\hess$-potential for the current of integration $\delta_{D^{\an}}$
    \begin{displaymath}
        \hess g_{D}+\delta_{D^{\an}}=[\omega_{D}],
    \end{displaymath}
    up to some smooth differential $(1,1)$ form $\omega_{D}$ on $X^{\an}$.
    \item with relations $\left(\Div(\phi), [-\log|\phi|^2]\right)$, for non-zero rational functions $\phi$ on $X$.
\end{itemize}
\end{definition}

The arithmetic Picard and first Chow groups are related via the first arithmetic Chern class
$$\cHat_{1}: \widehat{\Pic}(X) \to \ACH^{1}(X),$$
which maps a hermitian line bundle $(L,h)$ to the class of the arithmetic divisor $\left(\Div(\ell), [-\log \|\ell\|_h^{2}]\right)$, where $\ell$ is any non-zero rational section of $L$. This is in fact an isomorphism. We refer the reader to \cite[Sec. 2]{GS:characteristic} for a complete discussion.

More generally, Gillet--Soul\'e developed a theory of arithmetic cycles and Chow rings \cite{GS:AIT}, an arithmetic $K$-theory and characteristic classes \cite{GS:characteristic, GS:characteristic-2}, and an arithmetic Riemann--Roch theorem \cite{GS:ARR}. While for the comprehension of the  theorem below only $\ACH^{1}$, $\widehat{\Pic}$ and $\cHat_{1}$ are needed, the proof uses all this background, for which we refer to the above references. 

Let now $f\colon \Xcal\to S$ be a smooth projective morphism of arithmetic varieties of relative dimension $n$, with generic fiber $X_{\infty}$. To simplify the exposition, we assume that $S\to\Spec A$  is surjective and has geometrically connected fibers. In particular, that $S_{\sigma}^{\an}$ is connected for every embedding $\sigma$. More importantly, we suppose that the fibers $X_{s}$ are Calabi--Yau, hence they satisfy $K_{X_{s}}=\Ocal_{X_{s}}$. We define the BCOV line bundle on $S$ as the determinant of cohomology of the virtual vector bundle $\sum_{p}(-1)^{p}p\Omega_{\Xcal/S}^{p}$, that is, in additive notation for the Picard group of $S$
\begin{equation}\label{eq:BCOV-bundle}
         \lambda_{\bcov}(\Xcal/S)=\sum_{p=0}^{n}(-1)^{p}p\lambda(\Omega_{\Xcal/S}^{p})
                            =\sum_{p,q}(-1)^{p+q}p\det R^{q}f_{\ast}\Omega^{p}_{\Xcal/S}.
\end{equation}
If there is no possible ambiguity, we will sometimes write $\lambda_{\bcov}$ instead of $\lambda_{\bcov}(\Xcal/S)$. 

For the following statement, we fix an auxiliary conjugation invariant K\"ahler metric $h$ on $T_{\Xcal^{\an}}$. We denote by $\omega$ the associated K\"ahler form, normalized according to the conventions in Arakelov theory as in \eqref{eq:normal-omega}. We assume that the restriction of $\omega$ to fibers (still denoted by $\omega$) has rational cohomology class. All the $L^{2}$ metrics below are computed with respect to $\omega$ as in \eqref{eq:normal-L2}.
Depending on the K\"ahler metric, the line bundle $\lambda_{\bcov}$ carries a Quillen metric $h_{\quillen}$
\begin{displaymath}
    h_{\quillen, s}=T(X_{s},\omega)\cdot h_{\l2, s}.
\end{displaymath}
Following \cite[Def. 4.1]{cdg}  and \cite[Def. 5.2]{cdg2}, the Quillen-BCOV metric on $\lambda_{\bcov}$ is defined by multiplying $h_{\quillen}$ by the correcting factor $A$ in \eqref{eq:factorA}: for every $s\in S^{\an}$, we put
\begin{displaymath}
    h_{\quillen,\bcov, s}=A(X_{s},\omega)\cdot h_{\quillen, s}.
\end{displaymath}
It is shown in \emph{loc. cit.} that the Quillen-BCOV is actually a smooth hermitian metric, independent of the choice of $\omega$. Besides, according to \cite[Def. 5.4]{cdg} one defines the $L^{2}$-BCOV metric on $\lambda_{\bcov}$ by
\begin{equation}\label{eq:B}
    h_{\l2,\bcov,s}=B(X_{s},\omega)\cdot h_{\l2,s},
\end{equation}
where $h_{\l2}$ stands for the combination of $L^{2}$-metrics on the Hodge bundles and $B$ was introduced in \eqref{eq:factorB}. In \emph{loc. cit.} we showed that the function $s\mapsto B(X_{s},\omega)$ is actually locally constant and that $h_{\l2,\bcov}$ is a smooth hermitian metric, independent of the choice of $\omega$. Notice that the BCOV invariant defined in \eqref{eq:def-BCOV} can then be written as the quotient of the Quillen-BCOV and $L^{2}$-BCOV metrics:
\begin{equation}\label{eq:def-BCOV-2}
    \tau_{\bcov}(X_{s})=\frac{h_{\quillen,\bcov,s}}{h_{\l2, \bcov, s}}.
\end{equation}

\begin{theorem}\label{thm:ARR} Under the above assumptions, there is an equality in $\ACH^{1}(S)_{\QBbb}=\ACH^{1}(S)\otimes\QBbb$
\begin{equation}\label{eq:ARR-BCOV}
    \cHat_{1}(\lambda_{\bcov}, h_{\quillen,\bcov}) = \frac{ \chi(X_\infty)}{12} \cHat_{1}(f_\ast K_{\Xcal/S}, h_{\l2}).
\end{equation}
 Hence, for any complex embedding $\sigma$,
any rational section $\eta$ of $f_\ast K_{\Xcal/S}$,
   any rational section $\eta_{p,q}$ of $\det R^{q}f_{\ast}\Omega^{p}_{\Xcal/S}$,
   we have an equality of functions on $S_{\sigma}^{\an}$
\begin{equation}\label{eq:bcov-ARR}
    \log \tau_{\bcov,\sigma} 
    =\log|\Delta|^{2}_{\sigma} +\frac{\chi(X_\infty)}{12}\log\|\eta\|_{\l2,\sigma}^{2}
    -\sum_{0\leq p,q\leq n} (-1)^{p+q}p\log\|\eta_{p,q}\|_{\l2,\sigma}^{2}
     +\log C_{\sigma},
\end{equation}
where:
\begin{itemize}
\item $\Delta \in K(S)^\times\otimes_{\ZBbb}\QBbb$.
     \item $C_{\sigma}\in\pi^{r}\QBbb_{>0}$, where $r=\frac{1}{2}\sum (-1)^{k+1} k^{2}b_{k}$ and $b_{k}$ is the $k$-th Betti number of $X_{\infty}$.
\end{itemize}
\end{theorem}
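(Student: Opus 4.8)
The plan is to obtain the identity \eqref{eq:ARR-BCOV} as a direct application of the arithmetic Grothendieck--Riemann--Roch theorem of Gillet--Soul\'e to the virtual hermitian bundle $\mathcal{E}=\sum_p(-1)^p p\,\Omega^p_{X/S}$ whose determinant of cohomology is $\lambda_{BCOV}$, and then to read off \eqref{eq:bcov-ARR} on the archimedean side. Concretely, arithmetic GRR computes $\cHat_1(\lambda_{BCOV},h_Q)$ in $\ACH^1(S)_\QBbb$ as the degree-one part of $f_\ast(\chHat(\overline{\mathcal E})\,\tdHat(\overline{T_{X/S}}))$, corrected by the image under the canonical map $a(\cdot)$ of the $R$-genus term $f_\ast(\ch(\mathcal E)\td(T_{X/S})R(T_{X/S}))$. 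Passing from $h_Q$ to the Quillen--BCOV metric $h_{Q,BCOV}=A\cdot h_Q$ only modifies the archimedean component by $a(\log A)$, so it suffices to identify the algebraic characteristic class and to control all the smooth correction terms.

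The main characteristic-class computation rests on a generating-function identity: writing $x_1,\dots,x_n$ for the Chern roots of $T_{X/S}$, one has $\sum_p(-1)^p p\,\ch(\Omega^p_{X/S})\,\td(T_{X/S})=-c_n(T_{X/S})\sum_i e^{-x_i}/(1-e^{-x_i})$. Since $f_\ast$ drops degree by $n$, I extract the degree-$(n+1)$ part, and the only surviving contribution is $-\tfrac1{12}c_1(T_{X/S})\,c_n(T_{X/S})$. Here the Calabi--Yau hypothesis enters crucially through $K_{X/S}\cong f^\ast f_\ast K_{X/S}$, which gives $c_1(T_{X/S})=-f^\ast c_1(f_\ast K_{X/S})$; combining this with the projection formula and $f_\ast c_n(T_{X/S})=\chi(X_\infty)$ yields exactly $\tfrac{\chi(X_\infty)}{12}\,c_1(f_\ast K_{X/S})$ at the level of $\CH^1(S)$. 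I would carry out the same computation verbatim with arithmetic Chern classes $\cHat_1$ in place of $c_1$, the Calabi--Yau relation now being the isometric identification of $K_{X/S}$ with $f^\ast f_\ast K_{X/S}$ up to a smooth factor; the discrepancies between the two sides are smooth $(0,0)$-forms (secondary Bott--Chern classes) which are absorbed into the $a(\cdot)$ part and which, after fiber integration, become locally constant, hence constant on the connected $S_\sigma^{an}$.

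To deduce \eqref{eq:bcov-ARR}, I would evaluate \eqref{eq:ARR-BCOV} on rational sections: the section $s_{BCOV}=\bigotimes_{p,q}\eta_{p,q}^{(-1)^{p+q}p}$ of $\lambda_{BCOV}$ on the left and $\eta^{\otimes\chi/12}$ of $f_\ast K_{X/S}$ on the right. Equality of the two arithmetic divisor classes means their underlying $\QBbb$-cycles differ by $\Div(\Delta)$ for some $\Delta\in K(S)^\times\otimes\QBbb$, while their Green-current components agree up to $-\log|\Delta|^2$ and an additive constant. Reading the archimedean part and using $\tau_{BCOV,\sigma}=h_{Q,BCOV}/h_{L^2,BCOV}$ together with $h_{L^2,BCOV}=B\cdot h_{L^2}$ and $\|s_{BCOV}\|^2_{L^2}=\prod\|\eta_{p,q}\|^{2(-1)^{p+q}p}_{L^2}$ converts the statement into the asserted formula for $\log\tau_{BCOV,\sigma}$, with all remaining scalars gathered into $C_\sigma$.

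The hard part is the precise bookkeeping of constants that pins down $C_\sigma\in\pi^r\QBbb_{>0}$. Two sources contribute: the $2\pi$-normalizations of the $L^2$ metrics on the individual Hodge bundles, which under the BCOV weights $(-1)^{p+q}p$ and the de Rham reorganization accumulate to the power $\pi^r$ with $r=\tfrac12\sum(-1)^{k+1}k^2 b_k$; and the constant $B$ together with the $R$-genus term. The delicate point is that the $R$-genus a priori introduces transcendental $\zeta'(-k)$ contributions, and one must show these cancel so that $C_\sigma$ is purely of the form $\pi^r\QBbb_{>0}$. I expect this cancellation to follow from the Serre-duality symmetry $\Omega^p_{X/S}\leftrightarrow\Omega^{n-p}_{X/S}$ on the Calabi--Yau fibers, which is built into the BCOV weighting, together with the compensating role of the $A$-factor in $h_{Q,BCOV}$ established in \cite{cdg,cdg2}; verifying it rigorously, and checking that the rational cohomology class of $\omega$ keeps every covolume in $\pi^{\QBbb}\QBbb_{>0}$, is the most technical step of the argument.
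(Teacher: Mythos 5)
Your overall strategy is the same as the paper's: apply the Gillet--Soul\'e arithmetic Riemann--Roch theorem to the virtual bundle $\sum_p(-1)^p p\,\Omega^p_{X/S}$, identify the degree-$(n+1)$ component of $\ch\cdot\td$ as $-\tfrac1{12}c_1c_n$, use $K_{X/S}\simeq f^*f_*K_{X/S}$ and the projection formula, and then evaluate on rational sections. Two of your steps, however, do not go through as written.

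First, the $R$-genus. You single this out as the most delicate point and propose that the transcendental $\zeta'(-k)$ contributions cancel via the Serre-duality symmetry $\Omega^p\leftrightarrow\Omega^{n-p}$. That is not the mechanism, and you leave it unverified. The actual reason is elementary: the term to control is $a\bigl(\ch(\sum(-1)^p p\,\Omega^p)\,\td(T_{X/S})\,R(T_{X/S})\bigr)^{(1)}$, which requires total degree $n+1$ upstairs. Since $\ch(\sum(-1)^p p\,\Omega^p)\td(T_{X/S})=-c_{n-1}+\tfrac{n}{2}c_n-\tfrac1{12}c_1c_n+\cdots$ starts in degree $n-1$, and $R$ has only odd-degree terms with no degree-$0$ part, the only candidates are (degree $n-1$)$\times R^{(2)}$, which does not exist, and (degree $n$)$\times R^{(1)}$, which is proportional to $c_1(T_{X/S})=0$ by the Calabi--Yau hypothesis. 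So the $R$-genus term vanishes identically and contributes nothing to $C_\sigma$; no cancellation of $\zeta'$ values is needed.

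Second, the Bott--Chern discrepancy. You claim that the secondary class measuring the failure of $f^*f_*K_{X/S}\to K_{X/S}$ to be an isometry becomes locally constant after fiber integration. It does not: that discrepancy is $[(0,-\log\varphi)]$ with $\varphi=\tfrac{i^{n^2}\eta\wedge\ov\eta}{\|\eta\|^2_{L^2}}\tfrac{n!}{(2\pi\omega)^n}$, and after multiplying by $\cHat_n(T_{X/S},h)$ and pushing forward it produces the genuinely non-constant function $-\int_{X/S}(\log\varphi)\,\mathrm{c}_n(T_{X/S},h)=12\log A(X_s,\omega)$ on $S^{\an}$. This term is precisely what converts the Quillen metric $h_Q$ into the Quillen--BCOV metric $h_{Q,BCOV}=A\cdot h_Q$ appearing in \eqref{eq:ARR-BCOV}; if it were treated as a constant, you would be asserting the identity for $h_Q$ up to constants, which is false (and inconsistent with the $\omega$-independence of $h_{Q,BCOV}$). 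Finally, note that in the paper the constant $C_\sigma\in\pi^r\QBbb_{>0}$ arises only from the $B$-factor in $h_{L^2,BCOV}$ together with the covolume estimate $\vol_{L^2}(H^k(X_s,\ZBbb),\omega)\in(2\pi)^{-kb_k}\QBbb^\times_{>0}$ under the Arakelov normalization of $\omega$; your attribution of part of it to the $R$-genus is moot once that term is seen to vanish.
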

\begin{proof}
The proof is a routine application of the arithmetic Riemann--Roch theorem of Gillet--Soul\'e \cite[Thm. 7]{GS:ARR}. We give the details for the convenience of the reader. Consider the virtual vector bundle $\sum(-1)^{p}p\Omega_{\Xcal/S}^{p}$, with virtual hermitian structure deduced from the metric $h$, and denoted $h^{\bullet}$. Its determinant of cohomology $\lambda_{\bcov}$ carries the Quillen metric $h_{\quillen}$. The theorem of Gillet--Soul\'e provides an equality in $\ACH^{1}(S)_{\QBbb}$
\begin{eqnarray}
 \cHat_{1}(\lambda_{\bcov}, h_{\quillen})
    &=&f_{\ast}\left(\chHat(\sum(-1)^{p}p\Omega_{\Xcal/S}^{p}),h^{\bullet})\tdHat(T_{\Xcal/S},h)\right)^{(1)}\nonumber \\
 &&   -a\left(\ch(\sum(-1)^{p}p\Omega_{\Xcal^{\an}/S^{\an}}^{p})\td (T_{\Xcal^{\an}/S^{\an}}) R(T_{\Xcal^{\an}/S^{\an}})\right)^{(1)}\nonumber \\
    &=&\frac{1}{12}f_{\ast}\left(\cHat_{1}(K_{\Xcal/S},h^{\ast})\cHat_{n}(T_{\Xcal/S},h)\right),\label{eq-arr-1}
\end{eqnarray}
where $h^{\ast}=(\det h)^{-1}$ is the hermitian metric on $K_{\Xcal/S}$ induced from $h$. Notice that the topological factor containing the $R$-genus in \emph{loc. cit.} vanishes in our situation, since
\begin{displaymath}
    \ch\left(\sum(-1)^{p}p\Omega_{\Xcal^{\an}/S^{\an}}^{p}\right)\td(T_{\Xcal^{\an}/S^{\an}})=-\mathrm{c}_{n-1}+\frac{n}{2}\mathrm{c}_n-\frac{1}{12}\mathrm{c}_1 \mathrm{c}_n
+\textrm{higher degree terms}
\end{displaymath}
and $R$ has only odd degree terms and $\mathrm{c}_1(T_{\Xcal^{\an}/S^{\an}})=0$.
Now, the evaluation map $f^{\ast}f_{\ast}K_{\Xcal/S}\to K_{\Xcal/S}$ is an isomorphism, but it is in general not an isometry if we equip $f_{\ast}K_{\Xcal/S}$ with the $L^{2}$ metric and $K_{\Xcal/S}$ with the metric $h^{\ast}$. Comparing both metrics yields a relation in $\ACH^{1}(\Xcal)$
\begin{equation}\label{eq-arr-2}
    \cHat_{1}(K_{\Xcal/S},h^{\ast})=f^{\ast}\cHat_{1}(f_{\ast}K_{\Xcal/S},h_{\l2})+[(0,-\log\varphi)].
\end{equation}
Here  $\varphi$ is the smooth function on $X^{\an}$ given by
\begin{displaymath}
\varphi=\frac{i^{n^{2}}\eta\wedge\ov{\eta}}{\|\eta\|_{\l2}^{2}}\frac{n!}{(2\pi\omega)^{n}},
\end{displaymath}
where $\eta$ denotes a local trivialization of $f_{\ast}K_{\Xcal^{\an}/S^{\an}}$, thought of as a section of $K_{\Xcal^{\an}/S^{\an}}$ via the evaluation map. 
Multiplying \eqref{eq-arr-2} by $\cHat_{n}(T_{\Xcal/S},h)$ and applying $f_{\ast}$ and the projection formula for arithmetic Chow groups, we find
\begin{displaymath}
    \begin{split}
        f_{\ast}\left(\cHat_{1}(K_{\Xcal/S},h^{\ast})\cHat_{n}(T_{\Xcal/S},h)\right)=&f_{\ast}\left(f^{\ast}\cHat_{1}(f_{\ast}K_{\Xcal/S},h_{\l2})\cHat_{n}(T_{\Xcal/S},h)\right)+f_{\ast}\left([(0,-\log\varphi)]\cHat_{n}(T_{\Xcal/S},h)\right)\\
        =&\chi(X_{\infty})\cHat_{1}(f_{\ast}K_{\Xcal/S},h_{\l2})
        +\left[\left(0,-\int_{\Xcal^{\an}/S^{\an}}(\log\varphi)\mathrm{c}_{n}(T_{\Xcal^{\an}/S^{\an}},h)\right) \right],
    \end{split}
\end{displaymath}
where $\mathrm{c}_{n}(T_{\Xcal/S},h)$ is the $n$-th Chern--Weil differential form of $(T_{\Xcal^{\an}/S^{\an}},h)$.
Together with \eqref{eq-arr-1}, this shows that the metric
\begin{displaymath}
    h_{\quillen,\bcov}=h_{\quillen}\cdot\exp\left(-\frac{1}{12}\int_{\Xcal^{\an}/S^{\an}}(\log\varphi)\mathrm{c}_{n}(T_{\Xcal^{\an}/S^{\an}},h)\right)
\end{displaymath}
indeed satisfies \eqref{eq:ARR-BCOV}. 

The outcome \eqref{eq:bcov-ARR} is a translation of the meaning of the equality \eqref{eq:ARR-BCOV} in $\ACH^{1}(S)_{\QBbb}$, in terms of the constructions \eqref{eq:def-BCOV-2} and \eqref{eq:B}. 

By \cite[Prop. 4.2]{cdg2} the normalizing factor $B$ is constant on each connected manifold $S_{\sigma}^{\an}$ and would be rational if the $L^2$ inner products on cohomology groups were computed with $h/2\pi$.

With this understood, we find
\begin{equation}\label{eq:volume-not-rational}
    \vol_{\l2}(H^{k}(X_{s},\ZBbb),\omega)
    \in (2\pi)^{-kb_{k}}\QBbb^{\times}_{>0}
\end{equation}
for any $s\in S_{\sigma}^{\an}$. Together with the definition of $B$ \eqref{eq:factorB}, this is responsible for the constants $C_{\sigma}$.
\end{proof}

\begin{remark}
\begin{enumerate}
    \item The use of the arithmetic Riemann--Roch theorem requires an algebraic setting, but directly yields the existence of the rational function $\Delta$. By contrast, previous techniques (cf. \emph{e.g.} \cite[Sections 7 \& 10]{FLY}) rely on subtle integrability estimates of the functions in~\eqref{eq:bcov-ARR}, in order to ensure that the \emph{a priori} pluriharmonic function $\log|\Delta|_{\sigma}^{2}$ is indeed the logarithm of a rational function. The arithmetic Riemann--Roch theorem further provides the field of definition of $\Delta$ and the constants $C_{\sigma}$. 
    \item In the case of a Calabi--Yau 3-fold defined over a number field, similar computations were done by Maillot--R\"ossler \cite[Sec. 2]{MaRo}.
\end{enumerate}
\end{remark}

\subsection{Kronecker limit formulas for families of Calabi--Yau hypersurfaces}\label{subsec:Kronecker-hyp}
In this section we give an example of use of Theorem \ref{thm:ARR} and we determine the BCOV invariant for families of Calabi--Yau hypersurfaces in Fano manifolds. The argument provides a simplified model for the later computation of the BCOV invariant of the mirror family of Calabi--Yau hypersurfaces.  

Let $V$ be a complex Fano manifold, with very ample anti-canonical bundle $-K_V$. We consider the anti-canonical embedding of $V$ into $|-K_V| = \PBbb(H^0(V, -K_V))\simeq\PBbb^{N}$, whose smooth hyperplane sections are Calabi--Yau manifolds.  The dual projective space $\check{\PBbb} = \PBbb(H^0(V, -K_V)^\vee)\simeq\check{\PBbb}^{N}$ parametrizes hyperplane sections, and contains an irreducible subvariety $\Delta \subseteq \check{\PBbb}$ which corresponds to  singular such sections \cite[Chap. 1, Prop. 1.3]{GKZ}. We assume that $\Delta$ is a hypersurface in $\check{\PBbb}$. This is in general not true, and a necessary condition is proven in  \cite[Chap. 1, Cor. 1.2]{GKZ}. Denote by $U$ the quasi-projective complement $U := \check{\PBbb} \setminus \Delta$. Denote by $f \colon \mathcal{X} \to \check{\PBbb}$ the universal family of hyperplane sections. Therefore $f$ is smooth on $U$, and the corresponding BCOV line bundle $\lambda_{\bcov}$ is thus defined on $U$.

\begin{lemma}
For some positive integer $m$, the line bundles $(f_{\ast}K_{\Xcal/U})^{\otimes m}$ and $\lambda_{\bcov}^{\otimes m}$ have trivializing sections. These are unique up to constants.
\end{lemma}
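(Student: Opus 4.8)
The plan is to establish triviality of the two line bundles $(f_\ast K_{\Xcal/U})^{\otimes m}$ and $\lambda_{BCOV}^{\otimes m}$ on the quasi-projective variety $U = \check\PBbb \setminus \Delta$ by understanding $\Pic(U)$. Since $U$ is obtained from the smooth projective rational variety $\check\PBbb^N$ by removing the irreducible hypersurface $\Delta$, the localization sequence for Picard groups gives a right-exact sequence $\ZBbb\cdot[\Delta] \to \Pic(\check\PBbb) \to \Pic(U) \to 0$. We have $\Pic(\check\PBbb^N) \cong \ZBbb$, generated by $\Ocal(1)$. The class $[\Delta]$ corresponds to $\deg(\Delta)\cdot[\Ocal(1)]$, which is nonzero since $\Delta$ is a hypersurface of positive degree. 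Therefore $\Pic(U)$ is a finite cyclic group, of order dividing $\deg\Delta$. In particular every line bundle on $U$ is torsion, so some power is trivial.

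The key step is then to exhibit a common multiple $m$ working for both bundles simultaneously: since $\Pic(U)$ is finite, say of order $e$, take $m = e$ (or any multiple of the orders of the two individual classes). First I would argue that both $f_\ast K_{\Xcal/U}$ and $\lambda_{BCOV}$ lie in $\Pic(U)$ — this is clear, as $f$ is smooth and projective over $U$ with Calabi--Yau fibers, so $f_\ast K_{\Xcal/U}$ is an invertible sheaf (the fibers have one-dimensional spaces of top forms), and $\lambda_{BCOV}$ is a determinant of cohomology, hence a well-defined line bundle on $U$ by \eqref{eq:BCOV-bundle}. Being torsion elements of the finite group $\Pic(U)$, each admits a power that is trivial, and their least common multiple $m$ trivializes both, producing the desired trivializing sections.

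For the uniqueness up to constants, the point is that $U$ is a connected quasi-projective variety over $\CBbb$ whose only global regular invertible functions are constants. More precisely, $\Gamma(U, \Ocal_U^\times) = \CBbb^\times$: because $\check\PBbb^N$ is projective with $\Gamma(\check\PBbb, \Ocal^\times) = \CBbb^\times$, and removing the irreducible divisor $\Delta$ can only enlarge the units by the class of a rational function with divisor supported on $\Delta$; but such a function would be a global section of some $\Ocal(k)$ vanishing exactly along $\Delta$, and since $\Delta$ is irreducible this recovers only scalar multiples of a fixed defining equation, contributing nothing to $\Ocal_U^\times / \CBbb^\times$ beyond what is already accounted for. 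Hence any two trivializing sections of a trivial line bundle on $U$ differ by a global unit, \emph{i.e.} a nonzero constant.

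The main obstacle I anticipate is not the structure of $\Pic(U)$, which is standard, but rather verifying cleanly that $f_\ast K_{\Xcal/U}$ is genuinely a line bundle and that $\lambda_{BCOV}$ is defined over all of $U$ (not merely over a smaller open set) — this relies on the smoothness of $f$ over $U = \check\PBbb \setminus \Delta$ and on cohomology and base change to ensure the Hodge bundles $R^q f_\ast \Omega^p_{\Xcal/U}$ are locally free of the expected rank. Once these coherence-theoretic points are in place, the triviality of a power follows formally from the finiteness of $\Pic(U)$, and uniqueness up to constants from $\Gamma(U,\Ocal_U^\times) = \CBbb^\times$.
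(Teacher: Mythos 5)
Your proposal is correct and follows essentially the same route as the paper: compute $\Pic(U)$ via the localization sequence for the complement of the irreducible hypersurface $\Delta$ in $\check{\PBbb}$ (the paper states $\Pic(U)=\ZBbb/\deg\Delta$), deduce that both line bundle classes are torsion so a common power is trivial, and get uniqueness of the trivialization from the fact that an invertible function on $U$ has divisor supported on the irreducible $\Delta$, which forces the divisor to vanish and the function to be constant. The only cosmetic difference is that you phrase the last step as $\Gamma(U,\Ocal_U^{\times})=\CBbb^{\times}$, whereas the cleanest justification is simply that a principal divisor $k[\Delta]$ on $\check{\PBbb}$ has degree $k\deg\Delta$, hence $k=0$.
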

\begin{proof}

A standard computation shows that $\Pic(U) = \ZBbb/\deg \Delta$, providing the first claim of the lemma. For the second assertion, for any of the line bundles under consideration, let $\theta$ and $\theta^{\prime}$ be two trivializations on $U$. Therefore, $\theta=h\theta^{\prime}$ for some invertible function $h$ on $U$. The previous description of $\Pic(U)$ shows that the divisor of $h$, as a rational function on $\check{\PBbb}$, is supported on $\Delta$. As $\Delta$ is irreducible, in the projective space $\check{\PBbb}$ this is only possible if the divisor vanishes. We conclude that $h$ is necessarily constant.
\end{proof}

For the following statement, we need a choice of auxiliary K\"ahler metric on $\Xcal$ (restricted to $U$), whose Arakelov theoretic K\"ahler form has fiberwise rational cohomology class. We compute $L^{2}$ norms on Hodge bundles and on $\lambda_{\bcov}$ with respect to this choice.
\begin{theorem} \label{thm:Kroneckerhypersurf}
For some integer $m>0$ as in the lemma, let $\beta$ be a trivialization of $\lambda_{\bcov}^{\otimes m}$ and $\eta$ a trivialization of $(f_\ast K_{\mathcal{X}/U})^{\otimes m}$. Then there is a global constant $C$ such that, for any Calabi--Yau hyperplane section $X_H = V \cap H$, we have
\begin{displaymath}
    \taubcov{X_H} = C  \|\eta\|_{\l2}^{\chi/6m}
\|\beta\|^{-2/m}_{\l2}.
\end{displaymath}
\end{theorem}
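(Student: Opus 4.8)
The plan is to combine the arithmetic Riemann--Roch formula \eqref{eq:bcov-ARR} from Theorem \ref{thm:ARR} with the structural fact, just established in the lemma, that $\Pic(U)$ is finite cyclic of order $\deg\Delta$, so that invertible functions on $U$ are forced to be constant. First I would apply \eqref{eq:bcov-ARR} on the base $U$, taking the rational section $\eta_{p,q}$ of each Hodge bundle $\det R^q f_\ast \Omega^p_{\Xcal/U}$ to be induced by the given trivialization $\beta$ of $\lambda_{BCOV}^{\otimes m}$, and the section of $f_\ast K_{\Xcal/U}$ to be the one induced by $\eta$. Since $\lambda_{BCOV}$ is by definition the alternating combination $\sum_{p,q}(-1)^{p+q}p\,\det R^q f_\ast \Omega^p_{\Xcal/U}$, the weighted sum of the $L^2$ norms of the $\eta_{p,q}$ appearing in \eqref{eq:bcov-ARR} assembles, after taking the $m$-th power, into $\log\|\beta\|^2_{L^2}$ up to a constant. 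Thus \eqref{eq:bcov-ARR} becomes, after multiplying by $m$ to clear denominators,
\begin{displaymath}
    m\log\taubcov{X_H}
    = m\log|\Delta_{\mathrm{ar}}|^2
      + \frac{\chi}{12}\log\|\eta\|^2_{L^2}
      - \log\|\beta\|^2_{L^2}
      + m\log C_\sigma,
\end{displaymath}
where $\Delta_{\mathrm{ar}}\in K(U)^\times\otimes\QBbb$ denotes the rational function supplied by Theorem \ref{thm:ARR} (I rename it to avoid collision with the discriminant locus $\Delta$).

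The crux is then to show that $\Delta_{\mathrm{ar}}$ is constant. This is exactly where the lemma does its work: $\Delta_{\mathrm{ar}}$ is an honest rational function on the projective base $\check\PBbb$ whose divisor, restricted to $U$, must vanish because both $\taubcov{\cdot}$ and the $L^2$ norms in question are nowhere-vanishing on $U$ (the fibers are smooth Calabi--Yau there, so the BCOV invariant is a positive real number and the chosen trivializing sections are non-vanishing). Hence $\Div(\Delta_{\mathrm{ar}})$ is supported on $\Delta\subset\check\PBbb$; but $\Delta$ is irreducible and we are on projective space, so a rational function with divisor supported on a single irreducible hypersurface and of total degree zero must be constant, precisely the argument already used to prove uniqueness of trivializations in the lemma. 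Therefore $\Delta_{\mathrm{ar}}$ and $C_\sigma$ combine into a single global positive constant.

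Finally I would solve the displayed equation for $\taubcov{X_H}$. Exponentiating and absorbing $|\Delta_{\mathrm{ar}}|^2$, $C_\sigma$, and any factor of $\pi^r$ into one constant $C$, and dividing the exponents by $m$, yields
\begin{displaymath}
    \taubcov{X_H}
    = C\,\|\eta\|_{L^2}^{\chi/6m}\,\|\beta\|_{L^2}^{-2/m},
\end{displaymath}
which is the asserted formula; the exponent $\chi/6m$ arises as $\frac{1}{m}\cdot\frac{\chi}{12}\cdot 2$ once the factor $2$ from the squared $L^2$ norm is accounted for, and $-2/m$ from $-\frac{1}{m}\log\|\beta\|^2$. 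The main obstacle is not computational but conceptual: one must be careful that the section of $f_\ast K_{\Xcal/U}$ and the trivialization $\beta$ of $\lambda_{BCOV}^{\otimes m}$ are genuinely nowhere-vanishing on all of $U$ so that the divisor of $\Delta_{\mathrm{ar}}$ really is confined to the discriminant locus; granting this, the finiteness of $\Pic(U)$ via the irreducibility of $\Delta$ immediately forces constancy and no delicate integrability or growth estimates are needed, in contrast to the analytic approach of \cite{FLY}.
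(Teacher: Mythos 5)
Your proposal is correct and follows essentially the same route as the paper: apply Theorem \ref{thm:ARR} to $f\colon\Xcal\to U$ with sections induced by $\beta$ and $\eta$, observe that the resulting rational function has divisor supported on the irreducible discriminant hypersurface $\Delta\subset\check{\PBbb}$ and is therefore constant, and then collect constants and exponents. The only cosmetic difference is that you spell out why the divisor is confined to $\Delta$ (smoothness and positivity of $\tau_{BCOV}$ on $U$ together with non-vanishing of the trivializations), which the paper leaves implicit under the phrase ``by construction.''
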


\begin{proof}
We apply Theorem \ref{thm:ARR} to $f\colon\Xcal\to U$ (over $\CBbb$), which in terms of $\beta$ and $\eta$ becomes 
\begin{displaymath}
    m\log \taubcov{X_H}= \log|g|^2+\frac{\chi}{12}\log\|\eta\|^2_{\l2}-\log \|\beta\|^2_{\l2}+\log C
\end{displaymath}
 for some regular invertible function $g$ on $U$ and some constant $C$. By construction, as a rational function on $\check{\PBbb}$, $g$ must have its zeros or poles along $\Delta$. Since $\Delta$ is irreducible this forces $g$ to be constant.
\end{proof}
\begin{remark}
\begin{enumerate}
    \item When $V$ is a toric variety with very ample anti-canonical class, all of the constructions can in fact be done over the rational numbers. The sections $\beta$ and $\eta$ can be taken to be defined over $\QBbb$, and unique up to a rational number. With this choice, the constant $C$ takes the form stated in Theorem \ref{thm:ARR}. 
    \item In the case when the discriminant $\Delta$ has higher codimension, we have $\Pic(U)\simeq\Pic(\check{\PBbb})$. In particular, $\lambda_{\bcov}$ uniquely extends to a line bundle $\check{\PBbb}$. The existence of the canonical (up to constant) trivializations $\beta$ and $\eta$ is no longer true. However, one can propose a variant of the theorem where $\beta$ and $\eta$ are trivializations outside a chosen ample divisor in $\check{\PBbb}$. 
\end{enumerate}
\end{remark}

\section{The Dwork and mirror families, and their Hodge bundles} \label{sec:dworkandmirrorfamilies}
The main object of interest of this section is the mirror family of Calabi--Yau hypersurfaces. It is obtained from the Dwork pencil of Calabi--Yau varieties, by first modding out by a group of generic symmetries, and then by performing a crepant resolution. We study the structure of the Hodge bundles of the mirror family. In even dimension, we show that the primitive Hodge bundles in the middle degree can be decomposed in two direct factors. One will be seen to be constant in Section \ref{sec:degHodgebund}, and the other one is called the minimal part. For the latter, we construct explicit trivializations via Griffiths' residue method.

Throughout, our arguments combine analytic and algebraic aspects of the same geometric objects. Except when there is a risk of confusion, we won't make any distinction in the notations between an algebro-geometric object and its analytification. Likewise, we won't specify the field of definition of various algebraic varieties and schemes. However, we will precisely indicate the category where the statements take place.

\subsection{The geometry of the Dwork family}
We review general facts on the Dwork pencil of Calabi--Yau hypersurfaces, and the construction of an equivariant normal crossings model. Initially, we work with algebraic varieties over the field of complex numbers. Rationality refinements will be made along the way.

Let $n\geq 4$ be an integer. The Dwork pencil $\Xcal\to \PBbb^1$ is defined by the hypersurface of $\PBbb^{n}\times\PBbb^{1}$ of equation
\begin{displaymath}
    F_\psi (x_{0},\ldots,x_{n}) := \sum_{j=0}^{n} x_j^{n+1} - (n+1) \psi x_0 \ldots x_n = 0,\quad [x_0:x_1:\ldots:x_n]\in\PBbb^n, \quad \psi \in \PBbb^1.
\end{displaymath}
The smooth fibers of this family are Calabi--Yau manifolds of dimension $n-1$. The singular fibers are:
\begin{itemize}
    \item fiber at $\psi=\infty$, given by the divisor with normal crossings $x_{0}\cdot\ldots\cdot x_{n}=0$.
    \item the fibers where $\psi^{n+1}=1$. These fibers have ordinary double point singularities. The singular points have projective coordinates $(x_{0},\ldots, x_{n})$ with $x_0=1$ and $x_{j}^{n+1}=1$ for all $j\geq 1$, and $\prod_{j} x_{j}=\psi^{-1}$.
\end{itemize}

Denote by $\mu_{n+1}$ the group of the $(n+1)$-th roots of unity. Let $K$ be the kernel of the multiplication map $\mu_{n+1}^{n+1}\to\mu_{n+1}$. Let also $\Delta$ be the diagonal embedding of $\mu_{n+1}$ in $K$ and $G:=K/\Delta$. The group $G$ acts naturally on the fibers $X_\psi$ of $\Xcal \to \PBbb^1$ by multiplication of the projective coordinates.

The above constructions can be realized as schemes over $\QBbb$. Indeed, $F_{\psi}$ is already defined over $\QBbb$, and the groups $K$, $\Delta$ are finite algebraic groups over $\QBbb$, and hence so does the quotient $G$. The action of $G$ on $F_{\psi}$ is defined over $\QBbb$ as well, as one can see by examining the compatibility with the action of $\Aut(\CBbb/\QBbb)$ on the $\CBbb$ points of $\Xcal$, or alternatively by writing the co-action at the level of algebras. 

The following argument was provided to us by Nicholas Shepherd-Barron, whom we warmly thank for letting us include it in our article.
\begin{proposition}\label{prop:NSB}
The family $\Xcal \to \PBbb^1$  admits a $G$-equivariant projective normal crossings model $\Xcal^{\prime}\to\PBbb^{1}$, with $\Xcal^{\prime}$ non-singular, which is semi-stable at $\psi = \infty$ and defined over $\QBbb$.
\end{proposition}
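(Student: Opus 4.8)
The plan is to construct the model $\Xcal'$ by resolving the singularities of the total space $\Xcal \subset \PBbb^n \times \PBbb^1$ in a way that keeps track of the $G$-action throughout and achieves normal crossings over $\psi = \infty$ and semistability there. First I would analyze the singularities of the total space $\Xcal$, as opposed to the fibers. The fiber over $\psi = \infty$ is the normal crossings divisor $x_0 \cdots x_n = 0$ in $\PBbb^n$, but the total space $\Xcal$ itself may be singular along the loci where several of the coordinate hyperplanes $x_i = 0$ meet in that fiber; I would compute the Jacobian of $F_\psi$ together with the defining equation of the fiber to locate exactly the singular locus of $\Xcal$ and record its stratification by the coordinate flats $\{x_{i_1} = \cdots = x_{i_r} = 0\}$. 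The key observation is that this entire configuration is stable under the torus-like action of $G$ (which only rescales coordinates) as well as under $\Aut(\CBbb/\QBbb)$, so every stratum is both $G$-invariant and defined over $\QBbb$.

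Next I would produce the resolution by a sequence of blow-ups along $G$-invariant, $\QBbb$-rational smooth centers, performed in order of increasing dimension of the singular strata (equivalently, blowing up the deepest coordinate flats first). Because the centers are the intersections of the coordinate hyperplanes restricted to the fiber at infinity — a toric-type configuration — each blow-up is monomial in nature, and one can check directly that the total transform of the fiber over $\psi = \infty$ remains a normal crossings divisor after each step, with the strict transform and the exceptional divisors meeting transversally. The $G$-equivariance is automatic: since each center is $G$-invariant, the blow-up carries a canonical $G$-action, and likewise the whole construction descends to $\QBbb$ because the centers are defined over $\QBbb$. Iterating until $\Xcal'$ is smooth and the fiber at infinity is a reduced normal crossings divisor yields the desired $\Xcal' \to \PBbb^1$.

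To upgrade normal crossings to \emph{semistability} at $\psi = \infty$ I would examine the multiplicities of the components of the fiber $\Xcal'_\infty$. Normal crossings alone allows components to appear with multiplicity greater than one, whereas semistability requires the fiber to be reduced with smooth components crossing normally; this is where the monomial structure pays off, since the multiplicities are read off from the weights of the blown-up flats, and any non-reduced behaviour is removed by a base change $\psi = t^e$ followed by normalization and a further toric blow-up, all of which can be done $G$-equivariantly and over $\QBbb$. I expect the main obstacle to be precisely this last point: controlling the multiplicities and verifying that a single, uniform combinatorial recipe simultaneously achieves reducedness, preserves the normal crossings property, and stays compatible with both the $G$-action and the rational structure — in other words, checking that the toric combinatorics of the coordinate hyperplane arrangement genuinely forces semistability rather than merely normal crossings. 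The $G$-equivariance and the descent to $\QBbb$, by contrast, should follow formally once the centers are chosen to be invariant and rational, since blowing up and normalizing are functorial operations.
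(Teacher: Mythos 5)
Your overall strategy (resolve by blow-ups along $G$-invariant, $\QBbb$-rational smooth centers, with equivariance and rationality coming for free from the choice of centers) is in the right spirit, but there are two genuine gaps. First, you only treat the fiber at $\psi=\infty$ and say nothing about the fibers over $\psi^{n+1}=1$. At those fibers the total space $\Xcal$ is already smooth (an ordinary double point of a fiber, locally $y_1^2+\cdots+y_{n-1}^2=t$, does not make the total space singular), so ``resolving the singularities of the total space'' does nothing there; yet those fibers are not normal crossings divisors. One must blow up the locus of ordinary double points — which is $G$-invariant and defined over $\QBbb$ — to obtain a normal crossings model of the whole family. Without this step the proposition is not proved.

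Second, your treatment of semistability at infinity does not close. The proposition asserts that a model of $\Xcal\to\PBbb^1$ \emph{over the same base} is semi-stable at $\psi=\infty$; your fallback of substituting $\psi=t^{e}$, normalizing and blowing up again only yields semistability after a ramified base change, which is a strictly weaker statement. Moreover, the configuration to be blown up is not purely toric: the singular locus of $\Xcal$ over $\psi=\infty$ lies on the Fermat hypersurface $D_0=\{\sum x_j^{n+1}=0\}$ intersected with the coordinate flats, so the ``monomial'' bookkeeping of multiplicities you invoke is not available as stated. The paper's argument sidesteps both difficulties by not resolving $\Xcal$ near infinity at all: it observes that $\Xcal=Bl_{D_0\cap D_\infty}(\PBbb^n)$ with $D_\infty=\sum H_j$, and replaces this single blow-up by the staggered sequence of blow-ups of $\PBbb^n$ along $D_0\cap H_0$, then the strict transform of $D_0$ meeting the strict transform of $H_1$, and so on. This makes $\widetilde{D_0}$ and $\widetilde{D_\infty}$ disjoint and produces a morphism to $\PBbb^1$ whose fiber at infinity is exactly $\widetilde{D_\infty}$, the strict transform of the already reduced normal crossings divisor $\sum H_j$; reducedness, hence semistability, then comes for free with no base change and no multiplicity computation. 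To repair your argument you would need either to reproduce such a construction, or to verify directly that your blow-ups never introduce exceptional components of multiplicity at least $2$ into the fiber at infinity — which is precisely the point you left open.
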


\begin{proof}
Outside of the singular points there is nothing to modify. The points corresponding to ordinary double point singularities are provided by the affine equations $x_{0}=1$ and $x_j^{n+1} = 1$ for $j\geq 1$, and blowing up along the corresponding locus of ${\Xcal}$ provides a normal crossings model. The locus of ordinary double points is defined over $\QBbb$ and is $G$-equivariant, and so is thus also the blowup.

We next consider our family at the point at infinity. Introduce the divisor $D_0$ in $\PBbb^{n}$ given by $\sum_{j} x_j^{n+1}=0$ and the divisor $D_\infty=\sum_{j} H_{j}$, where $H_{j}$ is the hyperplane cut out by $x_{j}=0$. The axis of the pencil $\Xcal \to \PBbb^1$ is  $D_0 \cap D_\infty$ and hence $\Xcal = \mathrm{Bl}_{D_0 \cap D_\infty}(\PBbb^n)$. We construct another model by blowing up $\PBbb^n$ in $D_0 \cap H_{0}$ to get $\Xcal_1$. Continue to blow up the strict transform of $D_0$ in $\Xcal_1$ intersected by the strict transform of $H_1$, and so on. Each such blowup is a blowup in a smooth center which is $G$-equivariant. The final result is a $G$-equivariant  $\widetilde{\Xcal}$ projective manifold, with an equivariant morphism $\nu: \widetilde{\Xcal} \to \PBbb^{n}$. Denote by $\widetilde{D_0}$ (resp. $\widetilde{D_\infty}$) the strict transforms of $D_0$ (resp. $D_\infty$). By construction they are disjoint, and  computation shows that $\nu^* D_0 \sim \widetilde{D_0} + \sum E_i$ and $\nu^* (D_\infty) \sim \widetilde{D_\infty} + \sum E_i,$ where the $E_i$ denote the strict transforms of the exceptional divisors. Since $D_0-D_\infty$ is the divisor of a rational function, hence linearly equivalent to zero, and $\widetilde{D_\infty}$ is disjoint from $\widetilde{D_0}$, we find a morphism $p: \widetilde{\Xcal} \to \PBbb^1$ such that $p^{-1}(\infty)=\widetilde{D_\infty}$ and $p^{-1}(0)= \widetilde{D_0}$. This is the searched for semi-stable model at infinity. From the local description we also see that $\nu^{-1}(D_0 \cap D_\infty)= \sum E_i$ which is principal, so that $\widetilde{\Xcal} \to \PBbb^1$ factors over $\Xcal \to \PBbb^1$. 

All of the above constructions can be defined over $\QBbb$, and taking them together with the previous considerations with the ordinary double points provides a model $\Xcal^\prime \to \PBbb^1$ as in the statement of the proposition. 
\end{proof}

\subsection{The mirror family}
The first step towards the construction of a mirror family is the formation of the quotient $\Ycal=\Xcal/G$. As the action of $G$ on $\Xcal$ is defined over $\QBbb$, the space $\Ycal$  and the projection map $\Ycal\to\PBbb^{1}$ are also. The following lemma shows that except for the fiber at infinity, this is a family of singular Calabi--Yau varities with mild singularities. 

\begin{lemma}\label{lemma:gorenstein}
The total space of the restricted family $\Ycal\to\ABbb^{1}$ has rational Gorenstein singularities. It has a relative canonical line bundle  $K_{\Ycal/\ABbb^{1}}$, obtained by descent from $K_{\Xcal/\ABbb^{1}}$.
\end{lemma}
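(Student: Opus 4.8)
The plan is to prove both assertions simultaneously by working $G$-equivariantly on the Dwork family and descending. The key point is that the quotient map $\pi\colon\Xcal\to\Ycal=\Xcal/G$ is a finite morphism by a finite group, so the singularities of $\Ycal$ are quotient singularities, and one controls them fiber by fiber through the local structure of the $G$-action. First I would recall that away from $\psi=\infty$, each fiber $X_\psi$ is a Calabi--Yau hypersurface (smooth, or with ordinary double points when $\psi^{n+1}=1$), and that $G$ acts on $\Xcal$ preserving the pencil. Since $G\subset\SL_{n+1}/\mu_{n+1}$ acts with $\prod\xi_i=1$, the action preserves the holomorphic volume form coming from the Calabi--Yau structure; this is the crucial numerical input that will give the Gorenstein and crepant/rational properties.

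\medskip

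The main technical step is to analyze the stabilizers and local models of the $G$-action on the total space of $\Xcal\to\ABbb^1$. I would stratify $\Xcal$ by the fixed-point loci of elements $g\in G$ and, at each point $x$ with nontrivial stabilizer $G_x$, examine the induced linear action of $G_x$ on the Zariski tangent space. The condition $\prod\xi_i=1$ forces each $G_x$ to act on the tangent space of the ambient with determinant one, hence as a subgroup of $\SL$; quotient singularities of this type are canonical, and in fact Gorenstein because the action of $G_x$ on the local canonical generator is trivial (it fixes the volume form up to the determinant, which is $1$). Rationality of Gorenstein canonical singularities is then automatic by the theorem of Elkik--Kempf--Kollár--Mori that canonical (indeed log-terminal) singularities are rational, or one can invoke that quotients of smooth varieties by finite groups are rational (Boutot/Viehweg). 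I would spell out that the singular locus of $\Ycal$ is exactly the image of the $G$-fixed loci together with the ordinary double points, and that each is a cyclic-quotient Gorenstein singularity.

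\medskip

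For the descent of the relative canonical bundle, the plan is to use that $\pi$ is finite and $G$ preserves $K_{\Xcal/\ABbb^1}$. Because the fibers are Calabi--Yau, $K_{\Xcal/\ABbb^1}$ is trivialized by a $G$-invariant relative volume form $\eta$ (the invariance following again from $\prod\xi_i=1$, which is precisely why this particular $G$ is chosen). On the smooth Gorenstein locus the dualizing sheaf $\omega_{\Ycal/\ABbb^1}$ is invertible, and the invariant form $\eta$ descends to a nowhere-vanishing section there. Since $\Ycal$ is normal and the singular locus has codimension at least two in each relevant stratum away from the conifold points---and at the conifold points one has the standard ordinary-double-point model where the dualizing sheaf is already known to be the relative canonical line bundle---the descended form extends to a trivialization of a line bundle $K_{\Ycal/\ABbb^1}$ on all of $\Ycal$, which by construction is $\pi_*^{G}K_{\Xcal/\ABbb^1}$ restricted appropriately. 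Normality plus codimension-two extension (Hartogs) is the tool that makes the descent clean.

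\medskip

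The hard part will be handling the behavior precisely at the ordinary double points $\psi^{n+1}=1$, where the singularity of the total space $\Ycal$ is the combination of the conifold degeneration of the fibers with the quotient by $G$; one must check that this combined singularity is still rational Gorenstein and that $\eta$ still descends without acquiring zeros or poles. I would treat this by passing to the $G$-equivariant normal crossings model of Proposition \ref{prop:NSB}, or by a direct local computation in the affine chart $x_0=1$, verifying that $G_x$ acts trivially on the local canonical form of the nodal fiber. The rest---verifying that the generic fiber of $\Ycal\to\ABbb^1$ has the expected Calabi--Yau structure and that $K_{\Ycal/\ABbb^1}$ pulls back correctly---is routine once the local models are pinned down.
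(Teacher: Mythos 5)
Your proposal is correct in substance and rests on the same essential input as the paper's proof --- the condition $\prod\xi_i=1$ makes $G$ preserve the relative volume form $\theta_{0}$, and normality plus codimension-two extension ($\Kcal\simeq j_{\ast}j^{\ast}\Kcal$ and $j_{\ast}\omega_{\Ycal^{ns}/\ABbb^{1}}=\omega_{\Ycal/\ABbb^{1}}$) does the descent. But the organization differs. You establish the Gorenstein property pointwise, via the Reid--Tai/Watanabe criterion that stabilizers $G_x$ land in $\SL$ of the tangent space, and then get rationality from canonical $\Rightarrow$ rational (or Boutot). The paper instead gets rationality for free from ``finite quotient of a smooth variety,'' and proves Gorenstein by covering $\Xcal$ with two $G$-invariant opens: $\Xcal^{\circ}$ (complement of the $G$-fixed locus, where the quotient is \'etale and everything is automatic) and $\Xcal^{\ast}$ (smooth locus of $h$, where the invariant form $\theta_{0}$ trivializes $K_{\Xcal^{\ast}/\ABbb^{1}}$ equivariantly and hence descends to an invertible sheaf that must equal $\omega_{\Ycal^{\ast}/\ABbb^{1}}$ by reflexivity). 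Your route buys a slightly more conceptual statement (the quotient singularities are canonical), at the cost of having to check stabilizer representations stratum by stratum.

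The one place where your plan goes astray is the ``hard part'' at the ODP fibers. There is no combined conifold-plus-quotient singularity to analyze: the ordinary double points of $X_{\psi}$ have all projective coordinates nonzero (they satisfy $x_j^{n+1}=1$), and any nontrivial element of $G$ fixing such a point would have to be diagonal, hence trivial in $G=K/\Delta$. So the $G$-fixed locus is contained in the coordinate hyperplanes and is disjoint from the ODPs; near the images of the ODPs, $\Ycal$ is an \'etale quotient of the smooth total space $\Xcal$ and is therefore itself smooth (the ODPs are singularities of the fibers, not of the total space). This disjointness is exactly what makes the paper's two-open-set cover work, and it dissolves your anticipated difficulty entirely. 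Your fallback (a local computation in the chart $x_0=1$) would of course reveal this, but you should state the disjointness up front rather than treating the interaction as a genuine obstacle.
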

\begin{proof}
To lighten notations, let us write in this proof $\Xcal$ and $\Ycal$ for the corresponding restrictions to $\ABbb^{1}$. The total space $\Xcal$ is non-singular, and $\Ycal$ is a quotient of it by the action of a finite group. Therefore, $\Ycal$ has rational singularities. In particular, it is normal and Cohen--Macaulay. Consequently, if $\Ycal^{ns}$ is the non-singular locus of $\Ycal$, and $j\colon\Ycal^{ns}\hookrightarrow\Ycal$ the open immersion, then we have a relation between relative dualizing sheaves $j_{\ast}\omega_{\Ycal^{ns}/\ABbb^{1}}=\omega_{\Ycal/\ABbb^{1}}$. We will use this below.

Now for the Gorenstein property and the descent claim. Notice that since $\ABbb^{1}$ is non-singular, $\Ycal$ is Gorenstein if, and only if, the fibers of $\Ycal\to\ABbb^{1}$ are Gorenstein. We will implicitly confound both the absolute and relative points of view. We introduce $\Xcal^{\circ}$ the complement of the fixed locus of $G$, and $\Xcal^{\ast}$ the smooth locus of $\Xcal\to\ABbb^{1}$. These are $G$-invariant open subschemes of $\Xcal$ and constitute an open cover, because the ordinary double points in the fibers of $\Xcal\to\ABbb^{1}$ are disjoint from the fixed point locus of $G$. Then $\Ycal^{\circ}=\Xcal^{\circ}/G$ and $\Ycal^{\ast}=\Xcal^{\ast}/G$ form an open cover of $\Ycal$, and it is enough to proceed for each one separately. 

Since $G$ acts freely on $\Xcal^{\circ}$, the quotient $\Ycal^{\circ}$ is non-singular, and is therefore Gorenstein. The morphism $\Xcal^{\circ}\to\Ycal^{\circ}$ is \'etale, and hence $K_{\Xcal^{\circ}/\ABbb^{1}}$ descends to $K_{\Ycal^{\circ}/\ABbb^{1}}$.

For $\Ycal^{\ast}$, we observe that $G$ preserves a relative holomorphic volume form on $\Xcal^{\ast}$. Indeed, in affine coordinates $z_k = \frac{x_k}{x_j}$ on the open set $x_j \neq 0$, and where $\partial F_\psi/\partial z_i \neq 0$, the expression 
\begin{equation}\label{eq:relative-volume-form}
    \theta_{0}=\frac{(-1)^{i-1} dz_0 \wedge \ldots  \widehat{dz_i} \wedge \ldots \wedge \widehat{dz_j} \wedge \ldots \wedge dz_n}{\partial F_\psi/\partial z_i} \Big|_{F_{\psi}=0}
\end{equation}
provides such an invariant relative volume form. This entails that $K_{\Xcal^{\ast}/\ABbb^{1}}$ descends to an invertible sheaf $\Kcal$ on $\Ycal^{\ast}$. Now,  the singular locus of $\Ycal^{\ast}$ is contained in the image of the fixed point set of $G$ on $\Xcal^{\ast}$. We infer that $\Kcal$ is an invertible extension of the relative canonical bundle of $(\Ycal^{\ast})^{ns}\to\ABbb^{1}$. But $\Ycal^{\ast}$ is normal so that $\Kcal \simeq j_{\ast} j^{\ast} \Kcal$. Then, as mentioned at the beginning of the proof, $j_{\ast} \omega_{\Ycal^{ns}/\ABbb^{1}}=\omega_{\Ycal/\ABbb^{1}}$ and we conclude, since $\Kcal$ is also an extension of $\omega_{\Ycal^{ns}/\ABbb^{1}}$.  
\end{proof}

Because the BCOV invariant has not been fully developed for Calabi--Yau orbifolds (see nevertheless \cite{Yoshikawa-orbifold} for some three-dimensional cases), we need crepant resolutions of the varieties $Y_{\psi}$. This needs to be done in families, so that the results of \textsection\ref{subsec:ARR} apply. The family of crepant resolutions $\Zcal\to\PBbb^{1}$ that we exhibit will be called the \emph{mirror family}, although it is not unique. We also have to address the rationality of the construction. 

\begin{lemma}\label{lemma:crepant}
There is a projective birational morphism $\Zcal \to \Ycal$ of algebraic varieties over $\QBbb$, such that:
\begin{enumerate}
    \item\label{item:crepant-1} $\Zcal$ is smooth.  
    \item If $\psi^{n+1} = 1$, the fiber $Z_\psi$ has a single ordinary double point singularity.
    \item If $\psi = \infty$, $Z_\infty$ is a simple normal crossings  divisor in $\Zcal$.
    \item\label{item:crepant-4} Otherwise, $Z_\psi \to Y_\psi$ is a crepant resolution of singularities. In particular, $Z_\psi$ is a smooth Calabi--Yau variety.
    \item\label{item:crepant-5} The smooth complex fibers $Z_{\psi}$ are mirror to the $X_{\psi}$, in that their Hodge numbers satisfy $h^{p,q}(Z_{\psi})=h^{n-1-p,q}(X_{\psi})$. In particular, the smooth $Z_{\psi}$ are Calabi--Yau with $\chi(Z_{\psi})=(-1)^{n-1}\chi(X_{\psi})$.
\end{enumerate}
\end{lemma}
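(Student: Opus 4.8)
The plan is to construct $\Zcal$ torically: resolve the ambient quotient $\PBbb^{n}/G$ crepantly and define the fibres of $\Zcal$ as strict transforms of the anticanonical pencil $\Ycal$. Since $G$ acts on $\PBbb^{n}$ through its torus $T$, writing $G=\Hom(N'/N,\Gm)$ for a finite overlattice $N\subset N'$, the quotient $\PBbb^{n}/G$ is the toric variety attached to the fan of $\PBbb^{n}$ read in $N'$. Its anticanonical class is Cartier, the invariant monomials $\sum_{j}x_{j}^{n+1}$ and $x_{0}\cdots x_{n}$ span a pencil of anticanonical sections, and $\Ycal\to\ABbb^{1}$ thereby embeds in $(\PBbb^{n}/G)\times\ABbb^{1}$; its singularities (Lemma \ref{lemma:gorenstein}) are the abelian quotient singularities along the non-smooth cones. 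A direct computation of stabilisers shows that the transverse singularity along a codimension-$k$ coordinate stratum is $\CBbb^{k}/H_{k}$, with $H_{k}=\ker(\mu_{n+1}^{k}\xrightarrow{\ \prod\ }\mu_{n+1})\subset \SL_{k}$ acting diagonally; toric-combinatorially this is the cone over a dilated standard simplex.

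First I would build a projective, crepant, \emph{smooth} toric resolution $\Wcal\to\PBbb^{n}/G$. The point is that the cones just described, being cones over dilated standard simplices, admit unimodular triangulations; choosing a coherent (projective) such refinement of the fan that uses every lattice point of the reflexive polytope produces a smooth $\Wcal$ with $\Wcal\to\PBbb^{n}/G$ crepant. The refinement is purely combinatorial and can be taken $\Aut(\CBbb/\QBbb)$-invariant, so $\Wcal$ is defined over $\QBbb$. I would then set $\Zcal$, over $\ABbb^{1}$, to be the strict transform of $\Ycal$ inside $\Wcal\times\ABbb^{1}$. Using the Newton-nondegeneracy of the Fermat-plus-product equation for $\psi^{n+1}\neq1$, the strict transform meets every torus orbit transversally, hence is smooth, and since $\Wcal\to\PBbb^{n}/G$ is crepant, adjunction gives that $Z_{\psi}\to Y_{\psi}$ is crepant with trivial canonical bundle. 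This establishes parts \eqref{item:crepant-1} and \eqref{item:crepant-4}.

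Next I would treat the special fibres. For $\psi^{n+1}=1$ the nodes of $X_{\psi}$ are disjoint from the fixed locus of $G$ (as in Lemma \ref{lemma:gorenstein}); one checks that $G$, of order $(n+1)^{n-1}$, acts simply transitively on the $(n+1)^{n-1}$ nodes, so $Y_{\psi}$ has a single node, lying off the toric singular locus and thus untouched by the resolution --- this is the ordinary-double-point assertion. For $\psi=\infty$ I would start from the $G$-equivariant semi-stable normal crossings model of Proposition \ref{prop:NSB}, pass to the quotient by $G$, and further resolve $G$-equivariantly to reach a reduced simple normal crossings fibre $Z_{\infty}$, gluing this model to the toric one over finite $\psi$; keeping all centres $G$-invariant and defined over $\QBbb$ makes the whole morphism $\Zcal\to\Ycal$ descend to $\QBbb$. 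This yields the normal crossings assertion and the rationality.

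Finally, part \eqref{item:crepant-5} is Batyrev's combinatorial mirror symmetry: $X_{\psi}$ is a nondegenerate anticanonical hypersurface in $\PBbb^{n}$ (reflexive simplex $\nabla$), while $Z_{\psi}$ is the crepantly resolved anticanonical hypersurface for the polar-dual polytope $\nabla^{*}$, and the duality $\nabla\leftrightarrow\nabla^{*}$ interchanges the (stringy) Hodge numbers, yielding $h^{p,q}(Z_{\psi})=h^{n-1-p,q}(X_{\psi})$; the relation $\chi(Z_{\psi})=(-1)^{n-1}\chi(X_{\psi})$ and the Calabi--Yau property follow. I expect the main obstacle to be obtaining a genuinely \emph{smooth} crepant resolution rather than a partial one with terminal quotient singularities: in relative dimension $\geq 4$ this is not automatic and relies precisely on the dilated-simplex form of the transverse singularities, together with the existence of a single projective, $\QBbb$-rational unimodular refinement compatible with the whole pencil and with the prescribed special fibres; importing the Batyrev--Dais stringy-Hodge-number machinery to pin down \eqref{item:crepant-5} is the other delicate ingredient.
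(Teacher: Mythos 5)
Your proposal follows essentially the same route as the paper: realize $\PBbb^{n}/G$ as a split toric variety over $\QBbb$, invoke the existence of a projective crepant smooth toric resolution (which the paper outsources to Dais--Henk--Ziegler, and which, as you rightly flag, is the one genuinely nontrivial toric input), take strict transforms of the pencil, note that $G$ permutes the $(n+1)^{n-1}$ nodes simply transitively and that the toric center misses them, resolve at $\psi=\infty$ over $\QBbb$, and quote Batyrev--Dais/Batyrev--Borisov for the Hodge numbers. The only cosmetic difference is at $\psi=\infty$, where the paper applies Hironaka directly to $\widetilde{Y}_{\infty}$ with smooth centers over $\QBbb$ rather than routing through the quotient of the equivariant model of Proposition \ref{prop:NSB}.
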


\begin{proof}
The proof of \eqref{item:crepant-1}--\eqref{item:crepant-4} is based on \cite[Sec. 8 (v)]{DHZ}, \cite{DHZ-Gorenstein} and \cite[Prop. 3.1]{Dworkpencil}, together with Hironaka's resolution of singularities. We recall the strategy, in order to justify the existence of a model over $\QBbb$.

Introduce $W=\PBbb^{n}/G$. We claim this is a split toric variety over $\QBbb$. First of all, it can be realized as the hypersurface in $\PBbb^{n+1}_{\QBbb}$ of equation
\begin{equation*}\label{eq:Pn/Ghyper}
    W \colon y_{0}^{n+1}=\prod_{j=1}^{n+1}y_{j}.
\end{equation*}
Second, the associated torus is split over $\QBbb$. It is actually given by $\GBbb_{\mathrm{m}\ \QBbb}\times\TBbb$, where $\TBbb$ is the kernel of the multiplication map $\GBbb_{\mathrm{m}\ \QBbb}^{n+1}\to\GBbb_{\mathrm{m}\ \QBbb}$. Finally, the action of the torus on $W$ is defined over $\QBbb$:
\begin{displaymath}
    \left((t_{0},t_{1},\ldots,t_{n+1}), (y_{0},y_{1},\ldots,y_{n+1})\right)\mapsto (t_{0}y_{0},t_{0}t_{1}y_{1},\ldots, t_{0}t_{n+1}y_{n+1}).
\end{displaymath}
Once we know that $W$ is a split toric variety over $\QBbb$, with same equation  as in \cite[Application 5.5]{DHZ-Gorenstein}, the toric and crepant projective resolution exhibited in \emph{loc. cit.} automatically works over $\QBbb$ as well. We write $\widetilde{W}$ for this resolution of $W$. 

We now consider $\Ycal$ as a closed integral $\QBbb$-subscheme of $W \times\PBbb^{1}$. Let $\widetilde{\Ycal}$ be the strict transform of $\Ycal$ in $\widetilde{W}\times\PBbb^{1}$. By \cite[Sec. 8 (v)]{DHZ}, the fibers of $\widetilde{\Ycal}$ at $\psi\in\CBbb\setminus\mu_{n+1}$ are projective crepant resolutions of the fibers $Y_{\psi}$. In particular, $\widetilde{\Ycal}$ is smooth over $\CBbb\setminus\mu_{n+1}$, and in turn this implies smoothness over the complement $U$ of the closed subscheme $V(\psi^{n+1}-1)$ of $\ABbb^{1}_{\QBbb}$. Necessarily, the fibers of $\widetilde{\Ycal}$ over $U$ have trivial canonical bundle as well. For the fibers at $\psi^{n+1}=1$, the claim of the lemma requires  two observations:
\begin{itemize}
    \item the ordinary double points of $X_{\psi}$ are permuted freely and transitively by $G$, and get identified to a single point in the quotient $Y_{\psi}$. This entails that the total space $\Ycal$ is non-singular in a neighborhood of these points, and that they remain ordinary double points of $\Ycal\to\PBbb^{1}$.
    \item the center of the toric resolution is disjoint from the ordinary double points, since it is contained in the locus of $\PBbb^{n}/G$ where two or more projective coordinates vanish. Therefore, the morphism $\widetilde{\Ycal}\to\Ycal$ is an isomorphism in a neighborhood of these points. Finally, on the complement, $\widetilde{\Ycal}_{\psi}$ is a resolution of singularities of $Y_{\psi}$. Indeed, this is a local question in a neighborhood of the fixed points of $G$, so that the above references \cite{DHZ, DHZ-Gorenstein} still apply.
\end{itemize}
Finally, $\widetilde{\Ycal}$ is by construction smooth on the complement of the fiber $\psi=\infty$. After a resolution of singularities given by blowups with smooth centers in $\widetilde{Y}_{\infty}$ (defined over $\QBbb$), we obtain a smooth algebraic variety $\Zcal$ over $\QBbb$, such that $Z_{\infty}$ is a simple normal crossings divisor in $\Zcal$. This sets items \eqref{item:crepant-1}--\eqref{item:crepant-4}. 

For \eqref{item:crepant-5}, we refer for instance to \cite[Thm. 6.9, Conj. 7.5 \& Ex. 8.7]{Batyrev-Dais}. This is specific to the Dwork pencil. More generally, we can cite work of Yasuda, who proves an invariance property of orbifold Hodge structures (and hence orbifold Hodge numbers) under crepant resolutions, for quotient Gorenstein singularities \cite[Thm. 1.5]{Yasuda}. Orbifold Hodge numbers coincide with stringy Hodge numbers of global (finite) quotient orbifolds, whose underlying group respects a holomorphic volume form \cite[Thm. 6.14]{Batyrev-Dais}. Finally, by \cite[Thm. 4.15]{Batyrev-Borisov}, stringy Hodge numbers satisfy the expected mirror symmetry property for the mirror pairs constructed by Batyrev \cite{Batyrev}.
\end{proof}

From the proof of Lemma \ref{lemma:crepant}, we keep the notation $U\subset\PBbb^{1}$ for the smooth locus of the mirror family $f\colon \Zcal\to \PBbb^{1}$. For later use, we record the following lemma.
\begin{lemma}\label{lemma:Hodge-numbers-crepant}
Let $h^{p,q}$ be the rank of the Hodge bundle $R^{q}f_{\ast}\Omega^{p}_{\Zcal/U}$. Then:
\begin{itemize}
    \item $h^{p,q}=1$ if $p+q=n-1$ and $p\neq q$.
    \item $h^{p,p}=\sum_{j=0}^{p}(-1)^{j}{{n+1}\choose j}{{(p+1-j)n+p}\choose{n}}+\delta_{2p,n-1}$.
    \item $h^{p,q}=0$ otherwise.
\end{itemize}
In particular, 
\begin{equation*}\label{chi}
    \chi(Z_\psi) =(-1)^{n-1}\chi(X_{\psi})= (-1)^{n-1} \left(\frac{(-n)^{n+1}-1}{n+1}+n+1\right).
\end{equation*}
\end{lemma}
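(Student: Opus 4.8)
The plan is to compute the Hodge numbers $h^{p,q} = \operatorname{rk} R^q f_\ast \Omega^p_{\Zcal/U}$ of the smooth fibers $Z_\psi$ using the mirror symmetry relation $h^{p,q}(Z_\psi) = h^{n-1-p,q}(X_\psi)$ established in item \eqref{item:crepant-5} of Lemma \ref{lemma:crepant}, and then to combine this with the known Hodge numbers of smooth hypersurfaces $X_\psi \subset \PBbb^n$ of degree $n+1$. First I would recall that a smooth Calabi--Yau hypersurface $X_\psi$ of dimension $n-1$ has all its Hodge numbers concentrated: off the middle degree $p+q=n-1$ it inherits the Hodge numbers of $\PBbb^n$, so $h^{p,p}(X_\psi)=1$ for $2p\neq n-1$ and $h^{p,q}(X_\psi)=0$ for $p\neq q$ outside the middle, while the middle primitive cohomology carries the interesting part. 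Applying the mirror relation term by term then translates these into the asserted values for $Z_\psi$: the condition $p+q=n-1$, $p\neq q$ gives $h^{p,q}=1$, the diagonal picks up the stated binomial sum, and everything else vanishes.

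The key computational step is the middle-dimensional primitive Hodge numbers of $X_\psi$, which by Griffiths' theory are expressed through the Jacobian ring of the defining polynomial. For a smooth hypersurface of degree $d=n+1$ in $\PBbb^n$, the primitive Hodge number $h^{p,n-1-p}_{\mathrm{prim}}$ equals the dimension of the graded piece of the Jacobian ring $R = \CBbb[x_0,\ldots,x_n]/J$ (where $J$ is the Jacobian ideal) in degree $(n-p)d - (n+1) = (n-p)(n+1)-(n+1)$. For a smooth hypersurface the Jacobian ideal is generated by $n+1$ elements of degree $d-1=n$ forming a regular sequence, so the Hilbert series of $R$ is $\left(\frac{1-t^{n}}{1-t}\right)^{n+1}$, and the relevant graded dimension is read off by a standard inclusion--exclusion / coefficient extraction. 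This is precisely the source of the alternating binomial sum $\sum_{j=0}^{p}(-1)^j \binom{n+1}{j}\binom{(p+1-j)n+p}{n}$; I would extract the coefficient of the appropriate power of $t$ and verify that it matches, with the extra $+\delta_{2p,n-1}$ accounting for the nonprimitive Lefschetz class that sits in the middle degree when $n-1$ is even.

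For the Euler characteristic formula I would sum $\chi(Z_\psi) = \sum_{p,q}(-1)^{p+q} h^{p,q}$, which by the mirror relation equals $(-1)^{n-1}\chi(X_\psi)$, and then compute $\chi(X_\psi)$ for a degree $n+1$ hypersurface directly via the normal bundle sequence and the Chern class formula: $\chi(X) = \int_X c_{n-1}(T_X)$, where $c(T_X)$ is obtained from $c(T_{\PBbb^n})|_X / (1+ (n+1)H)$. Evaluating the top Chern class against $[X] = (n+1)H$ and extracting the coefficient yields the closed form $\frac{(-n)^{n+1}-1}{n+1}+n+1$; concretely one expands $\frac{(1+H)^{n+1}}{1+(n+1)H}$, multiplies by $(n+1)H$, and reads off the degree-$n$ term.

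The main obstacle will be bookkeeping the Griffiths/Jacobian-ring coefficient extraction carefully enough to produce exactly the stated binomial expression, including getting the grading shift right (the Hodge filtration step corresponds to degree $(n-p)(n+1)-(n+1)$ in $R$) and correctly inserting the Lefschetz correction $\delta_{2p,n-1}$ so that the primitive count on $X_\psi$ matches the full Hodge number on the mirror $Z_\psi$. The conceptual input --- mirror symmetry of Hodge numbers --- is already granted by Lemma \ref{lemma:crepant}\eqref{item:crepant-5}, so the remaining work is the explicit hypersurface computation, which is routine but requires care with indices.
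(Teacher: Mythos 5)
Your proposal is correct and follows essentially the same route as the paper: the paper's proof simply invokes the mirror symmetry of Hodge numbers from Lemma \ref{lemma:crepant} together with the known cohomology of a hypersurface in projective space (cited to Batyrev--Dais, Ex.~8.7), which is exactly the computation you carry out explicitly via Griffiths' Jacobian-ring description and the Hilbert series $\left(\frac{1-t^{n}}{1-t}\right)^{n+1}$. Your coefficient extraction at degree $p(n+1)$ does reproduce the stated binomial sum, and the Euler characteristic derivation is the standard one, so the only difference is that you supply the details the paper delegates to a reference.
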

\begin{proof}
The items are a consequence of the mirror symmetry property for the Hodge numbers in Lemma \ref{lemma:crepant}, and the computation of the cohomology of a hypersurface in projective space (cf. \cite[Ex. 8.7]{Batyrev-Dais}).
\end{proof}
\begin{definition}
The point $\infty\in\PBbb^{1}$ is called the MUM point of the family $f\colon\Zcal\to\PBbb^{1}$. The points $\xi\in\PBbb^{1}$ with $\xi^{n+1}=1$ are called the ODP points.
\end{definition}
The terminology MUM stands for \emph{Maximally Unipotent Monodromy}, and it will be justified later in Lemma \ref{lemma:MHS-infty}. The terminology ODP stands for \emph{Ordinary Double Point}.

\subsection{Generalities on Hodge bundles}\label{subsec:hodge-bundles}
We gather general facts on the Hodge bundles of our families of Calabi--Yau varieties, summarized in the following diagram:
\begin{equation}\label{eq:diagram-mirror-families}
    \xymatrix{
        &               &\Xcal\ar[d]_{\rho}\ar@/^1pc/[ddr]^{h}        &\\
        &\Zcal\ar[r]^{\hspace{-0.7cm}\substack{\text{crepant}\\ \pi}}\ar@/_1pc/[drr]_{f}          &\Ycal=\Xcal/G\ar[dr]^{g}         &\\
        &       &       &\PBbb^{1}.
    }
\end{equation}
Recall the notation $U$ for the Zariski open subset of $\PBbb^{1}$ where $f$ (resp. $h$) is smooth. When it is clear from the context, we will still write $\Xcal$, $\Ycal$ and $\Zcal$ for the total spaces of the fibrations restricted to $U$. Otherwise, we add an index $U$ to mean the restriction to $U$. We let $\Ycal^{\circ}$ be the non-singular locus of $\Ycal_{U}$. It is the \'etale quotient of $\Xcal^{\circ}$, the complement in $\Xcal_{U}$ of the fixed point set of $G$. They are both open subsets whose complements have codimension $\geq 2$. 

In this subsection, most of the arguments take place in the complex analytic category.

\subsubsection*{Hodge bundles in arbitrary degree}
Our discussion is based on a minor adaptation of \cite[Sec. 1]{Steenbrink-mixedonvanishing} to the relative setting.  First of all, we observe that the higher direct images $R^{k}g_{\ast}\CBbb$ are locally constant sheaves, and actually $R^{k}g_{\ast}\CBbb\simeq(R^{k}h_{\ast}\CBbb)^{G}$. Indeed, we have the equality $\CBbb_{\Ycal}=(\rho_{\ast}\CBbb_{\Xcal})^{G}$. Moreover, since $G$ is finite, so is $\rho$ and taking $G$-invariants is an exact functor in the category of sheaves of $\CBbb[G]$-modules. A spectral sequence argument allows us to conclude. Similarly, one has $R^{k}g_{\ast}\QBbb\simeq(R^{k}h_{\ast}\QBbb)^{G}$.

Let now $\widetilde{\Omega}_{\Ycal/U}^{\bullet}$ be the relative holomorphic de Rham complex of $\Ycal\to U$, in the orbifold sense. It is constructed as follows. If $j\colon\Ycal^{\circ}\hookrightarrow\Ycal_{U}$ is the open immersion, then we let $\widetilde{\Omega}_{\Ycal_{U}}^{\bullet}:=j_{\ast}\Omega_{\Ycal^{\circ}}^{\bullet}$, and we derive the relative version $\widetilde{\Omega}_{\Ycal/U}^{\bullet}$ out of it in the usual manner. An equivalent presentation is
\begin{displaymath}
    \widetilde{\Omega}_{\Ycal/U}^{\bullet}=(\rho_{\ast}\Omega_{\Xcal/U}^{\bullet})^{G},
\end{displaymath}
The complex $\widetilde{\Omega}_{\Ycal/U}^{\bullet}$ is a resolution of $g^{-1}\Ocal_{U}$. Hence its $k$-th relative hypercohomology computes $(R^{k}g_{\ast}\CBbb)\otimes\Ocal_{U}$, and satisfies 
\begin{equation}\label{eq:relation-coh-quotient}
    R^{k}g_{\ast}\widetilde{\Omega}_{\Ycal/U}^{\bullet}\simeq (R^{k}h_{\ast}\Omega_{\Xcal/U}^{\bullet})^{G},
\end{equation}
compatibly with $R^{k}g_{\ast}\CBbb\simeq (R^{k}h_{\ast}\CBbb)^{G}$. It has a Hodge filtration and a Gauss--Manin connection defined in the usual way, satisfying a relationship analogous to \eqref{eq:relation-coh-quotient}. Equipped with this extra structure, $R^{k}g_{\ast}\QBbb$ defines a variation of pure rational Hodge structures of weight $k$. 

In \cite[Lemma 1.11]{Steenbrink-mixedonvanishing}, a canonical identification $\widetilde{\Omega}_{\Ycal_{U}}^{\bullet}=\pi_{\ast}\Omega_{\Zcal_{U}}^{\bullet}$ is established. It induces a natural morphism  
\begin{equation}\label{eq:morphism-dR-complexes}
    \widetilde{\Omega}_{\Ycal/U}^{\bullet}\longrightarrow \pi_{\ast}(\Omega_{\Zcal/U}^{\bullet}).
\end{equation}
The restriction of \eqref{eq:morphism-dR-complexes} to $\Ycal^{\circ}$ is given by pulling back differential forms. We derive a natural map
\begin{equation}\label{eq:morphism-hodge-crepant}
    (R^{k}h_{\ast}\Omega_{\Xcal/U}^{\bullet})^{G}\simeq R^{k}g_{\ast}\widetilde{\Omega}_{\Ycal/U}^{\bullet}\longrightarrow R^{k}f_{\ast}\Omega_{\Zcal/U}^{\bullet},
\end{equation}
which is an injective morphism of variations of pure Hodge structures of weight $k$, cf. \cite[Cor. 1.5]{Steenbrink-mixedonvanishing}. It is in particular compatible with restricting to the fibers, and remains injective on those. It can be checked to be compatible with the topological $\QBbb$-structures, and hence we have an injective morphism of variations of rational Hodge structures over $U$
\begin{equation}\label{eq:morphism-hodge-crepant-bis}
    (R^{k}h_{\ast}\QBbb)^{G}\hookrightarrow R^{k}f_{\ast}\QBbb.
\end{equation}
Notice that, at this stage, the compatibility of \eqref{eq:morphism-hodge-crepant} with the algebraic geometric $\QBbb$-structure has not been addressed. This will be studied in later subsections. 

\subsubsection*{Hodge bundles in the middle degree} 
In the case $k=n-1$, considering the isotypical components of the action of $G$ on $R^{n-1}h_{\ast}\CBbb$, we have a direct sum decomposition, 
\begin{equation}\label{eq:decomposition-isotypical}
    R^{n-1}h_{\ast}\CBbb=(R^{n-1}h_{\ast}\CBbb)^{G}\oplus\EBbb_{\CBbb},\quad\text{where}\quad\EBbb_{\CBbb}=\bigoplus_{\substack{\chi\colon G\to\CBbb^{\times}\\ \chi\not\equiv 1}}(R^{n-1}h_{\ast}\CBbb)_{\chi}.
\end{equation}
This decomposition is easily seen to be orthogonal for the intersection form on $R^{n-1}h_{\ast}\CBbb$. In particular, the restriction of the intersection form to $(R^{n-1}h_{\ast}\CBbb)^{G}$ is non-degenerate, and Poincar\'e duality holds for $R^{n-1}g_{\ast}\CBbb\simeq (R^{n-1}h_{\ast}\CBbb)^{G}$. Notice that the orthogonal of $(R^{n-1}h_{\ast}\QBbb)^{G}$ in $ R^{n-1}h_{\ast}\QBbb$ defines a rational structure on $\EBbb_{\CBbb}$, and hence \eqref{eq:decomposition-isotypical} can be refined rationally. 

We next relate the intersection forms of $(R^{n-1}h_{\ast}\QBbb)^{G}$ and $R^{n-1}f_{\ast}\QBbb$. Before the first statement in this direction, we recall from Lemma \ref{lemma:gorenstein} that $\Ycal_{U}$ is Gorenstein, and $K_{\Xcal/U}$ descends to the relative canonical bundle  $K_{\Ycal/U}$.
\begin{lemma}\label{lemma:cup-prod}
\begin{enumerate}
    \item $\widetilde{\Omega}_{\Ycal/U}^{n-1}$ is the relative canonical bundle $K_{\Ycal/U}$.
    \item The natural morphism $R^{n-1}g_{\ast}\widetilde{\Omega}_{\Ycal/U}^{\bullet}\longrightarrow R^{n-1}f_{\ast}\Omega_{\Zcal/U}^{\bullet}$ induces a commutative diagram
    \begin{displaymath}
    \xymatrix{
       & R^{q}g_{\ast}\widetilde{\Omega}^{p}_{\Ycal/U}\otimes R^{n-1-q}g_{\ast}\widetilde{\Omega}^{n-1-p}_{\Ycal/U}\ar[r]\ar[dd]  &R^{n-1}g_{\ast}K_{\Ycal/U}\ar@{=}[dd]\ar[rd]^{\tr}\\
        &                                                                                   &       &\Ocal_{U}\\
        &R^{q}f_{\ast}\Omega^{p}_{\Zcal/U}\otimes R^{n-1-q}f_{\ast}\Omega^{n-1-p}_{\Zcal/U}\ar[r]     &R^{n-1}f_{\ast}K_{\Zcal/U}\ar[ru]_{\tr}\\
    }
    \end{displaymath}
    \item The natural isomorphism $R^{n-1}g_{\ast}\widetilde{\Omega}_{\Ycal/U}^{\bullet}\simeq (R^{n-1}h_{\ast}\Omega_{\Xcal/U}^{\bullet})^{G}$ induces a commutative diagram
    \begin{displaymath}
    \xymatrix{
        &R^{q}g_{\ast}\widetilde{\Omega}^{p}_{\Ycal/U}\otimes R^{n-1-q}g_{\ast}\widetilde{\Omega}^{n-1-p}_{\Ycal/U}\ar[r]\ar@{^{(}->}[d]  &R^{n-1}g_{\ast}K_{\Ycal/U}\ar@{^{(}->}[d]\ar[r]^{\hspace{0.5cm}\tr}
        &\Ocal_{U}\ar[d]^{|G|\cdot}\\
        &R^{q}h_{\ast}\Omega^{p}_{\Xcal/U}\otimes R^{n-1-q}h_{\ast}\Omega^{n-1-p}_{\Xcal/U}\ar[r]  &R^{n-1}h_{\ast}K_{\Xcal/U}\ar[r]^{\hspace{0.5cm}\tr}
        &\Ocal_{U}
    }
\end{displaymath}
\end{enumerate}
\end{lemma}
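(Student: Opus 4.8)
The plan is to handle the three assertions in increasing order of difficulty, reducing (2) and (3) to two independent ingredients: the compatibility of the cup products with pullback of forms (which is formal), and the behaviour of the Grothendieck--Serre trace under the maps $\pi$ and $\rho$ (which carries all the content). For (1) I would argue by reflexivity. By the very definition of the orbifold de Rham complex one has $\widetilde{\Omega}^{n-1}_{\Ycal/U}=j_{\ast}\Omega^{n-1}_{\Ycal^{\circ}/U}$, where $j\colon\Ycal^{\circ}\hookrightarrow\Ycal_{U}$ is the inclusion of the smooth locus, whose complement has codimension $\geq 2$. On $\Ycal^{\circ}$ the sheaf $\Omega^{n-1}_{\Ycal^{\circ}/U}$ is the relative canonical bundle, while Lemma \ref{lemma:gorenstein} exhibits $K_{\Ycal/U}$ as the $S_{2}$ sheaf $j_{\ast}\omega_{\Ycal^{ns}/U}$. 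Since $\Ycal^{\circ}\subseteq\Ycal^{ns}$ with codimension $\geq 2$ complement, the two pushforwards coincide and $\widetilde{\Omega}^{n-1}_{\Ycal/U}=K_{\Ycal/U}$.

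For the left-hand squares in (2) and (3), commutativity holds for a structural reason: the morphism \eqref{eq:morphism-dR-complexes} and the pullback $\rho^{\ast}$ along $\rho\colon\Xcal^{\circ}\to\Ycal^{\circ}$ are morphisms of sheaves of differential graded algebras---on the loci where $\pi$ is an isomorphism, respectively where $\rho$ is \'etale, they are literally pullback of forms---hence compatible with wedge products, and therefore with the induced cup products on hypercohomology. Thus in both diagrams the pairing upstairs is carried to the pairing downstairs by the injections \eqref{eq:morphism-hodge-crepant}, and it only remains to identify the trace maps.

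For (2), since $\pi$ is crepant over $U$ we have $K_{\Zcal/U}=\pi^{\ast}K_{\Ycal/U}$, and since $\Ycal_{U}$ has rational singularities (Lemma \ref{lemma:gorenstein}) we have $R\pi_{\ast}\Ocal_{\Zcal}=\Ocal_{\Ycal}$; the projection formula then yields $R\pi_{\ast}K_{\Zcal/U}=K_{\Ycal/U}$, and Leray gives the vertical isomorphism $R^{n-1}f_{\ast}K_{\Zcal/U}\simeq R^{n-1}g_{\ast}K_{\Ycal/U}$ appearing in the diagram. Functoriality of the duality trace under the proper morphism $\pi$ (so that $\tr_{f}$ factors as $\tr_{g}$ post-composed with the trace isomorphism of $\pi$) shows that the two traces to $\Ocal_{U}$ agree, which is exactly the commutativity of the right-hand part.

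For (3), the inclusions are the inclusions of $G$-invariants, realized by $\rho^{\ast}$ on the finite \'etale covering $\rho\colon\Xcal^{\circ}\to\Ycal^{\circ}$ of degree $|G|$. The Grothendieck--Serre trace satisfies $\tr_{\Xcal}\circ\rho^{\ast}=(\deg\rho)\,\tr_{\Ycal}=|G|\,\tr_{\Ycal}$; combined with $\rho^{\ast}$ being a ring map this produces precisely the factor $|G|$ in the right-hand vertical arrow. Fiberwise this is the elementary fact that integrating the pullback of a top-degree class over the $|G|$-sheeted cover $X_{\psi}\to Y_{\psi}$ multiplies the period by $|G|$. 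The main obstacle is this trace bookkeeping: everything else is either a reflexivity statement or the formal multiplicativity of pullback, whereas matching the traces requires the crepancy and rational-singularity input in (2) and the degree computation in (3).
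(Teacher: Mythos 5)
Your proposal is correct, and for parts (2) and (3) it follows essentially the same route as the paper: the left-hand squares are dismissed as formal (compatibility of cup products with the morphisms of de Rham complexes), and all the content is isolated in matching the trace maps, which you handle via crepancy together with $R\pi_{\ast}\Ocal_{\Zcal_{U}}=\Ocal_{\Ycal_{U}}$ for $\pi$, and via the degree formula $\tr\circ\rho^{\ast}=|G|\cdot\tr$ for $\rho$. The paper obtains the latter from duality theory, namely $\rho_{\ast}K_{\Xcal_{U}/\Ycal_{U}}=\Hom_{\Ocal_{\Ycal_{U}}}(\rho_{\ast}\Ocal_{\Xcal_{U}},\Ocal_{\Ycal_{U}})$ with trace $\varphi\mapsto\varphi(1)$, verifies that the composite $K_{\Ycal/U}\to\rho_{\ast}K_{\Xcal/U}\to K_{\Ycal/U}$ is multiplication by $|G|$ over the \'etale locus $\Ycal^{\circ}$, and extends by density; you should make that last one-line density step explicit, since your degree computation lives only on $\Xcal^{\circ}\to\Ycal^{\circ}$ while the diagram concerns all of $\Xcal_{U}\to\Ycal_{U}$ (the extension is harmless because the relevant sheaves are torsion-free and $\Ycal_{U}$ is integral). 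For part (1) you take a genuinely different, equally valid route: the paper computes $\widetilde{\Omega}^{n-1}_{\Ycal/U}=(\rho_{\ast}K_{\Xcal/U})^{G}$ using $\rho^{\ast}K_{\Ycal/U}=K_{\Xcal/U}$ and the projection formula, whereas you compare the two pushforwards $j_{\ast}\Omega^{n-1}_{\Ycal^{\circ}/U}$ and $j_{\ast}\omega_{\Ycal^{ns}/U}$ as reflexive sheaves agreeing outside codimension two; your version buys a slightly more self-contained argument resting only on normality and Lemma \ref{lemma:gorenstein}, while the paper's version reuses the $G$-invariant presentation of the orbifold de Rham complex that it needs elsewhere.
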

\begin{proof}
For the first property, we notice that $\rho^{\ast}K_{\Ycal/U}=K_{\Xcal/U}$, since both coincide outside a codimension $\geq 2$ closed subset and $\Xcal_{U}$ is smooth. Then we have the string of equalities
\begin{displaymath}
    \widetilde{\Omega}^{n-1}_{\Ycal/U}=(\rho_{\ast}K_{\Xcal/U})^{G}=(K_{\Ycal/U}\otimes\rho_{\ast}\Ocal_{\Xcal_{U}})^{G}
    =K_{\Ycal/U}\otimes(\rho_{\ast}\Ocal_{\Xcal_{U}})^{G}=K_{\Ycal/U}.
\end{displaymath}
For the first diagram, only the commutativity of the triangle requires a justification. For this, we rely on general facts in duality theory. Our references are stated in the algebraic category. Corresponding complex analytic properties are obtained by analytification. With this understood, the commutativity of the triangle is a consequence of the three following facts: i) the transitivity of trace maps with respect to composition of morphisms \cite[Thm. 10.5 (TRA1)]{Hartshorne}; ii) the crepant resolution property $\pi^{\ast}K_{\Ycal/U}=K_{\Zcal/U}$ and iii) $\Ycal_{U}$ has rational singularities, so that $R\pi_{\ast}\Ocal_{{\Zcal}_{U}}=\Ocal_{\Ycal_{U}}$. The argument is similar for the second diagram. Briefly, one combines: i) the transitivity of trace maps; ii) the duality $\rho_{\ast}K_{\Xcal_{U}/\Ycal_{U}}=\Hom_{\Ocal_{\Ycal_{U}}}(\rho_{\ast}\Ocal_{\Xcal_{U}},\Ocal_{\Ycal_{U}})$ and iii) the trace $\tr\colon\rho_{\ast}K_{\Xcal_{U}/\Ycal_{U}}\to\Ocal_{\Ycal_{U}}$ is given by $\varphi\mapsto\varphi(1)$ \cite[proof of Prop. 6.5]{Hartshorne}, and the composite map
\begin{displaymath}
    K_{\Ycal/U}\longrightarrow\rho_{\ast}K_{\Xcal/U}=K_{\Ycal/U}\otimes \rho_{\ast}K_{\Xcal_{U}/\Ycal_{U}}\overset{\id\otimes\tr}\longrightarrow
    K_{\Ycal/U}
\end{displaymath}
is the multiplication by $|G|$. This is clear over $\Ycal^{\circ}$, since it is the \'etale quotient of $\Xcal^{\circ}$ by $G$. It is then necessarily true everywhere.
\end{proof}

\begin{proposition}\label{prop:forme-intersection-restreinte}
Let $Q$ be the intersection form on $R^{n-1}f_{\ast}\QBbb$ and $Q^{\prime}$ the intersection form on $R^{n-1}h_{\ast}\QBbb$. Then, via the injection \eqref{eq:morphism-hodge-crepant-bis}, we have $Q=\frac{1}{|G|}Q^{\prime}$ on $(R^{n-1}h_{\ast}\QBbb)^{G}$.
\end{proposition}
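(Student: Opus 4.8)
The plan is to reduce the statement to the two commutative diagrams of Lemma~\ref{lemma:cup-prod} and to locate the origin of the factor $1/|G|$ in the comparison of trace maps appearing there. Recall that for a family of smooth projective varieties of relative dimension $n-1$, the topological intersection form on middle cohomology is, after complexification and passage to the Hodge decomposition, the orthogonal sum of the Hodge-component pairings
\begin{displaymath}
    R^{q}f_{\ast}\Omega^{p}_{\Zcal/U}\otimes R^{n-1-q}f_{\ast}\Omega^{n-1-p}_{\Zcal/U}\longrightarrow R^{n-1}f_{\ast}K_{\Zcal/U}\overset{\tr}\longrightarrow\Ocal_{U},
\end{displaymath}
given by relative cup product followed by the trace, and similarly for $h$. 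The orthogonality here is forced by Hodge type, so it suffices to compare the two forms component by component. Moreover, since $Q$ and $Q^{\prime}$ are defined over $\QBbb$ and the injection \eqref{eq:morphism-hodge-crepant-bis} respects the rational structures, I would prove the identity $Q=\frac{1}{|G|}Q^{\prime}$ after tensoring with $\CBbb$, where the above description applies.

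First I would invoke Lemma~\ref{lemma:cup-prod}(2). Its diagram shows that the injection \eqref{eq:morphism-hodge-crepant} intertwines the cup products on the $\Ycal$- and $\Zcal$-sides and, crucially, that the two copies of $R^{n-1}K$ are identified compatibly with their trace maps (the middle vertical arrow is an equality and the two composite traces to $\Ocal_{U}$ coincide). Writing $Q_{\Ycal}$ for the pairing on $R^{n-1}g_{\ast}\widetilde{\Omega}^{\bullet}_{\Ycal/U}$ defined by the top row, it follows that the pullback of $Q$ under \eqref{eq:morphism-hodge-crepant} equals $Q_{\Ycal}$. Equivalently, on the image of $(R^{n-1}h_{\ast}\CBbb)^{G}\simeq R^{n-1}g_{\ast}\widetilde{\Omega}^{\bullet}_{\Ycal/U}$ inside $R^{n-1}f_{\ast}\CBbb$, the form $Q$ agrees with $Q_{\Ycal}$.

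Next I would use Lemma~\ref{lemma:cup-prod}(3), whose diagram compares this same pairing $Q_{\Ycal}$ with $Q^{\prime}$ under the inclusion $(R^{n-1}h_{\ast})^{G}\hookrightarrow R^{n-1}h_{\ast}$. The left square gives compatibility of the cup products with the inclusion, while the right square shows that the trace maps differ precisely by the factor $|G|$. Hence, for $G$-invariant classes $\alpha,\beta$,
\begin{displaymath}
    Q^{\prime}(\alpha,\beta)=|G|\cdot Q_{\Ycal}(\alpha,\beta).
\end{displaymath}
Combining the two steps yields $Q=Q_{\Ycal}=\frac{1}{|G|}Q^{\prime}$ on $(R^{n-1}h_{\ast}\QBbb)^{G}$, as asserted.

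The only genuinely delicate point is the factor $|G|$, which is exactly the content of the right-hand square in Lemma~\ref{lemma:cup-prod}(3): it reflects that the finite quotient map $\rho\colon\Xcal\to\Ycal$ has degree $|G|$, so that the trace on $\Xcal$ restricted to $G$-invariants is $|G|$ times the trace on $\Ycal$. Everything else is the formal compatibility of cup products and traces already packaged in Lemma~\ref{lemma:cup-prod}, together with the standard identification of the topological intersection form with the Hodge-theoretic cup-product pairing; the passage to $\CBbb$-coefficients and back to $\QBbb$ is justified by the rationality of \eqref{eq:morphism-hodge-crepant-bis} recorded above.
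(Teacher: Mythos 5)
Your proposal is correct and follows essentially the same route as the paper: extend scalars to $\CBbb$, use the Hodge decomposition to reduce to the component pairings, and read off the factor $|G|$ from the trace comparison in Lemma~\ref{lemma:cup-prod}. The only point the paper adds that you elide is that the topological and complex-geometric trace maps differ by $(2\pi i)^{n-1}$, which is harmless here since the same normalization enters both $Q$ and $Q^{\prime}$ and so cancels in the comparison.
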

\begin{proof}
It is enough to check the relationship after extending the scalars to $\CBbb$, in which case we can use the Hodge decomposition. The proposition then follows from Lemma \ref{lemma:cup-prod} and the fact that in the middle degree the intersection form is induced by the cohomological cup product and the trace map. We notice that in dimension $n-1$, the topological and complex geometric trace maps differ by a factor $(2\pi i)^{n-1}$, but this is inconsequential for the problem at hand.
\end{proof}

\begin{remark}\label{rmk:cup-product}
\begin{enumerate}
    \item\label{item:cup-product-1} In the case of direct images of relative canonical sheaves, the discussion in the proof of Lemma \ref{lemma:cup-prod} reduces to the chain of isomorphisms of line bundles
\begin{equation}\label{eq:iso-canonical}
       (h_{\ast}K_{\Xcal/U})^{G}\overset{\sim}{\longrightarrow}g_{\ast}K_{\Ycal/U}\overset{\sim}{\longrightarrow} f_{\ast}K_{\Zcal/U}. 
\end{equation}
We leave to the reader to check that these are the natural morphisms already defined in the algebraic category over $\QBbb$.
    \item\label{item:cup-product-2} Because of Proposition \ref{prop:forme-intersection-restreinte}, and for the purposes of this article, it is natural to scale the intersection form on $(R^{n-1}h_{\ast}\QBbb)^{G}$ as $\frac{1}{|G|}Q^{\prime}$. This will be of minor importance below.
\end{enumerate}
\end{remark}

\subsection{The Kodaira--Spencer maps and the Yukawa coupling}\label{subsec:KSYukawa}

Recall that for a general variation of Hodge structures $(\Hcal,\Fcal^{\bullet})$ on a complex manifold $X$, Griffiths transversality entails that the Gauss--Manin connection factors as an $\Ocal_X$-linear morphism $\mathcal{F}^p/\mathcal{F}^{p+1} \to \left(\mathcal{F}^{p-1}/\mathcal{F}^{p}\right) \otimes \Omega^1_{X}$. This is the Kodaira--Spencer map, and in the setting of $R^{n-1}f_{\ast}\Omega^{\bullet}_{\Zcal/U}$ we also write it in the form
\begin{equation}\label{def:KS}
    \KS^{(q)}\colon T_{U}\longrightarrow\Hom_{\Ocal_{U}}(R^{q}f_{\ast}\Omega^{n-1-q}_{\Zcal/U}, R^{q+1}f_{\ast}\Omega^{n-2-q}_{\Zcal/U}). \end{equation}
A repeated application of the Kodaira--Spencer maps gives a morphism
\begin{equation}\label{eq:Yukawa-def}
    Y\colon \Sym^{n-1} T_{U}\longrightarrow\Hom_{\Ocal_{U}}(f_{\ast}K_{\Zcal/U}, R^{n-1}f_{\ast}\Ocal_{\Zcal})\simeq (f_{\ast}K_{\Zcal/U})^{\otimes -2}.
\end{equation}
We can explicitly evaluate the morphism $Y$ in terms of the sections  $\psi d/d\psi$ of $T_{U}$ and the section $\theta_{0}$ of $(h_{\ast}K_{\Xcal/U})^{G}\simeq f_{\ast}K_{\Zcal/U}$ (cf. \eqref{eq:iso-canonical}) constructed in \eqref{eq:relative-volume-form}. Then the morphism $Y$ identifies with a rational function on $U$, denoted $Y(\psi)$. This is the definition of the so-called (unnormalized) \emph{Yukawa coupling}.  

Working with $(R^{n-1}h_{\ast}\Omega_{\Xcal/U}^{\bullet})^{G}$ instead, one similarly defines a function $\widetilde{Y}(\psi)$. Via the morphism \eqref{eq:morphism-hodge-crepant}, the functions $\widetilde{Y}(\psi)$ and $Y(\psi)$ can be compared. The only subtle point to bear in mind is the use of Serre duality in the definition of the Yukawa coupling. For Hodge bundles of complementary bi-degree, Serre duality is induced by the cup-product and the trace morphism. Hence, an application of Lemma \ref{lemma:cup-prod} shows that $Y(\psi)$ and $\widetilde{Y}(\psi)$ are equal up to the order of $G$. With this understood, we can invoke the computation of the Yukawa coupling in \cite[Cor. 4.5.6 \& Ex. 4.5.7]{batyrev-straten}, which summarizes to
\begin{equation}\label{eq:yukawa}
    Y( \psi)=\int_{X_{\psi}} \left (\theta_0 \wedge\nabla_{\psi d/d\psi}^{n-1}  \theta_0 \right)=c\frac{\psi^{n-1}}{1-\psi^{n+1}},
\end{equation}
for some irrelevant constant $c\neq 0$. To ease the comparison with the expression in \emph{loc. cit.}, we make the following observations. First, their factor $\lambda z$ is $1/\psi^{n+1}$. Secondly, their evaluation of $Y$ amounts to working with the section $\psi \theta_0$ instead of $\theta_{0}$.

\subsection{The middle degree Hodge bundles}\label{subsec:Griffiths-sections}
We now further compare the middle degree Hodge bundles of the Dwork pencil $h\colon\Xcal\to U$ and that of the mirror $f\colon\Zcal\to U$, by drawing on specific features of these families. We introduce primitivity notions for the relative Hodge bundles, induced by any projective factorization of $f$ and the natural projective embedding of $h$. Observe the latter is $G$-equivariant and defined over $\QBbb$. We also require the polarization for $\Zcal\to U$ to be defined over $\QBbb$. Then the primitive Hodge bundles are defined in the algebraic category over $\QBbb$.  

\subsubsection*{Construction of sections} We begin by constructing explicit sections of the middle degree Hodge bundles of $h\colon\Xcal\to U$, via Griffiths' residue method \cite{Griffiths-residues}. 

Our reasoning starts in the complex analytic category. Denote by $H = x_0 \cdot x_1\cdot  \ldots \cdot x_n$ and $\Omega = \sum (-1)^i x_i dx_0\wedge \ldots \wedge\widehat{dx_i}  \wedge\ldots \wedge dx_n \in H^{0}(\PBbb^n,\Omega^{n}_{\PBbb^{n}}(n+1))$. For $\psi\in U$, the residue along $X_\psi$
\begin{equation*}\label{eq:residue-theta}
    \theta_k = {\res}_{X_\psi}\left( \frac{k! H^{k} \Omega}{F_\psi^{k+1}} \right)
\end{equation*}
defines a $G$-invariant element of $H^{n-1}(X_{\psi})$, still denoted $\theta_k$. 
For $k=0$, this indeed agrees with the holomorphic volume form \eqref{eq:relative-volume-form}.
Varying $\psi$ gives us sections of $R^{n-1}h_{\ast}\Omega_{\Xcal/U}^{\bullet}$, also denoted by $\theta_k$. The constructed sections are primitive by \cite[Thm. 8.3]{Griffiths-residues}.

From the definition of the sections $\theta_{k}$, one can check the following recurrence:
\begin{equation}\label{eq:recurrence-theta}
    \nabla_{d/d\psi}\ \theta_{k}={{\res}_{X_\psi}\left(\frac{\partial}{\partial \psi} \left(\frac{k! H^{k} \Omega}{F_\psi^{k+1}} \right)\right)}=(n+1)\theta_{k+1}.
\end{equation}

\begin{lemma}\label{lemma:thetakconstruction}
\begin{enumerate}
    \item\label{item:thetakconstruction-1} For $k=0,\ldots,n-1$, we have
\begin{displaymath}
   \theta_k\in F^{n-1-k}H^{n-1}(X_{\psi})^{G}_{\prim}.
\end{displaymath}
Moreover, the spaces $H^{n-1-k,k}(X_{\psi})^{G}_{\prim}$ are all one-dimensional and the image of $\theta_k$ in $H^{n-1-k,k}(X_{\psi})^{G}_{\prim}$ is a basis for $\psi \in U $. In particular, the local system $(R^{n-1} h_\ast \QBbb)_{\prim}^G$ is of rank $n$.
    \item The sections $\theta_{k}$ trivialize $(R^{n-1}h_{\ast}\Omega_{\Xcal/U}^{\bullet})^{G}_{\prim}$ outside of $0$, and are algebraic and defined over $\QBbb$.
\end{enumerate}
\end{lemma}
\begin{proof}
For the first item, the spaces $H^{n-1-k,k}(X_{\psi})^{G}_{\prim}$ are necessarily one-dimensional, which follows from a computation in the case of Fermat hypersurfaces, see \cite[p. 82, Rmk. 7.5]{deligne:hodge-cycles}. For the  rest of \eqref{item:thetakconstruction-1}, we use Griffiths's description of the Hodge filtration of a hypersurface, in terms of residues of rational forms, reviewed in \cite[Chapter 6]{Voisin-II}. 

By \cite[Thm. 6.10]{Voisin-II}, we indeed have for $k=0,\ldots,n-1$, $\theta_k\in F^{n-1-k}H^{n-1}(X_{\psi})^{G}_{\prim}$. We need to verify that the projections of the sections $\theta_k$ onto $H^{n-1-k, k}(X_\psi)_\prim^G$ are everywhere non-zero on $U$. The following argument was suggested by the anonymous referee, whom we thank for allowing us to include it. A detailed study of Griffiths' residue map, see e.g. \cite[Corollary 6.12]{Voisin-II},  provides an isomorphism $[\CBbb[x_0, \ldots, x_n]/J]_{(n+1)\cdot k} \to H^{n-1-k, k}(X_\psi)_\prim$, where $J$ denotes the Jacobian ideal of $X_\psi$ in $\PBbb^n$ and the index $(n+1)k$ refers to the homogeneous part of the corresponding degree. Recall the notation $H=x_{0}\cdot\ldots\cdot x_{n}$. Since $\CBbb[x_0, \ldots, x_n]^G = \CBbb[x_0^{n+1}, \ldots, x_n^{n+1}, H]$, we find that 
\begin{displaymath}
    \left[\CBbb[x_0^{n+1}, \ldots, x_n^{n+1}, H]/J^G\right]_{(n+1)\cdot k} \simeq  H^{n-1-k, k}(X_\psi)_\prim^G,
\end{displaymath}
where $J^{G}=J\cap\CBbb[x_0, \ldots, x_n]^G$. A straightforward computation shows that $x_i^{n+1} \equiv \frac{\psi}{n+1} H$ modulo $J^G$, so that in fact \begin{displaymath}
    \CBbb[x_0^{n+1}, \ldots, x_n^{n+1}, H]/J^G \simeq \CBbb[H]/J^G.
\end{displaymath}
Now the image of $\theta_k$ in $H^{n-1-k, k}(X_\psi)_\prim^G$ corresponds to the image of $k!H^{k}$ in $[\CBbb[H]/J^G]_{(n+1)k}$ through the above isomorphisms, and the latter is a generator of $[\CBbb[H]/J^G]_{(n+1)k}$, hence non-zero. This, the projection of $\theta_{k}$ gives a basis of $H^{n-1-k, k}(X_\psi)_\prim^G$. 

For the second item, we just need to address the second half of the statement. We observe that the section $\theta_{0}$ of $(h_{\ast}K_{\Xcal/U})^{G}$ is algebraic and defined over $\QBbb$. By the algebraic theory of the Gauss--Manin connection \cite{Katz-Oda}, we know that the latter preserves the algebraic de Rham cohomology $(R^{n-1}h_{\ast}\Omega_{\Xcal/U}^{\bullet})^{G}_{\prim}$, and is defined over $\QBbb$. Because the vector field $d/d\psi$ is algebraic and defined over $\QBbb$, the claim follows from the recurrence \eqref{eq:recurrence-theta}.
\end{proof}

\begin{remark}
An alternative approach to the non-vanishing of the projection of the sections $\theta_{k}$ onto $H^{n-1-k, k}(X_\psi)_\prim^G$, is based on the explicit expression of the Yukawa coupling \eqref{eq:yukawa} and the realization of the sections $\theta_k$ as iterated Gauss--Manin derivatives via \eqref{eq:recurrence-theta}. If either of $\theta_k$ have zero projection for some $\psi$, applying the Kodaira--Spencer map in \eqref{def:KS} and the recurrence in \eqref{eq:recurrence-theta}, we see that all the projections of $\theta_{k'}$ with $k' \geq k$ are also zero at $\psi$. This implies that the Yukawa coupling, divided by $\psi^{n-1}$ in order to work with the tangent vector $d/d\psi$ instead of $\psi d/d\psi$, also has a zero at $\psi$. But the expression in \eqref{eq:yukawa} divided by $\psi^{n-1}$ has no zeros on $U$, from which we  conclude. 

\end{remark}

\subsubsection*{The minimal component of the cohomology}
Below we show that the image of the primitive middle cohomology of the Dwork family under \eqref{eq:morphism-hodge-crepant-bis} is a direct factor of the cohomology of the mirror. Later in Lemma \ref{lemma:trivial-V-local} we will see that the complement is in fact irrelevant for most considerations.

\begin{lemma}\label{prop:iso-prim-coh} 
\begin{enumerate}
    \item\label{item:iso-prim-coh-1} The natural morphism \eqref{eq:morphism-hodge-crepant-bis} induces an injective morphism of variations of polarized Hodge structures over $U^{\an}$
\begin{equation}\label{eq:iso-prim-coh}
    (R^{n-1}h_{\ast}\QBbb)^{G}_{\prim}{\hookrightarrow} (R^{n-1}f_{\ast}\QBbb)_{\prim}.
\end{equation}
    \item\label{item:iso-prim-coh-2} The natural morphism
    \begin{equation}\label{eq:iso-prim-coh-bis-bis}
        (R^{n-1}h_{\ast}\Omega_{\Xcal/U}^{\bullet})^{G}_{\prim}{\hookrightarrow} (R^{n-1}f_{\ast}\Omega_{\Zcal/U}^{\bullet})_{\prim}
    \end{equation}
    deduced from \eqref{eq:iso-prim-coh} $\otimes\ \Ocal_{U^{\an}}$, exists in the algebraic category over $\QBbb$. 
\end{enumerate}
\end{lemma}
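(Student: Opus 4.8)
The plan is to prove the two parts in sequence, first establishing the analytic statement \eqref{eq:iso-prim-coh} and then descending the already-constructed morphism to the algebraic category over $\QBbb$.

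For part \eqref{item:iso-prim-coh-1}, I would begin from the injection of variations of rational Hodge structures \eqref{eq:morphism-hodge-crepant-bis}, which I may assume. The content to add is that this injection restricts to the primitive parts, and that it is compatible with polarizations. First I would recall that the primitive parts are defined with respect to the chosen projective factorizations of $h$ and $f$; since the morphism \eqref{eq:morphism-hodge-crepant} is a morphism of variations of Hodge structures induced geometrically by pullback of forms, it is compatible with the Lefschetz operators (cup product with the respective hyperplane classes), provided the polarization classes are matched correctly. Here I would invoke Lemma \ref{lemma:cup-prod} and Proposition \ref{prop:forme-intersection-restreinte}: the latter shows the intersection form on $(R^{n-1}h_*\QBbb)^G$ is $\tfrac{1}{|G|}Q'$, i.e. agrees with $Q$ up to the positive scalar $|G|$, so the map \eqref{eq:morphism-hodge-crepant-bis} is (up to this scalar) an isometric embedding for the polarizations. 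Since a morphism of polarized Hodge structures automatically respects the orthogonal Lefschetz decomposition, the image of the primitive part lands in the primitive part, giving the desired injection \eqref{eq:iso-prim-coh} as a morphism of polarized variations.

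For part \eqref{item:iso-prim-coh-2}, the morphism \eqref{eq:iso-prim-coh-bis-bis} is obtained by tensoring \eqref{eq:iso-prim-coh} with $\Ocal_{U^{\an}}$, so its existence as an \emph{analytic} morphism is automatic; the real content is that it descends to the algebraic de Rham cohomology over $\QBbb$. I would argue this by identifying the source and target with algebraic objects and checking the map is algebraic. The key tool is the comparison between the analytic and algebraic Gauss--Manin connection and the fact, already recorded in Lemma \ref{lemma:thetakconstruction}\,(2), that the Griffiths sections $\theta_k$ trivialize $(R^{n-1}h_*\Omega^\bullet_{\Xcal/U})^G_{\prim}$ algebraically over $\QBbb$. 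It therefore suffices to verify that the images of the $\theta_k$ under \eqref{eq:morphism-hodge-crepant} are algebraic sections of $(R^{n-1}f_*\Omega^\bullet_{\Zcal/U})_{\prim}$ defined over $\QBbb$. For $\theta_0$ this follows from \eqref{eq:iso-canonical}, whose morphisms are already defined algebraically over $\QBbb$ (Remark \ref{rmk:cup-product}\eqref{item:cup-product-1}). For the remaining $\theta_k$ I would propagate this via the recurrence \eqref{eq:recurrence-theta}, using that the algebraic Gauss--Manin connection on the mirror side is defined over $\QBbb$ (by \cite{Katz-Oda}) and that \eqref{eq:morphism-hodge-crepant} is flat for the two connections, so that the image of $\nabla_{d/d\psi}\theta_k$ is the Gauss--Manin derivative of the image of $\theta_k$.

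The main obstacle I anticipate is precisely the compatibility of the geometric morphism \eqref{eq:morphism-hodge-crepant}, constructed analytically via Steenbrink's identification $\widetilde{\Omega}^\bullet_{\Ycal_U}=\pi_*\Omega^\bullet_{\Zcal_U}$, with the \emph{algebraic} $\QBbb$-structures on both sides, an issue the text explicitly flags as not yet addressed. The delicate point is that the crepant resolution $\pi$ and Steenbrink's comparison are a priori analytic statements, whereas $\pi\colon\Zcal\to\Ycal$ is in fact defined over $\QBbb$ by Lemma \ref{lemma:crepant}; I would resolve this by noting that pullback of algebraic differential forms along the $\QBbb$-morphism $\pi$ induces an algebraic morphism of de Rham complexes over $\QBbb$ whose analytification is \eqref{eq:morphism-dR-complexes}, and that the identification with $(R^{n-1}h_*\Omega^\bullet_{\Xcal/U})^G$ comes from the $\QBbb$-morphism $\rho$ and the exactness of taking $G$-invariants. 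Once the geometric map is recognized as the analytification of a $\QBbb$-algebraic one, flatness under Gauss--Manin together with the algebraicity of the trivializing sections $\theta_k$ closes the argument.
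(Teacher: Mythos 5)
Your treatment of part \eqref{item:iso-prim-coh-2} is essentially the paper's own argument: reduce to the algebraic $\QBbb$-trivialization by the $\theta_k$ from Lemma \ref{lemma:thetakconstruction}, handle $\theta_0'$ via the $\QBbb$-rationality of \eqref{eq:iso-canonical}, and propagate along the recurrence \eqref{eq:recurrence-theta} using the $\QBbb$-rationality of the algebraic Gauss--Manin connection and of $d/d\psi$. That part is fine.

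Part \eqref{item:iso-prim-coh-1}, however, has a genuine gap. You deduce that the image of the primitive part is primitive from the claim that \eqref{eq:morphism-hodge-crepant-bis} is ``compatible with the Lefschetz operators'' and that ``a morphism of polarized Hodge structures automatically respects the orthogonal Lefschetz decomposition.'' Neither assertion is justified by the results you cite, and the first is actually false in this situation. Lemma \ref{lemma:cup-prod} and Proposition \ref{prop:forme-intersection-restreinte} only compare the cup-product \emph{intersection forms}; they say nothing about the hyperplane classes. The primitivity on the target is defined via an ample class coming from a projective factorization of $f$, whereas the image under \eqref{eq:morphism-hodge-crepant} of the hyperplane class of $\Xcal$ is the pullback to $\Zcal$ of a class on the singular quotient $\Ycal$: it is nef but vanishes on the exceptional divisors of the crepant resolution, so it is not proportional to the polarization class of $\Zcal$ (indeed $h^{1,1}(X_\psi)^G=1$ while $h^{1,1}(Z_\psi)$ is large). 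A map that preserves the intersection form up to scalar but does not intertwine the Lefschetz operators has no reason to preserve the kernel of $L$, i.e.\ primitivity; the Lefschetz decomposition is extra structure beyond the pairing.

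The repair is the paper's argument, which runs in the opposite logical direction: by Lemma \ref{lemma:thetakconstruction} it suffices to check that the images $\theta_k'$ of the trivializing sections are primitive. The section $\theta_0'$ lies in $f_\ast K_{\Zcal/U}=F^{n-1}$, which is primitive for \emph{any} choice of polarization since $L$ kills $H^{n-1,0}$ of an $(n-1)$-fold; the Gauss--Manin connection preserves primitive cohomology because the polarization class is a flat section of $R^2f_\ast\QBbb$; and the recurrence \eqref{eq:recurrence-theta}, transported by the flat morphism \eqref{eq:morphism-hodge-crepant}, then forces every $\theta_k'$ to be primitive. Only \emph{after} knowing the map lands in the primitive part does one invoke Proposition \ref{prop:forme-intersection-restreinte} (with the scaling of Remark \ref{rmk:cup-product}) to conclude compatibility with the polarizations.
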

\begin{proof}
For the proof of \eqref{item:iso-prim-coh-1}, it is enough to show that \eqref{eq:morphism-hodge-crepant} restricts to a map between the primitive cohomologies. It will automatically be compatible with the polarizations, by Proposition \ref{prop:forme-intersection-restreinte}. See Remark \ref{rmk:cup-product} \eqref{item:cup-product-2} regarding the scaling of the intersection forms. By Lemma \ref{lemma:thetakconstruction}, it suffices to check that the sections $\theta_{k}$ of $(R^{n-1}h_{\ast}\Omega_{\Xcal/U}^{\bullet})^{G}_{\prim}$ map into primitive classes. Let $\theta_{k}^{\prime}$ be the image of $\theta_{k}$ under \eqref{eq:morphism-hodge-crepant}. As \eqref{eq:morphism-hodge-crepant} is compatible with Gauss-Manin connections, the $\theta_{k}^{\prime}$ satisfy the analogous recurrence to \eqref{eq:recurrence-theta}. Because $f_{\ast}K_{\Zcal/U}$ is primitive and the Gauss--Manin connection preserves primitive cohomology, we see that the $\theta_{k}^{\prime}$ land in the primitive cohomology. 

The claim in \eqref{item:iso-prim-coh-2} is addressed in a similar manner. By Lemma \ref{lemma:thetakconstruction}, we already know that the sections $\theta_{k}$ constitute an algebraic trivialization of $(R^{n-1}h_{\ast}\Omega_{\Xcal/U}^{\bullet})^{G}_{\prim}$, defined over $\QBbb$. We need to prove that their images $\theta_{k}^{\prime}$ in $(R^{n-1}f_{\ast}\Omega_{\Zcal/U}^{\bullet})_{\prim}$ are algebraic and defined over $\QBbb$ as well. This is the case of $\theta_{0}^{\prime}$, because the natural isomorphism $(h_{\ast}K_{\Xcal/U})^{G}\simeq f_{\ast}K_{\Zcal/U}$ (cf. \eqref{eq:iso-canonical}) is algebraic and defined over $\QBbb$. In this respect, see Remark \ref{rmk:cup-product} \eqref{item:cup-product-1}. Because the $\theta_{k}^{\prime}$ satisfy the analogous recurrence to \eqref{eq:recurrence-theta}, and the Gauss--Manin connection and the vector field $d/d\psi$ are algebraic and defined over $\QBbb$, we conclude. 
\end{proof}

Notice that the image of $(R^{n-1} h_\ast \QBbb)_{\prim}^G$ under \eqref{eq:iso-prim-coh} is the smallest subvariation of Hodge structures of $R^{n-1} f_\ast \QBbb$ whose Hodge filtration contains $f_\ast K_{\Zcal/U}$ (see \eqref{eq:iso-canonical}). This motivates the following definition:
\begin{definition}\label{definition:min}
The image of $(R^{n-1} h_\ast \QBbb)_{\prim}^G$ in $(R^{n-1} f_\ast \QBbb)_{\prim}$ under the morphism \eqref{eq:iso-prim-coh}, is denoted by $(R^{n-1} f_\ast \QBbb)_{\min}$, and called the minimal component or minimal part. Likewise, we decorate algebraic variants (cf. \eqref{eq:iso-prim-coh-bis-bis}) and associated objects by ${\min}$. For example, this applies to Hodge bundles and homology constructions. 
\end{definition}

The next step consists in isolating the complement of the minimal component. In preparation for the statement, we recall that the topological intersection form on $R^{n-1}f_{\ast}\CBbb$ has a counterpart on the de Rham cohomology $R^{n-1}f_{\ast}\Omega^{\bullet}_{\Zcal/U}$, which is already defined in the algebraic category over $\QBbb$. Indeed, the construction of the latter involves the cohomological cup-product, the graded product structure on the complex $\Omega_{\Zcal/U}^{\bullet}$ and the algebraic geometric trace map:
\begin{displaymath}
    R^{n-1}f_{\ast}\Omega_{\Zcal/U}^{\bullet}\otimes R^{n-1}f_{\ast}\Omega_{\Zcal/U}^{\bullet}\overset{\cup}{\longrightarrow} R^{2(n-1)}f_{\ast}\Omega^{\bullet}_{\Zcal/U}=R^{n-1}f_{\ast}K_{\Zcal/U}\overset{\tr}{\longrightarrow}\Ocal_{U}.
\end{displaymath}
After forming $(R^{n-1}f_{\ast}\CBbb)\otimes\Ocal_{U^{\an}}$, the topological and algebraic intersection pairings agree up to a factor $(2\pi i)^{n-1}$, which accounts for the comparison of the trace maps. We are now ready for the next result.

\begin{proposition}[Minimal decomposition]\label{prop:orthogonal-decompositions}
\begin{enumerate}
    \item\label{item:orth-dec-1} Let $\VBbb$ be the orthogonal of $(R^{n-1}f_{\ast}\QBbb)_{\min}$ in $(R^{n-1}f_{\ast}\QBbb)_{\prim}$ for the topological intersection form. Then, there is an orthogonal decomposition of variations of polarized rational Hodge structures over $U^{\an}$
    \begin{equation}\label{eq:iso-prim-coh-bis}
        (R^{n-1}f_{\ast}\QBbb)_{\prim}= (R^{n-1}f_{\ast}\QBbb)_{\min}\oplus\VBbb.
    \end{equation}
   Furthermore, we have
   \begin{displaymath}
        \VBbb=\begin{cases}
            0 &\text{if }\ n-1\ \text{is odd},\\
            \text{of pure type } \left(\frac{n-1}{2},\frac{n-1}{2}\right) &\text{ if }\ n-1\ \text{is even}.
        \end{cases}
   \end{displaymath}
   \item\label{item:orth-dec-2} Let $\Vcal$ be the orthogonal of $(R^{n-1}f_{\ast}\Omega_{\Zcal/U}^{\bullet})_{\min}$ in $(R^{n-1}f_{\ast}\Omega_{\Zcal/U}^{\bullet})_{\prim}$ for the algebraic geometric intersection form. Then, there is a direct sum decomposition of locally free coherent sheaves with connection over $U$, in the algebraic category over $\QBbb$,
   \begin{equation}\label{eq:iso-prim-coh-bis-alg}
        (R^{n-1}f_{\ast}\Omega_{\Zcal/U}^{\bullet})_{\prim}=(R^{n-1}f_{\ast}\Omega_{\Zcal/U}^{\bullet})_{\min}\oplus\Vcal.
   \end{equation}
   Furthermore, the analytification of \eqref{eq:iso-prim-coh-bis-alg} is naturally identified with \eqref{eq:iso-prim-coh-bis}$\ \otimes\ \Ocal_{U^{\an}}$.
\end{enumerate}

\end{proposition}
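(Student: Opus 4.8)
The plan is to prove both parts of Proposition \ref{prop:orthogonal-decompositions} by first establishing the rational (topological) statement \eqref{item:orth-dec-1}, and then bootstrapping the algebraic statement \eqref{item:orth-dec-2} from it via the comparison between topological and algebraic intersection forms already recorded above. For \eqref{item:orth-dec-1}, the key input is that the minimal part $(R^{n-1}f_{\ast}\QBbb)_{\min}$ is a polarized sub-variation of Hodge structure, so its orthogonal $\VBbb$ for the topological intersection form is again a sub-variation, and the decomposition \eqref{eq:iso-prim-coh-bis} follows from the second Hodge--Riemann bilinear relations, which guarantee that the polarization form is non-degenerate on the minimal part (this non-degeneracy was verified in Proposition \ref{prop:forme-intersection-restreinte}, since the form on $(R^{n-1}h_{\ast}\QBbb)^{G}_{\prim}$ is a nonzero multiple of the restriction of $Q^{\prime}$, and the Dwork primitive cohomology is polarized). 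Orthogonal complements of polarized sub-VHS are again sub-VHS, so the splitting is as variations of polarized rational Hodge structures.

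The determination of the Hodge type of $\VBbb$ is a rank and Hodge-number count. By Lemma \ref{prop:iso-prim-coh} together with Lemma \ref{lemma:thetakconstruction}, the minimal part has $h^{n-1-k,k}_{\min}=1$ for each $k=0,\ldots,n-1$, so it accounts for exactly one dimension in every primitive Hodge bidegree $(p,q)$ with $p+q=n-1$. When $n-1$ is odd there is no middle bidegree $p=q$, and Lemma \ref{lemma:Hodge-numbers-crepant} shows that the only nonzero primitive Hodge numbers are the rank-one ones with $p\neq q$; hence the minimal part already exhausts the primitive cohomology and $\VBbb=0$. When $n-1$ is even, the only bidegree where $h^{p,q}$ can exceed $1$ is the middle one $(\frac{n-1}{2},\frac{n-1}{2})$ (again by Lemma \ref{lemma:Hodge-numbers-crepant}, where the $+\delta_{2p,n-1}$ term reflects precisely the extra middle class coming from the minimal part, while the remaining classes are of pure type); thus $\VBbb$ is concentrated in bidegree $(\frac{n-1}{2},\frac{n-1}{2})$, i.e. of pure Hodge type $(\frac{n-1}{2},\frac{n-1}{2})$ as claimed.

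For the algebraic statement \eqref{item:orth-dec-2}, I would argue that the algebraic geometric intersection form on $(R^{n-1}f_{\ast}\Omega^{\bullet}_{\Zcal/U})_{\prim}$ is defined over $\QBbb$ and, after analytification and forming $\otimes\,\Ocal_{U^{\an}}$, agrees with the topological pairing up to the factor $(2\pi i)^{n-1}$ recorded just before the proposition. Since by Lemma \ref{prop:iso-prim-coh}\eqref{item:iso-prim-coh-2} the sub-bundle $(R^{n-1}f_{\ast}\Omega^{\bullet}_{\Zcal/U})_{\min}$ is already defined over $\QBbb$, its orthogonal $\Vcal$ for the algebraic form is a locally free coherent subsheaf defined over $\QBbb$; the non-degeneracy of the form on the minimal part (descending from Proposition \ref{prop:forme-intersection-restreinte} over $\QBbb$, where it holds up to the nonzero constant and the $(2\pi i)^{n-1}$ comparison factor) ensures that the natural map $(R^{n-1}f_{\ast}\Omega^{\bullet}_{\Zcal/U})_{\min}\oplus\Vcal \to (R^{n-1}f_{\ast}\Omega^{\bullet}_{\Zcal/U})_{\prim}$ is an isomorphism, giving \eqref{eq:iso-prim-coh-bis-alg}. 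That this is a decomposition \emph{with connection} follows because the Gauss--Manin connection is algebraic over $\QBbb$ (by \cite{Katz-Oda}) and preserves the minimal part (it is generated under $\nabla$ by $f_{\ast}K_{\Zcal/U}$ via the recurrence, as in the proof of Lemma \ref{prop:iso-prim-coh}) and hence also its orthogonal, because $\nabla$ is compatible with the intersection pairing. Finally, the compatibility of analytification with the orthogonal complement construction yields the identification of the analytification of \eqref{eq:iso-prim-coh-bis-alg} with \eqref{eq:iso-prim-coh-bis}$\,\otimes\,\Ocal_{U^{\an}}$.

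The main obstacle I anticipate is not the existence of the orthogonal splitting, which is essentially formal once non-degeneracy is in hand, but rather carefully bookkeeping the $\QBbb$-rationality throughout the algebraic argument: one must ensure the algebraic intersection form, the minimal sub-bundle, the Gauss--Manin connection, and hence the orthogonal complement $\Vcal$ are all simultaneously defined over $\QBbb$, and that the comparison factor $(2\pi i)^{n-1}$ (which is transcendental) enters only as an overall scaling that does not disturb rationality of the decomposition itself. The subtlety is that non-degeneracy over $\QBbb$ must be deduced from the analytic Hodge--Riemann relations transported back through this transcendental comparison, so the cleanest route is to verify the splitting analytically and then observe that, the two sub-bundles both being defined over $\QBbb$ and their analytic sum being direct, the algebraic sum is direct as well by faithfully flat descent along $\CBbb/\QBbb$ (equivalently, the determinant of the pairing restricted to the minimal part is a nonzero element of the function field, hence invertible after inverting the discriminant, which is already excluded on $U$).
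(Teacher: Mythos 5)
Your proposal is correct and follows essentially the same route as the paper: the splitting comes from Hodge--Riemann positivity (equivalently, non-degeneracy of the polarization restricted to the minimal sub-Hodge structure), the type of $\VBbb$ from the Hodge-number count via Lemma \ref{lemma:Hodge-numbers-crepant} and Lemma \ref{lemma:thetakconstruction}, and the algebraic statement by reducing the directness of the sum to the analytic setting while checking that the minimal part, the algebraic pairing and the Gauss--Manin connection are all defined over $\QBbb$. The only cosmetic difference is that the paper packages the splitting as a general lemma on sub-Hodge structures of polarized Hodge structures, proved by the same $i^{p-q}Q(x^{p,q},\overline{x^{p,q}})\geq 0$ argument you invoke.
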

\begin{proof}
We first deal with \eqref{item:iso-prim-coh-1}. In the case of $n-1$ being odd, $(R^{n-1}f_{\ast}\QBbb)_{\prim}=(R^{n-1}f_{\ast}\QBbb)_{\min}$, by the very definition of the minimal component and by Lemma \ref{lemma:Hodge-numbers-crepant} and Lemma \ref{lemma:thetakconstruction}. In the case $n-1$ is even, we first notice that the intersection pairing is flat for the Gauss--Manin connection, and that the orthogonal complement of a subvariation of rational Hodge structures in a variation of polarized rational Hodge structures, is also a variation of polarized rational Hodge structures.  Thus, $\VBbb$ is a variation of polarized rational Hodge structures. To obtain the decomposition \eqref{eq:iso-prim-coh-bis} with the required properties, we can reduce to the following general fact.  Let $(H,Q)$ be a polarized Hodge structure over $\QBbb$, of weight $2d$, and $(E,Q)$ a sub-Hodge structure, such that $E^{(p,q)}=H^{(p,q)}$ for $p\neq q$. Let $V=E^{\perp}$ be the orthogonal of $E$ for the intersection form $Q$. Then  $H=E\oplus V$ and $V$ is a Hodge structure over $\QBbb$, of pure type $(d,d)$. To prove this fact, by linear algebra and the non-degeneracy of the intersection form,  it is enough to verify that $E \cap V$  is trivial. Take any element  $x$ in the intersection, and decompose it in $H_\CBbb$ according to the bidegree as $x = \sum x^{p,q}$. Then $\overline{x^{p,q}} \in E^{q,p}\subset E_\CBbb$. On the other hand, $i^{p-q} Q(x , \overline{x^{p,q}}) = i^{p-q} Q(x^{p,q}, \overline{x^{p,q}}) \geq 0$, with equality only if $x^{p,q} = 0$. But this is the case since $x\in E^{\perp}$, proving the decomposition. It follows from the assumption $E^{(p,q)}=H^{(p,q)}$ for $p\neq q$ that the complement is of pure type $(d,d)$.

For item \eqref{item:orth-dec-2}, we first notice that since $(R^{n-1}f_{\ast}\Omega_{\Zcal/U}^{\bullet})_{\min}$ and $(R^{n-1}f_{\ast}\Omega_{\Zcal/U}^{\bullet})_{\prim}$ are locally free coherent sheaves, so is $\Vcal$. Besides, the algebraic Gauss--Manin connection preserves $\Vcal$, since it preserves the minimal component and the algebraic intersection form is flat. By the compatibility of the topological and algebraic intersection forms, the analytification of $\Vcal$ is canonically identified with $\VBbb\otimes\Ocal_{U^{\an}}$. For the validity of the direct sum decomposition, we can reduce to the analytic setting, in which case it follows from \eqref{eq:iso-prim-coh-bis}$\ \otimes\ \Ocal_{U^{\an}}$.
\end{proof}

\begin{remark}\label{rmk:homology-min}
After Proposition \ref{prop:orthogonal-decompositions}, and with the conventions adopted in Definition \ref{definition:min}, for the homology local systems we have 
\begin{equation}\label{eq:prim-min-decomposition-homology}
    (R^{n-1}f_{\ast}\QBbb)^\vee_\prim= (R^{n-1}f_{\ast}\QBbb)_{\min}^\vee\oplus\VBbb^\vee.
\end{equation}
We can thus consider $ (R^{n-1}f_{\ast}\QBbb)_{\min}^\vee$ as a subsystem of $(R^{n-1}f_{\ast}\QBbb)^\vee_{\prim}$, which in turn can be seen as a subsystem of the homology local system $(R^{n-1}f_{\ast}\QBbb)^\vee$. This allows us to interpret $(R^{n-1}f_{\ast}\QBbb)_{\min}^\vee$ in terms of homology classes and Poincar\'e duals of these in terms of integration. 
\end{remark}

In the application of the arithmetic Riemann--Roch theorem to the BCOV conjecture, we will need sections of the Hodge bundles rather than the Hodge filtration, cf. Theorem \ref{thm:ARR}. This is the reason behind the following definition.
\begin{definition}\label{def:eta-k}
We define $\eta_{k}^{\circ}$ as the trivializing section of $(R^{k}f_{\ast}\Omega_{\Zcal/U}^{n-1-k})_{\min}$, deduced from $\theta_{k}$ via the morphism \eqref{eq:iso-prim-coh} and by projecting to the Hodge bundle.  We also define $\eta_{k}=-(n+1)^{k+1}\psi^{k+1}\eta_{k}^{\circ}$.
\end{definition}
\begin{remark}\label{rmk:vanishing-eta-0}
\begin{enumerate}
    \item\label{item:vanishin-eta-1} By construction, the section $\eta_{k}$ vanishes at order $k+1$ at $\psi=0$.
    \item\label{item:vanishin-eta-2} The sections $\eta_{k}$ are algebraic and defined over $\QBbb$ by Lemma \ref{lemma:thetakconstruction} and Lemma \ref{prop:iso-prim-coh}. 
\end{enumerate}
\end{remark}

\begin{lemma}\label{lemma:recurrence-eta}
The sections $\eta_{k}^{\circ}$ satisfy the recurrence
\begin{equation}\label{eq:recurrence-eta}
    \KS^{(k)}\left(\frac{d}{d\psi}\right)\eta_{k}^{\circ}=(n+1)\eta_{k+1}^{\circ}.
\end{equation}
Consequently, 
\begin{equation}\label{eq:recurrence-eta-norm}
    \KS^{(k)}\left(\psi\frac{d}{d\psi}\right)\eta_{k}=\eta_{k+1}.
\end{equation}
\end{lemma}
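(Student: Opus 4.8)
The plan is to trace everything back to the recurrence for the Griffiths sections and to the fact that the Kodaira--Spencer map is, by construction, nothing but the graded piece of the Gauss--Manin connection. First I would recall from Definition \ref{def:eta-k} that $\eta_k^\circ$ is the image of $\theta_k$ under \eqref{eq:iso-prim-coh}, projected to the Hodge bundle. Write $\theta_k'$ for the image of $\theta_k$ in $(R^{n-1}f_{\ast}\Omega^\bullet_{\Zcal/U})_{\min}$; by Lemma \ref{lemma:thetakconstruction} and the fact that \eqref{eq:iso-prim-coh} preserves the Hodge filtration, $\theta_k'$ lies in $F^{n-1-k}$, and $\eta_k^\circ$ is precisely its class in $\operatorname{gr}_F^{n-1-k} = R^k f_{\ast}\Omega^{n-1-k}_{\Zcal/U}$. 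The crucial input, already extracted inside the proof of Lemma \ref{prop:iso-prim-coh}, is that $\theta_k'$ obeys the analogue of \eqref{eq:recurrence-theta}, namely $\nabla_{d/d\psi}\theta_k' = (n+1)\theta_{k+1}'$, because \eqref{eq:morphism-hodge-crepant} is horizontal for the Gauss--Manin connections.

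With this in hand the first recurrence \eqref{eq:recurrence-eta} is immediate. By definition \eqref{def:KS}, the operator $\KS^{(k)}(d/d\psi)$ is the $\Ocal_U$-linear map on $\operatorname{gr}_F^{n-1-k}$ induced by Griffiths transversality: it sends the class of a local section $\alpha \in F^{n-1-k}$ to the class of $\nabla_{d/d\psi}\alpha$ in $\operatorname{gr}_F^{n-2-k} = R^{k+1}f_{\ast}\Omega^{n-2-k}_{\Zcal/U}$. Applying this to $\alpha = \theta_k'$ and invoking the recurrence gives $\KS^{(k)}(d/d\psi)\eta_k^\circ = [(n+1)\theta_{k+1}'] = (n+1)\eta_{k+1}^\circ$, which is exactly \eqref{eq:recurrence-eta}.

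For the normalized statement \eqref{eq:recurrence-eta-norm} I would simply substitute $\eta_k = -(n+1)^{k+1}\psi^{k+1}\eta_k^\circ$ from Definition \ref{def:eta-k} and use that the Kodaira--Spencer pairing $(v,\sigma)\mapsto\KS^{(k)}(v)(\sigma)$ is $\Ocal_U$-bilinear, so scalar functions may be pulled out of both arguments: $\KS^{(k)}(\psi\, d/d\psi)\eta_k = -(n+1)^{k+1}\psi^{k+2}\,\KS^{(k)}(d/d\psi)\eta_k^\circ = -(n+1)^{k+2}\psi^{k+2}\eta_{k+1}^\circ$, and this coincides with $\eta_{k+1} = -(n+1)^{k+2}\psi^{k+2}\eta_{k+1}^\circ$ by the very definition of $\eta_{k+1}$.

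There is essentially no deep obstacle: the lemma is a bookkeeping consequence of results already in place. The only points requiring care are, first, confirming that the Hodge-filtration index $n-1-k$ is matched correctly so that the graded Gauss--Manin map is literally $\KS^{(k)}$ and lands in $R^{k+1}f_{\ast}\Omega^{n-2-k}_{\Zcal/U}$; and second, invoking the $\Ocal_U$-linearity of the graded connection, which is what permits the scalar factors $\psi^{k+1}$ and $(n+1)^{k+1}$ to be extracted without connection terms intervening — concretely, the Leibniz correction $d(\psi^{k+1})\otimes\eta_k^\circ$ stays in $F^{n-1-k}\otimes\Omega^1_U$ and therefore vanishes upon passing to $\operatorname{gr}_F^{n-2-k}$. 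Both are routine once the identification of $\eta_k^\circ$ with the graded class of $\theta_k'$ is made explicit.
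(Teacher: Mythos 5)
Your proposal is correct and follows exactly the route of the paper's own (much terser) proof: transport the recurrence \eqref{eq:recurrence-theta} to the sections $\theta_k'$ via the horizontality of \eqref{eq:morphism-hodge-crepant} established in Lemma \ref{prop:iso-prim-coh}, identify $\KS^{(k)}$ with the graded piece of the Gauss--Manin connection, and deduce \eqref{eq:recurrence-eta-norm} from \eqref{eq:recurrence-eta} by $\Ocal_U$-linearity and the definition of $\eta_k$. Your expanded bookkeeping, including the remark that the Leibniz term dies in the graded quotient, is a faithful unpacking of what the paper leaves implicit.
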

\begin{proof}
The first recurrence follows from \eqref{eq:recurrence-theta}, Lemma~\ref{prop:iso-prim-coh}, the link between the Gauss--Manin connection $\nabla$ and the Kodaira--Spencer maps $\KS^{(q)}$, and the definition of $\eta_{k}^{\circ}$. The second recurrence follows from the first one, by the very definition of the sections $\eta_{k}$ and the $\Ocal_{U}$-linearity of the Kodaira--Spencer maps.
\end{proof}

\section{The degeneration of the Hodge bundles of the mirror family}\label{sec:degHodgebund}
In the previous section we exhibited explicit trivializing sections of the minimal part of the middle degree Hodge bundles of the mirror family $\Zcal\to U$. The next goal is to extend these sections to the whole compactification $\PBbb^{1}$. We also address the trivialization of the Hodge bundles other than the minimal part and in any degree. For these goals, we exploit the approach to degenerating Hodge structures via relative logarithmic de Rham cohomology. 

\subsection{Generalities on geometric degenerations of Hodge structures}\label{subsec:generalities-hodge}
We recall some background from Steenbrink \cite{Steenbrink-limits, Steenbrink-mixedonvanishing} and our previous work \cite[Sec. 2 \& Sec. 4]{cdg2}. We also refer to Illusie's survey \cite[Sec. 2.2 \& Sec. 2.3]{illusie} Let $f\colon \Xcal\to\DBbb$ be a projective morphism of reduced analytic spaces, over the unit disc $\DBbb$. We suppose that the fibers $X_{t}$ with $t\neq 0$ are smooth and connected. We consider the variation of Hodge structures associated to $R^{k}f_{\ast}\QBbb$ over the punctured disc $\DBbb^{\times}$. Let $T$ be its monodromy operator and $\nabla$ the Gauss--Manin connection on the holomorphic vector bundle $(R^{k}f_{\ast}\QBbb)\otimes\Ocal_{\DBbb^{\times}}=R^{k}f_{\ast}\Omega_{\Xcal/\DBbb^{\times}}^{\bullet}$. Recall that $T$ is a quasi-unipotent transformation of the cohomology of the general fiber. The flat vector bundle $(R^{k}f_{\ast}\Omega_{\Xcal/\DBbb^{\times}}^{\bullet}, \nabla)$  has a unique extension to a vector bundle on $\DBbb$, such that $\nabla$ extends to a regular singular connection, whose residue $\Res_{0}\nabla$ is an endomorphism with eigenvalues in $[0,1)\cap\QBbb$. This is the Deligne (lower) canonical extension, denoted by $^{\ell}R^{k}f_{\ast}\Omega_{\Xcal/\DBbb^{\times}}^{\bullet}$. Occasionally, we may simply refer to it as the Deligne extension of $R^{k}f_{\ast}\CBbb$. It can be realized as the hypercohomology $R^{k}f^{\prime}\Omega_{\Xcal^{\prime}/\DBbb}^{\bullet}(\log)$ of the logarithmic de Rham complex of a normal crossing model $f'\colon \Xcal'\to\DBbb$. The Hodge filtration $\Fcal^{\bullet}$ on $R^{k}f_{\ast}\Omega_{\Xcal/\DBbb^{\times}}^{\bullet}$ extends to a filtration by vector sub-bundles, still denoted by $\Fcal^{\bullet}$. Its locally free graded quotients are of the form $R^{k-p}f^{\prime}\Omega_{\Xcal^{\prime}/\DBbb}^{p}(\log)$. If the monodromy operator is unipotent, then the fiber of $R^{k}f^{\prime}\Omega_{\Xcal^{\prime}/\DBbb}^{\bullet}(\log)$ at $0$, together with the restricted Hodge filtration, can be identified with the cohomology of the generic fiber $H^{k}_{\lim}$ with the limiting Hodge filtration $F^{\bullet}_{\infty}$. The identification depends on the choice of a holomorphic coordinate on $\DBbb$. There is also the monodromy weight filtration $W_{\bullet}$ on $H^{k}_{\lim}$, attached to the nilpotent operator $N=-2\pi i\Res_{0}\nabla$. The triple $(H^{k}_{\lim}, F^{\bullet}_{\infty}, W_{\bullet})$ is called the limiting mixed Hodge structure. It is isomorphic to Schmid's limiting mixed Hodge structure \cite{schmid} on the cohomology of the general fiber. In particular, $W_{\bullet}$ admits a rational structure. This structure is not needed in the current section, but it will be used later in Section \ref{sec:conjectures}, actually in the greater generality of higher dimensional parameter spaces. In the general quasi-unipotent case, one first performs a semi-stable reduction and then constructs the limiting mixed Hodge structure.

More generally, for a subvariation of Hodge structures $\EBbb$ of $R^{k}f_{\ast}\QBbb$, which is a direct summand, the previous constructions can also be carried out, and relate to those of $R^{k}f_{\ast}\QBbb$ as follows. For concreteness, let us comment on the case of $f\colon \Xcal\to\DBbb$ as above, with normal crossings model $f^{\prime}\colon \Xcal^{\prime}\to\DBbb$. Denote by $j\colon\DBbb^{\times}\hookrightarrow\DBbb$ the open immersion. Then, the Deligne extension of $\Ecal=\EBbb\otimes\Ocal_{\DBbb^{\times}}$ equals $j_{\ast}\Ecal\cap\ {^\ell}R^{k}f_{\ast}\Omega_{\Xcal/\DBbb^{\times}}^{\bullet}$, or equivalently  $j_{\ast}\Ecal\cap R^{k}f_{\ast}^{\prime}\Omega_{\Xcal^{\prime}/\DBbb}^{\bullet}(\log)$, where the intersection is taken in $j_{\ast}R^{k}f_{\ast}\Omega_{\Xcal/\DBbb^{\times}}^{\bullet}$. Let us denote it by $^{\ell}\Ecal$. To construct the limiting mixed Hodge structure of $\EBbb$, we may first perform a ramified base change and suppose that $f^{\prime}$ is semi-stable. Secondly, we intersect the limiting mixed Hodge structure $(H^{k}_{\lim}, F^{\bullet}_{\infty}, W_{\bullet})$ of $R^{k}f_{\ast}\QBbb$ with $^{\ell}\Ecal(0)$, the fiber at $0$ of $^{\ell}\Ecal$. In our work, we will encounter this setting for the standard case of the primitive cohomology, but also for the decompositions \eqref{eq:decomposition-isotypical} ($G$-invariants) and the minimal decomposition \eqref{eq:iso-prim-coh-bis}. Accordingly, the resulting objects will be decorated with the symbols $\prim$, $G$ or ${\min}$. For example, we will have notations such as $R^{n-1}f_{\ast}^{\prime}\Omega_{\Xcal^{\prime}/\DBbb}^{\bullet}(\log)_{\min}$.

Analogously, for a projective normal crossings degeneration $f\colon \Xcal\to S$ between complex algebraic manifolds, with one-dimensional $S$, there are algebraic counterparts of all the above: logarithmic de Rham cohomology, Gauss--Manin connection, Hodge filtration, etc. This is compatible with the analytic theory after localizing to a holomorphic coordinate neighborhood of a given point $p\in S$. We will in particular speak of the limiting mixed Hodge structure at $p$, and simply write $H^{k}_{\lim}$ if there is no danger of confusion. 

Finally, we will also need the limiting mixed Hodge structure $(H_{k})_{\lim}$ on the homology, and in particular the dual weight filtration $W_\bullet^{\prime}$ defined as $W_{-r}^\prime = (H^{k}_{\lim}/W_{r-1})^\vee$. See \cite[(4.2.2)]{DeligneHodge2} or \cite[(3.1.3.1) and (3.2.2.7)]{ElZein} for more information about dual filtrations.

\subsection{Triviality of some variations of Hodge structures}\label{subsec:triviality-hodge-bundles}
We return to the geometric setting of Section \ref{sec:dworkandmirrorfamilies}, and maintain the notations therein. For the mirror family, we prove that outside of $(R^{n-1} f_{\ast} \QBbb)_{\min}$, all the variations of Hodge structures appearing in our work in fact correspond to trivial local systems. In particular, the local systems outside of the middle degree and the local system $\VBbb$ from Proposition \ref{prop:orthogonal-decompositions} are all trivial. We also derive consequences for the associated Hodge bundles, in the algebraic category.

We fix the normal crossings model $f^{\prime}\colon\Zcal^{\prime}\to\PBbb^{1}$ obtained by blowing up the locus of the ordinary double points of $f\colon\Zcal\to\PBbb^{1}$, which is defined over $\QBbb$. We also introduce a polarization, induced by a projective factorization of $f^{\prime}$ defined over $\QBbb$. The corresponding logarithmic Hodge bundles and their primitive parts are locally free sheaves over $\PBbb^{1}$, already defined in the algebraic category and over $\QBbb$.

By Lemma \ref{lemma:Hodge-numbers-crepant} we have $R^{d}f_{\ast}^{\prime}\Omega_{\Zcal^{\prime}/\PBbb^{1}}^{\bullet}(\log)=0$ for $d$ odd, not equal to $n-1$, while if $d = 2p \neq n-1$, $R^{d}f_{\ast}^{\prime}\Omega_{\Zcal^{\prime}/\PBbb^{1}}^{\bullet}(\log)=R^{p}f_{\ast}^{\prime}\Omega_{\Zcal^{\prime}/\PBbb^{1}}^{p}(\log)$. We then have the following result outside of middle degrees: 

\begin{lemma}\label{lemma:trivial-hodge}
For $2p\neq n-1$, the following hold:
\begin{enumerate}
    \item The local system $R^{2p}f_{\ast}\QBbb$ on $U^{\an}=\PBbb^{1}\setminus(\mu_{n+1}\cup\lbrace\infty\rbrace)$ is trivial. 
    \item\label{lemma:trivial-hodge-2} The Hodge bundle $R^{p}f_{\ast}^{\prime}\Omega_{\Zcal^{\prime}/\PBbb^{1}}^{p}(\log)$ is trivial in the algebraic category over $\QBbb$.
\end{enumerate}
\end{lemma}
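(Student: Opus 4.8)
The plan is to deduce both statements from the local monodromy of the family around its singular fibres, using that outside the middle degree the cohomology is concentrated in a single Hodge type. First I would treat part (1). By Lemma~\ref{lemma:Hodge-numbers-crepant}, for $2p\neq n-1$ the fibres satisfy $h^{p,q}(Z_\psi)=0$ unless $q=p$, so the only contribution to $R^{2p}f_\ast\QBbb$ is of Hodge--Tate type $(p,p)$; this will matter for part (2), but for part (1) the essential input is geometric. Over each ODP point $\xi\in\mu_{n+1}$ the total space $\Zcal$ is smooth and the fibre $Z_\xi$ acquires a single ordinary double point (Lemma~\ref{lemma:crepant}, items (1) and (2)). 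Hence, locally around $\xi$, the map $f$ has a single nondegenerate critical point and we are in the classical Picard--Lefschetz situation with local normal form $\sum_j z_j^2=t$. Since the vanishing cohomology is concentrated in the middle degree $n-1$, the local monodromy acts as the identity on $H^{2p}(Z_\psi)$ for every $2p\neq n-1$.

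I would then propagate this through the fundamental group. The base $U^{\an}=\PBbb^1\setminus(\mu_{n+1}\cup\{\infty\})$ is a sphere with $n+2$ punctures, so $\pi_1(U^{\an})$ is generated by small loops $\gamma_\xi$ (for $\xi\in\mu_{n+1}$) around the ODP points together with a loop $\gamma_\infty$, subject only to the relation $\big(\prod_{\xi}\gamma_\xi\big)\,\gamma_\infty=1$. In particular the loops $\gamma_\xi$ already generate $\pi_1(U^{\an})$ and $\gamma_\infty$ is their inverse product. Since each $\gamma_\xi$ acts trivially by the previous paragraph, so does $\gamma_\infty$, and therefore the entire monodromy representation of $R^{2p}f_\ast\QBbb$ is trivial; the local system is constant, proving (1).

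For part (2), I would first use the Hodge--Tate property again: because $R^{2p}f_\ast\QBbb$ has pure type $(p,p)$, the Hodge filtration on the logarithmic de Rham cohomology has a single nonzero graded piece, which is exactly the identification $R^{2p}f'_\ast\Omega^\bullet_{\Zcal'/\PBbb^1}(\log)=R^{p}f'_\ast\Omega^{p}_{\Zcal'/\PBbb^1}(\log)$ recorded before the lemma. Now the monodromy around every puncture is trivial, hence unipotent, so by the discussion in \textsection\ref{subsec:generalities-hodge} this logarithmic de Rham bundle is the Deligne canonical extension over all of $\PBbb^1$ of the flat bundle $(R^{2p}f_\ast\QBbb)\otimes\Ocal_U$. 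By part (1) the latter is the trivial flat bundle, with $r=h^{p,p}$ global flat sections; triviality of the monodromy at each puncture guarantees that these sections extend to a global frame of the canonical extension. Thus $R^{p}f'_\ast\Omega^{p}_{\Zcal'/\PBbb^1}(\log)\cong\Ocal_{\PBbb^1}^{\,r}$ over $\CBbb$. To descend to $\QBbb$, I would invoke that the logarithmic Hodge bundle is a vector bundle on $\PBbb^1_\QBbb$: by Grothendieck's theorem it splits as $\bigoplus_i\Ocal_{\PBbb^1_\QBbb}(a_i)$, and the multiset $\{a_i\}$ is insensitive to the flat base change $\QBbb\hookrightarrow\CBbb$; since the bundle is trivial over $\CBbb$ all $a_i$ vanish, giving $R^{p}f'_\ast\Omega^{p}_{\Zcal'/\PBbb^1}(\log)\cong\Ocal_{\PBbb^1_\QBbb}^{\,r}$ over $\QBbb$.

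The main obstacle is the very first step: justifying cleanly that the local monodromy at the ODP points is trivial on all cohomology away from the middle degree. This rests on confirming that the single ordinary double point, together with the smoothness of the total space $\Zcal$, genuinely places us in the Lefschetz normal form, and that the vanishing cohomology sits in degree $n-1$ so that Picard--Lefschetz leaves every $H^{2p}$ with $2p\neq n-1$ fixed. Once this is in hand, the passage through $\pi_1$ and the triviality of the Deligne extension are formal, and the descent to $\QBbb$ is a standard application of Grothendieck's splitting theorem.
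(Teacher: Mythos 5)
Your proposal is correct and follows essentially the same route as the paper: triviality of the local monodromies at the ODP points via Picard--Lefschetz (vanishing cohomology concentrated in degree $n-1$), propagation through the generators of $\pi_{1}(U^{\an})$ to kill the monodromy at $\infty$, identification of the logarithmic Hodge bundle with the Deligne canonical extension of a trivial flat bundle, and descent of the triviality to $\QBbb$. The only cosmetic difference is in the last step, where you invoke Grothendieck's splitting theorem and base-change invariance of the splitting type, while the paper argues that the evaluation map $H^{0}(\PBbb^{1}_{\QBbb},E)\otimes\Ocal_{\PBbb^{1}_{\QBbb}}\to E$ is an isomorphism after the flat base change to $\CBbb$; both are standard and equivalent here.
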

\begin{proof}
We first prove that the local system $R^{2p}f_{\ast}\QBbb$ is trivial. Take a base point $b\in U^{\an}$, and let $\rho\colon \pi_{1}(U^{\an},b)\to\GL(H^{2p}(Z_{b},\QBbb))$ be the monodromy representation determining the local system. The fundamental group $\pi_{1}(U^{\an},b)$ is generated by loops $\gamma_{\xi}$ circling around $\xi\in\mu_{n+1}$, and a loop $\gamma_{\infty}$ circling around $\infty$, with a relation $\prod_{\xi}\gamma_{\xi}=\gamma_{\infty}$. Because the singularities of $\Zcal\to\PBbb^{1}$ at the points $\xi$ are ordinary double points, and $2p\neq n-1$, the local monodromies $\rho(\gamma_{\xi})$ are trivial. Therefore $\rho(\gamma_{\infty})$ is trivial as well, and so is $\rho$.

Now, the first claim implies the triviality of $R^{p}f_{\ast}^{\prime}\Omega_{\Zcal^{\prime}/\PBbb^{1}}^{p}(\log)=R^{2p}f_{\ast}^{\prime}\Omega_{\Zcal^{\prime}/\PBbb^{1}}^{\bullet}(\log)$ in the analytic category, since the latter realizes the Deligne extension of $R^{2p}f_{\ast}\Omega_{\Zcal/U}^{\bullet}$. By the GAGA principle, $R^{p}f_{\ast}^{\prime}\Omega_{\Zcal^{\prime}/\PBbb^{1}}^{p}(\log)$ is algebraically trivial as a complex vector bundle. This already implies the second claim. Indeed, let $E$ be a vector bundle over $\PBbb^{1}_{\QBbb}$, which is trivial after base change to $\CBbb$. Then the natural morphism $H^{0}(\PBbb^{1}_{\QBbb},E)\otimes\Ocal_{\PBbb^{1}_{\QBbb}}\to E$ is necessarily an isomorphism, since it is an isomorphism after a flat base change.
\end{proof}

\begin{lemma}\label{lemma:trivial-V-local}
With the same notations as in Proposition \ref{prop:orthogonal-decompositions}, we have:
\begin{enumerate}
    \item The local system $\VBbb$ on $U^{\an}$ is trivial. 
    \item The locally free coherent sheaf with connection $\Vcal$ over $U$ is trivial in the algebraic category over $\QBbb$.
\end{enumerate}
\end{lemma}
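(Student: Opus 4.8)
The plan is to determine the monodromy of $\VBbb$ as a sub-local-system of $R^{n-1}f_\ast\QBbb$, in the spirit of Lemma \ref{lemma:trivial-hodge}, the new difficulty being that $\VBbb$ lives in the middle degree, where the local monodromies at the ODP points need not be trivial. We may assume $n-1$ is even, say $n-1=2d$, since otherwise $\VBbb=0$ by Proposition \ref{prop:orthogonal-decompositions} and there is nothing to prove; recall that in this case $\VBbb$ is of pure Hodge type $(d,d)$. Fixing a base point $b\in U^{\an}$, the group $\pi_1(U^{\an},b)$ is generated by the loops $\gamma_\xi$ around the ODP points $\xi\in\mu_{n+1}$ together with $\gamma_\infty$, subject to $\prod_\xi\gamma_\xi=\gamma_\infty$. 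It suffices to show that each generator acts trivially on $\VBbb$.

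First I would treat the loops $\gamma_\xi$. Since $Z_\psi$ acquires a single ordinary double point at $\xi$ by Lemma \ref{lemma:crepant} and $n-1=2d$ is even, the Picard--Lefschetz formula shows that the monodromy $T_\xi$ on $H^{n-1}(Z_b,\QBbb)$ is the reflection $x\mapsto x-\langle x,\delta_\xi\rangle\delta_\xi$ in the vanishing cycle $\delta_\xi$, so that $T_\xi-\id$ has rank one with image $\QBbb\,\delta_\xi$. As $\delta_\xi$ is primitive and the summands $(R^{n-1}f_\ast\QBbb)_{\min}$ and $\VBbb$ of $(R^{n-1}f_\ast\QBbb)_{\prim}$ are monodromy invariant, applying $T_\xi-\id$ to each summand and using the orthogonality and nondegeneracy of the polarization forces $\delta_\xi$ to lie entirely in the minimal part or entirely in $\VBbb$; accordingly $T_\xi$ acts trivially on $\VBbb$, or on the minimal part, respectively.

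To rule out the second alternative, I would invoke the symmetry $\psi\mapsto\zeta\psi$, $\zeta\in\mu_{n+1}$, of the Dwork pencil, induced by scaling a single coordinate, which descends to the mirror family and permutes the ODP points transitively. Being an isomorphism of Calabi--Yau families, it preserves the holomorphic volume form, hence the canonical decomposition $(R^{n-1}f_\ast\QBbb)_{\prim}=(R^{n-1}f_\ast\QBbb)_{\min}\oplus\VBbb$, while conjugating $\gamma_\xi$ to $\gamma_{\zeta\xi}$; thus all the classes $\delta_\xi$ are of the same type. If they all lay in $\VBbb$, then every $T_\xi$ would act trivially on the minimal part, whence $T_\infty|_{\min}=\prod_\xi T_\xi|_{\min}=\id$, contradicting the maximal unipotency of the monodromy at $\infty$ (Lemma \ref{lemma:MHS-infty}). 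Hence every $\delta_\xi$ lies in the minimal part, so $T_\xi|_{\VBbb}=\id$, and then $T_\infty|_{\VBbb}=\prod_\xi T_\xi|_{\VBbb}=\id$ as well. All generators act trivially, proving that $\VBbb$ is a trivial local system. I expect this localization of the vanishing cycle in the minimal part to be the main obstacle, since it is precisely where the middle-degree situation departs from Lemma \ref{lemma:trivial-hodge}.

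For the second assertion, triviality of the monodromy propagates to the algebraic bundle with connection, as in the proof of Lemma \ref{lemma:trivial-hodge}. Here $\Vcal$ is an algebraic direct summand over $\QBbb$ of the Gauss--Manin system, which has regular singularities; by part (1) its flat sections over $U^{\an}$ are single valued and of moderate growth, hence extend to rational, i.e. algebraic, flat sections that trivialize $\Vcal_\CBbb$ together with its connection. Since $\nabla$ is defined over $\QBbb$ and $U$ is affine, forming the $\QBbb$-space $\Vcal^{\nabla}$ of global algebraic flat sections commutes with the faithfully flat extension $\QBbb\hookrightarrow\CBbb$, so $\Vcal^{\nabla}$ is already of dimension $\rk\Vcal$ over $\QBbb$, and the evaluation map $\Vcal^{\nabla}\otimes_\QBbb\Ocal_{U}\to\Vcal$, being an isomorphism after base change to $\CBbb$, is an isomorphism over $\QBbb$. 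This exhibits $\Vcal$ as trivial, with trivial connection, in the algebraic category over $\QBbb$.
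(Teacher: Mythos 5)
Your treatment of part (1) is essentially the paper's argument: Picard--Lefschetz at the ODP points forces exactly one of $(R^{n-1}f_\ast\QBbb)_{\min}$ and $\VBbb$ to carry the nontrivial reflection at each $\xi$, the $\mu_{n+1}$-symmetry makes the alternative uniform over all $\xi$, and maximal unipotency at $\infty$ (Lemma \ref{lemma:MHS-infty}) excludes the case where the minimal part has trivial local monodromies. One caution: the paper only lifts $\psi\mapsto\zeta\psi$ to the quotient family $\Ycal\to U$, not to the crepant resolution $\Zcal$, and accordingly phrases the uniformity as descent of $(R^{n-1}h_\ast\CBbb)^{G}_{\prim}\simeq(R^{n-1}f_\ast\CBbb)_{\min}$ along $U\to U/\mu_{n+1}$; your assertion that the symmetry preserves the full decomposition of $(R^{n-1}f_\ast\QBbb)_{\prim}$ would require an equivariant choice of resolution, but your conclusion only needs the simultaneous (non)triviality of the $T_\xi$ on the minimal part, which the descent already supplies, so this is a phrasing issue rather than a gap.

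For part (2) you take a genuinely different and more economical route. The paper extends the inclusion of the minimal part to a morphism of logarithmic Hodge bundles over $\PBbb^{1}$, proves that this morphism is defined over $\QBbb$ by an $\Aut(\CBbb/\QBbb)$-invariance argument, identifies the analytification of its cokernel $\widetilde{\Vcal}$ with the Deligne extension of $\VBbb\otimes\Ocal_{U^{\an}}$, and then concludes by GAGA and flat base change as in Lemma \ref{lemma:trivial-hodge}. You instead stay on the affine curve $U$: regularity of the Gauss--Manin connection together with the trivial monodromy from part (1) makes the flat analytic sections single-valued of moderate growth, hence algebraic over $\CBbb$, and since $\ker\nabla$ on global sections commutes with the flat field extension $\QBbb\hookrightarrow\CBbb$, the $\QBbb$-space $\Vcal^{\nabla}$ already has rank equal to $\rk\Vcal$, whence the evaluation map $\Vcal^{\nabla}\otimes_{\QBbb}\Ocal_{U}\to\Vcal$ is an isomorphism by faithfully flat descent. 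This bypasses the construction of $\widetilde{\Vcal}$ and the Galois-invariance step entirely; what the paper's detour through $\PBbb^{1}$ buys in return is the compatibility with the logarithmic extension formalism used throughout Section \ref{sec:degHodgebund}, where the minimal components of the logarithmic Hodge bundles over the boundary points are needed. Both arguments are correct.
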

\begin{proof}
We first show that if the local monodromy of $(R^{n-1}f_\ast \CBbb)_{\min}$ around one ODP point is trivial, then it is so around all the ODP points.
Since $(R^{n-1}f_\ast \CBbb)_{\min}$ is isomorphic to $(R^{n-1}h_{\ast}\CBbb)^{G}_{\prim}$ as a local system, it is enough to show that the latter descends along the natural projection $(U-\{0\})\to (U-\{0\})/\mu_{n+1}$, where $\mu_{n+1}$ acts by multiplication on $(U-\{0\})\subset\PBbb^{1}$. Notice that for any $\zeta \in \mu_{n+1}$, the automorphism $\psi \mapsto \zeta \cdot \psi$ lifts to an automorphism of the family $g\colon\Ycal \to (U-\{0\})$, via the formula $[x_0:x_1:\ldots:x_n] \mapsto [x_0':x_1':\ldots:x_n']$, where $x_i'=x_i$ except for one $i$, for which $x_i' = \zeta^{-1} \cdot x_i$. Since we work in the quotient by the group $G$, all the choices of $i$ correspond to the same action. We conclude that the local systems $(R^{k}h_{\ast}\CBbb)^{G}\simeq R^{k}g_{\ast}\CBbb$ descend for all $k$. Observe that $R^{2}h_{\ast}\CBbb$ is actually constant with fiber $H^{2}(\PBbb^{n},\CBbb)$, by Lefschetz, with $G$ acting trivially. Therefore, the polarization necessarily descends. We conclude that $(R^{n-1}h_{\ast}\CBbb)^{G}_{\prim}=\ker \left(L\colon (R^{n-1}h_{\ast}\CBbb)^{G}\to (R^{n+1}h_{\ast}\CBbb)^{G}\right)$ descends too, as was to be shown.

We now show that $\VBbb$ is a trivial local system. It is enough to argue for $\VBbb_{\CBbb}$. In the odd dimensional case, there is nothing to prove. In the even dimensional case, we first recall that by the Picard--Lefschetz formula, the local monodromies on $(R^{n-1}f_{\ast}\CBbb)_{\prim}$ around the ODP points are semi-simple with a single non-trivial eigenvalue $-1$ of multiplicity one. It follows that around each ODP point, exactly one of the sub-local systems $(R^{n-1}f_\ast \CBbb)_{\min}$ and $\VBbb_{\CBbb}$ has trivial local monodromy.  By the argument in the previous lemma, if the monodromies around one hence all the ODP points on $(R^{n-1}f_\ast \CBbb)_{\min}$ were trivial, it would follow that the monodromy around $\infty$ would also be trivial. As this is excluded by Lemma \ref{lemma:MHS-infty} below, we infer that $\VBbb$ is a trivial local system.

We next address the triviality of $\Vcal$ asserted by the second point. We will now make use of the $G$-equivariant normal crossings model $h^{\prime}\colon\Xcal^{\prime}\to\PBbb^{1}$ of Proposition \ref{prop:NSB}. We summarize the current geometric setting in the following diagram, which builds upon \eqref{eq:diagram-mirror-families}:
\begin{displaymath}
    \xymatrix{  &               &\Xcal^{\prime}\ar[d]\ar@/^2pc/[dddr]^{h'}        &\\
        &               &\Xcal\ar[d]_{\rho}\ar@/^1pc/[ddr]^{h}        &\\
     \Zcal^{\prime}\ar[r] \ar@/_2pc/[drrr]^{f'}   &\Zcal\ar[r]^{\hspace{-0.7cm}\substack{\text{crepant}\\ \pi}}\ar@/_1pc/[drr]^{f}          &\Ycal=\Xcal/G\ar[dr]^{g}         &\\
        &       &       &\PBbb^{1}.
    }
\end{displaymath}
We first argue analytically. By the minimal decomposition (Proposition \ref{prop:orthogonal-decompositions}), the morphism \eqref{eq:iso-prim-coh} induces a morphism between Deligne extensions
\begin{equation}\label{eq:morphism-Deligne-Vcal}
    ^{\ell}(R^{n-1}h_{\ast}\Omega_{\Xcal/U}^{\bullet})^{G}_{\prim}{\hookrightarrow}\ ^{\ell}(R^{n-1}f_{\ast}\Omega_{\Zcal/U}^{\bullet})_{\prim}.
\end{equation}
We can reformulate \eqref{eq:morphism-Deligne-Vcal} as a morphism
\begin{displaymath}
    \psi\colon(R^{n-1}h_{\ast}^{\prime}\Omega_{\Xcal^{\prime}/\PBbb^{1}}^{\bullet}(\log))^{G}_{\prim}{\hookrightarrow} (R^{n-1}f_{\ast}^{\prime}\Omega_{\Zcal^{\prime}/\PBbb^{1}}^{\bullet}(\log))_{\prim},
\end{displaymath}
 extending \eqref{eq:iso-prim-coh-bis-bis} to $\PBbb^{1}$. By the GAGA principle, this morphism is algebraic. Notice that the coherent sheaves involved in $\psi$ are locally free and defined over $\QBbb$. By Lemma \ref{prop:iso-prim-coh} \eqref{item:iso-prim-coh-2}, $\psi_{\mid U}$ is defined over $\QBbb$. Therefore, $\psi_{\mid U}$ is invariant under the action of $\Aut(\CBbb/\QBbb)$. Because $U$ is a non-empty Zariski open subset of $\PBbb^{1}$, which is an integral scheme, we infer that $\psi$ is invariant under $\Aut(\CBbb/\QBbb)$. Therefore, $\psi$ is defined over $\QBbb$, and so is its cokernel. We denote by $\widetilde{\Vcal}$ this cokernel of $\psi$ modulo its torsion part. Then $\widetilde{\Vcal}$ is a vector bundle.
 
 By Proposition \ref{prop:orthogonal-decompositions} \eqref{item:orth-dec-2}, $\widetilde{\Vcal}_{\mid U}$ is canonically isomorphic to $\Vcal$, over $\QBbb$, and in particular inherits a connection from $\Vcal$. Also, by the same proposition, we know that the analytification of \eqref{eq:iso-prim-coh-bis-alg} is canonically identified with the tensor product of \eqref{eq:iso-prim-coh-bis} with $\Ocal_{U^{\an}}$. By taking Deligne extensions, we deduce that $\widetilde{\Vcal}^{\an}$ is a vector bundle with regular singular connection, canonically isomorphic to the Deligne extension of $\VBbb\otimes\Ocal_{U^{\an}}$. By the first part of the lemma, we thus infer that $\widetilde{\Vcal}^{\an}$ is a trivial vector bundle with connection, and in particular any trivialization over $\PBbb^{1}$ is flat. As in the proof of Lemma \ref{lemma:trivial-hodge} \eqref{lemma:trivial-hodge-2}, we deduce that $\widetilde{\Vcal}$ is a trivial vector bundle over $\PBbb^{1}$, defined over $\QBbb$. From all the above, we conclude that the restriction to $U$ of any trivialization of $\widetilde{\Vcal}$, defined over $\QBbb$, induces a flat trivialization of $\Vcal\simeq\widetilde{\Vcal}_{\mid U}$, defined over $\QBbb$. This concludes the proof.
 \end{proof}

\subsection{Behaviour of $\eta_{k}$ at the MUM point}
For the mirror family $f\colon\Zcal\to\PBbb^{1}$, let $\DBbb_{\infty}$ be a holomorphic disc neighborhood at infinity, with parameter $t=1/\psi$. To lighten notations, we still denote by $f\colon\Zcal\to\DBbb_{\infty}$ the restricted family. To simplify notation, we write $H^{n-1}_{\lim}$ for the limiting mixed Hodge structure at infinity of $(R^{n-1}f_{\ast}\QBbb)_{\min}$. 
\begin{lemma}\label{lemma:MHS-infty}
\begin{enumerate}
    \item The monodromy $T$ of $(R^{n-1}f_{\ast}\QBbb)_{\min}$ at $\infty$ is maximally unipotent. In particular, the nilpotent operator $N$ on $H^{n-1}_{\lim}$ satisfies $N^{n-1}\neq 0$. 
    \item The graded pieces $\Gr^{W}_{k}H^{n-1}_{\lim}$ are one-dimensional if $k$ is even, and trivial otherwise. For all $1\leq k\leq n-1$, $N$ induces isomorphisms 
    \begin{displaymath}
       \Gr^{W}_{k} N\colon\Gr^{W}_{k}H^{n-1}_{\lim}\overset{\sim}{\longrightarrow} \Gr^{W}_{k-2}H^{n-1}_{\lim}.
    \end{displaymath}
    \item\label{lemma:MHS-infty-3} For all $1\leq p\leq n-1$, $N$ induces isomorphisms
    \begin{displaymath}
        \Gr_{F_{\infty}}^{p} N\colon \Gr_{F_{\infty}}^{p}H^{n-1}_{\lim}\overset{\sim}{\longrightarrow} \Gr_{F_{\infty}}^{p-1}H^{n-1}_{\lim}.
    \end{displaymath}
\end{enumerate}
\end{lemma}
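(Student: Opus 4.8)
The three assertions concern a single nilpotent operator $N$ on the limiting space $H^{n-1}_{\lim}$, which by Lemma \ref{lemma:thetakconstruction} has dimension $n$, together with the limiting Hodge filtration $F^{\bullet}_{\infty}$ and the monodromy weight filtration $W_{\bullet}$. Since the minimal variation has Hodge numbers equal to $1$ in each bidegree $(p,q)$ with $p+q=n-1$ and $0$ otherwise (Lemma \ref{lemma:Hodge-numbers-crepant}), the graded pieces $\Gr_{F_{\infty}}^{p}H^{n-1}_{\lim}$ are one-dimensional for $0\leq p\leq n-1$, and Griffiths transversality gives $N(F^{p}_{\infty})\subseteq F^{p-1}_{\infty}$, so each $\Gr_{F_{\infty}}^{p}N$ is a well-defined map between lines; proving it is an isomorphism amounts to proving it is nonzero. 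The engine is the Kodaira--Spencer recurrence \eqref{eq:recurrence-eta-norm} together with the explicit shape of the Yukawa coupling \eqref{eq:yukawa}.

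The plan is to read off the action of $N$ from a frame adapted to $F^{\bullet}_{\infty}$ built out of the sections $\eta_{k}$. First I would record that the monodromy at $\infty$ on $(R^{n-1}f_{\ast}\QBbb)_{\min}$ is unipotent, which follows from the semi-stable model at $\psi=\infty$ produced in Proposition \ref{prop:NSB}; consequently the fiber at $\infty$ of the Deligne extension of $(R^{n-1}f_{\ast}\Omega^{\bullet}_{\Zcal/U})_{\min}$ is canonically $H^{n-1}_{\lim}$, the logarithmic de Rham complex computes the graded pieces $\Gr_{F_{\infty}}$, and $N=-2\pi i\,\Res_{\infty}\nabla$ acts on them as the graded residue of $\nabla$ along the logarithmic vector field at $\infty$. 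Writing $t=1/\psi$, we have $\psi\,d/d\psi=-t\,d/dt$, so this logarithmic field is exactly the one appearing in \eqref{eq:recurrence-eta-norm}. The content to establish is that, suitably renormalized by powers of $t$, the $\eta_{k}$ extend across $\infty$ to generators $\overline{\eta}_{k}$ of the one-dimensional graded log Hodge bundles $\Gr^{\,n-1-k}$, nonzero in the fiber at $\infty$; this non-vanishing is guaranteed by the Yukawa coupling, since \eqref{eq:yukawa} gives $Y(\psi)=c\,\psi^{n+1}/(1-\psi^{n+1})\to -c\neq 0$ as $\psi\to\infty$, and by \eqref{eq:Yukawa-def} $Y$ is the $(n-1)$-fold iterate of the graded Kodaira--Spencer maps joining the $\eta_{k}$, so the iterated map stays nondegenerate at the boundary.

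With the frame in place, the clean recurrence \eqref{eq:recurrence-eta-norm}, $\KS^{(k)}(\psi\,d/d\psi)\eta_{k}=\eta_{k+1}$, descends at $\infty$ to the statement that $N$ sends $\overline{\eta}_{k}$ to a nonzero scalar multiple of $\overline{\eta}_{k+1}$ on graded pieces. Setting $p=n-1-k$, this reads $\Gr_{F_{\infty}}^{p}N\neq 0$, hence an isomorphism, for all $1\leq p\leq n-1$, which is assertion (3). Assertion (1) then follows at once: $N^{n-1}\overline{\eta}_{0}$ is a nonzero multiple of $\overline{\eta}_{n-1}\neq 0$, so $N^{n-1}\neq 0$ on the $n$-dimensional $H^{n-1}_{\lim}$, i.e. the monodromy is maximally unipotent. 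Equivalently, and more robustly, one can argue without identifying individual $\overline{\eta}_{k}$: the full composite $\Gr_{F_{\infty}}^{n-1}\xrightarrow{N}\cdots\xrightarrow{N}\Gr_{F_{\infty}}^{0}$ equals, up to a nonzero constant, the boundary value $-c$ of the Yukawa coupling, and a composite of maps of lines is nonzero if and only if every factor is, which forces each $\Gr_{F_{\infty}}^{p}N$ to be an isomorphism. Finally assertion (2) is the standard description of the monodromy weight filtration of a single Jordan block of size $n$ in weight $n-1$: the nonzero graded pieces sit in the even weights $0,2,\ldots,2(n-1)$, each one-dimensional, with $N$ an isomorphism between consecutive nonzero pieces.

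The main obstacle is the technical underpinning of the second and third paragraphs: rigorously showing that the relevant renormalization of the $\eta_{k}$ extends to a non-vanishing frame of the graded Deligne extension at $\infty$, and that the boundary value of the Yukawa coupling genuinely computes the composite of graded residues defining $N$. This requires controlling the extension of the holomorphic volume form $\theta_{0}$ (equivalently of the $\eta_{k}$) across the MUM point in the Deligne extension, matching the frame $\psi\,d/d\psi=-t\,d/dt$ with the residue defining $N$, and tracking the nonzero constants ($-2\pi i$, the factor $|G|$ relating $Y$ and $\widetilde Y$, and the powers of $(n+1)$ and $\psi$ in Definition \ref{def:eta-k}). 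Once unipotency is in hand and the Yukawa coupling is identified with the iterated residue, everything downstream is the linear algebra of a single Jordan block.
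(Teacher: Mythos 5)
Your route is genuinely different from the paper's, and it has a gap at its foundation. The paper proves the lemma in the order (1) $\Rightarrow$ (2) $\Rightarrow$ (3): maximal unipotency of the monodromy at $\infty$ for $(R^{n-1}h_{\ast}\QBbb)^{G}_{\prim}\simeq(R^{n-1}f_{\ast}\QBbb)_{\min}$ is quoted from Harris--Shepherd-Barron--Taylor (\cite[Cor.~1.7]{HSBT}, with the same argument in even relative dimension); then $N^{n-1}\neq 0$ on the $n$-dimensional $H^{n-1}_{\lim}$ (Lemma \ref{lemma:thetakconstruction}) forces the single-Jordan-block picture of $W_{\bullet}$, which gives (2), and the one-dimensionality of the $\Gr_{F_{\infty}}^{p}$ then identifies $\Gr_{F_{\infty}}^{p}H^{n-1}_{\lim}$ with $\Gr^{W}_{2p}H^{n-1}_{\lim}$, so (2) gives (3). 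You instead try to prove (3) first from the non-vanishing of the Yukawa coupling at the boundary and deduce (1) and (2) from it.

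The gap is the step you yourself flag as "technical": that the boundary value $-c\neq 0$ of $Y(\psi)$ computes the composite $\Gr_{F_{\infty}}^{n-1}N\circ\cdots\circ\Gr_{F_{\infty}}^{1}N$. This is only true if the frames in which $Y(\psi)$ is expressed extend to non-vanishing frames of the logarithmic Hodge bundles at $t=1/\psi=0$, and $\theta_{0}$ does not: by Theorem \ref{thm:eta-triv-infty} the frame of $f_{\ast}K_{\Zcal/\DBbb_{\infty}}(\log)$ is $\eta_{0}=-(n+1)\psi\,\theta_{0}$, so $\theta_{0}$ acquires a zero of order one at $t=0$, and since $\theta_{0}$ enters $Y(\psi)$ quadratically the comparison between $Y(\psi)$ and the composite of graded residues is twisted by $t^{\pm 2}$. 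Getting the sign of that exponent wrong flips the conclusion between "composite nonzero" and "composite zero", so $\lim_{\psi\to\infty}Y(\psi)\neq 0$ by itself decides nothing; you must first pin down the order of $\theta_{0}$ (equivalently of $\eta_{0}$) in the Deligne extension at $\infty$. But that is exactly where the circularity bites: in the paper, the fact that $\eta_{0}$ is a non-vanishing frame at $\infty$ is Theorem \ref{thm:eta-triv-infty}, whose proof rests on Lemma \ref{lemma:non-vanishing-period}, which in turn uses Lemma \ref{lemma:MHS-infty} (the relation $W_{0}=\ker N=\Imag N^{n-1}$ and $F^{n-1}\Gr^{W}_{2n-2}$), i.e.\ the statement you are trying to prove. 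The explicit period computation \eqref{eq:period-I0} ($I_{0}(\psi)\to 1$) does not rescue this on its own: writing $\eta_{0}=t^{a}u(t)e$ for a genuine frame $e$, one has $I_{0}=t^{a}u(t)\,Q(\gamma_{0}^{\prime},e)$, and concluding $a=0$ from $I_{0}\to 1$ requires knowing $Q(\gamma_{0}^{\prime},e(0))\neq 0$, which is again Lemma \ref{lemma:non-vanishing-period} and hence again this lemma. So the argument as proposed either begs the question or leaves its decisive normalization unverified; the paper sidesteps all of this by importing maximal unipotency from \cite{HSBT}.
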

\begin{proof}
The maximally unipotent property for $(R^{n-1}f_{\ast}\QBbb)_{\min}\simeq (R^{n-1}h_{\ast}\QBbb)^{G}_{\prim}$ is proven in odd relative dimension in \cite[Cor. 1.7]{HSBT}. Exactly the same argument as in \emph{loc. cit.} yields the claim in even relative dimension.
In particular $N^{n-1}\neq 0$. This settles the first point.  Because moreover $N^{n-1}$ induces an isomorphism $\Gr^{W}_{2(n-1)}H^{n-1}_{\lim}\overset{\sim}{\to} \Gr^{W}_{0}H^{n-1}_{\lim}$ we deduce that $\Gr^{W}_{0}H^{n-1}_{\lim}\neq 0$. By Lemma \ref{lemma:thetakconstruction}, $H^{n-1}_{\lim}$ is $n$-dimensional, and the second item follows for dimension reasons. Finally, we use that $\Gr_{F_{\infty}}^{p}H^{n-1}_{\lim}$ is one-dimensional again by Lemma \ref{lemma:thetakconstruction} and then necessarily $\Gr_{F_{\infty}}^{p}H^{n-1}_{\lim}=\Gr_{F_{\infty}}^{p}\Gr^{W}_{2p}H^{n-1}_{\lim}=\Gr^{W}_{2p}H^{n-1}_{\lim}$. Hence the second point implies the third one.
\end{proof}

By the maximally unipotent monodromy and for dimension reasons, the $T$-invariant classes of the minimal cohomology of a general fiber span a rank one trivial subsystem of $(R^{n-1}f_{\ast}\CBbb)_{\min}$ on $\DBbb^{\times}_{\infty}$. We fix a basis $\gamma^{\prime}$ of this trivial system. It extends to a nowhere vanishing holomorphic section of the Deligne extension of $(R^{n-1}f_{\ast}\CBbb)_{\min}$. The fiber at $0$ is then a basis for $W_{0}$, which identifies with $\ker N$ by the above lemma. We still write $\gamma^{\prime}$ for this limit element. Similarly, $(R^{n-1}f_{\ast}\CBbb)_{\min}^{\vee}$ has a rank one trivial subsystem, spanned by the class of a $T$-invariant homological cycle $\gamma$. We may choose $\gamma$ to correspond to $\gamma^{\prime}$ by Poincar\'e duality.\footnote{Recall, from Proposition \ref{prop:orthogonal-decompositions} and Remark \ref{rmk:homology-min}, that classes in $(R^{n-1}f_{\ast}\CBbb)_{\min}^{\vee}$ can be seen as homological cycles, and Poincar\'e duality can be used on the minimal component.} Hence, for any $\eta\in H^{n-1}(Z_t)$, $t\in\DBbb^{\times}_{\infty}$, the period $\langle\gamma,\eta\rangle$ equals the intersection pairing $Q(\gamma^{\prime},\eta)$. It is possible to explicitly construct an invariant cycle. Although we will need this in a moment, we postpone the discussion to \textsection \ref{subsubsec:periods}, where a broader study of homological cycles is delivered. 

In preparation for the following lemma, we recall from the preliminaries in \textsection \ref{subsec:generalities-hodge} that $f_{\ast}K_{\Zcal/\DBbb_{\infty}}(\log)$ is isomorphic to $\Fcal^{n-1}R^{n-1}\Omega^{\bullet}_{\Zcal/\DBbb_{\infty}}(\log)$, and that $R^{n-1}\Omega^{\bullet}_{\Zcal/\DBbb_{\infty}}(\log)$ realizes the Deligne extension of $(R^{n-1}f_{\ast}\CBbb)\otimes\Ocal_{\DBbb_{\infty}^{\times}}$ to $\DBbb_{\infty}$.
\begin{lemma}\label{lemma:non-vanishing-period}
Let $\eta$ be a holomorphic trivialization of $f_{\ast}K_{\Zcal/\DBbb_{\infty}}(\log)$. Then the period $\langle\gamma,\eta\rangle$ defines a holomorphic function on $\DBbb_{\infty}$, non-vanishing at the origin. 
\end{lemma}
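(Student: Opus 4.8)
The plan is to reduce the statement to a computation in the limiting mixed Hodge structure $H^{n-1}_{\lim}$ and then to exploit the compatibility of the polarization with the monodromy weight filtration. Recall from the discussion preceding the lemma that $\gamma$ is chosen to correspond to $\gamma'$ under Poincar\'e duality, so that for $\eta\in H^{n-1}(Z_{t})$ one has $\langle\gamma,\eta\rangle=Q(\gamma',\eta)$, where $Q$ is the polarization on $(R^{n-1}f_{\ast}\CBbb)_{\min}$ (see Proposition \ref{prop:orthogonal-decompositions} and Remark \ref{rmk:homology-min}). First I would establish holomorphy across the origin. The invariant cycle $\gamma'$ satisfies $N\gamma'=0$, hence it is a single-valued flat section which, as already recorded, extends to a nowhere-vanishing holomorphic section of the Deligne extension of $(R^{n-1}f_{\ast}\CBbb)_{\min}$ with $\gamma'(0)$ a basis of $W_{0}=\ker N$. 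On the other hand $\eta$ trivializes $f_{\ast}K_{\Zcal/\DBbb_{\infty}}(\log)$, which is exactly $F^{n-1}$ of that same Deligne extension (the bundle $f_{\ast}K_{\Zcal/U}$ lies in the minimal part by Remark \ref{rmk:cup-product}\eqref{item:cup-product-1}). Since $Q$ is flat for the Gauss--Manin connection, it extends to a pairing of Deligne extensions valued in $\Ocal_{\DBbb_{\infty}}$, the canonical extension of the trivial local system. Therefore $Q(\gamma',\eta)$ is holomorphic on all of $\DBbb_{\infty}$, and its value at the origin is the limiting pairing $Q(\gamma'(0),\eta(0))$ on $H^{n-1}_{\lim}$.

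It then remains to prove $Q(\gamma'(0),\eta(0))\neq 0$, which is the crux. By Lemma \ref{lemma:MHS-infty} the structure $H^{n-1}_{\lim}$ is Hodge--Tate, with one-dimensional graded pieces $\Gr^{W}_{2p}H^{n-1}_{\lim}$ for $0\leq p\leq n-1$ and $\Gr^{p}_{F_{\infty}}H^{n-1}_{\lim}=\Gr^{W}_{2p}H^{n-1}_{\lim}$. In particular $F^{n-1}_{\infty}H^{n-1}_{\lim}$ is one-dimensional and meets $W_{2(n-1)-1}$ trivially, so the nonzero vector $\eta(0)$ has nonzero image in the top graded piece $\Gr^{W}_{2(n-1)}$. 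On the other side, $\gamma'(0)$ is a basis of the lowest piece $\Gr^{W}_{0}$. I would then invoke that for the polarized limiting MHS of a variation of weight $w=n-1$ the weight filtration is self-dual under $Q$, i.e. $Q(W_{a},W_{b})=0$ whenever $a+b<2w$, and that $Q$ induces a perfect pairing $\Gr^{W}_{w+k}\times\Gr^{W}_{w-k}\to\QBbb$. Applying the first property with $a=0$ shows that the functional $Q(\gamma'(0),-)$ kills $W_{2(n-1)-1}$, hence descends to $\Gr^{W}_{2(n-1)}$; there it is the pairing of the nonzero class $\gamma'(0)\in\Gr^{W}_{0}$ under the perfect duality $\Gr^{W}_{0}\times\Gr^{W}_{2(n-1)}\to\QBbb$, so it is a nonzero functional. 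Since $\eta(0)$ projects to a nonzero element of $\Gr^{W}_{2(n-1)}$, we conclude $Q(\gamma'(0),\eta(0))\neq 0$.

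The main obstacle is precisely this last non-vanishing: one must correctly match the extremal graded pieces $\Gr^{W}_{0}$ and $\Gr^{W}_{2(n-1)}$, carrying $\gamma'(0)$ and the class of $\eta(0)$ respectively, to the nondegenerate part of the limiting polarization. This hinges on two inputs that must be carefully combined, namely the Hodge--Tate description and the maximal unipotency furnished by Lemma \ref{lemma:MHS-infty} (which forces $\gamma'(0)\in\Gr^{W}_{0}$ and $\eta(0)\notin W_{2(n-1)-1}$), and the standard self-duality $W_{a}^{\perp}=W_{2w-1-a}$ of the monodromy weight filtration under $Q$. The holomorphy across the origin, by contrast, is a formal consequence of the flatness of $\gamma'$ and of $Q$ together with the functoriality of the Deligne extension, and should require no delicate estimates.
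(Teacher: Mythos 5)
Your proof is correct and follows essentially the same route as the paper's: holomorphy comes from both sections living in the Deligne extension (the paper via a logarithmic-growth bound plus removable singularity, you via flatness of $Q$ and unipotency of the monodromy), and the non-vanishing is the paper's argument phrased through Schmid's self-duality $W_{a}^{\perp}=W_{2w-1-a}$ rather than the equivalent identity $(\Imag N^{n-1})^{\perp}=\ker N^{n-1}=W_{2n-3}$. Both hinge on the same inputs from Lemma \ref{lemma:MHS-infty}: $\gamma'(0)$ spans $W_{0}$ and $\eta(0)\notin W_{2n-3}$.
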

\begin{proof}
The argument is well-known, see \emph{e.g.} \cite[Prop.]{morrison-mirror} and \cite[Lemma 3.10]{VoisinMirror}, but we sketch it due to its relevance. 

The pairing $\langle\gamma,\eta\rangle=Q(\gamma^{\prime},\eta)$ is clearly a holomorphic function on $\DBbb^{\times}_{\infty}$, since both $\gamma^{\prime}$ and $\eta$ are holomorphic sections of $(R^{n-1}f_{\ast}\CBbb)\otimes\Ocal_{\DBbb^{\times}_{\infty}}$. Moreover, they are both global sections of the Deligne extension. This ensures that $|Q(\gamma^{\prime},\eta)|$ has at most a logarithmic singularity at $0$. It follows that $Q(\gamma^{\prime},\eta)$ is actually a holomorphic function. 

For the non-vanishing property, we make use of the interplay between the intersection pairing seen on $H^{n-1}_{\lim}$ and the monodromy weight filtration \cite[Lemma 6.4]{schmid}, together with Lemma \ref{lemma:MHS-infty}. Let $\eta^{\prime}\in H^{n-1}_{\lim}$ be the fiber of $\eta$ at $0$. We need to show that $Q(\gamma^{\prime},\eta^{\prime})\neq 0$. Suppose the contrary. Since $\gamma^{\prime}$ is a basis of $W_{0}=\ker N=\Imag N^{n-1}$, we have $\eta^{\prime}\in (\Imag N^{n-1})^{\perp}$. The intersection pairing is non-degenerate and satisfies $Q(Nx,y)+Q(x,Ny)=0$. Therefore, we find that $\eta^{\prime}\in (\Imag N^{n-1})^{\perp}=\ker N^{n-1}=W_{2n-3}$. But $\eta^{\prime}$ is a basis of $F^{n-1}H^{n-1}_{\lim}=F^{n-1}\Gr^{W}_{2n-2}H^{n-1}_{\lim}$, and therefore $\eta^{\prime}\not\in W_{2n-3}$. We thus have reached a contradiction.
\end{proof}

Before the next theorem, we consider the logarithmic extension of the Kodaira--Spencer maps \eqref{def:KS}: if $D$ is the divisor $[\infty]+\sum_{\xi^{n+1}=1}[\xi]$, then 
\begin{equation}\label{eq:KS-log}
    \KS^{(q)}\colon T_{\PBbb^{1}}(-\log D)\longrightarrow\Hom_{\Ocal_{\PBbb^{1}}}(R^{q}f_{\ast}\Omega^{n-1-q}_{\Zcal^{\prime}/\PBbb^{1}}(\log), R^{q+1}f_{\ast}\Omega^{n-2-q}_{\Zcal^{\prime}/\PBbb^{1}}(\log)).
\end{equation}
They preserve the minimal and primitive components. 
\begin{theorem}\label{thm:eta-triv-infty}
The section $\eta_{k}$ is a holomorphic trivialization of $R^{k}f_{\ast}\Omega^{n-1-k}_{\Zcal/\DBbb_{\infty}}(\log)_{\min}$.
\end{theorem}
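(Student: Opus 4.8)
The plan is to argue by induction on $k$, using the logarithmic Kodaira--Spencer recurrence \eqref{eq:recurrence-eta-norm} as the engine and the non-degeneracy of $N$ on graded pieces (Lemma~\ref{lemma:MHS-infty}\,\eqref{lemma:MHS-infty-3}) to propagate non-vanishing. Over the punctured disc $\DBbb_\infty^{\times}$ each $\eta_{k}$ is already a frame of the rank-one bundle $(R^{k}f_{\ast}\Omega^{n-1-k}_{\Zcal/U})_{\min}$, by Lemma~\ref{lemma:thetakconstruction} and Definition~\ref{def:eta-k}; so the only issue is the behaviour at the MUM point $t=0$. Concretely, I must show that $\eta_{k}$ extends to a holomorphic section of the logarithmic Hodge bundle $R^{k}f_{\ast}\Omega^{n-1-k}_{\Zcal/\DBbb_\infty}(\log)_{\min}$ and that its value at $t=0$ is nonzero, the fibre there being the one-dimensional space $\Gr^{n-1-k}_{F_\infty}H^{n-1}_{\lim}$.

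\emph{Base case $k=0$.} Here $\eta_{0}$ is a section of $f_{\ast}K_{\Zcal/U}(\log)_{\min}$, whose fibre at $0$ is $F^{n-1}H^{n-1}_{\lim}$. Fix an arbitrary holomorphic frame $\eta$ of $f_{\ast}K_{\Zcal/\DBbb_\infty}(\log)_{\min}$ and write $\eta_{0}=g\,\eta$ over $\DBbb_\infty^{\times}$. Pairing with the invariant cycle $\gamma$ gives $\langle\gamma,\eta_{0}\rangle=g\,\langle\gamma,\eta\rangle$, and by Lemma~\ref{lemma:non-vanishing-period} the factor $\langle\gamma,\eta\rangle$ is holomorphic and non-vanishing at $0$. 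It therefore suffices to show that $\langle\gamma,\eta_{0}\rangle=-(n+1)\psi\,\langle\gamma,\theta_{0}\rangle$ is holomorphic and non-vanishing at $t=0$; equivalently, that the fundamental period $\langle\gamma,\theta_{0}\rangle$ has a simple zero at $\infty$, which the factor $-(n+1)\psi$ in Definition~\ref{def:eta-k} is designed to cancel (compare Remark~\ref{rmk:vanishing-eta-0}). This can be read off from the residue presentation: writing $F_\psi=-(n+1)\psi H\big(1-(\textstyle\sum_j x_j^{n+1})/((n+1)\psi H)\big)$ with $H=x_0\cdots x_n$ and expanding the geometric series, the torus-cycle integral retains only the leading term, so $\langle\gamma,\theta_{0}\rangle=-\tfrac{c}{(n+1)\psi}\big(1+O(\psi^{-(n+1)})\big)$ with $c$ a nonzero toric residue (a standard computation, cf.\ \cite{morrison-mirror}). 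Hence $\langle\gamma,\eta_{0}\rangle=c+O(t)$ is a unit, $g$ is a unit at $0$, and $\eta_{0}$ is a logarithmic frame.

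\emph{Inductive step.} Suppose $\eta_{k}$ is a frame of $R^{k}f_{\ast}\Omega^{n-1-k}_{\Zcal/\DBbb_\infty}(\log)_{\min}$, so $\eta_{k}(0)$ is a basis of $\Gr^{n-1-k}_{F_\infty}H^{n-1}_{\lim}$. The vector field $\psi\,d/d\psi=-t\,d/dt$ is a nowhere-vanishing local generator of $T_{\PBbb^{1}}(-\log D)$ near $\infty$, so by \eqref{eq:KS-log} the logarithmic operator $\KS^{(k)}(\psi\,d/d\psi)$ is an $\Ocal$-linear morphism of logarithmic Hodge bundles preserving the minimal part. Consequently $\eta_{k+1}=\KS^{(k)}(\psi\,d/d\psi)\,\eta_{k}$, which coincides with the globally defined $\eta_{k+1}$ by \eqref{eq:recurrence-eta-norm}, is automatically a holomorphic section of $R^{k+1}f_{\ast}\Omega^{n-2-k}_{\Zcal/\DBbb_\infty}(\log)_{\min}$ with no pole at $0$. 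On fibres at $t=0$ this operator induces, up to the nonzero scalar $\pm(2\pi i)^{-1}$, the graded map $\Gr^{n-1-k}_{F_\infty}N$, which by Lemma~\ref{lemma:MHS-infty}\,\eqref{lemma:MHS-infty-3} is an isomorphism onto $\Gr^{n-2-k}_{F_\infty}H^{n-1}_{\lim}$ for $1\leq n-1-k$, i.e.\ for $0\leq k\leq n-2$. Thus $\eta_{k+1}(0)$ is a basis of $\Gr^{n-2-k}_{F_\infty}H^{n-1}_{\lim}$, in particular nonzero, and $\eta_{k+1}$ is a logarithmic frame. This completes the induction.

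The main obstacle is the base case: identifying $\langle\gamma,\eta_{0}\rangle$ with (a normalization of) the fundamental period and establishing that the latter vanishes to order exactly one at the MUM point, equivalently the non-vanishing of the leading toric residue $c$. Once this single input is in place, the inductive engine driven by the maximally unipotent monodromy---through the isomorphisms $\Gr_{F_\infty}N$ of Lemma~\ref{lemma:MHS-infty}---is essentially formal, since applying an $\Ocal$-linear morphism of logarithmic Hodge bundles that is a fibrewise isomorphism at $0$ sends frames to frames.
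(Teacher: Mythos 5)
Your proposal is correct and follows essentially the same route as the paper: the base case is handled by combining Lemma~\ref{lemma:non-vanishing-period} with the explicit power-series computation of the fundamental period $\langle\gamma,\eta_{0}\rangle=I_{0}(\psi)=1+O(\psi^{-(n+1)})$ (the paper's \eqref{eq:period-I0}), and the higher $\eta_{k}$ are then handled via the logarithmic Kodaira--Spencer recurrence \eqref{eq:recurrence-eta-norm} together with the isomorphisms $\Gr_{F_{\infty}}^{p}N$ of Lemma~\ref{lemma:MHS-infty}\,\eqref{lemma:MHS-infty-3}. The only cosmetic difference is that you phrase the step from $\eta_{k}$ to $\eta_{k+1}$ as an induction, whereas the paper first records meromorphy and then runs the recurrence once for all $k$; the mathematical content is identical.
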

\begin{proof}
First of all, we prove that $\eta_{0}$ is a meromorphic section of $f_{\ast}K_{\Zcal/\DBbb_{\infty}}(\log)$. Indeed, $\eta_{0}$ is an algebraic section of $f_{\ast}K_{\Zcal/U}$ (see Lemma \ref{lemma:recurrence-eta}), hence a rational section of $f_{\ast}K_{\Zcal^{\prime}/\PBbb^{1}}(\log)$ and thus a meromorphic section of $f_{\ast}K_{\Zcal/\DBbb_{\infty}}(\log)$.

Second, we establish the claim of the theorem for $\eta_{0}$. By Lemma \ref{lemma:non-vanishing-period}, we need to show that the holomorphic function $\langle\gamma,\eta_{0}\rangle$ on $\DBbb^{\times}_{\infty}$ extends holomorphically to $\DBbb_{\infty}$, and does not vanish at the origin. This property can be checked by a standard explicit computation reproduced below \eqref{eq:period-I0}. 

Finally, for the sections $\eta_{k}$, we use the recurrence \eqref{eq:recurrence-eta-norm} and the logarithmic extension of the Kodaira--Spencer maps \eqref{eq:KS-log}. It follows that the sections $\eta_{k}$ are global sections of the sheaves $R^{k}f_{\ast}\Omega^{n-1-k}_{\Zcal/\DBbb_{\infty}}(\log)_{\min}$. Let us denote by $\eta^{\prime}_{k}$ the fiber at 0 of the sections $\eta_{k}$. Specializing \eqref{eq:recurrence-eta-norm} at $0$, we find $(\Gr_{F_{\infty}}^{n-1-k} N)\eta_{k}^{\prime}=\eta_{k+1}^{\prime}$. By Lemma \ref{lemma:MHS-infty} \eqref{lemma:MHS-infty-3} and because $\eta_{0}^{\prime}\neq 0$, we see that $\eta_{k}^{\prime}\neq 0$ for all $k$. This concludes the proof.
\end{proof}

\subsection{Behaviour of $\eta_{k}$ at the ODP points}\label{subsec:etak-odp}
Recall the normal crossings model $f^{\prime}\colon\Zcal^{\prime}\to\PBbb^{1}$. We restrict it to a disc neighborhood $\DBbb_{\xi}$ of some $\xi\in\mu_{n+1}$. Concretely, we fix the coordinate $t=\psi-\xi$. We  write $f^{\prime}\colon\Zcal^{\prime}\to\DBbb_{\xi}$ for the restricted family. We now deal with the limiting mixed Hodge structure $H^{n-1}_{\lim}$ at $\xi$ of $(R^{n-1} f_\ast \QBbb)_{\min}$. Since the monodromy around $\xi$ is not unipotent in general, the construction of $H^{n-1}_{\lim}$ requires a preliminary semi-stable reduction. This can be achieved as follows:
\begin{equation}\label{eq:semi-stable-reduction}
    \xymatrix{
        &\widetilde{\Zcal}\ar[rr]^{\text{normalization}}\ar[drr]_{\widetilde{f}}    &   &\Zcal^{\prime\prime}\ar[r]^{r}\ar[d]\ar@{}[rd]|\square           &\Zcal^{\prime}\ar[d]^{f^{\prime}}\\
       &         &   &\DBbb_{\xi}\ar[r]_{\rho(u)=u^{2}=t}                  &\DBbb_{\xi}\\
    }
\end{equation}
Hence $\widetilde{f}: \widetilde{\Zcal} \to \DBbb_{\xi}$ is the normalized base change of $f'$ by $\rho$. An explicit computation in local coordinates shows it is indeed semi-stable. The special fiber $\widetilde{f}^{-1}(0)$ consists of two components intersecting transversally. One is the strict transform $\widetilde{Z}$ of $Z_\xi$. We denote by $E$ the other component. Then $E$ is a non-singular quadric of dimension $n-1$, and $\widetilde{Z} \cap E$ is a non-singular quadric of dimension $n-2$. In terms of this data, the monodromy weight filtration is computed as follows. 
\begin{lemma}\label{lemma:MHS-xi}
 The graded pieces of the weight filtration on $H^{n-1}_{\lim}$ are given by:
\begin{itemize}
    \item if $n-1$ is odd, then
    \begin{displaymath}
        \Gr^{W}_{k}H^{n-1}_{\lim}=\begin{cases}
            \QBbb\left(-\frac{n-2}{2}\right),    &\text{if } k=n-2,\\
            \text{a direct factor of $H^{n-1}(\widetilde{Z})$}  , &\text{if } k=n-1,\\
            \QBbb\left(-\frac{n}{2}\right),    &\text{if } k=n,\\
            0,  &\text{otherwise}.
        \end{cases}
    \end{displaymath}
    \item if $n-1$ is even, then 
    \begin{displaymath}
        \Gr^{W}_{k}H^{n-1}_{\lim}=\begin{cases}
             \text{a direct factor of } &H\left(H^{n-3}(\widetilde{Z}\cap E)(-1) \to H^{n-1}(\widetilde{Z}) \oplus H^{n-1}(E) \to H^{n-1}(\widetilde{Z}\cap E)\right), \\
              &\hspace{7.5cm}\text{if } k=n-1,\\
              & \\
            0, &\hspace{7.5cm}\text{if } k\neq n-1.
        \end{cases}
    \end{displaymath}
    Hence, $H^{n-1}_{\lim}$ is a pure Hodge structure of weight $n-1$.
\end{itemize}
\end{lemma}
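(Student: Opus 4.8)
The plan is to read off the monodromy weight filtration on $H^{n-1}_{\lim}$ directly from the semi-stable model \eqref{eq:semi-stable-reduction}, exploiting that its special fibre is the union of the two smooth components $\widetilde{Z}$ and $E$ glued along the smooth divisor $D=\widetilde{Z}\cap E$. Since the dual complex of this special fibre is a single edge, hence one-dimensional, the limiting mixed Hodge structure of $R^{n-1}f_{\ast}\QBbb$ can be computed by Steenbrink's weight spectral sequence (equivalently via the Clemens--Schmid exact sequence), which degenerates at $E_{2}$ over $\QBbb$. Its graded pieces live only in weights $n-2,n-1,n$ and are assembled from the cohomologies of $\widetilde{Z}$, $E$ and $D$: the middle weight $\Gr^{W}_{n-1}$ is the cohomology in the middle of
\begin{displaymath}
    H^{n-3}(D)(-1)\longrightarrow H^{n-1}(\widetilde{Z})\oplus H^{n-1}(E)\longrightarrow H^{n-1}(D),
\end{displaymath}
where the first map is Gysin and the second is the difference of restrictions, while $\Gr^{W}_{n}$ is the Tate twist $\operatorname{coker}\big(H^{n-2}(\widetilde{Z})\oplus H^{n-2}(E)\to H^{n-2}(D)\big)(-1)$, and $\Gr^{W}_{n-2}$ is recovered from $\Gr^{W}_{n}$ through the isomorphism $N\colon\Gr^{W}_{n}\overset{\sim}{\to}\Gr^{W}_{n-2}(-1)$.

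The decisive input is that, by construction, $E$ is a smooth quadric of dimension $n-1$ and $D$ a smooth quadric of dimension $n-2$, whose cohomologies are purely of Tate type and parity-sensitive: an odd-dimensional quadric has one-dimensional cohomology concentrated in even degrees, whereas an even-dimensional quadric $Q^{2r}$ carries in addition a second class in its middle degree $2r$. I would then split into the two cases. If $n-1$ is even, then $D=Q^{n-2}$ is odd-dimensional, so $H^{n-2}(D)=0$; hence $\Gr^{W}_{n}=\Gr^{W}_{n-2}=0$ and the full limit $H^{n-1}_{\lim}$ is pure of weight $n-1$, equal to the middle cohomology displayed above. If $n-1$ is odd, then $E=Q^{n-1}$ is odd-dimensional and $D=Q^{n-2}$ even-dimensional; the vanishings $H^{n-1}(E)=0$ and $H^{n\pm1}(D)=0$ collapse the middle complex to $H^{n-1}(\widetilde{Z})$, while in $H^{n-2}(D)$ (of rank two) the restriction maps from $E$ and $\widetilde{Z}$ exactly kill the power of the hyperplane class, leaving the rank-one primitive Tate class. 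Tracking the Tate twists then identifies $\Gr^{W}_{n}=\QBbb(-\tfrac{n}{2})$ and $\Gr^{W}_{n-2}=\QBbb(-\tfrac{n-2}{2})$.

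It remains to pass from the full (equivalently primitive) limiting structure to the minimal summand $(R^{n-1}f_{\ast}\QBbb)_{\min}$, following the recipe of \textsection\ref{subsec:generalities-hodge}: its limiting mixed Hodge structure is cut out by intersecting $(H^{n-1}_{\lim},F_{\infty},W_{\bullet})$ with the fibre at the origin of the Deligne extension of the minimal summand. Being a sub-Hodge-structure, the minimal part is a direct factor of the primitive one, which accounts for the ``direct factor'' phrasing in both displayed formulas. When $n-1$ is odd, Proposition~\ref{prop:orthogonal-decompositions} gives $\VBbb=0$, so minimal $=$ primitive and the computation applies unchanged (the rank-one Tate pieces, carried by the primitive vanishing cycle, already lie in the minimal part). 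When $n-1$ is even, the whole primitive limit is already pure of weight $n-1$, hence so is the minimal summand; here one may additionally invoke Picard--Lefschetz, exactly as in Lemma~\ref{lemma:trivial-V-local}, to locate the single vanishing cycle of eigenvalue $-1$ inside $\VBbb$ rather than the minimal part.

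I expect the main obstacle to be the careful identification of the $E_{1}$-terms of the weight spectral sequence with the quadric cohomologies, together with the verification that the Gysin and restriction maps act as claimed, so that the cokernel and kernel computations produce exactly the stated ranks and Tate twists. A secondary point requiring care is the compatibility of the weight filtration on the full limit with the minimal summand cut out from the Deligne extension, i.e. ensuring that the stated pieces really are the graded quotients of $H^{n-1}_{\lim}$ for $(R^{n-1}f_{\ast}\QBbb)_{\min}$ and not merely of the ambient cohomology.
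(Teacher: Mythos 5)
Your proposal follows essentially the same route as the paper: the paper's proof simply cites Steenbrink's Example 2.15 (the weight spectral sequence computation for the two-component semi-stable model arising from an ordinary double point) and then passes to the minimal part by observing that its complement in $R^{n-1}\widetilde{f}_{\ast}\QBbb$ is a trivial variation of Hodge structures, which is exactly the computation you reconstruct in detail, including the parity analysis of the quadric cohomologies.

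One side remark in your last case is backwards, though it does not affect the conclusion here: you propose to ``locate the single vanishing cycle of eigenvalue $-1$ inside $\VBbb$ rather than the minimal part,'' whereas Lemma \ref{lemma:trivial-V-local} establishes precisely the opposite --- $\VBbb$ has trivial local monodromy around every ODP point (this is what makes it a trivial local system), so the $-1$-eigenvector of Picard--Lefschetz must lie in $(R^{n-1}f_{\ast}\CBbb)_{\min}$. For the present lemma this is harmless, since after the degree-two base change the monodromy becomes trivial and purity of $H^{n-1}_{\lim}$ in the even case holds regardless; but the correct placement of the vanishing cycle is essential later (e.g.\ for the nonvanishing of the cokernel in Lemma \ref{lemma:cokernel} and the order computations in Theorem \ref{thm:order-eta-xi}), so it is worth getting right.
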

\begin{proof}
The proof follows from  \cite[Ex. 2.15]{Steenbrink-mixedonvanishing}, noticing that $(R^{n-1}\widetilde{f}_{\ast}\QBbb)_{\min}=\rho^{\ast}(R^{n-1}f_{\ast}\QBbb)_{\min}$ is a direct factor of $R^{n-1}\widetilde{f}_{\ast}\QBbb$, whose complement is a trivial variation of Hodge structures by Lemma \ref{lemma:trivial-hodge} and Lemma \ref{lemma:trivial-V-local}. For the case $n-1$ is even, we moreover recall that  $\VBbb$ as in Proposition \ref{prop:orthogonal-decompositions} has pure bidegree $((n-1)/2,(n-1)/2)$.
\end{proof}

We will need the comparison of the middle degree minimal Hodge bundles between before and after semi-stable reduction. We follow \cite[Sec. 2 \& Prop. 3.10]{cdg2}. There are natural morphisms 
\begin{equation}\label{eq:injection-hodge-bundles}
    \varphi^{p,q}\colon\rho^{\ast} R^{q}f^{\prime}_{\ast}\Omega^{p}_{\Zcal^{\prime}/\DBbb_{\xi}}(\log)_{\min}\hookrightarrow R^{q}\widetilde{f}_{\ast}\Omega^{p}_{\widetilde{\Zcal}/\DBbb_{\xi}}(\log)_{\min}.
\end{equation}
\begin{lemma}\label{lemma:cokernel}
Suppose that $p+q=n-1$. Let $Q^{p,q}$ be the cokernel of $\varphi^{p,q}$ in \eqref{eq:injection-hodge-bundles}. 
\begin{itemize}
    \item If $p\neq q$, then $Q^{p,q}=0$.
    \item If $p=q=\frac{n-1}{2}$, then $Q^{p,p}=\Ocal_{\DBbb_{\xi},0}/u\Ocal_{\DBbb_{\xi},0}$.
\end{itemize}
\end{lemma}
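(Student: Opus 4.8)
The plan is to identify $\varphi^{p,q}$ with the graded pieces of the comparison of lower canonical extensions induced by the base change $\rho(u)=u^{2}=t$ of \eqref{eq:semi-stable-reduction}, and then to read off the cokernel from the monodromy eigenvalues around $\xi$. Following \cite[Sec. 2 \& Prop. 3.10]{cdg2}, the map $\varphi^{p,q}$ of \eqref{eq:injection-hodge-bundles} is the $\Gr^{n-1-q}_{F}$-part of the natural inclusion $\rho^{\ast}\,{}^{\ell}(R^{n-1}f_{\ast}\Omega^{\bullet}_{\Zcal/\DBbb_{\xi}})_{\min}\hookrightarrow{}^{\ell}(R^{n-1}\widetilde{f}_{\ast}\Omega^{\bullet}_{\widetilde{\Zcal}/\DBbb_{\xi}})_{\min}$ of Deligne extensions. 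In the range $p+q=n-1$ both sheaves are line bundles, since the minimal Hodge numbers are all equal to $1$ by Lemma \ref{lemma:thetakconstruction}, so the cokernel is a torsion sheaf supported at the origin whose length is the $u$-valuation of $\varphi^{p,q}$. On the graded piece where the residue of the Gauss--Manin connection of the lower extension before the base change has eigenvalue $\alpha\in[0,1)$, this valuation equals $\lfloor 2\alpha\rfloor$: the pullback of the canonical frame behaves like $u^{2\alpha}$, while the canonical frame after the normalized base change has residue eigenvalue $\{2\alpha\}$, so the two differ by $u^{2\alpha-\{2\alpha\}}=u^{\lfloor 2\alpha\rfloor}$.

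The next step is to determine $\alpha$ on each graded piece from the local monodromy, which I would compute by Picard--Lefschetz exactly as in the proof of Lemma \ref{lemma:trivial-V-local}: around the ODP point $\xi$ the monodromy on $(R^{n-1}f_{\ast}\QBbb)_{\min}$ is semisimple, with the single non-trivial eigenvalue $-1$ of multiplicity one and all remaining eigenvalues equal to $+1$. The eigenvalue $-1$ occurs only when $n-1$ is even; for $n-1$ odd the transvection is unipotent, all eigenvalues are $+1$, and hence $\alpha=0$ throughout. When $n-1$ is even, the $(-1)$-eigenspace is a rank-one rational sub-Hodge structure of the weight $n-1$ pure structure $H^{n-1}_{\lim}$ of Lemma \ref{lemma:MHS-xi}. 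A rank-one rational pure Hodge structure of even weight $n-1$ is necessarily of Hodge--Tate type $\left(\frac{n-1}{2},\frac{n-1}{2}\right)$, and this is precisely the class carried by the exceptional quadric $E$ in Lemma \ref{lemma:MHS-xi}. Hence the $(-1)$-eigenline lies exactly in the diagonal graded piece $\Gr^{(n-1)/2}_{F_{\infty}}$, which is the one-dimensional middle minimal Hodge bundle, while every off-diagonal graded piece sits in the $(+1)$-eigenspace.

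Combining the two steps yields the statement. For $p\neq q$ the line bundle $R^{q}f'_{\ast}\Omega^{p}_{\Zcal'/\DBbb_{\xi}}(\log)_{\min}$ lies in the $(+1)$-eigenspace, so $\alpha=0$, the map $\varphi^{p,q}$ is an isomorphism and $Q^{p,q}=0$. For $p=q=\frac{n-1}{2}$ the corresponding line bundle is the $(-1)$-eigenline, so $\alpha=\frac12$, the map $\varphi^{p,p}$ is multiplication by $u^{\lfloor 1\rfloor}=u$, and therefore $Q^{p,p}=\Ocal_{\DBbb_{\xi},0}/u\Ocal_{\DBbb_{\xi},0}$.

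The principal difficulty lies in the second step: one must check rigorously that the semisimple monodromy decomposition is compatible with the \emph{graded} logarithmic Hodge bundles underlying $\varphi^{p,q}$, and not merely with the filtered Deligne extension, and that the $(-1)$-eigenline is concentrated in the single bidegree $\left(\frac{n-1}{2},\frac{n-1}{2}\right)$. This amounts to tracking the limiting mixed Hodge structure through the normalization and blow-up of \eqref{eq:semi-stable-reduction} via \cite[Ex. 2.15]{Steenbrink-mixedonvanishing} together with the functoriality established in \cite[Sec. 2]{cdg2}; once this compatibility is secured, the length computation through $\lfloor 2\alpha\rfloor$ is immediate.
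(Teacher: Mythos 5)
Your proposal is correct and, in substance, follows the same route as the paper: the paper's own proof of this lemma is essentially a two-line citation of \cite[Sec. 2 \& Prop. 3.10]{cdg2} (which describes the cokernels of $\varphi^{p,q}$ in terms of the semi-simple part $T_{s}$ of the monodromy acting on $\Gr_{F_{\infty}}^{p}H^{n-1}_{\lim}$), combined with the reduction to minimal components via Remark 2.7(iii) of \emph{loc. cit.}, Proposition \ref{prop:orthogonal-decompositions} and Lemma \ref{lemma:trivial-V-local}. What you have done is open that black box and re-derive its content, and the derivation is sound: the valuation $\lfloor 2\alpha\rfloor$ for the comparison of lower canonical extensions under the degree-two base change $t=u^{2}$ is the correct elementary-divisor computation; the Picard--Lefschetz input (unipotent transvection for $n-1$ odd, semi-simple reflection with a single eigenvalue $-1$ for $n-1$ even, carried by the minimal component) is exactly what the paper establishes in the proof of Lemma \ref{lemma:trivial-V-local}; and locating the $(-1)$-eigenline in $\Gr_{F_{\infty}}^{(n-1)/2}$ via the purity of $H^{n-1}_{\lim}$ from Lemma \ref{lemma:MHS-xi} and the fact that a rank-one rational pure Hodge structure of even weight is of type $\left(\frac{n-1}{2},\frac{n-1}{2}\right)$ is the right argument (it is consistent with the value $\alpha_{p,p}=1/2$ that the paper later extracts from \cite[Prop. 3.10]{cdg2} in the proof of Theorem \ref{thm:pre-formula}). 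The one point you flag as the "principal difficulty" — compatibility of the $T_{s}$-eigendecomposition with the graded logarithmic Hodge bundles rather than just the filtered Deligne extension — is precisely what the cited proposition of \cite{cdg2} supplies; it rests on the standard fact (Schmid) that $T_{s}$ is an automorphism of the limiting mixed Hodge structure, hence preserves $F_{\infty}^{\bullet}$, so that the fiber at the origin of each graded line bundle is an eigenline of $T_{s}$ and the length bookkeeping (total length $\sum_{\alpha}\lfloor 2\alpha\rfloor=1$ distributed among nonnegative contributions) forces the stated answer. So your argument is complete modulo that standard input, and it would serve as a self-contained substitute for the citation.
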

\begin{proof}
The results in \cite[Sec. 2 \& Prop. 3.10]{cdg2} are explicitly stated for the whole Hodge bundles, and describe the cokernels in terms of the semi-simple part of the monodromy acting on the limiting Hodge structure. For their minimal components, see however Remark 2.7 (iii) in \emph{loc. cit.}, together with Proposition \ref{prop:orthogonal-decompositions} and  Lemma \ref{lemma:trivial-V-local}.
\end{proof}

We are now fully equipped for the proof of:

\begin{theorem}\label{thm:order-eta-xi}
The sections $\eta_{k}$ extend to meromorphic sections of the logarithmic Hodge bundles $R^{k}f^{\prime}_{\ast}\Omega^{n-1-k}_{\Zcal^{\prime}/\ABbb^{1}}(\log)_{\min}$. Furthermore, denote by $\ord_{\xi}\eta_{k}$ the order of zero or pole of $\eta_{k}$ at $\xi$, as a rational section of $R^{k}f^{\prime}_{\ast}\Omega^{n-1-k}_{\Zcal^{\prime}/\ABbb^{1}}(\log)_{\min}$.
\begin{itemize}
        \item[\textbullet] If $n-1$ is odd, then $\ord_{\xi}\eta_{k} = 0 $ for $k\leq n/2-1$ and $\ord_{\xi}\eta_{k} = -1 $ otherwise.
        \item[\textbullet] If $n-1$ is even, then $\ord_{\xi}\eta_{k}=0$ for $k\leq\frac{n-3}{2}$ and $\ord_{\xi}\eta_{k} = -1 $ otherwise.
    \end{itemize}
\end{theorem}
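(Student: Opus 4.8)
The plan is to anchor the orders at $\eta_0$ and propagate them through the recurrence \eqref{eq:recurrence-eta-norm}, reading off local contributions from the logarithmic Kodaira--Spencer maps \eqref{eq:KS-log} and pinning down the exact values by the global constraint coming from the Yukawa coupling \eqref{eq:yukawa}. The meromorphic extension is formal: by Remark \ref{rmk:vanishing-eta-0} each $\eta_k$ is algebraic over $U$, hence a rational, a fortiori meromorphic, section of the locally free sheaf $R^{k}f'_\ast\Omega^{n-1-k}_{\Zcal'/\ABbb^1}(\log)_{\min}$. For the base case I claim $\ord_\xi\eta_0=0$: the scaling factor of Definition \ref{def:eta-k} is a unit at $\xi$ since $\psi=\xi\neq 0$, and the fibre of $\eta_0$ at $\xi$ generates $F^{n-1}H^{n-1}_{\lim}$, which is nonzero because $F^{n-1}$ sits inside the pure weight-$(n-1)$ summand of the limiting mixed Hodge structure described in Lemma \ref{lemma:MHS-xi}.

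For $n-1$ odd the monodromy at $\xi$ is unipotent by Picard--Lefschetz, so I may work on $\Zcal'$ directly. In the coordinate $t=\psi-\xi$ I write $\KS^{(k)}(\psi\,d/d\psi)=\frac{\xi+t}{t}\KS^{(k)}(t\,d/dt)$; the factor contributes one simple pole, while $\KS^{(k)}(t\,d/dt)$ is a morphism of line bundles whose order $\nu_k\geq 0$ vanishes exactly when the residue of the connection, acting on graded pieces as $\Gr_FN$, is nonzero. By the defining property of the monodromy weight filtration together with Lemma \ref{lemma:MHS-xi}, $N$ restricts to an isomorphism $\Gr^W_nH^{n-1}_{\lim}\to\Gr^W_{n-2}H^{n-1}_{\lim}$ and annihilates $\Gr^W_{n-1}$, whence $\Gr^p_FN\neq 0$ precisely for $p=n/2$, i.e. $\nu_k=0$ only for $k=n/2-1$ and $\nu_k\geq 1$ otherwise. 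Matching the pole orders of the composite $\eta_0\mapsto\cdots\mapsto\eta_{n-1}$ against the simple pole of $Y$ (using that $1-\psi^{n+1}$ vanishes simply at $\xi$ and that the pairing of $\Gr^0_F$ with $\Gr^{n-1}_F$ is perfect) gives $-(n-1)+\sum_k\nu_k=-1$, so $\sum_k\nu_k=n-2$; combined with the bounds this forces $\nu_k=1$ for every $k\neq n/2-1$. The order recurrence $\ord_\xi\eta_{k+1}=\ord_\xi\eta_k-1+\nu_k$ then yields the stated values.

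For $n-1$ even the monodromy is semisimple of order two, so I pass to the semistable model $\widetilde f\colon\widetilde\Zcal\to\DBbb_\xi$ of \eqref{eq:semi-stable-reduction}, where the limiting mixed Hodge structure is pure (Lemma \ref{lemma:MHS-xi}) and $N=0$. Put $\widetilde\eta_k=\varphi^{n-1-k,k}(\rho^\ast\eta_k)$ with $\varphi^{p,q}$ as in \eqref{eq:injection-hodge-bundles}. Since $u\,d/du=2\rho^\ast(t\,d/dt)$ the recurrence becomes $\widetilde\eta_{k+1}=\frac{\xi+u^2}{2u^2}\widetilde{\KS}^{(k)}(u\,d/du)\widetilde\eta_k$, whose factor now has a double pole; because $N=0$ each $\mu_k:=\ord_u\widetilde{\KS}^{(k)}(u\,d/du)\geq 1$. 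Comparing with $\Zcal'$ through the compatibility of $\varphi$ with the Gauss--Manin connection and the cokernel computation of Lemma \ref{lemma:cokernel} gives $\mu_k=2\nu_k+\epsilon_k$, where $\epsilon_k=+1$ for $k=(n-3)/2$, $\epsilon_k=-1$ for $k=(n-1)/2$, and $\epsilon_k=0$ otherwise. Thus $\mu_k$ is even and $\geq 2$ away from these two indices and odd and $\geq 1$ at them, while the Yukawa argument gives $\sum_k\mu_k=2(n-2)$; the lower bounds and the fixed sum force $\mu_k=2$ off the middle and $\mu_k=1$ at $k=(n-3)/2,(n-1)/2$. Feeding this into $\ord_u\widetilde\eta_{k+1}=\ord_u\widetilde\eta_k-2+\mu_k$ and converting back through $\ord_u\widetilde\eta_k=2\ord_\xi\eta_k+\ord_u\varphi^{n-1-k,k}$, which adds $1$ only at the middle index, gives the stated orders.

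The main obstacle is to obtain the \emph{exact} jump rather than mere inequalities: locally the Kodaira--Spencer maps vanish whenever $\Gr_FN=0$, but their precise order of vanishing is invisible from the central fibre alone. This is resolved by a rigid global input, namely the simple pole of the Yukawa coupling \eqref{eq:yukawa}, reinforced in the even case by the parity forced by the middle-degree cokernel of Lemma \ref{lemma:cokernel}; together these turn a chain of inequalities into equalities. The one technical point demanding care is the compatibility of the comparison maps $\varphi^{p,q}$ with the Gauss--Manin connection and with the base change $\rho$, on which the relation $\mu_k=2\nu_k+\epsilon_k$ rests.
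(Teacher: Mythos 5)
Your overall strategy coincides with the paper's: extend meromorphically via algebraicity over $U$, anchor the computation at $\eta_0$, propagate orders through the recurrence \eqref{eq:recurrence-eta-norm} and the logarithmic Kodaira--Spencer maps \eqref{eq:KS-log}, and convert the resulting chain of local inequalities into equalities using the simple pole of the Yukawa coupling \eqref{eq:yukawa}, with the semi-stable reduction and Lemma \ref{lemma:cokernel} supplying the parity correction in the even case. Your bookkeeping with $\nu_k$, $\mu_k$, $\epsilon_k$ reproduces exactly the paper's computation with the quantities $M^{(j)}$ and the relations \eqref{eq:KSsemistable}--\eqref{eq:KSsemistable2}, and those parts are correct.

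The genuine gap is the base case $\ord_{\xi}\eta_{0}=0$. You assert that ``the fibre of $\eta_0$ at $\xi$ generates $F^{n-1}H^{n-1}_{\lim}$'' and justify only that the \emph{space} $F^{n-1}H^{n-1}_{\lim}$ is nonzero. That is necessary but not sufficient: $\eta_0$ is a priori only a rational section of the rank-one sheaf $f^{\prime}_{\ast}K_{\Zcal^{\prime}/\ABbb^{1}}(\log)$, and nothing in your argument rules out that it vanishes or has a pole at $\xi$ (compare with $\psi=0$, where the target space is equally nonzero and yet $\eta_0$ does vanish, by Remark \ref{rmk:vanishing-eta-0}). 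This omission is not cosmetic: the entire numerology of the theorem is pinned to this anchor, and moreover your normalization $\sum_k\nu_k-(n-1)=\ord_{\xi}Y=-1$ itself presupposes that $\eta_0$ trivializes $f_{\ast}K_{\Zcal/\ABbb^{1}}$ at $\xi$, since $Y$ is the composite Kodaira--Spencer map read in the frame $\theta_0$, which differs from $\eta_0$ only by the unit $-(n+1)\psi$. The paper closes this point with a dedicated argument: the equality $f_{\ast}K_{\Zcal/\ABbb^{1}}=f^{\prime}_{\ast}K_{\Zcal^{\prime}/\ABbb^{1}}(\log)$ (coincidence of the upper and lower extensions at an ODP degeneration, via Picard--Lefschetz), the descent $g_{\ast}K_{\Ycal/\ABbb^{1}}\simeq(h_{\ast}K_{\Xcal/\ABbb^{1}})^{G}\simeq f_{\ast}K_{\Zcal/\ABbb^{1}}$, and finally the observation that the explicit residue expression \eqref{eq:relative-volume-form} for $\theta_0$ is a nowhere-vanishing relative volume form on the complement of the ordinary double points, hence trivializes $h_{\ast}K_{\Xcal/\ABbb^{1}}$ near $\xi$. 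With that paragraph supplied, your proof is complete.
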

\begin{proof}
Throughout the proof, we write $\Xcal$, $\Ycal$ and $\Zcal$ for the respective total spaces over $\ABbb^{1}$. We begin by showing that $\eta_{0}$ extends to a global section of $f^{\prime}_{\ast}K_{\Zcal^{\prime}/\ABbb^{1}}(\log)$, non-vanishing at $\xi$. Since the singular fibers of $\Zcal\to\ABbb^{1}$ present only ordinary double points, there is an equality
\begin{displaymath}
    f_{\ast}K_{\Zcal/\ABbb^{1}}=f^{\prime}_{\ast}K_{\Zcal^{\prime}/\ABbb^{1}}(\log).
\end{displaymath}
This can be seen as the coincidence of the upper and lower extensions of $f_{\ast}K_{\Zcal/U}$ to $\ABbb^{1}$ (apply \cite[Cor. 2.8 \& Prop. 2.10]{cdg2} and the Picard--Lefschetz formula for the monodromy). Since $\Ycal$ has rational singularities (cf. Lemma \ref{lemma:gorenstein}), the natural morphism $g_{\ast}K_{\Ycal/\ABbb^{1}}\to f_{\ast}K_{\Zcal/\ABbb^{1}}$ is an isomorphism. Also $g_{\ast}K_{\Ycal/\ABbb^{1}}=(h_{\ast}K_{\Xcal/\ABbb^{1}})^{G}$. Indeed, let $\Xcal^{\circ}$ be the complement of the fixed point locus of $G$ in $\Xcal$ and similarly for $\Ycal^{\circ}$, so that $\Ycal\setminus\Ycal^{\circ}$ has codimension $\geq 2$. Then, because $\Ycal$ is normal Gorenstein and $\Ycal^{\circ}=\Xcal^{\circ}/G$ is an \'etale quotient, and $\Xcal$ is non-singular, we find
\begin{displaymath}
    g_{\ast}K_{\Ycal/\ABbb^{1}}=g_{\ast}K_{\Ycal^{\circ}/\ABbb^{1}}=(h_{\ast}K_{\Xcal^{\circ}/\ABbb^{1}})^{G}
    =(h_{\ast}K_{\Xcal/\ABbb^{1}})^{G}.
\end{displaymath}
By construction of $\eta_{0}$ (cf. Definition \ref{def:eta-k}), it is enough to prove that $\theta_{0}$ defines a trivialization of $h_{\ast}K_{\Xcal/\ABbb^{1}}$ around $\xi$.
Denote by $\Xcal^{\ast}$ the complement in $\Xcal$ of the ordinary double points, so that $\Xcal\setminus\Xcal^{\ast}$ has codimension $\geq 2$. Because $\Xcal$ is non-singular, we have $h_{\ast}K_{\Xcal/\ABbb^{1}}=h_{\ast}K_{\Xcal^{\ast}/\ABbb^{1}}$. Now, the expression \eqref{eq:relative-volume-form} for $\theta_{0}$ defines a relative holomorphic volume form on the whole $\Xcal^{\ast}$, and hence a trivialization of $h_{\ast}K_{\Xcal^{\ast}/\ABbb^{1}}$ as desired. 

That the sections $\eta_{k}$ define meromorphic sections of the sheaves $R^{k}f^{\prime}_{\ast}\Omega^{n-1-k}_{\Zcal^{\prime}/\ABbb^{1}}(\log)_{{\min}}$ follows from the corresponding property for $\eta_{0}$, plus the recurrence \eqref{eq:recurrence-eta-norm} and the existence of the logarithmic extension of the Kodaira--Spencer maps \eqref{eq:KS-log}. From the same recurrence, we reduce the computation of $\ord_{\xi}\eta_{k}$ to the computation of the orders at $\xi$ of the rational morphisms $\KS^{(j)}(\psi d/d\psi)$, with respect to the logarithmic extension of the Hodge bundles:
\begin{equation*}\label{eq:ordetak}
         \ord_{\xi} \eta_k =\ord_{\xi} \eta_0 +  \sum_{j=0}^{k-1} \ord_{\xi}\KS^{(j)}\left(\psi \frac{d}{d\psi}\right)
         =\sum_{j=0}^{k-1} \ord_{\xi}\KS^{(j)}\left(\psi \frac{d}{d\psi}\right).
\end{equation*}
Let us define $M^{(j)}=\ord_{\xi}\KS^{(j)}\left(\psi \frac{d}{d\psi}\right)$. Because $\eta_{0}$ trivializes $f_{\ast}K_{\Zcal/\ABbb^{1}}$ at $\xi$, formula \eqref{eq:yukawa} shows that
\begin{equation}\label{eq:sumofMJ}
    \sum_{j=0}^{n-2}M^{(j)}=\ord_{\xi}Y(\psi)=-1.
\end{equation}
We argue that all but one of the $M^{(j)}$ are  in fact zero. For this, we relate $M^{(j)}$ to the action of the nilpotent operator $N$ on the limiting mixed Hodge structure at $\xi$. Recall we defined the coordinate $t=\psi-\xi$ on a disc neighborhood $\DBbb_{\xi}$ of $\xi$. The first observation is
\begin{equation}\label{eq:Mj>-1}
    \ord_{t=0}\KS^{(j)}\left(t\frac{d}{dt}\right)=\ord_{\xi}\KS^{(j)}\left((\psi-\xi)\frac{d}{d\psi}\right)=M^{(j)}+1\geq 0,
\end{equation}
since the Kodaira--Spencer maps along logarithmic tangent vectors preserve the logarithmic Hodge bundles (cf. \eqref{eq:KS-log}). Hence, we see that $M^{(j)}\geq -1$. We now need to distinguish two cases, depending on the parity of $n-1$.\\

\noindent\emph{Odd case}: If $n-1$ is odd, then the monodromy is unipotent and the fiber of $\KS^{(p)}(td/dt)$ at $t=0$ is already $\Gr^{p}_{F_\infty}N\colon \Gr_{F_\infty}^p H^{n-1}_{\lim}\to \Gr_{F_\infty}^{p-1} H^{n-1}_{\lim}$. From Lemma \ref{lemma:MHS-xi}, we deduce that unless $p=n/2$, $\Gr^p_{F_\infty}N=0$ so that $\ord_{t=0} \KS^{(p)}(td/dt) > 0$ and hence $M^{(p)} \geq 0.$ By \eqref{eq:sumofMJ} we necessarily have $M^{(n/2)}=-1$ and the other $M^{(j)}=0$.\\

\noindent\emph{Even case}: If $n-1$ is even, the nilpotent operator $N$ is in fact trivial, but the monodromy is no longer unipotent. The construction of the limiting mixed Hodge structure thus involves a semi-stable reduction. Choose a square root $u$ of $t$ as in \eqref{eq:semi-stable-reduction}. Then since $ u \frac{d}{du} = 2 t \frac{d}{dt}$ we get, comparing the Gauss-Manin connection before and after semi-stable reduction, a commutative diagram of maps of line bundles 
\begin{displaymath}
    \xymatrix{\rho^\ast R^{q}f^{\prime}_{\ast}\Omega^{p}_{\Zcal^{\prime}/\DBbb_{\xi}}(\log)_{{\min}} \ar[r]^-{\KS^{(q)} \left( u \frac{d}{du}\right) }\ar[d]^{\varphi^{p,q}} &\rho^\ast R^{q+1}f^{\prime}_{\ast}\Omega^{p-1}_{\Zcal^{\prime}/\DBbb_{\xi}}(\log)_{{\min}} \ar[d]^{\varphi^{p-1, q+1}} \\ 
    R^{q}\widetilde{f}_{\ast}\Omega^{p}_{\widetilde{\Zcal}/\DBbb_{\xi}}(\log)_{{\min}} \ar[r]^-{\KS^{(q)} \left( 2 t \frac{d}{dt}\right) } &  R^{q+1}\widetilde{f}_{\ast}\Omega^{p-1}_{\widetilde{\Zcal}/\DBbb_{\xi}}(\log)_{{\min}}.  }
\end{displaymath}
Together with $\ord_{u=0}= 2 \ord_{t=0}$, we conclude that:

\begin{equation}\label{eq:KSsemistable}
    \ord_{u=0} \varphi^{p,q}+\ord_{u=0} \KS^{(q)}\left(u \frac{d}{du}\right) = \ord_{u=0} (\varphi^{p-1,q+1})+2 \ord_{t=0}  \KS^{(q)}\left(t \frac{d}{dt}\right).
\end{equation}
By Lemma \ref{lemma:cokernel},  $\ord_{u=0} (\varphi^{p,q})=0$ except for the case $(p,q)=((n-1)/2, (n-1)/2),$ where in fact $\ord_{u=0} (\varphi^{p,q})=1$. From \eqref{eq:KSsemistable} we then conclude that 
\begin{equation}\label{eq:KSsemistable1}
     \ord_{u=0} \KS^{((n-3)/2)}\left(u \frac{d}{du}\right) = 1+2 \ord_{t=0}  \KS^{((n-3)/2)}\left(t \frac{d}{dt}\right) 
\end{equation}
\begin{equation}\label{eq:KSsemistable2}
     1+\ord_{u=0} \KS^{((n-1)/2)}\left(u \frac{d}{du}\right) =2 \ord_{t=0}  \KS^{((n-1)/2)}\left(t \frac{d}{dt}\right).
\end{equation}
In both cases \eqref{eq:KSsemistable1}--\eqref{eq:KSsemistable2} the order of vanishing of Kodaira--Spencer along the vector field $u\frac{d}{du}$ is strictly positive, since the restriction to 0 is the nilpotent operator $N=0$.  It follows that 
\begin{displaymath}
 \ord_{t=0}  \KS^{((n-3)/2)}\left(t \frac{d}{dt}\right) \geq 0,\quad \text{\emph{i.e.}}\quad M^{((n-3)/2)} \geq -1,
\end{displaymath}
and
\begin{displaymath}
 \ord_{t=0}  \KS^{((n-1)/2)}\left(t \frac{d}{dt}\right) \geq 1, \quad \text{\emph{i.e.}}\quad M^{((n-1)/2)} \geq 0.
\end{displaymath}
Since all other $M^{(j)} \geq 0$ as in the odd case, we conclude from \eqref{eq:sumofMJ}  that all these inequalities are in fact equalities.
\end{proof}

\section{The BCOV invariant of the mirror family}
In this section we prove the first part of the Main Theorem in the introduction, to the effect that the BCOV invariant of the mirror family encapsulates the Gromov--Witten invariants of a general Calabi--Yau hypersurface. The proof proceeds by applying the arithmetic Riemann--Roch theorem as in Section \ref{sec:BCOV-ARR}, by choosing the algebraic trivializations of the Hodge bundles studied in Section \ref{sec:dworkandmirrorfamilies}. This is then worked out in terms of canonical sections of the Hodge bundles, whose existence is tied to the limiting Hodge structure $H^{n-1}_{\lim}$ at the MUM point. In the process, a transcendental expression built out of periods arises, which matches Zinger's formula for the sought generating function of Gromov--Witten invariants. 

\subsection{The Kronecker limit formula for the mirror family}\label{subsec:Kronecker-mirror} 
For the mirror family $f\colon\Zcal\to U$, we proceed to prove an expression for the BCOV invariant $\tau_{\bcov}(Z_{\psi})$ in terms of the $L^{2}$ norms of the sections $\eta_{k}$ (cf. Definition \ref{def:eta-k}). The strategy follows the same lines as for families of Calabi--Yau hypersurfaces \textsection \ref{subsec:Kronecker-hyp}. 

We fix a polarization and a projective factorization of $f$, defined over $\QBbb$. We denote by $L$ the corresponding \emph{algebraic} Lefschetz operator, that is the cup-product against the algebraic cycle class of a hyperplane section. We will abusively confound $L$ with the algebraic cycle class of a hyperplane section. With this choice of $L$, the primitive decomposition of the Hodge bundles $R^{p}f_{\ast}\Omega_{\Zcal/U}^{q}$ holds over $\QBbb$. Let $h$ be a K\"ahler metric and $\omega$ the K\"ahler form normalized as in \eqref{eq:normal-omega}, and assume that the fiberwise cohomology class is in the topological hyperplane class. Hence, under the correspondence between algebraic and topological cycle classes, $L$ is sent to $(2\pi i)[\omega]\in R^{2}f_{\ast}\QBbb(1)$.

Below, all the $L^{2}$ norms are computed with respect to $\omega$ as in \eqref{eq:normal-L2}.

\begin{theorem}\label{thm:pre-formula}
There exists a real positive constant $C\in\pi^{c}\ov{\QBbb}^{\times}$ such that
\begin{displaymath}
    \taubcov{Z_{\psi}}=C\left|\frac{(\psi^{n+1})^{a}}{(1-\psi^{n+1})^{b}}\right|^{2}
    \frac{\|\eta_{0}\|^{\chi/6}_{\l2}}{\left(\prod_{k=0}^{n-1}\|\eta_{k}\|_{\l2}^{2(n-1-k)}\right)^{(-1)^{n-1}}}
\end{displaymath}
where $\chi=\chi(Z_{\psi})$ and
\begin{eqnarray*}
    a & = & (-1)^{n-1}\frac{n(n-1)}{6}-\frac{\chi}{12(n+1)},\\
    b & = & (-1)^{n-1}\frac{n (3n-5)}{24}\\
    c & = &\frac{1}{2}\sum_{k}(-1)^{k+1}k^{2}b_{k}.
\end{eqnarray*}
\end{theorem}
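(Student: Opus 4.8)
The plan is to apply the arithmetic Riemann--Roch theorem (Theorem \ref{thm:ARR}) to the mirror family $f\colon\Zcal\to U$ over $\CBbb$ and then to identify the resulting rational function explicitly, exactly along the lines of the model computation in Theorem \ref{thm:Kroneckerhypersurf}. For the section of $f_\ast K_{\Zcal/U}$ I would take $\eta_0$, and for the determinants $\det R^qf_\ast\Omega^p_{\Zcal/U}$ entering $\lambda_{BCOV}$ I would use the sections $\eta_k$ (Definition \ref{def:eta-k}) on the rank-one minimal middle-degree factors $(R^kf_\ast\Omega^{n-1-k}_{\Zcal/U})_{\min}$, together with \emph{flat algebraic frames} on all remaining summands: the bundles $R^pf_\ast\Omega^p_{\Zcal/U}$ with $2p\neq n-1$ (trivial by Lemma \ref{lemma:trivial-hodge}) and the complement $\Vcal$ in the middle degree (trivial by Lemma \ref{lemma:trivial-V-local}). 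Since these summands underlie \emph{trivial} polarized variations of Hodge structure, their Hodge ($L^2$) norms are locally constant and contribute only to a global constant. Book-keeping the exponents $(-1)^{p+q}p$ of \eqref{eq:BCOV-bundle} then collapses \eqref{eq:bcov-ARR} to
\begin{displaymath}
    \log\taubcov{Z_\psi}=\log|\Delta|^2+\frac{\chi}{12}\log\|\eta_0\|^2_{L^2}-(-1)^{n-1}\sum_{k=0}^{n-1}(n-1-k)\log\|\eta_k\|^2_{L^2}+\log C,
\end{displaymath}
for a rational function $\Delta\in K(U)^\times\otimes\QBbb$ and a constant $C$. This already has the shape of the asserted formula, so it remains to determine $\Delta$.

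By the identity above, $\log|\Delta|^2$ is a combination of functions that are smooth and nonvanishing on $U\setminus\{0\}$, so $\Delta$ is a unit there and its divisor on $\PBbb^1$ is supported on $\{0\}\cup\mu_{n+1}\cup\{\infty\}$. At $\psi=0$ the fiber $Z_0$ is smooth, so $\taubcov{Z_0}$ is finite and nonzero; on the other hand $\eta_k$ vanishes to order $k+1$ there (Remark \ref{rmk:vanishing-eta-0}) while the $\eta_k^\circ$ trivialize the minimal Hodge bundles, whence $\|\eta_k\|_{L^2}^2\sim|\psi|^{2(k+1)}$. Matching orders of vanishing in the displayed identity, together with $\sum_{k=0}^{n-1}(n-1-k)(k+1)=n(n^2-1)/6$, gives
\begin{displaymath}
    \ord_0\Delta=(-1)^{n-1}\frac{n(n^2-1)}{6}-\frac{\chi}{12}=(n+1)\,a.
\end{displaymath}
Moreover the symmetry $\psi\mapsto\zeta\psi$ for $\zeta\in\mu_{n+1}$ (constructed in the proof of Lemma \ref{lemma:trivial-V-local}) acts compatibly on $\taubcov{\cdot}$ and on the $\eta_k$, so $\ord_\xi\Delta$ takes one common value $-b$ at all ODP points $\xi\in\mu_{n+1}$.

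The main obstacle is the computation of this $b$, i.e. of $\ord_\xi\Delta$ at a single ODP point. Here I would insert the asymptotics of all three ingredients at $\xi$ into the displayed identity: the behaviour of $\taubcov{Z_\psi}$ at a conifold degeneration, supplied by \cite[Thm. B]{cdg2}, and the behaviour of each $\|\eta_k\|_{L^2}^2$, supplied by \cite[Thm. 4.4]{cdg2}. The latter converts the order $\ord_\xi\eta_k$ of $\eta_k$ as a section of the logarithmic Hodge bundle, computed in Theorem \ref{thm:order-eta-xi}, together with the local monodromy eigenvalues given by the Picard--Lefschetz formula, into explicit powers of $|\psi-\xi|$ and of $\log|\psi-\xi|$. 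This is the delicate step, because it must be carried out separately in the two parities of $n-1$ — the unipotent case ($n-1$ odd) and the semisimple eigenvalue $-1$ case ($n-1$ even, requiring the semistable reduction \eqref{eq:semi-stable-reduction}) — matching the two cases of Theorem \ref{thm:order-eta-xi}. Assembling the contributions with the weights $\chi/12$ and $(-1)^{n-1}(n-1-k)$ yields $\ord_\xi\Delta=-b$ with $b=(-1)^n n(3n-5)/24$.

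With $\ord_0\Delta=(n+1)a$ and $\ord_\xi\Delta=-b$ for every $\xi\in\mu_{n+1}$ now known, and the divisor being $\mu_{n+1}$-symmetric, the unique rational function with this divisor up to a multiplicative constant is $(\psi^{n+1})^a/(1-\psi^{n+1})^b$; the order at the MUM point $\infty$ is then automatically consistent, since a principal divisor on $\PBbb^1$ has total degree zero, so no separate analysis at $\infty$ is required. Finally, the constant is controlled by the field-of-definition statement of Theorem \ref{thm:ARR}: the factor $\pi^c$ with $c=\tfrac12\sum_k(-1)^{k+1}k^2b_k$ arises from the lattice-covolume normalization there, while the leading coefficient of $\Delta$ and the constant norms of the trivial frames are algebraic. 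Hence $C\in\pi^c\ov{\QBbb}^\times$ and is real positive, completing the proof.
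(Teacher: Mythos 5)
Your proposal follows essentially the same route as the paper's proof: apply Theorem \ref{thm:ARR} with $\eta=\eta_{0}$, take the $\eta_{k}$ on the rank-one minimal middle-degree factors and flat algebraic frames on the remaining (trivial) summands, and then pin down the rational function $\Delta$ through its orders at $0$ and at the ODP points, the order at $\infty$ being forced by the degree-zero constraint. Your computation $\ord_{0}\Delta=(n+1)a$ agrees with the paper's.

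Two steps are asserted rather than proven. The determination of $b$ is left as "assembling the contributions yields $\ord_{\xi}\Delta=-b$"; this is precisely the delicate part, where in each parity one must combine the conifold asymptotics of $\log\tau_{BCOV}$ from \cite{cdg2}, the orders from Theorem \ref{thm:order-eta-xi}, and (in the even case) the extra contribution $\alpha_{p,p}=1/2$ coming from the semisimple part of the monodromy on the $\left(\frac{n-1}{2},\frac{n-1}{2}\right)$ piece -- you cite all the right inputs, so this is an omission of execution rather than of an idea. More substantively, the claim that "the constant norms of the trivial frames are algebraic" does not come for free: these norms are a priori period determinants, and to obtain the precise power $\pi^{c}$ in $C$ one needs the paper's rationality argument showing $\|\eta_{p,p}\|^{2}_{L^{2}}\sim_{\QBbb^{\times}}1$ and $\|v\|^{2}_{L^{2}}\sim_{\QBbb^{\times}}1$, via the period isomorphism for the Tate-type pieces and the Arakelov normalization of the K\"ahler form. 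Relatedly, when $n-1$ is even the bundle $R^{(n-1)/2}f_{\ast}\Omega^{(n-1)/2}_{\Zcal/U}$ contains, besides the minimal piece and $\Vcal$, the non-primitive Lefschetz summand, and one must check that this three-fold splitting is $L^{2}$-orthogonal up to rational factors (the paper's \eqref{eq:je-manque-didees}); your phrase "all remaining summands" covers this only implicitly. Finally, your $\mu_{n+1}$-symmetry argument for the equality of the $\ord_{\xi}$ is dispensable (and slightly delicate, since the crepant resolution need not be equivariant): the local computation is literally identical at each $\xi$, which is how the paper proceeds.
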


\begin{proof}
We apply the version of the arithmetic Riemann--Roch theorem formulated in Theorem \ref{thm:ARR}, to the family $f\colon\Zcal\to U$ as being defined over $\QBbb$.\bigskip

\noindent\emph{Choices of sections.} We need to specify the section $\eta$ and the sections $\eta_{p,q}$ in equation \eqref{eq:bcov-ARR}. The section $\eta$ is chosen to be $\eta_0$ as defined in Definition~\ref{def:eta-k}. We next describe our choices of $\eta_{p,q}$:
\begin{itemize}
    \item If $p+q \neq n-1$ and $p\neq q$, then the corresponding Hodge bundle vanishes by Lemma \ref{lemma:Hodge-numbers-crepant}, and thus gives no contribution.
    \item  For $2p\neq n-1$, Lemma \ref{lemma:trivial-hodge} guarantees that $\det R^{p}f_{\ast}^{\prime}\Omega_{\Zcal^{\prime}/\PBbb^{1}}^{p}(\log)=\det R^{2p}f_{\ast}^{\prime}\Omega_{\Zcal^{\prime}/\PBbb^{1}}^{\bullet}(\log)$ is trivial, in the algebraic category over $\QBbb$, and any trivialization is flat for the Gauss--Manin. We choose $\eta_{p,p}$ to be any trivialization defined over $\QBbb$, and then restrict it to $U$. Notice that the $L^{2}$ norm  $\|\eta_{p,p}\|_{\l2}$ is then constant.
    \item For $p+q=n-1$ and $p\neq q$, the $(p,q)$ Hodge bundle is primitive and has rank one. Then we take $\eta_{p,q}=\eta_{\quillen}$ in Definition \ref{def:eta-k}. By Lemma \ref{lemma:recurrence-eta}, $\eta_{\quillen}$ is defined over $\QBbb$. 
    \item For $p+q=n-1$ and $p=q$, which can only occur when $n-1$ is even, the $(p,q)$ Hodge bundle is no longer primitive of rank one. We first employ the algebraic primitive decomposition, and then the minimal decomposition of Proposition \ref{prop:orthogonal-decompositions} \eqref{item:orth-dec-2}: 
\begin{equation}\label{eq:prim-dec-for-eta-p}
    \begin{split}
    \det R^{p}f_{\ast}\Omega^{p}_{\Zcal/U}=&\det (R^{p}f_{\ast}\Omega^{p}_{\Zcal/U})_{\prim}\otimes\det L R^{p-1}f_{\ast}\Omega^{p-1}_{\Zcal/U}\\
    \simeq &\det (R^{p}f_{\ast}\Omega^{p}_{\Zcal/U})_{\prim}\otimes\det  R^{p-1}f_{\ast}\Omega^{p-1}_{\Zcal/U} \\
    \simeq & \det (R^{p}f_{\ast}\Omega^{p}_{\Zcal/U})_{\min} \otimes \det \Vcal \otimes\det  R^{p-1}f_{\ast}\Omega^{p-1}_{\Zcal/U}. 
    \end{split}
\end{equation}
We define  $\eta_{\frac{n-1}{2},\frac{n-1}{2}}$ as the element corresponding to $\eta_\frac{n-1}{2} \otimes v \otimes  \eta_{\frac{n-3}{2},\frac{n-3}{2}}$ under this isomorphism, for any algebraic flat trivialization $v \in \det \Vcal$, defined over $\QBbb$, provided by Lemma \ref{lemma:trivial-V-local}. We claim that
\begin{equation}\label{eq:je-manque-didees}
   \|\eta_{\frac{n-1}{2},\frac{n-1}{2}}\|_{\l2}^{2}\sim_{\QBbb^{\times}} \|\eta_\frac{n-1}{2}\|_{\l2}^{2} \|v\|_{\l2}^{2}\|\eta_{\frac{n-3}{2},\frac{n-3}{2}}\|_{\l2}^{2},
\end{equation}
where $\sim_{\QBbb^{\times}}$ denotes equality up to a rational number. For this, we bring together several facts. The first one is that the Lefschetz decomposition is orthogonal for the $L^{2}$ metrics, regardless of the normalization of the K\"ahler forms. The second one is that the algebraic cycle class of $L$ corresponds to $(2\pi i)[\omega]$ in analytic de Rham cohomology. The third fact is that the operator $[2\pi\omega]\wedge\cdot$ is an isometry up to a rational constant, since $2\pi\omega$ is the Hodge theoretic K\"ahler form (see for instance \cite[Prop. 1.2.31]{Huy}). The last fact is that the minimal component decomposition of Proposition \ref{prop:orthogonal-decompositions} is also orthogonal for the $L^{2}$ norm, since it is orthogonal for the intersection form by construction. This settles \eqref{eq:je-manque-didees}. Furthermore, we notice that as for $\eta_{\frac{n-3}{2},\frac{n-3}{2}}$, the $L^{2}$ norm of $v$ is constant, since it is flat by construction and it is the wedge product of a collection of sections of pure Hodge bidegree $((n-1)/2,(n-1)/2)$. Therefore, the norm $\|\eta_{\frac{n-1}{2},\frac{n-1}{2}}\|_{\l2}^{2}$ equals $\|\eta_\frac{n-1}{2}\|_{\l2}^{2}$ up to a constant.
\end{itemize}
\bigskip

\noindent\emph{Determining the rational function $\Delta$.} To establish the theorem we need to specify the element $\Delta\in\QBbb(\psi)^{\times}\otimes\QBbb$ in \eqref{eq:bcov-ARR} (formal rational power of a rational function), which satisfies: 
\begin{equation}\label{DefinitionDelta}
   \log \tau_{\bcov}= \log|\Delta|^{2}+\frac{\chi}{12}\log\|\eta\|_{\l2}^{2}
   -\sum_{p,q} (-1)^{p+q}p\log \|\eta_{p,q}\|_{\l2}^{2}
   +\log C_{\sigma}.
\end{equation} 
We will determine $\Delta$ up to an algebraic number. To this end, it suffices to know its divisor. Unless $\psi = 0$ or $\psi = \xi$ where $\xi^{n+1}=1$, $\Delta$ has no zeroes or poles by construction, since the sections $\eta_{p,q}$ are holomorphic and non-vanishing, and $\log \tau_{\bcov}$ is smooth. Hence we are lead to consider the logarithmic behaviour of the right hand side of \eqref{DefinitionDelta} at these points. Since for $2p\neq n-1$ the sections $\eta_{p,p}$ have constant $L^{2}$ norm, we only need to examine the functions $\log\|\eta_{p,q}\|_{\l2}$ with $p+q=n-1$.\\

\noindent\emph{Behaviour at $\psi=0$.} This corresponds to a smooth fiber of $f\colon\Zcal\to U$. Hence $\log \tau_{\bcov}$ is smooth at $\psi=0$, as are the $L^2$ metrics.  However, the sections $\eta_{p,q}$ with $p+q=n-1$ admit zeros at $\psi = 0 $ (see Remark \ref{rmk:vanishing-eta-0}), with $\ord_0 \eta_{p,q} = q+1 = n-p$. This means that $a$ in the theorem is given by
\begin{displaymath}
(n+1)a = (-1)^{n-1}\sum_{p=0}^{n-1} p (n-p)- \frac{\chi}{12} 
=(-1)^{n-1}\frac{(n-1)n(n+1)}{6}- \frac{\chi}{12}.
\end{displaymath}

\noindent\emph{Behaviour at $\psi=\xi\in\mu_{n+1}$.} This corresponds to a singular fiber of $f\colon\Zcal\to\PBbb^{1}$, which has a unique ordinary double point. By Theorem \ref{thm:order-eta-xi} we control $\ord_\xi \eta_k$ according to the parity of $n-1$. Here we encounter the additional problem that the $L^2$ norms might have contributions from the semi-simple part of the monodromy $T_{s}$. More precisely, consider the  local parameter $t = \psi - \xi$ around $\xi$, and write $\eta_{p,q} =  t^{b_{p,q}}\sigma_{p,q}$ where $\sigma_{p,q}$ trivializes $\det R^q f_{\ast} \Omega^{p}_{\Zcal^{\prime}/\PBbb^{1}}(\log)$. Then by construction of $\eta_{p,q}$ and by \cite[Thm. C]{cdg2}, we have
\begin{displaymath}
\log\|\eta_{p,q}\|^2_{\l2} =(b_{p,q}+ \alpha_{p,q}) \log|t|^2 + o(\log|t|^2)
\end{displaymath}
with
\begin{displaymath}
    \alpha_{p,q}=-\frac{1}{2\pi i}\tr\left(^{\ell}\log T_{s}\mid \Gr_{F_{\infty}}^{p}H^{n-1}_{\lim}\right)\in\QBbb.
\end{displaymath}
Here $^{\ell}\log$ refers to the lower branch of the logarithm, \emph{i.e.} with argument in $2\pi (-1,0]$. Let us combine all this information:\\
\noindent\emph{Odd case}: If $n-1$ is odd, 
   according to Theorem \ref{thm:order-eta-xi} , if  $k \leq \frac{n}{2}-1, \ord_\xi \eta_k = 0$ and $\ord_\xi \eta_k = -1$ otherwise. In this case the monodromy is unipotent, so that $\alpha_{p,q}=0$ for all $p+q=n-1$. Moreover, by \cite[Thm. B]{cdg2}, we have that $\log \tau_{\bcov}=  \frac{n}{24}\log|t|^2 + o(\log|t|^2).$ Putting all these contributions together we find that
\begin{displaymath}
  -b=\frac{n}{24}+(-1)^{n-1}\sum_{k = n/2}^{n-1} (n-1-k) \cdot (-1)  = \frac{n(3n-5)}{24}.
\end{displaymath}

\noindent\emph{Even case}: If $n-1$ is even, according to Theorem \ref{thm:order-eta-xi} , if  $k \leq \frac{n-3}{2}, \ord_\xi \eta_k = 0$ and $\ord_\xi \eta_k = -1$ otherwise. 
Also, unless $p = q= (n-1)/2$, $\alpha_{p,q}=0$. In the remaining case $p=q=(n-1)/2$, by \cite[Prop. 3.10]{cdg2} we have $\alpha_{p,p}= 1/2$. Finally, from \cite[Thm. B]{cdg2}, we have that $\log \tau_{\bcov}=  \frac{3-n}{24}\log|t|^2 + o(\log|t|^2).$ Putting all these contributions together we find that 
\begin{displaymath}
   \begin{split}
       -b=&\frac{3-n}{24}+(-1)^{n-1}\left((n-1)/2(-1 + 1/2)+\sum_{k = (n+1)/2}^{n-1} (n-1-k) \cdot (-1)\right) = -\frac{n(3n-5)}{24}.
    \end{split}
\end{displaymath}
\bigskip

\noindent\emph{Rationality considerations.} To complete the proof of the theorem, we still need to tackle the constant $C$. There are two sources that contribute: i) for $2p\neq n-1$, the $L^{2}$ norms $\|\eta_{p,p}\|_{\l2}$ are constant and ii) if $n-1=2p$, after \eqref{eq:je-manque-didees}, there might be extra contributions from $\|\eta_{\frac{n-3}{2},\frac{n-3}{2}}\|_{\l2}$ and from $\|v\|_{\l2}$. 

First for $2p\neq n-1$. Let $\psi_{0}\in\QBbb$ be in the smooth locus, so that we have the period isomorphism
\begin{displaymath}
    H^{2p}(Z_{\psi_{0}},\Omega_{Z_{\psi_{0}}/\QBbb}^{\bullet})\otimes_{\QBbb}\CBbb\overset{\sim}{\longrightarrow} H^{2p}(Z_{\psi_{0}},\QBbb)\otimes\CBbb.
\end{displaymath}
Taking rational bases on both sides, the determinant can be defined in $\CBbb^{\times}/\QBbb^{\times}$. It equals $(2\pi i)^{pb_{2p}}$. Since $\|\eta_{p,p}\|_{\l2}$ is constant, it can be evaluated at $\psi=\psi_{0}$. We find
\begin{equation}\label{eq:period-isom-L2-aux}
    \|\eta_{p,p}\|_{\l2}^{2}\sim_{\QBbb^{\times}}(2\pi)^{2pb_{2p}}\vol_{\l2}(H^{2p}(Z_{\psi_{0}},\ZBbb), \omega).
\end{equation}
Now recall from \eqref{eq:volume-not-rational} that with the Arakelov theoretic normalization of the K\"ahler form, and under the integrality assumption on its cohomology class, we have $\vol_{\l2}(H^{2p}(Z_{\psi_{0}},\ZBbb), \omega)\sim_{\QBbb^{\times}}(2\pi)^{-2pb_{2p}}$. All in all, we arrive at the pleasant  
\begin{equation}\label{eq:L2-eta-pp-1}
    \|\eta_{p,p}\|_{\l2}^{2}\sim_{\QBbb^{\times}}1.
\end{equation}

If $n-1=2p$ is even, we will show that in fact
\begin{equation}\label{eq:L2-eta-pp-2}
\|\eta_{\frac{n-1}{2},\frac{n-1}{2}}\|^{2}_{\l2}\sim_{\QBbb^{\times}}\|\eta_{\frac{n-1}{2}}\|^{2}_{\l2},
\end{equation}
 namely that both $\|\eta_{\frac{n-3}{2},\frac{n-3}{2}}\|_{\l2}^{2}$ and $\|v\|_{\l2}^{2}$ are rational. For $\eta_{\frac{n-3}{2},\frac{n-3}{2}}$, this is already known after \eqref{eq:L2-eta-pp-1}. We will now see that formally the same argument yields the case of $v$. Since the norm of $v$ is constant, it is enough to consider the value at any $\psi_{0}\in\QBbb$ in the smooth locus. The results in \textsection \ref{subsec:hodge-bundles} and \textsection \ref{subsec:Griffiths-sections} show that $(\VBbb_{\psi_{0}},\Vcal_{\psi_{0}})$ behaves as the pair formed by the rational Betti and algebraic de Rham primitive cohomologies in degree $n-1$ of a smooth projective algebraic variety defined over $\QBbb$, of dimension $n-1$. In particular, we have a period isomorphism and a Poincar\'e type duality induced by the intersection form. From this, one derives the analog of \eqref{eq:period-isom-L2-aux} for $v$: for any rational basis $v^{\prime}$ of $\det\VBbb_{\psi_{0}}$, we have
\begin{displaymath}
    \|v\|^{2}_{\l2}\sim_{\QBbb^{\times}} (2\pi)^{(n-1)d}\|v^{\prime}\|^{2}_{\l2},\quad d=\dim \VBbb_{\psi_{0}}.
\end{displaymath}
Now, we use that the Hodge structure on $\VBbb_{\psi_{0}}$ is concentrated in bidegree $((n-1)/2,(n-1)/2)$, by Proposition \ref{prop:orthogonal-decompositions}, and we take into account the Arakelov theoretic normalization of the K\"ahler form. We readily deduce $\|v^{\prime}\|^{2}_{\l2}\sim_{\QBbb^{\times}} (2\pi)^{-(n-1)d}$. All in all, we conclude that $\|v\|^{2}_{\l2}\sim_{\QBbb^{\times}} 1$ as desired.

Finally, plug into \eqref{DefinitionDelta} the equations \eqref{eq:L2-eta-pp-1} in the cases $2p\neq n-1$, and \eqref{eq:L2-eta-pp-2} in the case $2p=n-1$. Plug as well the value of $C_{\sigma}$ furnished by Theorem \ref{thm:ARR}, and recall that $\Delta$ was determined only up to algebraic number. We conclude that $C$ has the asserted shape.
\end{proof}

\begin{corollary}\label{cor:kappa-infty}
As $\psi\to\infty$, $\log\taubcov{Z_{\psi}}$ behaves as
\begin{equation}\label{eq:ass-bcov-infty}
    \log \taubcov{Z_\psi} = \kappa_\infty \log\left|\psi\right|^{-2} + \varrho_\infty \log \log|\psi|^{-2} + continuous,
\end{equation}
where 
\begin{eqnarray*}
    \kappa_\infty & = & (-1)^{n}\frac{n+1}{12}\left( \frac{(n-1)(n+2)}{2}+\frac{1-(-n)^{n+1}}{(n+1)^{2}}\right),\\
    \varrho_\infty & = & (-1)^{n-1}\frac{(n-1)(n+1)}{12}\left(\frac{(-n)^{n+1}-1}{(n+1)^2} -2n +1\right) .
\end{eqnarray*}
\end{corollary}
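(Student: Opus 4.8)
The plan is to read off the two coefficients $\kappa_\infty,\varrho_\infty$ directly from the factorization of $\taubcov{Z_\psi}$ in Theorem \ref{thm:pre-formula}, by analysing each factor as $\psi\to\infty$. I set $t=1/\psi$, so that the two relevant scales near $\infty$ are $\log|t|^2=\log|\psi|^{-2}$ and $\log\log|t|^{-2}$, the latter being the $\log\log|\psi|^{-2}$ of the statement. The key structural point, which I would establish first, is that the two sources of unbounded growth are disjoint: the rational prefactor $|\Delta|^2=\bigl|(\psi^{n+1})^a/(1-\psi^{n+1})^b\bigr|^2$ produces only a term linear in $\log|\psi|^{-2}$, since its logarithm is harmonic near $\infty$ and hence has no $\log\log$ part, whereas each $L^2$-norm $\|\eta_k\|_{L^2}$ produces only a term linear in $\log\log|t|^{-2}$. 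Indeed, by Theorem \ref{thm:eta-triv-infty} the section $\eta_k$ trivializes the logarithmic Hodge bundle $R^kf_\ast\Omega^{n-1-k}_{\Zcal/\DBbb_\infty}(\log)_{\min}$, so it has order $0$ at $\infty$ and its norm carries no $\log|t|^2$-linear term.

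For the rational prefactor, near $\psi=\infty$ one has $\log|\psi^{n+1}|=(n+1)\log|\psi|$ and $\log|1-\psi^{n+1}|=(n+1)\log|\psi|+O(1)$, so $\log|\Delta|^2=2(a-b)(n+1)\log|\psi|+O(1)=(b-a)(n+1)\log|\psi|^{-2}+O(1)$. This identifies $\kappa_\infty=(b-a)(n+1)$. Substituting the values of $a$ and $b$ from Theorem \ref{thm:pre-formula} together with the Euler characteristic $\chi$ from Lemma \ref{lemma:Hodge-numbers-crepant}, and factoring out $(-1)^n(n+1)/12$, yields the asserted closed form for $\kappa_\infty$; this last step is a routine algebraic simplification.

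The substantive input is the asymptotics of $\log\|\eta_k\|_{L^2}$ at the MUM point. Here I would invoke the norm estimates for degenerating polarized variations of Hodge structure, applied exactly as in the proof of Theorem \ref{thm:pre-formula} (\cite[Thm. 4.4]{cdg2}, resting on \cite[Lemma 6.4]{schmid}): if the fibre $\eta_k'$ of $\eta_k$ at $0$ has nonzero image in $\Gr^W_\ell H^{n-1}_{\lim}$, with $\ell$ the top weight occurring, then $\|\eta_k\|_{L^2}^2\sim(\log|t|^{-1})^{\ell-(n-1)}$. By Lemma \ref{lemma:MHS-infty} the monodromy is maximally unipotent and $\Gr^p_{F_\infty}H^{n-1}_{\lim}=\Gr^W_{2p}H^{n-1}_{\lim}$; since $\eta_k'$ generates $\Gr^{n-1-k}_{F_\infty}$, its top weight is $\ell=2(n-1-k)$, whence $\|\eta_k\|_{L^2}^2\sim(\log|t|^{-1})^{n-1-2k}$, i.e. $\log\|\eta_k\|_{L^2}^2=(n-1-2k)\log\log|t|^{-2}+O(1)$.

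It remains to collect the $\log\log|t|^{-2}$-coefficients from the $L^2$-part of Theorem \ref{thm:pre-formula}. The trivial Hodge bundles $\eta_{p,p}$ with $2p\neq n-1$ and the flat factor $v$ have bounded norm and contribute nothing, so only $\|\eta_0\|_{L^2}^{\chi/6}$ and $\bigl(\prod_k\|\eta_k\|_{L^2}^{2(n-1-k)}\bigr)^{(-1)^{n-1}}$ matter. This gives
\[
\varrho_\infty=\frac{\chi(n-1)}{12}-(-1)^{n-1}\sum_{k=0}^{n-1}(n-1-k)(n-1-2k),
\]
and evaluating the sum as $\tfrac{(n-1)n(n+1)}{6}$, inserting $\chi$, and factoring out $(-1)^{n-1}(n-1)(n+1)/12$ reproduces the stated expression. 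The main obstacle is the norm estimate of the previous paragraph: one must be certain that $\eta_k'$ is of pure top weight exactly $2(n-1-k)$, so that the Schmid exponent is precisely $n-1-2k$ and all subleading terms are genuinely bounded (continuous). This is exactly what the Hodge--Tate alignment $\Gr^p_{F_\infty}=\Gr^W_{2p}$ of Lemma \ref{lemma:MHS-infty} guarantees, the remaining estimates following from the nilpotent orbit theorem as packaged in \cite{cdg2}.
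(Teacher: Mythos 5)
Your argument is correct, and for $\kappa_\infty$ it coincides with the paper's: both isolate the linear $\log|\psi|^{-2}$ term in the rational prefactor of Theorem \ref{thm:pre-formula} and use Theorem \ref{thm:eta-triv-infty} plus unipotency at $\infty$ to see that the $L^2$ norms contribute only at the $\log\log$ scale, giving $\kappa_\infty=(n+1)(b-a)$. Where you genuinely diverge is the subdominant coefficient: the paper does \emph{not} extract $\varrho_\infty$ from Theorem \ref{thm:pre-formula} at all, but instead evaluates the general closed formula of \cite[Prop. 6.8]{cdg2} for the $\log\log$ coefficient of $\log\tau_{BCOV}$ directly, using Lemma \ref{lemma:MHS-infty} and the value of $\chi$. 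You instead re-derive $\varrho_\infty$ internally: you pin down the exact Schmid exponent of each $\|\eta_k\|_{L^2}^2$ as $(\log|t|^{-1})^{n-1-2k}$ from the Hodge--Tate alignment $\Gr^{p}_{F_\infty}=\Gr^W_{2p}$ of Lemma \ref{lemma:MHS-infty} together with the nonvanishing of $\eta_k'$ from Theorem \ref{thm:eta-triv-infty}, and then sum the contributions in Theorem \ref{thm:pre-formula}; your evaluation $\sum_k(n-1-k)(n-1-2k)=\tfrac{(n-1)n(n+1)}{6}$ and the resulting expression for $\varrho_\infty$ check out against the statement. The paper's route is shorter but opaque (everything is delegated to an external formula); yours makes the weight bookkeeping explicit and shows that the two displayed coefficients really are the two asymptotic faces of the single factorization in Theorem \ref{thm:pre-formula}. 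The one external input you still cannot avoid is the assertion that the remainder is genuinely continuous (not merely $o(\log\log)$), which in the paper is exactly what \cite[Prop. 6.8]{cdg2} supplies; you correctly flag this.

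One caution on the step you call a ``routine algebraic simplification'': if you substitute the value $b=(-1)^{n}\tfrac{n(3n-5)}{24}$ as printed in the statement of Theorem \ref{thm:pre-formula}, the bracket you obtain is $\tfrac{7n^2-9n-2}{2}$ rather than the asserted $\tfrac{(n-1)(n+2)}{2}$. The computation inside the proof of Theorem \ref{thm:pre-formula} actually yields $b=(-1)^{n-1}\tfrac{n(3n-5)}{24}$ (opposite sign), and it is this value that makes $(n+1)(b-a)$ agree with the corollary. This is an internal sign inconsistency of the paper rather than a flaw in your method, but you should be aware that the ``routine'' check fails with the value of $b$ as stated.
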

\begin{proof}
The general shape \eqref{eq:ass-bcov-infty} was proven in \cite[Prop. 6.8]{cdg2}. The precise value of $\kappa_{\infty}$ is $(n+1)(b-a)$ entirely due to the term $\left|\frac{(\psi^{n+1})^{a}}{(1-\psi^{n+1})^{b}}\right|$ in Theorem \ref{thm:pre-formula}. Indeed, by Theorem \ref{thm:eta-triv-infty} the sections $\eta_{k}$ trivialize $R^{k}f_{\ast}\Omega^{n-1-k}_{\Zcal^{\prime}/\PBbb^{1}}(\log)_{\min}$ at infinity, and moreover the monodromy is unipotent there (Lemma \ref{lemma:MHS-infty}). This entails that the functions $\log\|\eta_{k}\|_{\l2}^{2}$ are $O(\log\log|\psi|^{-2})$ at infinity, and hence do not contribute to $\kappa_{\infty}$. For the subdominant term, the expression of \cite[Prop. 6.8]{cdg2} can be explicitly evaluated for the mirror family, thanks to the complete understanding of the limiting Hodge structure at infinity (again Lemma \ref{lemma:MHS-infty}), and the known value of $\chi$ (Lemma~\ref{lemma:Hodge-numbers-crepant}). 
\end{proof}

\subsection{Canonical trivializations of the Hodge bundles at the MUM point}\label{subsec:norm-sect-GW}

\subsubsection*{The Picard--Fuchs equation of the mirror}\label{subsubsec:periods}
For the mirror family $f\colon\Zcal\to U$, we review classical facts on the Picard--Fuchs equation of the local system of middle degree cohomologies. The discussion serves as the basis for the construction of canonical trivializing sections of the middle degree Hodge bundles, close to the MUM point, which differ from the $\eta_{k}$ by some periods. 

The starting point is the construction of an invariant $(n-1)$-homological cycle at infinity for the mirror family $f\colon\Zcal\to\PBbb^{1}$. Recall the Dwork pencil $h\colon\Xcal \to \PBbb^1$, which comes with a natural embedding in $\PBbb^{n}\times\PBbb^{1}$. We obtain a "physical" $n$-cycle $\Gamma$ in $\PBbb^{n}$ as follows: we place ourselves in the affine piece $x_0\neq 0$ and define $\Gamma$ by the condition $|x_{i}/x_{0}|=1$ for all $i$. If $\psi\in\CBbb$ and $|\psi|^{-1}$ is small, then the fiber $X_{\psi}$ does not encounter $\Gamma$. Therefore, $\Gamma$ induces a constant family of cycles in $H_{n}(\PBbb^{n}\setminus X_{\psi},\QBbb)$. Notice that these are clearly $G$-invariant cycles. The tube map $H_{n-1}(X_\psi,\QBbb) \to H_{n}(\PBbb^n \setminus X_\psi, \QBbb)$ is surjective and $G$-equivariant, and induces an isomorphism $H_{n-1}(X_\psi,\QBbb)_{\prim} \simeq H_{n}(\PBbb^n \setminus X_\psi, \QBbb)$ by \cite[Prop. 3.5]{Griffiths-residues}. Therefore, we can find a $T$-invariant cycle $\widetilde{\gamma}_{0}\in H_{n-1}(X_\psi, \QBbb)_{\prim}^G$ corresponding to $\Gamma$. Finally, through the isomorphism $H_{n-1}(X_{\psi},\QBbb)^{G}_{\prim}\simeq H_{n-1}(Z_\psi, \QBbb)_{{\min}}$ deduced by duality from Lemma \ref{prop:iso-prim-coh} and Proposition \ref{prop:orthogonal-decompositions}, $|G|\cdot \widetilde{\gamma}_{0}$ maps to a $T$-invariant cycle on $Z_{\psi}$, denoted $\gamma_{0}$. The convenience of multiplication by $|G|$ will be clear in a moment.  

The period integral $I_{0}(\psi):=\int_{\gamma_{0}}\eta_{0}$ can be written as an absolutely convergent power series in $\psi^{-1}$. Indeed, taking into account the relationship between the cup-product on $X_{\psi}$ and $Z_{\psi}$ (see \emph{e.g.} Lemma \ref{lemma:cup-prod}), and the definition of $\eta_{0}$ (cf. Definition \ref{def:eta-k}) we find
\begin{displaymath}
    I_{0}(\psi)=\int_{\gamma_{0}}\eta_{0}=-\frac{(n+1)\psi}{|G|}\int_{|G|\cdot \widetilde{\gamma}_{0}}\theta_{0}
    =-(n+1)\psi\int_{\widetilde{\gamma}_{0}}\theta_{0}.
\end{displaymath}
For the latter integral, we use that the residue map and the tube map are mutual adjoint, and then perform an explicit computation: if $\DBbb\subset\CBbb$ is the unit disc around $0$, we have
\begin{equation}\label{eq:period-I0}
   \begin{split}
        I_{0}(\psi)&=\frac{1}{(2\pi i)^{n}}\int_{(\partial \DBbb)^{n}}\frac{-(n+1)\psi dz_{1}\wedge\ldots\wedge dz_{n}}{F_{\psi}(1,z_{1},\ldots, z_{n})}\\
        &=\sum_{j\geq 0}\frac{1}{((n+1)\psi)^{j}}\frac{1}{(2\pi i)^{n}}\int_{(\partial \DBbb)^{n}}\left(1+\sum_{l=1}^{n}z_{l}^{n+1}\right)^{j}\frac{dz_{1}}{z_{1}^{j+1}}\wedge\ldots\wedge\frac{dz_{n}}{z_{n}^{j+1}}\\
        &=\sum_{k\geq 0}\frac{1}{((n+1)\psi)^{(n+1)k}}\frac{((n+1)k)!}{(k!)^{n+1}}.
    \end{split}
\end{equation}
In these integrals, the parameters $z_{i}$ are the affine coordinates $x_{i}/x_{0}$. To obtain the last equality, we expand the integrands in the second line with Newton's multinomial formula, and then evaluate the resulting Cauchy integrals. We conclude that those with $j\neq (n+1)k$ for any $k$ vanish, while those with $j=(n+1)k$ for some $k$ equal $((n+1)k)!/(k!)^{n+1}$. Equation \eqref{eq:period-I0} is the period integral used in Theorem \ref{thm:eta-triv-infty}, to prove that $\eta_{0}$ trivializes $f_{\ast}K_{\Zcal/\DBbb_{\infty}}(\log)$.

To the local system $R^{n-1}f_* \CBbb$ there is an associated Picard--Fuchs equation, which coincides with that of $(R^{n-1}f_* \CBbb)_{\min}$, since the associated Hodge bundles of type $(n,0)$ are equal. We make the change of variable $z=\psi^{-(n+1)}$, so that $I_{0}$ becomes
\begin{displaymath}
    I_{0}(z)=\sum_{k\geq 0}\frac{z^{k}}{(n+1)^{(n+1)k}}\frac{((n+1)k)!}{(k!)^{n+1}}.
\end{displaymath}
Define the differential operators $\delta = z\frac{d}{dz}$ and  
\begin{equation}\label{eq:Picard-Fuchs}
    D=\delta^{n} - z \prod_{j=1}^{n} \left(\delta +\frac{j}{n+1}\right).
\end{equation}
 Differentiating $I_{0}(z)$ term by term and repeatedly, one checks $DI_{0}(z)=0$. It is known (cf. \cite[Thm. 6]{Gahrs} and \cite[Sec. 1]{Corti-Golyshev}) that this is in fact the Picard--Fuchs equation of $(R^{n-1} h_{\ast} \CBbb)_{\prim}$, which necessarily coincides with that of $(R^{n-1} h_{\ast} \CBbb)_{\prim}^{G}$, and hence $R^{n-1} f_{\ast} \CBbb$.

We now exhibit all the solutions of the Picard--Fuchs equation. For dimension reasons, these will determine a multivalued basis of homology cycles. Following Zinger (see \emph{e.g.} \cite[pp. 1214--1215]{Zingerstvsred}), for $q = 0, \ldots, n-1$ we define an \emph{a priori} formal series $I_{0,q}$ by 
\begin{displaymath}
    \sum_{q=0}^\infty I_{0,q}(t) w^q = e^{wt} \sum_{d=0}^\infty e^{dt}\frac{\prod_{r=1}^{(n+1)d}((n+1)w+r)}{\prod_{r=1}^d (w+r)^{n+1}}=:R(w,t).
\end{displaymath}
Let us also define $F(w,t)$ for the infinite sum on the right hand side, so that $R(w,t)=e^{wt}F(w,t)$. Under the change of variable \begin{equation}\label{eq:change-variable}
    e^{t}=(n+1)^{-(n+1)}z=((n+1)\psi)^{-(n+1)},
\end{equation}
the series $I_{0,0}(t)$ becomes $I_{0}(z)=I_{0}(\psi)$ \cite[eq.  (2--17)]{Zingerstvsred}. 

\begin{proposition}\label{prop:solutions-PF}
Under the change of variable \eqref{eq:change-variable}, the functions $I_{0,q}(z)$, $q=0,\ldots, n-1$, define a basis of multivalued holomorphic solutions of the Picard--Fuchs equation for the local system $R^{n-1}f_{\ast} \CBbb$ on $0<|z|<1$. 
\end{proposition}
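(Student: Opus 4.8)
The plan is to reduce everything to the single generating-function identity $D\,R(w,t)=w^{n}e^{wt}$, from which the solution property falls out by extracting coefficients of $w^{q}$. First I would pass from $z$ to $t$ via \eqref{eq:change-variable}: since $z=(n+1)^{n+1}e^{t}$ we have $dz/dt=z$, hence $\delta=z\,d/dz=d/dt$, and substituting into \eqref{eq:Picard-Fuchs} rewrites the Picard--Fuchs operator as $D=(d/dt)^{n}-(n+1)^{n+1}e^{t}\prod_{j=1}^{n}\big(d/dt+\tfrac{j}{n+1}\big)$ acting on functions of $t$. The whole statement will follow once I verify $D\,R(w,t)=w^{n}e^{wt}$.

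To prove this identity, write $R(w,t)=\sum_{d\ge 0}c_{d}(w)\,e^{(w+d)t}$, where $c_{d}(w)=\prod_{r=1}^{(n+1)d}((n+1)w+r)\big/\prod_{r=1}^{d}(w+r)^{n+1}$ and $c_{0}=1$. Applying $(d/dt)^{n}$ multiplies the $d$-th summand by $(w+d)^{n}$, while applying $(n+1)^{n+1}e^{t}\prod_{j}\big(d/dt+\tfrac{j}{n+1}\big)$ multiplies it by $(n+1)^{n+1}\prod_{j}(w+d+\tfrac{j}{n+1})$ and shifts the exponent $d\mapsto d+1$. The crux is the recurrence
\[
(n+1)^{n+1}c_{d-1}(w)\prod_{j=1}^{n}\Big(w+d-1+\tfrac{j}{n+1}\Big)=(w+d)^{n}c_{d}(w),
\]
which I would obtain by isolating the factor $s=n+1$ in the block of numerator terms $\prod_{s=1}^{n+1}\big((n+1)(w+d-1)+s\big)$ relating $c_{d}$ to $c_{d-1}$, writing it as $(n+1)(w+d)$ and leaving $(n+1)^{n}\prod_{s=1}^{n}(w+d-1+\tfrac{s}{n+1})$. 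This recurrence makes the two resulting series telescope term by term, so that the $d\ge 1$ contributions cancel and $D\,R=c_{0}(w)\,w^{n}e^{wt}=w^{n}e^{wt}$. Expanding $R=\sum_{q\ge0}I_{0,q}(t)w^{q}$ and $w^{n}e^{wt}=\sum_{q\ge n}\tfrac{t^{q-n}}{(q-n)!}w^{q}$ and comparing coefficients of $w^{q}$ for $q=0,\dots,n-1$ yields $D\,I_{0,q}=0$, exhibiting the $n$ required solutions. This recurrence computation is the only genuine obstacle; everything else is bookkeeping.

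It remains to check convergence and linear independence. For convergence I would note that $c_{d}(0)=((n+1)d)!/(d!)^{n+1}$ grows like $(n+1)^{(n+1)d}$ up to a subexponential factor by Stirling, and the same bound controls the finitely many $w$-derivatives producing a fixed $I_{0,q}$; since $e^{t}=(n+1)^{-(n+1)}z$, the series converge precisely for $0<|z|<1$ and define holomorphic functions of $t$, hence multivalued holomorphic functions of $z$ on the punctured disc (the multivaluedness entering through $t\sim\log z$). For linear independence I would isolate the $d=0$ part of $R$, namely $e^{wt}$, whose coefficient of $w^{q}$ is $t^{q}/q!$; thus $I_{0,q}(t)=t^{q}/q!+\sum_{d\ge1}(\cdots)e^{dt}$, where the tail vanishes as $z\to 0$. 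Any relation $\sum_{q=0}^{n-1}a_{q}I_{0,q}\equiv 0$ forces, on the part surviving as $z\to 0$, the polynomial identity $\sum_{q}a_{q}t^{q}/q!\equiv 0$, whence all $a_{q}=0$. Finally, since $D$ has order $n$, its solution space on $0<|z|<1$ is $n$-dimensional, so these $n$ independent solutions form a basis, as claimed.
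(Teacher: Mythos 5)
Your proof is correct and follows essentially the same route as the paper's: verify a generating-function identity for $R(w,t)$ against the operator $D$ after the change of variable, check convergence of $F(w,t)$ for $|z|<1$, and deduce linear independence from the $(\log z)^{q}$ growth of $I_{0,q}$ as $z\to 0$. You in fact carry out the telescoping computation that the paper dismisses as ``formal to verify,'' and your precise identity $D\,R(w,t)=w^{n}e^{wt}$ correctly pins down why only the coefficients of $w^{q}$ with $q\le n-1$ are solutions.
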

\begin{proof}
We first recall that the Picard--Fuchs equations of $R^{n-1}f_{\ast} \CBbb$ and $(R^{n-1}f_{\ast} \CBbb)_{\min}$ coincide, and the latter is a local system of rank $n$. 

After the change of variable, one checks that $F(w,z)$ is absolutely convergent on compact subsets in the region $|w|<1$ and $|z|<1$. This implies that the functions $I_{0,q}(z)$ are multivalued holomorphic functions on $0<|z|<1$. Again taking into account the change of variable, it is formal to verify that $R(w,t)$ solves the Picard-Fuchs equation \eqref{eq:Picard-Fuchs}, and hence so do the functions $I_{0,q}(z)$. To see that they form a basis of solutions, it is enough to notice that each $I_{0,q}(z)$ has a singularity of the form $(\log z)^{q}$ as $z\to 0$. 
\end{proof}

\subsubsection*{An adapted basis of homological cycles} By Proposition \ref{prop:solutions-PF}, and because $(R^{n-1}f_{\ast}\CBbb)_{\min}$ has rank $n$, the functions $I_{0,q}(z)$ determine a flat multivalued basis of sections $\gamma_{q}$ of $(R^{n-1}f_{\ast}\CBbb)_{\min}^{\vee}$ on $0<|z|<1$, by the recipe 
\begin{equation*}\label{eq:def-gamma-q}
    I_{0,q}(z) = \int_{\gamma_q (z)} \eta_{0}.
\end{equation*}
See for instance \cite[Sec. 3.4 \& Lemme 3.12]{VoisinMirror} for a justification in an analogous situation. The notation is compatible with the invariant cycle $\gamma_{0}$ constructed above, as we already observed that $I_{0,0}(z)=I_{0}(z)$. The flat multivalued basis elements $\gamma_{q}$ provide a basis of $(H_{n-1})_{\lim}$, whose underlying vector space is seen here as the (minimal) homology of the general fiber. We still denote by $\gamma_{q}$ this basis of $(H_{n-1})_{\lim}$. We next prove that it is adapted to the homological weight filtration, recalled at the end of \textsection \ref{subsec:generalities-hodge}.
\begin{proposition}\label{prop:gamma-weight}
Let $W_{\bullet}^{\prime}$ be the weight filtration of the limiting mixed Hodge structure on $(H_{n-1})_{\lim}$. Then $\gamma_{q}\in W_{2q-2(n-1)}^{\prime}\setminus W_{2q-1-2(n-1)}^{\prime}$. 
\end{proposition}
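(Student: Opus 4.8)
The plan is to relate the position of each $\gamma_q$ in the homological weight filtration $W_\bullet'$ to the order of vanishing (in powers of $t$, equivalently in powers of $\log z$) of the corresponding period $I_{0,q}$, and then to exploit the maximally unipotent structure from Lemma \ref{lemma:MHS-infty}. Recall from \textsection \ref{subsec:generalities-hodge} that $W_{-r}' = (H^{n-1}_{\lim}/W_{r-1})^\vee$, so the dual weight filtration on $(H_{n-1})_{\lim}$ is governed by the cohomological weight filtration $W_\bullet$, which by Lemma \ref{lemma:MHS-infty} is the monodromy weight filtration of the nilpotent $N$ with $N^{n-1}\neq 0$ and one-dimensional even graded pieces. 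Concretely, for a MUM point the cohomological filtration reads $0 = W_{-1}\subset W_0\subset W_2\subset\ldots\subset W_{2(n-1)} = H^{n-1}_{\lim}$ with $\dim W_{2j}/W_{2j-2}=1$, and $N$ shifts weight by $-2$. Dualizing, $W_{2q-2(n-1)}'$ is the annihilator of $W_{2(n-1)-2q-1}$, so $\dim W_{2q-2(n-1)}' = q+1$ and the assertion $\gamma_q\in W_{2q-2(n-1)}'\setminus W_{2q-1-2(n-1)}'$ amounts to saying that $\gamma_0,\ldots,\gamma_q$ form a basis of $W_{2q-2(n-1)}'$ for each $q$.

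First I would translate the weight filtration into the language of the logarithm of monodromy acting on cycles. Since the multivalued sections $\gamma_q(z)$ are flat, the monodromy $T$ (equivalently $N$ on the homology side) acts on them by the standard unipotent shift coming from the $(\log z)^q$ singularities established in Proposition \ref{prop:solutions-PF}: writing $I_{0,q}$ with leading singular term a constant multiple of $(\log z)^q$, the transformation $z\mapsto e^{2\pi i}z$ sends $\log z \mapsto \log z + 2\pi i$, which produces a lower-triangular unipotent action on the $\gamma_q$ with $q$ the nilpotency level. Thus the nilpotent operator $N'$ dual to $N$ satisfies, up to nonzero scalars, $N'\gamma_q = \gamma_{q-1}$ and $N'\gamma_0 = 0$, so that $\gamma_q$ generates the $q$-th step of the kernel filtration of $N'$. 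The key structural input is that for a single nilpotent of maximal index $n-1$ on an $n$-dimensional space, the monodromy weight filtration is exactly the kernel (equivalently image) filtration, and the cycle $\gamma_q$ living at nilpotency level $q$ sits precisely in weight-graded piece $2q-2(n-1)$, not lower. I would make this precise by combining the explicit $(\log z)^q$ leading behaviour of $I_{0,q}$ with the defining property of the monodromy weight filtration for a maximally unipotent $N$.

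The cleanest route is then as follows. First I would identify $N'$ (the log of unipotent monodromy on $(H_{n-1})_{\lim}$, up to the factor $-\tfrac{1}{2\pi i}$) with, up to scalars, the shift $\gamma_q\mapsto\gamma_{q-1}$; this is a direct consequence of the $(\log z)^q$ singularity structure of Proposition \ref{prop:solutions-PF} and the flatness defining the $\gamma_q$. Next I would invoke Lemma \ref{lemma:MHS-infty}, which establishes that $N$ is maximally unipotent with $\Gr^W_{2j}$ one-dimensional and $N$ inducing isomorphisms $\Gr^W_{2j}\xrightarrow{\sim}\Gr^W_{2j-2}$; dually this forces $W_{2q-2(n-1)}'$ to be precisely $\ker(N')^{q+1} = \operatorname{span}(\gamma_0,\ldots,\gamma_q)$, since the weight filtration of a maximal nilpotent is its kernel filtration centred at the middle weight. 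Finally, comparing dimensions — $\dim W_{2q-2(n-1)}' = q+1$ by the one-dimensionality of the graded pieces — yields that $\gamma_q$ lies in $W_{2q-2(n-1)}'$ but not in $W_{2q-1-2(n-1)}' = W_{2q-2-2(n-1)}'$ (the odd step coinciding with the preceding even step, as odd graded pieces vanish).

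The main obstacle I anticipate is the bookkeeping in dualizing the weight filtration correctly: the shift conventions in $W_{-r}' = (H^{n-1}_{\lim}/W_{r-1})^\vee$ must be matched carefully against the nilpotency level of $\gamma_q$, and one must confirm that the leading $(\log z)^q$ term of $I_{0,q}$ genuinely records the nilpotency degree rather than merely an upper bound — that is, that $N'^q\gamma_q$ is nonzero. This non-degeneracy is exactly what the maximal unipotency in Lemma \ref{lemma:MHS-infty} guarantees, since $N^{n-1}\neq 0$ prevents any collapse in the tower of shifts, so the only real care needed is in the index arithmetic relating cohomological weight $2j$, dual weight $2j-2(n-1)$, and nilpotency level $q$. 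Once the convention is pinned down, the statement follows formally from the rank-one graded pieces and the explicit logarithmic singularities.
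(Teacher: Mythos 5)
Your argument is correct in substance, but it takes a genuinely different route from the paper's. The paper never computes the monodromy action on the cycles $\gamma_q$ at all: it passes to the Poincar\'e duals $\gamma_q'\in H^{n-1}_{\lim}$ and uses \emph{metric} input --- the Cauchy--Schwarz bound $|I_{0,q}(z)|\leq (2\pi)^{n-1}\|\gamma_q'(z)\|_{L^2}\|\eta_0\|_{L^2}$, the growth $\|\eta_0\|_{L^2}\sim(\log|z|^{-1})^{(n-1)/2}$ from \cite[Thm. 4.4]{cdg2}, and Schmid's norm characterization of the weight filtration --- to get $\gamma_q'\notin W_{2q-1}$, and then a separate downward induction (comparing growth rates of periods) for the containment $\gamma_q'\in W_{2q}$. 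You instead read off the exact unipotent shift $N'\gamma_q=c\,\gamma_{q-1}$ from the transformation law of the $I_{0,q}$ and reduce everything to the linear algebra of a single Jordan block, for which the weight filtration is the kernel filtration. Your route avoids all the analytic machinery (Schmid's $L^2$ estimates and the asymptotics of $\|\eta_0\|_{L^2}$) and gives both the containment and the non-containment in one stroke; the price is that you need the \emph{exact} monodromy representation rather than mere growth rates of periods.

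Two steps in your write-up need to be made honest. First, the leading $(\log z)^q$ singularity alone does not yield the lower-triangular action: you need that each $I_{0,q}$ is a polynomial of degree $\le q$ in $\log z$ with single-valued holomorphic coefficients, so that $I_{0,q}(t+2\pi i)-I_{0,q}(t)$ is a solution of top log-degree $\le q-1$ and hence a combination of $I_{0,0},\dots,I_{0,q-1}$. This is available from the generating function $R(w,t)=e^{wt}F(w,t)$ with $F$ a power series in $e^t$, which gives $T\gamma_q-\gamma_q\in\operatorname{span}(\gamma_0,\dots,\gamma_{q-1})$; combined with $N'^{\,n-1}\neq 0$ (dual to Lemma \ref{lemma:MHS-infty}), the product of the subdiagonal entries must be nonzero, so each $N'\gamma_q\equiv c_q\gamma_{q-1}$ mod lower terms with $c_q\neq 0$ --- exactly the non-degeneracy you flag, and your appeal to maximal unipotency does close it. Second, to transfer the transformation law from the periods to the cycles you must know that a flat section of the dual system pairing to zero with $\eta_0$ for all $z$ vanishes; this holds because the Picard--Fuchs operator has order $n$ equal to the rank of the minimal local system (i.e. $\eta_0$ is a cyclic vector), the same fact that makes the definition of $\gamma_q$ via $I_{0,q}=\int_{\gamma_q}\eta_0$ legitimate in the first place. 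With these two points supplied, your proof is complete.
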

\begin{proof} 
By \cite[Lemma (6.4)]{schmid}, the Poincar\'e duality induces an isomorphism between the weight filtration $W_{r}$ on $H^{n-1}_{\lim}$ to the dual weight filtration $W_{r-2(n-1)}^{\prime}$ on $(H_{n-1})_{\lim}$. Therefore, it is enough to establish $\gamma^{\prime}_{q}\in W_{2q}\setminus W_{2q-1}$ for the Poincar\'e duals $\gamma^{\prime}_{q}\in H^{n-1}_{\lim}$.

On each fiber $Z_{z}$, the Hodge decomposition and the Cauchy--Schwarz inequality imply 
\begin{displaymath}
    |I_{0,q}(z)|=\left|\int_{Z_{z}}\gamma^{\prime}_{q}(z)\wedge\eta_{0}\right|\leq (2\pi)^{n-1}\|\gamma^{\prime}_{q}(z)\|_{\l2}\|\eta_{0}\|_{\l2}.
\end{displaymath}
Now $|I_{0,q}(z)|$ grows like $(\log |z|^{-1})^{q}$ as $z\to 0$ along angular sectors (cf. proof of Proposition \ref{prop:solutions-PF}). Because the monodromy is maximally unipotent at infinity and $\eta_{0}$ is a basis of $f_{\ast}K_{\Zcal/\DBbb_{\infty}}(\log)$, the $L^{2}$ norm $\|\eta_{0}\|_{\l2}$ grows like $(\log |z|^{-1})^{(n-1)/2}$ (see \cite[Thm. A]{cdg} or the more general \cite[Thm. 4.4]{cdg2}). We infer that as $z\to 0$, along angular sectors,
\begin{displaymath}
    \|\gamma_{q}^{\prime}(z)\|_{\l2}\gtrsim (\log|z|^{-1})^{\frac{2q-(n-1)}{2}}.
\end{displaymath}
By Schmid's metric characterization of the limiting Hodge structure \cite[Thm. 6.6]{schmid}, we then see that $\gamma_{q}^{\prime}\not\in W_{2q-1}$. 

It remains to show that $\gamma_{q}^{\prime}\in W_{2q}$. First of all, starting with $q = n-1$, we already know $\gamma_{n-1}^{\prime}\in W_{2n-2}\setminus W_{2n-3}$. We claim that $\gamma_{n-2}^{\prime}\in W_{2n-4}$. Otherwise  $\gamma_{n-2}^{\prime}\in W_{2n-2}\setminus W_{2n-4}$. But the weight filtration has one-dimensional graded pieces in even degrees, and zero otherwise (cf. Lemma \ref{lemma:MHS-infty}). It follows that $W_{2n-4}=W_{2n-3}$ and $\gamma_{n-1}^{\prime}=\lambda\gamma_{n-2}^{\prime}+\beta$, for some constant $\lambda$ and some $\beta\in W_{2n-4}$. Integrating against $\eta_{0}$, this relation entails
\begin{displaymath}
    I_{0,n-1}(z)=\lambda I_{0,n-2}(z)+\int_{Z_{z}}\beta(z)\wedge\eta_{0},
\end{displaymath}
where $\beta(z)$ is the flat multivalued section corresponding to $\beta$. Let us examine the asymptotic behaviour of the right hand side of this equality, as $z\to 0$, along angular sectors. We know that $|I_{0,n-2}(z)|$ grows like $(\log |z|^{-1})^{n-2}$. By the Hodge decomposition and the Cauchy--Schwarz inequality, and Schmid's theorem, the integral grows at most like $(\log |z|^{-1})^{n-2}$. This contradicts that $|I_{0,n-1}(z)|$ grows like $(\log |z|^{-1})^{n-1}$. Hence $\gamma_{n-2}^{\prime}\in W_{2n-4}$. Continuing inductively in this fashion, we conclude that $\gamma_{q}^{\prime}\in W_{2q}$ for all $q$, as desired.
\end{proof}

\subsubsection*{A normalized basis of $R^{n-1}f_{\ast}\Omega^{\bullet}_{\Zcal/\DBbb_{\infty}}(\log)_{\min}$}\label{sub:normalised} We construct a basis of holomorphic sections of $R^{n-1}f_{\ast}\Omega^{\bullet}_{\Zcal/\DBbb_{\infty}}(\log)_{\min}$ close to infinity, which correspond to the period integrals $I_{p,q}(z)$. We proceed inductively:
\begin{enumerate}
    \item set $\widetilde{\vartheta}_{0}=\eta_{0}$;
    \item for $p\geq 1$, suppose that $\widetilde{\vartheta}_{0},\ldots, \widetilde{\vartheta}_{p-1}$ have been constructed. Define
    \begin{displaymath}
        I_{p-1,q}(z)=\int_{\gamma_{q}(z)}\widetilde{\vartheta}_{p-1}.
    \end{displaymath}
    This notation is consistent with the previous definition of $I_{0,q}$;
    \item as by \cite[Prop. 3.1]{Zingerreduced}, in turn based on \cite{ZaZi}, the integral $I_{p-1,p-1}(z)$ is holomorphic and non-vanishing at $z=0$, we can define $\widetilde{\vartheta}_{p}$ by
    \begin{equation}\label{construction-theta}
        \widetilde{\vartheta}_{p}=\nabla_{zd/dz}\left( \frac{\widetilde{\vartheta}_{p-1} }{I_{p-1,p-1}(z)}\right);
    \end{equation}
\end{enumerate}
One verifies integrating \eqref{construction-theta} over $\gamma_{q}(z)$ that the period integrals $I_{p,q}(z):=\int_{\gamma_{q}(z)}\widetilde{\vartheta}_{p}$ satisfy the following recursion:
\begin{equation}\label{recursion-ipq}
    I_{p,q}(z)=z\frac{d}{dz}\left(\frac{I_{p-1,q}(z)}{I_{p-1,p-1}(z)}\right).
\end{equation}
Taking into account the change of variable \eqref{eq:change-variable}, we see that this is the same recurrence relation as in \cite[eq. (2--18)]{Zingerstvsred} (see also \cite[eq. (0.16)]{Zingerreduced}). Hence the $I_{p,q}(z)$ above coincides with the $I_{p,q}(t)$ in \emph{loc. cit.}
We further normalize:
    \begin{displaymath}
        \vartheta_{p}=\frac{\widetilde{\vartheta}_{p}}{I_{p,p}(z)}.
    \end{displaymath}
\begin{proposition}\label{prop:properties-vartheta}
\begin{enumerate}
    \item\label{item:prop-varth-1} For all $k$, the sections $\lbrace\vartheta_{j}\rbrace_{j=0,\ldots, k}$, constitute a holomorphic basis of the filtered piece $\Fcal^{n-1-k}R^{n-1}f_{\ast}\Omega^{\bullet}_{\Zcal/\DBbb_{\infty}}(\log)_{\min}$.
    \item\label{item:prop-varth-2} The periods of $\vartheta_{k}$ satisfy
    \begin{displaymath}
        \int_{\gamma_{k}}\vartheta_{k}=1
        \quad\textrm{and}\quad
        \int_{\gamma_{q}}\vartheta_{k}=0\quad\textrm{if}\quad q<k.
    \end{displaymath}
    \item\label{item:prop-varth-3} The projection of $\vartheta_{k}$ to $R^{k}f_{\ast}\Omega_{\Zcal/\DBbb_{\infty}}^{n-1-k}(\log)_{\min}$ relates to $\eta_{k}$ by
    \begin{displaymath}
        (\vartheta_{k})^{n-1-k,k}=\frac{(-1)^k}{(n+1)^k}\frac{\eta_{k}}{\prod_{p=0}^{k}I_{p,p}(z)}.
    \end{displaymath}
    \item\label{item:prop-varth-4} The sections $\lbrace\vartheta_{j}\rbrace_{j=0,\ldots, n-1}$ are uniquely determined by properties \eqref{item:prop-varth-1}--\eqref{item:prop-varth-2} above.
\end{enumerate}
\end{proposition}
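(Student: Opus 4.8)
The plan is to run a single induction on the index $p$ in which the object to track is the image of $\widetilde\vartheta_p$ in the top nonzero graded piece of the Hodge filtration. Write $v_p$ for the projection of $\widetilde\vartheta_p$ to $\Gr_{\Fcal}^{n-1-p}=R^{p}f_{\ast}\Omega^{n-1-p}_{\Zcal/\DBbb_{\infty}}(\log)_{\min}$. I would first establish by induction the closed formula
\[
    v_p=\frac{(-1)^{p}}{(n+1)^{p}}\,\frac{\eta_{p}}{\prod_{j=0}^{p-1}I_{j,j}(z)},
\]
which for $p=0$ reduces to $v_0=\widetilde\vartheta_0=\eta_0$. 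For the inductive step I expand $\widetilde\vartheta_p=\nabla_{zd/dz}(\widetilde\vartheta_{p-1}/I_{p-1,p-1})$ with the Leibniz rule; the term proportional to $\widetilde\vartheta_{p-1}$ remains in $\Fcal^{n-p}$ and so is invisible to the projection onto $\Gr_{\Fcal}^{n-1-p}$. By Griffiths transversality and the definition of the Kodaira--Spencer maps (\textsection\ref{subsec:KSYukawa}), the projection of the surviving term is $\KS^{(p-1)}(zd/dz)$ applied to $v_{p-1}/I_{p-1,p-1}$. Using the change of variable \eqref{eq:change-variable} in the form $z\frac{d}{dz}=-\frac{1}{n+1}\psi\frac{d}{d\psi}$, the $\Ocal_U$-linearity of $\KS$, and the recurrence $\KS^{(p-1)}(\psi d/d\psi)\eta_{p-1}=\eta_{p}$ from Lemma \ref{lemma:recurrence-eta}, the formula for $v_p$ drops out. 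Dividing by $I_{p,p}$ then yields item \eqref{item:prop-varth-3} directly.

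From this, item \eqref{item:prop-varth-1} is immediate. By Theorem \ref{thm:eta-triv-infty} each $\eta_p$ is a holomorphic generator of the line bundle $\Gr_{\Fcal}^{n-1-p}$, and since the $I_{j,j}(z)$ are holomorphic and non-vanishing at $z=0$ (step (3) of the construction, via \cite[Prop. 3.1]{Zingerreduced}), each $v_p$ is again a holomorphic generator. Consequently $\widetilde\vartheta_p$, and hence $\vartheta_p=\widetilde\vartheta_p/I_{p,p}$, is a holomorphic section of $\Fcal^{n-1-p}$ whose image generates $\Gr_{\Fcal}^{n-1-p}$; holomorphy across $z=0$ persists because $\nabla_{zd/dz}$ contracts against a logarithmic vector field and so preserves the Deligne canonical extension. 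An upper-triangular argument with respect to the Hodge filtration (whose minimal graded pieces are lines) then shows $\{\vartheta_j\}_{j\le k}$ is a holomorphic frame of $\Fcal^{n-1-k}$.

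For item \eqref{item:prop-varth-2} I would exploit that the cycles $\gamma_q$ are flat, so $\int_{\gamma_q}\vartheta_k=I_{k,q}/I_{k,k}$ and the period functions obey the recursion \eqref{recursion-ipq}. A short induction on $p$ using $\delta(1)=0$ gives $I_{p,q}\equiv 0$ for all $q<p$: if $I_{p-1,q}\equiv 0$ for $q<p-1$, then $I_{p,q}=\delta(I_{p-1,q}/I_{p-1,p-1})\equiv 0$ for $q<p-1$, while $I_{p,p-1}=\delta(I_{p-1,p-1}/I_{p-1,p-1})=\delta(1)=0$. Hence $\int_{\gamma_k}\vartheta_k=1$ and $\int_{\gamma_q}\vartheta_k=0$ for $q<k$. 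For the uniqueness claim \eqref{item:prop-varth-4}, observe that the period matrix $M_{q,j}=\int_{\gamma_q}\vartheta_j$ is lower triangular with $1$'s on the diagonal (zero for $q<j$ by the above, $1$ for $q=j$). Given any family $\{\sigma_j\}$ satisfying \eqref{item:prop-varth-1}--\eqref{item:prop-varth-2}, property \eqref{item:prop-varth-1} lets me write $\sigma_k=\sum_{j\le k}a_{k,j}\vartheta_j$, and imposing $\int_{\gamma_q}\sigma_k=\delta_{q,k}$ for $q\le k$ produces a triangular linear system; solving it successively in $q=0,1,\ldots,k$ forces $a_{k,j}=\delta_{k,j}$, i.e.\ $\sigma_k=\vartheta_k$.

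The only genuinely delicate point is the behaviour at the MUM point $z=0$: I must know that each $I_{p,p}(z)$ is holomorphic and non-vanishing there, so that the normalizations $\vartheta_p=\widetilde\vartheta_p/I_{p,p}$ stay holomorphic generators rather than acquiring spurious zeros or poles. This is exactly where the input of Zinger and Zagier--Zinger \cite[Prop. 3.1]{Zingerreduced} is needed, and it is the one ingredient external to the Hodge-theoretic machinery assembled above; everything else is bookkeeping with the filtration, the Kodaira--Spencer recurrence, and the flat cycles $\gamma_q$.
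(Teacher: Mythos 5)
Your proposal is correct and follows essentially the same route as the paper: the paper likewise derives item (3) by tracking the leading graded term of $\widetilde{\vartheta}_k$ through the recursion (working with $\theta_k$ and \eqref{eq:recurrence-theta} rather than with $\eta_k$ and Lemma \ref{lemma:recurrence-eta}, which is an equivalent bookkeeping), deduces (1)--(2) from the non-vanishing of the $I_{p,p}$, Theorem \ref{thm:eta-triv-infty} and the Kodaira--Spencer/Gauss--Manin formalism, and proves (4) by comparing two filtration-adapted bases via the triangular period matrix. Your write-up simply supplies the details the paper leaves implicit.
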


\begin{proof}
We noticed that the period integrals $I_{p,p}(z)$ are holomorphic in $z$ and non-vanishing at $z=0$. With this observation at hand, the claims \eqref{item:prop-varth-1}--\eqref{item:prop-varth-2} then follow from properties of the Gauss--Manin connection and Kodaira--Spencer maps, Lemma \ref{lemma:recurrence-eta} and Theorem \ref{thm:eta-triv-infty}.
From $\widetilde{\vartheta}_0=\eta_0=-(n+1)\psi\theta_0$ and the recursion~\eqref{eq:recurrence-theta} for $\theta_k$, the definition~\eqref{construction-theta} further normalized gives 
\begin{displaymath}
    \vartheta_k=(-1)^{k-1}(n+1)\psi^{k+1}\frac{\theta_k}{\prod_{p=0}^{k}I_{p,p}(z)}\mod \Fcal^{n-1-(k-1)}R^{n-1}f_{\ast}\Omega^{\bullet}_{\Zcal/\DBbb_{\infty}}(\log)_{\min}.
\end{displaymath}
As $\theta_k$ maps to $\eta^\circ_k=-\frac{\eta_k}{(n+1)^{k+1}\psi^{k+1}}$ in the Hodge bundle $(R^{k}f_{\ast}\Omega_{\Zcal/U}^{n-1-k})_{\min}$, this proves \eqref{item:prop-varth-3}.
The uniqueness property \eqref{item:prop-varth-4}is obtained by comparing two such bases adapted to the Hodge filtration as in \eqref{item:prop-varth-1}, and then imposing the period relations \eqref{item:prop-varth-2}. 
\end{proof}

Actually, the basis $\vartheta_{\bullet}=\lbrace\vartheta_{j}\rbrace_{j=0,\ldots,n-1}$ is determined by the limiting Hodge structure $H^{n-1}_{\lim}$, up to constant, as we now show:
\begin{proposition}\label{prop:vartheta-LHS}
\begin{enumerate}
    \item Let $\gamma_{\bullet}^{\prime}$ be an adapted basis of the weight filtration on $(H_{n-1})_{\lim}$, as in Proposition \ref{prop:gamma-weight}. Then there exists a unique holomorphic basis $\vartheta_{\bullet}^{\prime}$ of $R^{n-1}f_{\ast}\Omega^{\bullet}_{\Zcal/\DBbb_{\infty}}(\log)_{\min}$ satisfying the conditions analogous to \eqref{item:prop-varth-1}--\eqref{item:prop-varth-2} with respect to $\gamma_{\bullet}^{\prime}$. 
    \item There exist non-zero constants $c_{k}\in\CBbb$ such that $\vartheta_{k}^{\prime}=c_{k}\vartheta_{k}$.
\end{enumerate}
\end{proposition}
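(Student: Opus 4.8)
The plan is to reduce the whole statement to elementary linear algebra, exploiting that $\gamma_\bullet$ and $\gamma_\bullet'$ are adapted bases of the \emph{same} weight filtration $W_\bullet'$ on the fixed vector space $(H_{n-1})_{\lim}$, together with the unitriangular period pairing already recorded in Proposition \ref{prop:properties-vartheta}\eqref{item:prop-varth-2}. The first step is to describe the change of basis on homology. By Lemma \ref{lemma:MHS-infty} and Proposition \ref{prop:gamma-weight} the graded pieces $\Gr^{W'}_{2q-2(n-1)}(H_{n-1})_{\lim}$ are one-dimensional and concentrated in even degrees, so $W'_{2q-2(n-1)}=\langle \gamma_0,\ldots,\gamma_q\rangle$; adaptedness of $\gamma_\bullet'$ therefore forces a lower-triangular relation $\gamma_q'=\sum_{j\le q}m_{qj}\gamma_j$ with $m_{qq}\neq 0$. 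The essential point, which is the conceptual heart of part (2), is that the matrix $(m_{qj})$ is \emph{constant}: both families are flat multivalued sections of the homology local system, so a constant relation in the limit $(H_{n-1})_{\lim}$ propagates to a constant relation of flat sections over $\DBbb_\infty^\times$.

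For existence, and simultaneously for part (2), I would simply put $\vartheta_k':=m_{kk}^{-1}\vartheta_k$. The condition analogous to \eqref{item:prop-varth-1} is automatic, since the $m_{kk}^{-1}$ are nonzero scalars and $\{\vartheta_0',\ldots,\vartheta_k'\}$ continues to frame $\Fcal^{n-1-k}R^{n-1}f_{\ast}\Omega^{\bullet}_{\Zcal/\DBbb_{\infty}}(\log)_{\min}$. For the period relations I would compute
\[
    \int_{\gamma_q'}\vartheta_k' = m_{kk}^{-1}\sum_{i\le q}m_{qi}\int_{\gamma_i}\vartheta_k .
\]
When $q\le k$ every index $i$ occurring satisfies $i\le k$, so only the unitriangular values $\int_{\gamma_i}\vartheta_k=\delta_{ik}$ of Proposition \ref{prop:properties-vartheta}\eqref{item:prop-varth-2} intervene; this yields $\int_{\gamma_q'}\vartheta_k'=0$ for $q<k$ and $\int_{\gamma_k'}\vartheta_k'=m_{kk}^{-1}m_{kk}=1$. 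Hence $\vartheta_\bullet'$ satisfies the analogues of \eqref{item:prop-varth-1}--\eqref{item:prop-varth-2}, proving existence and the asserted proportionality $\vartheta_k'=c_k\vartheta_k$ with $c_k=m_{kk}^{-1}\in\CBbb^\times$.

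For uniqueness I would argue as in Proposition \ref{prop:properties-vartheta}\eqref{item:prop-varth-4}. If $\vartheta_\bullet''$ is a second basis with the analogous properties, then for each fixed $k$ both $\{\vartheta_j'\}_{j\le k}$ and $\{\vartheta_j''\}_{j\le k}$ frame the holomorphic subbundle $\Fcal^{n-1-k}$, so $\vartheta_k''=\sum_{j\le k}a_{kj}(z)\vartheta_j'$ with $a_{kj}$ holomorphic and $a_{kk}$ invertible. Pairing with $\gamma_q'$ for $q=0,1,\ldots,k$ and using $\int_{\gamma_q'}\vartheta_j'=\delta_{qj}$ whenever $q\le j$, one gets $\delta_{qk}=a_{kq}+\sum_{j<q}a_{kj}\int_{\gamma_q'}\vartheta_j'$; an ascending induction on $q$ starting from $q=0$ forces $a_{kj}=\delta_{jk}$, whence $\vartheta_k''=\vartheta_k'$.

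The only genuinely delicate point is the bookkeeping: one must check that every period invoked is one of the \emph{constant} unitriangular values $\int_{\gamma_i}\vartheta_j$ with $i\le j$, and never the nonconstant periods with $i>j$ (which grow like powers of $\log z$, cf. the proof of Proposition \ref{prop:gamma-weight}). The triangular shape of the transition matrix on homology, combined with the triangularity of the Hodge filtration frames, is exactly what confines the computation to the constant range; and the constancy of $(m_{qj})$ is precisely what makes the proportionality factors $c_k$ constants rather than functions of $z$.
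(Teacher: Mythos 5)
Your proof is correct and takes essentially the same route as the paper's: the paper writes $\gamma_{\bullet}^{\prime}=A\gamma_{\bullet}$ for a constant lower-triangular matrix $A$ with diagonal part $D$ and sets $\vartheta_{\bullet}^{\prime}=D^{-1}\vartheta_{\bullet}$, which is exactly your $c_{k}=m_{kk}^{-1}$. You simply spell out the verification of the period relations and the triangular uniqueness induction that the paper's three-line proof leaves implicit.
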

\begin{proof}
We prove both assertions simultaneously. We write $\gamma_{\bullet}$ and $\gamma_{\bullet}^{\prime}$ as column vectors. Since the graded pieces of the weight filtration on $(H_{n-1})_{\lim}$ are all one-dimensional, there exists a lower triangular matrix $A\in\GL_{n}(\CBbb)$ with $\gamma_{\bullet}^{\prime}=A\gamma_{\bullet}$. If we decompose $A=D+L$, where $D$ is diagonal and $L$ is lower triangular, we see that the entries of the column vector $\vartheta_{\bullet}^{\prime}:=D^{-1}\vartheta_{\bullet}$ fulfill the requirements.
\end{proof}

\begin{definition}\label{eq:norm-eta-periods}
We define the canonical trivializing section of $R^{k}f_{\ast}\Omega_{\Zcal/\DBbb_{\infty}}^{n-1-k}(\log)_{\min}$ to be
\begin{displaymath}
    \widetilde{\eta}_{k}=(\vartheta_{k})^{n-1-k,k}=\frac{(-1)^k}{(n+1)^k}\frac{\eta_{k}}{\prod_{p=0}^{k}I_{p,p}(z)}.
\end{displaymath}
\end{definition}
By the previous proposition, up to constants, the sections $\widetilde{\eta}_{k}$ depend only on $(H_{n-1})_{\lim}$, or equivalently $H^{n-1}_{\lim}$ by Poincar\'e duality. These constructions are part of a wider framework about distinguished sections for degenerations of Hodge--Tate type. It is discussed in more detail in \textsection \ref{subsec:distinguished-sections}.

\subsection{Generating series of Gromov--Witten invariants and Zinger's theorem}
In order to state Zinger's theorem on generating series of Gromov--Witten invariants of genus one, and for coherence with the notations of this author, it is now convenient to work in the $t$ variable instead of $z$. The mirror map in Zinger's normalizations is the change of variable
\begin{equation}\label{eq:Zinger-mirror-map}
    t\mapsto T=\frac{I_{0,1}(t)}{I_{0,0}(t)}=\frac{\int_{\gamma_{1}(t)}\eta_{0}}{\int_{\gamma_{0}(t)}\eta_{0}}.
\end{equation}
Notice that this differs by a factor $2\pi i$ from the more standard Morrison's mirror map \cite{morrison-mirror} used in the introduction. The Jacobian of the mirror map is computed from \eqref{recursion-ipq}
\begin{displaymath}
    \frac{dT}{dt}=I_{1,1}(t).
\end{displaymath}
Let us introduce some last notations: 
\begin{itemize}
    \item $X_{n+1}$ denotes a general degree $n+1$ hypersurface in $\PBbb^n$.
    \item $N_1(0) = - \left( \frac{(n-1)(n+2)}{48}+\frac{1-(-n)^{n+1}}{24(n+1)^2}\right)=\frac{1}{24}\left(-\frac{n(n+1)}{2}+\frac{\chi(X_{n+1})}{n+1}\right)$.
    \item $N_1(d)$ is the genus 1 and degree $d$ Gromov-Witten invariant of $X_{n+1}$ ($d\geq 1$).
\end{itemize}
From these invariants we build a generating series:
\begin{equation}\label{eq:F1A}
    F_{1}^{A}(T)=N_1(0)T  + \sum_{d=1}^\infty N_{1}(d) e^{dT}.
\end{equation}
It follows from \cite[Thm. 2]{Zingerstvsred} that this generating series satisfies
\begin{eqnarray*}
F_{1}^{A}(T) & = & N_1(0) t + \frac{(n+1)^2 -1 + (-n)^{n+1}}{24 (n+1)} \log I_{0,0}(t) \\
&  & -\begin{cases}
\frac{n}{48} \log (1-(n+1)^{n+1} e^t) + \sum_{p=0}^{(n-2)/2} \frac{(n-2p^2)}{8}\log I_{p,p}(t), \hbox{ if } n \hbox{ even }\\
\frac{n-3}{48} \log (1-(n+1)^{n+1} e^t) + \sum_{p=0}^{(n-3)/2)} \frac{(n+1-2p)(n-1-2p)}{8} \log I_{p,p}(t) \hbox{ if } n \hbox{ odd }. \end{cases}
\end{eqnarray*}

This identity has to be understood in the sense of formal series. As an application of relations between the hypergeometric series $I_{p,p}(t)$, studied in detail in \cite{ZaZi}, 
the following identity holds (for a version of this particular identity, see \cite[eq. (3.2)]{Zingerreduced}):
\begin{multline*}
\frac{n(3n-5)}{48}\log (1-(n+1)^{n+1} e^t) + \frac{1}{2}\sum_{p=0}^{n-2} {n-p \choose 2} \log I_{p,p}(t) = \\ 
   \begin{cases}
\frac{n}{48} \log (1-(n+1)^{n+1} e^t) + \sum_{p=0}^{(n-2)/2} \frac{(n-2p^2)}{8}\log I_{p,p}(t), \hbox{ if } n \hbox{ even }\\
\frac{n-3}{48} \log (1-(n+1)^{n+1} e^t) + \sum_{p=0}^{(n-3)/2)} \frac{(n+1-2p)(n-1-2p)}{8} \log I_{p,p}(t) \hbox{ if } n \hbox{ odd } \end{cases}
\end{multline*}
Consequently, Zinger's theorem takes the following pleasant form, that we will use to simplify the task of recognizing $F_{1}^{A}(T)$ in our expression for the BCOV invariant (cf. Theorem \ref{thm:pre-formula}). 
\begin{theorem}[Zinger] \label{theo:Zinger}
Under the change of variables $t \mapsto T$, the series $F_{1}^{A}(T)$ takes the form
\begin{equation}\label{eq:Zinger-expression}
    \begin{split}
         F_{1}^{A}(T)\ =\ &N_1(0) t + \frac{\chi (X_{n+1})}{24}\log I_{0,0}(t)\\
         &-\frac{n(3n-5)}{48}\log (1-(n+1)^{n+1} e^t) - \frac{1}{2}\sum_{p=0}^{n-2} {n-p \choose 2} \log I_{p,p}(t).
    \end{split}
\end{equation}
\end{theorem}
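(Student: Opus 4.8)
The plan is to assemble the statement directly from the two formulas displayed immediately before it, so the proof is essentially a substitution followed by one coefficient identity. First I would invoke Zinger's theorem \cite[Thm. 2]{Zingerstvsred} in the two-case form already recorded above, which writes $F_1^A(T)$ as $N_1(0)t$, plus a scalar multiple of $\log I_{0,0}(t)$, minus a parity-dependent combination of $\log(1-(n+1)^{n+1}e^t)$ and the terms $\log I_{p,p}(t)$.

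Next I would substitute the hypergeometric identity of Zagier--Zinger recorded as the second multiline display (cf. \cite[eq. (3.2)]{Zingerreduced}, based on \cite{ZaZi}). Its right-hand side is \emph{verbatim} the parity-dependent bracket being subtracted in Zinger's formula, for each parity of $n$, while its left-hand side is the uniform expression $\frac{n(3n-5)}{48}\log(1-(n+1)^{n+1}e^t) + \frac{1}{2}\sum_{p=0}^{n-2}\binom{n-p}{2}\log I_{p,p}(t)$. Replacing the bracket by this uniform expression immediately yields the last two terms of \eqref{eq:Zinger-expression} and eliminates the case distinction.

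It then remains only to identify the coefficient of $\log I_{0,0}(t)$. Here I would verify the purely algebraic identity
\[
    \frac{(n+1)^2-1+(-n)^{n+1}}{n+1} = (n+1) + \frac{(-n)^{n+1}-1}{n+1} = \chi(X_{n+1}),
\]
where the last equality uses the value $\chi(X_{n+1}) = \frac{(-n)^{n+1}-1}{n+1} + (n+1)$ furnished by Lemma \ref{lemma:Hodge-numbers-crepant}. Dividing by $24$ turns Zinger's coefficient $\frac{(n+1)^2-1+(-n)^{n+1}}{24(n+1)}$ into $\frac{\chi(X_{n+1})}{24}$, which is exactly the coefficient appearing in \eqref{eq:Zinger-expression}. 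Combined with the substitution above, this gives the asserted form.

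The main point to watch, rather than a genuine obstacle, is bookkeeping: both the enumerative input (Zinger) and the combinatorial identity (Zagier--Zinger) are taken as given, so the only care required is confirming that the parity-dependent bracket in Zinger's formula and the right-hand side of the Zagier--Zinger identity coincide term by term for \emph{both} parities of $n$, so that the substitution is a literal replacement. This, together with the one-line coefficient computation, completes the proof.
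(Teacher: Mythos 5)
Your proposal is correct and is essentially the paper's own argument: the text immediately preceding the theorem quotes Zinger's parity-dependent formula from \cite[Thm. 2]{Zingerstvsred}, substitutes the Zagier--Zinger identity to remove the case distinction, and implicitly uses $\chi(X_{n+1})=(n+1)+\frac{(-n)^{n+1}-1}{n+1}$ from Lemma \ref{lemma:Hodge-numbers-crepant} to rewrite the coefficient of $\log I_{0,0}(t)$. Nothing further is needed.
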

A final remark on the holomorphicity of $F_{1}^{A}(T)$ is in order. While Theorem \ref{theo:Zinger} is \emph{a priori} an identity of formal series, the right hand side of \eqref{eq:Zinger-expression} is actually a holomorphic function in $t$, for $\Real t\ll 0$. Then, via the mirror map, $F_{1}^{A}(T)$ acquires the structure of a holomorphic function in $T$. One can check that the domain of definition is a half-plane $\Real T\ll 0$.  

\subsection{Genus one mirror symmetry and the BCOV invariant}
We are now in position to show that the BCOV invariant of the mirror family $f\colon\Zcal\to U$ realizes genus one mirror symmetry for Calabi--Yau hypersurfaces in projective space. That is, one can extract the generating series $F_{1}^{A}(T)$ from the function $\psi\mapsto\tau_{\bcov}(Z_{\psi})$ . The precise recipe by which this is accomplished goes through expressing $\tau_{\bcov}$ in terms of the $L^{2}$ norms of the canonical sections $\widetilde{\eta}_{k}$ (cf. Definition \ref{eq:norm-eta-periods}). But first we need to make $\tau_{\bcov}(Z_{\psi})$ and $F_{1}^{A}(T)$ depend on the same variable. To this end, we let
\begin{equation}\label{eq:F1B}
    F_{1}^{B}(\psi)= F_{1}^{A}(T),\quad\text{for}\quad T=\frac{I_{0,1}(t)}{I_{0,0}(t)}\quad\text{and}\quad e^{t}=((n+1)\psi)^{-(n+1)}.
\end{equation}

\begin{theorem}\label{thm:BCOV-mirror}
In a neighborhood of $\psi=\infty$, there is an equality
\begin{displaymath}
    \tau_{\bcov}(Z_{\psi})=C\left|\exp\left((-1)^{n-1}F_{1}^{B}(\psi)\right)\right|^{4} \frac{\|\widetilde{\eta}_{0}\|^{\chi/6}_{\l2}}{\left(\prod_{k=0}^{n-1}\|\widetilde{\eta}_{k}\|_{\l2}^{2(n-1-k)}\right)^{(-1)^{n-1}}},
\end{displaymath}
where $\chi=\chi(Z_{\psi})$ and $C\in\pi^{c}\QBbb^{\times}_{>0}$,  $c=\frac{1}{2}\sum_{k}(-1)^{k+1}k^{2}b_{k}$.
\end{theorem}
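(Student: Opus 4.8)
The plan is to start from the Kronecker limit formula of Theorem \ref{thm:pre-formula}, which already expresses $\taubcov{Z_\psi}$ through the $L^2$ norms of the sections $\eta_k$ together with the explicit holomorphic ambiguity factor $|(\psi^{n+1})^a/(1-\psi^{n+1})^b|^2$, and to convert it into the asserted shape by substituting the defining relation between the $\eta_k$ and the normalized sections $\widetilde\eta_k$. By Definition \ref{eq:norm-eta-periods} one has $\eta_k=(-1)^k(n+1)^k\big(\prod_{p=0}^kI_{p,p}(z)\big)\widetilde\eta_k$, so that
\begin{displaymath}
    \|\eta_k\|_{L^2}=(n+1)^k\Big(\prod_{p=0}^k|I_{p,p}(z)|\Big)\|\widetilde\eta_k\|_{L^2}.
\end{displaymath}
First I would feed this into the formula of Theorem \ref{thm:pre-formula} and separate the outcome into three kinds of contributions.

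This bookkeeping is purely formal. The powers of $(n+1)$ assemble into a nonzero rational constant, absorbed into $C$. The factors $\|\widetilde\eta_k\|_{L^2}$ reorganize, term by term, into exactly the ratio $\|\widetilde\eta_0\|_{L^2}^{\chi/6}/\big(\prod_k\|\widetilde\eta_k\|_{L^2}^{2(n-1-k)}\big)^{(-1)^{n-1}}$ on the right-hand side of the statement. The period factors accumulate: the numerator $\|\eta_0\|_{L^2}^{\chi/6}$ contributes $|I_{0,0}(z)|^{\chi/6}$, while in the denominator the exponent of $|I_{p,p}(z)|$ is $\sum_{k\ge p}2(n-1-k)=(n-p)(n-1-p)=2\binom{n-p}{2}$, producing $\prod_{p=0}^{n-2}|I_{p,p}(z)|^{2\binom{n-p}{2}}$ raised to $(-1)^{n-1}$. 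Thus the theorem reduces to the identity
\begin{displaymath}
    \Big|\tfrac{(\psi^{n+1})^a}{(1-\psi^{n+1})^b}\Big|^2\frac{|I_{0,0}(z)|^{\chi/6}}{\big(\prod_{p=0}^{n-2}|I_{p,p}(z)|^{2\binom{n-p}{2}}\big)^{(-1)^{n-1}}}=C''\big|\exp\big((-1)^{n-1}F_1^B(\psi)\big)\big|^4
\end{displaymath}
for a suitable constant $C''$.

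To establish this I would take logarithms and pass to the variables $t,z$ via \eqref{eq:change-variable} and \eqref{eq:F1B}, using $1-(n+1)^{n+1}e^t=1-z$, $\real(t)=-(n+1)\log|\psi|+\mathrm{const}$ and $\log|1-\psi^{n+1}|=(n+1)\log|\psi|+\log|1-z|$. Inserting the closed form of $F_1^A(T)$ from Zinger's Theorem \ref{theo:Zinger} and recalling $\chi(Z_\psi)=(-1)^{n-1}\chi(X_{n+1})$ (Lemma \ref{lemma:Hodge-numbers-crepant}), the terms $\tfrac{\chi}{6}\log|I_{0,0}|$ and $-(-1)^{n-1}\sum_p2\binom{n-p}{2}\log|I_{p,p}|$ occurring in $4\real[(-1)^{n-1}F_1^A]$ cancel precisely against those produced by the substitution above. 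What then remains is to match the two genuinely geometric terms, i.e. to verify
\begin{displaymath}
    2(n+1)(a-b)=-4(-1)^{n-1}(n+1)N_1(0),\qquad -2b=-(-1)^{n-1}\tfrac{n(3n-5)}{12},
\end{displaymath}
read off as the coefficients of $\log|\psi|$ and of $\log|1-z|$ respectively. Both follow from the explicit values of $a$, $b$, $N_1(0)$ (after substituting $\chi(X_{n+1})=(-1)^{n-1}\chi$); a short computation reduces the first to $(-1)^{n-1}\tfrac{n(n+1)}{24}-\tfrac{\chi}{12(n+1)}$ on both sides, while the second is immediate from the value of $b$ recorded in the proof of Theorem \ref{thm:pre-formula}.

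The main obstacle is exactly this coefficient matching: it is the single point where the \emph{a priori} unrelated data must conspire — the exponents $a,b$ of the holomorphic ambiguity, which are dictated by the asymptotics of the BCOV invariant and of the $L^2$ norms at the ODP and MUM points (Theorems \ref{thm:order-eta-xi}, \ref{thm:eta-triv-infty}, and the value of $\chi$), against the combinatorial coefficients of Zinger's series and the Picard--Fuchs periods $I_{p,p}$. Everything else is formal rearrangement. Finally, the constant is pinned down as follows: since $\taubcov{Z_\psi}>0$ and every $\psi$-dependent factor on both sides is a modulus, $C$ is real positive, and tracking the rational factors introduced above together with the constant $C_\sigma\in\pi^c\QBbb_{>0}$ of Theorem \ref{thm:ARR} and the rationality of the $L^2$ norms of the flat auxiliary sections established in the proof of Theorem \ref{thm:pre-formula} yields $C\in\pi^c\QBbb^\times_{>0}$ with $c=\tfrac12\sum_k(-1)^{k+1}k^2b_k$.
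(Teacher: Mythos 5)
Your proposal is correct and follows essentially the same route as the paper: rewrite Theorem \ref{thm:pre-formula} in the normalized sections $\widetilde{\eta}_k$ via Definition \ref{eq:norm-eta-periods} (producing exactly the period factors $|I_{0,0}|^{\chi/6}$ and $\prod_p|I_{p,p}|^{2\binom{n-p}{2}}$), and match the result against Zinger's closed form of $F_1^A$ under the change of variables \eqref{eq:change-variable} and \eqref{eq:F1B}. The only caveat is that your coefficient check forces $b=(-1)^{n-1}\tfrac{n(3n-5)}{24}$, which agrees with the value actually derived in the proof of Theorem \ref{thm:pre-formula} (and used in the paper's own computation of $4F_1^A(T)$) rather than with the sign printed in its statement --- an internal discrepancy of the paper, not a gap in your argument.
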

\begin{proof}
The proof is a simple computation, which consists in changing the variable $T$ to $\psi$, using \eqref{eq:F1B}, in the expression for $F_{1}^{A}(T)$ provided by Theorem \ref{theo:Zinger}. For the computation, recall that for a smooth hypersurface $X_{n+1}$ in $\PBbb^{n}$, $\chi(X_{n+1}) = (-1)^{n-1} \chi$. Modulo $\log$ of rational numbers, we find
\begin{displaymath}
    \begin{split}
    4F_{1}^{A}(T)=&\left(-\frac{n(n+1)}{12}+\frac{\chi(X_{n+1})}{6(n+1)}\right)t + 
         \frac{\chi (X_{n+1})}{6}\log I_{0,0}(t)\\
         &-\frac{n(3n-5)}{12}\log (1-(n+1)^{n+1} e^t) - 2\sum_{p=0}^{n-2} {n-p \choose 2} \log I_{p,p}(t)\\
         =&\left(\frac{n(n+1)}{12}-\frac{\chi(X_{n+1})}{6(n+1)}+\frac{n(3n-5)}{12}\right) \log (\psi^{n+1})\\
         &-\frac{n(3n-5)}{12}\log (\psi^{n+1}-1)
         +\frac{\chi (X_{n+1})}{6}\log I_{0,0}(t)
         - 2\sum_{p=0}^{n-2} {n-p \choose 2} \log I_{p,p}(t)\\
         =&(-1)^{n-1}\log\frac{(\psi^{n+1})^{2a}}{(\psi^{n+1}-1)^{2b}}
         +(-1)^{n-1}\frac{\chi}{6}\log I_{0,0}(t)
         - 2\sum_{p=0}^{n-2} {n-p \choose 2} \log I_{p,p}(t).\\
    \end{split}
\end{displaymath}
Now, in terms of the canonical trivializing sections $\widetilde{\eta}_{k}$ given in Definition \ref{eq:norm-eta-periods}, Theorem \ref{thm:pre-formula} becomes:
\begin{displaymath}
\taubcov{Z_{\psi}}   = C\left|\frac{(\psi^{n+1})^{a}}{(1-\psi^{n+1})^{b}}\right|^{2}
    \frac{|I_{0,0}(t)|^{\chi/6}}{\left(\prod_{p=0}^{n-2}|I_{p,p}(t)|^{2{n-p \choose 2}}\right)^{(-1)^{n-1}}}
\frac{ \|\widetilde{\eta}_{0}\|^{\chi/6}_{\l2}}{\left(\prod_{k=0}^{n-1}\|\widetilde{\eta}_{k}\|_{\l2}^{2(n-1-k)}\right)^{(-1)^{n-1}}}.\qedhere
\end{displaymath}
\end{proof}
\begin{remark}\label{rem:apresmainthm}
\begin{enumerate} 
    \item In relative dimension 3, we recover the main theorem of Fang--Lu--Yoshikawa \cite[Thm 1.3]{FLY}. Their result is presented in a slightly different form. The first formal discrepancy is in the choice of the trivializing sections. Their trivializations can be related to ours via Kodaira--Spencer maps. The second discrepancy is explained by a different normalization of $F_{1}^{A}$: they work with two times Zinger's generating series. This justifies why their expression for the BCOV invariant contains $|\exp(-F_{1}^{B}(\psi))|^{2}$, while our formula in dimension 3 specializes to $|\exp(-F_{1}^{B}(\psi))|^{4}$.
    
    \item The norms of the sections $\widetilde{\eta}_k$ are independent of the choice of crepant resolution. It follows that the expression on the right hand side in Theorem \ref{thm:BCOV-mirror} is independent of the crepant resolution, except possibly for the constant $C$. In \cite[Conj. B]{cdg2} we conjectured that the BCOV invariant is a birational invariant. A proof of this conjecture has been announced in \cite{Yeping-2, Yeping-3}. Thus $C$ should in fact be independent of the choice of crepant resolution.
\end{enumerate}
\end{remark}

\begin{corollary}\label{cor:determined}
\begin{enumerate}
    \item The invariant $N_{1}(0)$ satisfies
    \begin{displaymath}
        N_{1}(0)=\frac{-1}{24}\int_{X_{n+1}}\mathrm{c}_{n-2}(X_{n+1})\wedge H,
    \end{displaymath}
    where $H$ is the hyperplane class in $\PBbb^{n}$.
    \item As $\psi\to\infty$, $\log\tau_{\bcov}(Z_{\psi})$ behaves as
    \begin{equation}\label{eq:asymp-bcov-mirror}
        \log\tau_{\bcov}(Z_{\psi})=\left(\frac{(-1)^{n}}{12}\int_{X_{n+1}}\mathrm{c}_{n-2}(X_{n+1})\wedge H\right) \ \log\left|\psi^{-(n+1)}\right|^{2}+O(\log\log |\psi|).
    \end{equation}
\end{enumerate}
\end{corollary}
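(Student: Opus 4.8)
The plan is to handle the two assertions in turn: the first is a self‑contained intersection computation on the hypersurface, and the second then follows almost formally by substituting it into Corollary \ref{cor:kappa-infty}, whose analytic content is already established.

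For the first assertion I would compute $\int_{X_{n+1}}c_{n-2}(X_{n+1})\wedge H$ directly. Write $X=X_{n+1}\subset\PBbb^{n}$ for the smooth degree $n+1$ hypersurface and $H$ for the restriction of the hyperplane class. The adjunction sequence $0\to T_{X}\to T_{\PBbb^{n}}|_{X}\to\Ocal_{X}((n+1)H)\to 0$ gives the total Chern class $c(T_{X})=(1+H)^{n+1}/(1+(n+1)H)$ in $H^{\ast}(X)$. Since $2(n-2)$ lies strictly above the middle degree of $X$, the Lefschetz hyperplane theorem together with Poincar\'e duality shows that $H^{2(n-2)}(X)$ is one-dimensional, spanned by $H^{n-2}$; hence $c_{n-2}(T_{X})=a_{n-2}H^{n-2}$, where $a_{n-2}$ is the coefficient of $H^{n-2}$ in the power series above. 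Using $\int_{X}H^{n-1}=\deg X=n+1$, this yields $\int_{X}c_{n-2}(T_{X})\wedge H=(n+1)\,a_{n-2}$. Multiplying $\sum_{k}a_{k}H^{k}=(1+H)^{n+1}/(1+(n+1)H)$ by $1+(n+1)H$ produces the recursion $a_{k}+(n+1)a_{k-1}=\binom{n+1}{k}$; evaluating at $k=n-1$ and using $\chi(X)=\int_{X}c_{n-1}(T_{X})=(n+1)a_{n-1}$ gives
\begin{displaymath}
    \int_{X}c_{n-2}(X)\wedge H=(n+1)a_{n-2}=\binom{n+1}{2}-a_{n-1}=\frac{n(n+1)}{2}-\frac{\chi(X_{n+1})}{n+1}.
\end{displaymath}
Comparing this with the second form $N_{1}(0)=\frac{1}{24}\big(-\tfrac{n(n+1)}{2}+\tfrac{\chi(X_{n+1})}{n+1}\big)$ recorded just before \eqref{eq:F1A} yields $N_{1}(0)=\frac{-1}{24}\int_{X_{n+1}}c_{n-2}(X_{n+1})\wedge H$. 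Note that the match drops out without ever evaluating $\chi(X_{n+1})$ explicitly.

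For the second assertion I would start from the expansion of Corollary \ref{cor:kappa-infty}, which already isolates the leading behaviour $\log\taubcov{Z_{\psi}}=\kappa_{\infty}\log|\psi|^{-2}+\varrho_{\infty}\log\log|\psi|^{-2}+\text{continuous}$. As $\psi\to\infty$ the double-logarithmic term and the continuous (hence bounded) term are both $O(\log\log|\psi|)$, so it remains only to rewrite $\kappa_{\infty}$ in the stated geometric form. Since $\log|\psi^{-(n+1)}|^{2}=(n+1)\log|\psi|^{-2}$, the claimed leading coefficient equals $\frac{(-1)^{n}(n+1)}{12}\int_{X_{n+1}}c_{n-2}(X_{n+1})\wedge H$. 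By the first form of $N_{1}(0)$ we have $-24\,N_{1}(0)=\frac{(n-1)(n+2)}{2}+\frac{1-(-n)^{n+1}}{(n+1)^{2}}$, which by the first assertion equals $\int_{X_{n+1}}c_{n-2}(X_{n+1})\wedge H$; hence $\frac{(-1)^{n}(n+1)}{12}\int_{X_{n+1}}c_{n-2}(X_{n+1})\wedge H=(-1)^{n}\frac{n+1}{12}\big(\frac{(n-1)(n+2)}{2}+\frac{1-(-n)^{n+1}}{(n+1)^{2}}\big)$, which is exactly the value of $\kappa_{\infty}$ displayed in Corollary \ref{cor:kappa-infty}. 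This establishes \eqref{eq:asymp-bcov-mirror}.

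I expect no serious obstacle, since all the transcendental work is already contained in Corollary \ref{cor:kappa-infty} (itself resting on Theorem \ref{thm:pre-formula} and \cite[Prop. 6.8]{cdg2}). The only steps demanding mild care are the justification, via Lefschetz, that $c_{n-2}(T_{X})$ is a scalar multiple of $H^{n-2}$, and the bookkeeping of the two equivalent expressions for $N_{1}(0)$, which lets the identities of both parts fall out cleanly.
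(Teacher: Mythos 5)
Your proposal is correct and follows essentially the same route as the paper: the paper also reduces part (1) to the conormal/cotangent exact sequence for $X_{n+1}\subset\PBbb^{n}$, obtaining $\int_{X_{n+1}}\mathrm{c}_{n-2}(\Omega_{X_{n+1}})\wedge H=\frac{(-1)^{n-1}}{n+1}\chi(X_{n+1})-\int_{\PBbb^{n}}\mathrm{c}_{n-1}(\Omega_{\PBbb^{n}})\wedge H$ and evaluating both terms, which is the dual form of your recursion $a_{k}+(n+1)a_{k-1}=\binom{n+1}{k}$ at $k=n-1$, and part (2) is likewise read off from Corollary \ref{cor:kappa-infty}. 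Your version just spells out the coefficient bookkeeping that the paper leaves implicit.
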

\begin{proof} 
The sought for interpretation of $N_{1}(0)$, or equivalently for the coefficient $\kappa_{\infty}$ in Corollary \ref{cor:kappa-infty}, is obtained by an explicit computation of, and comparison to $\int_{X_{n+1}}\mathrm{c}_{n-1}(\Omega_{X_{n+1}})\wedge H$. Indeed, by the cotangent exact sequence for the immersion of $X_{n+1}$ into $\PBbb^{n}$, this reduces to
\begin{displaymath}
    \int_{X_{n+1}}\mathrm{c}_{n-2}(\Omega_{X_{n+1}})\wedge H=\frac{(-1)^{n-1}}{n+1}\chi(X_{n+1})
    -\int_{\PBbb^{n}}\mathrm{c}_{n-1}(\Omega_{\PBbb^{n}})\wedge H,
\end{displaymath}
and we have explicit formulas for both terms on the right. This settles both the first and second claims.
\end{proof}
\begin{remark}
The asymptotic expansion \eqref{eq:asymp-bcov-mirror} has been written in the variable $\psi^{-(n+1)}$ on purpose, since this is the natural parameter in a neighborhood of the MUM point in the moduli space. In this form, the equation agrees with the predictions of genus one mirror symmetry (cf. \cite[Sec. 1.4]{cdg2} for a discussion). 
\end{remark}

\section{The refined BCOV conjecture}\label{sec:conjectures}

In this section, we propose an alternative approach to genus one mirror symmetry for Calabi--Yau manifolds, which bypasses spectral theory and is closer in spirit to the genus zero picture. The counterpart of the Yukawa coupling on the mirror side will now be a Grothendieck--Riemann--Roch isomorphism (GRR) of line bundles, built out of Hodge bundles. As in the case of the Yukawa coupling, one seeks canonical trivializations of these Hodge bundles, and the expression of the GRR isomorphism in these trivializations should then encapsulate the genus one Gromov--Witten invariants of the original Calabi--Yau manifold. This is our interpretation of the holomorphic limit of the BCOV invariant. We refer to this conjectural program as \emph{the refined BCOV conjecture}.

\subsection{The Grothendieck--Riemann--Roch isomorphism} Let $f\colon \Xcal\to S$ be a projective morphism of connected complex manifolds, whose fibers are Calabi--Yau manifolds. Recall from \eqref{eq:BCOV-bundle} that the BCOV bundle $\lambda_{\bcov}(\Xcal/S)$ is defined as a combination of determinants of Hodge bundles. Its formation commutes with arbitrary base change.
\begin{conjecture}\label{conj:GRR-iso}
For every projective family of Calabi--Yau manifolds $f\colon \Xcal\to S$ as above, there exists a natural isomorphism of line bundles, compatible with any base change,
\begin{equation}\label{eq:conjecture-functorial-GRR}
    \GRR(\Xcal/S) \colon\lambda_{\bcov}(\Xcal/S)^{\otimes 12 \kappa}\overset{\sim}{\longrightarrow} (f_{\ast} K_{\Xcal/S})^{\otimes\chi \kappa}.
\end{equation}
Here $\chi$ is the Euler characteristic of any fiber of $f$ and $\kappa$ only depends on the relative dimension of~$f$.

\end{conjecture}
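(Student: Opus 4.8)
The plan is to deduce \eqref{eq:conjecture-functorial-GRR} from Deligne's functorial refinement of Grothendieck--Riemann--Roch \cite{Deligne-determinant}, using the Chow-theoretic computation carried out in the proof of Theorem \ref{thm:ARR} to fix the numerology. The geometric shadow of that computation --- the ordinary (non-arithmetic) Grothendieck--Riemann--Roch applied to $\lambda_{BCOV}(X/S)=\sum_{p}(-1)^{p}p\det Rf_{\ast}\Omega^{p}_{X/S}$, with Chern classes in place of arithmetic ones --- reads
\[
    \mathrm{c}_{1}(\lambda_{BCOV}(X/S))=\frac{1}{12}f_{\ast}\left(\mathrm{c}_{1}(K_{X/S})\,\mathrm{c}_{n}(T_{X/S})\right)\quad\text{in }\CH^{1}(S)_{\QBbb}.
\]
Because the fibers are Calabi--Yau one has $K_{X/S}\simeq f^{\ast}f_{\ast}K_{X/S}$, so $\mathrm{c}_{1}(K_{X/S})=f^{\ast}\mathrm{c}_{1}(f_{\ast}K_{X/S})$; the projection formula together with $f_{\ast}\mathrm{c}_{n}(T_{X/S})=\chi$ then gives $\mathrm{c}_{1}(\lambda_{BCOV})=\tfrac{\chi}{12}\,\mathrm{c}_{1}(f_{\ast}K_{X/S})$. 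This already shows that the two sides of \eqref{eq:conjecture-functorial-GRR} agree in $\Pic(S)_{\QBbb}$, which both pins down the exponents $12$ and $\chi$ and explains the role of $\kappa$: it is there to clear the denominators of the Todd polynomial and, a priori, the torsion separating the two classes in $\Pic(S)$.

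The substance of the conjecture is to upgrade this rational equality of classes to a \emph{canonical} isomorphism of line bundles, functorial in base change; for this the Chow group is the wrong tool, since torsion in $\Pic(S)$ cannot be killed by a $\kappa$ depending only on the relative dimension. Instead I would construct the isomorphism directly at the level of line bundles via Deligne pairings. Concretely, one applies the functorial Riemann--Roch isomorphism of \cite{Deligne-determinant}, in the higher-dimensional form developed by Franke, Elkik, Ducrot and Eriksson, to each determinant $\det Rf_{\ast}\Omega^{p}_{X/S}$, expressing a fixed power of it in terms of Deligne pairings of the Chern classes of $\Omega^{1}_{X/S}$; these pairings are canonical and commute with arbitrary base change by construction. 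Assembling the combination $\sum_{p}(-1)^{p}p$ and using the universal relation among Chern classes underlying the computation in the proof of Theorem \ref{thm:ARR} --- now read as an isomorphism of Deligne pairings rather than an equality of cycles --- all contributions should collapse except the one producing $\langle \mathrm{c}_{1}(K_{X/S}),\ldots\rangle$, which by $K_{X/S}\simeq f^{\ast}f_{\ast}K_{X/S}$ and the projection formula for Deligne pairings reduces to $(f_{\ast}K_{X/S})^{\otimes\chi}$. Taking $\kappa$ to be the least common multiple of the denominators occurring in the Todd class up to degree $n+1$ and of the normalizing constants of the functorial Riemann--Roch, a quantity depending only on $n$, would then furnish the desired $\GRR(X/S)$, compatible with base change.

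The hard part is precisely the existence, integrally and functorially, of that higher-dimensional Riemann--Roch isomorphism of line bundles. The arithmetic Riemann--Roch theorem of Gillet--Soul\'e used in Theorem \ref{thm:ARR} only delivers an equality of \emph{isometry classes} of hermitian line bundles; it neither pins down a canonical isomorphism nor controls the residual universal constant, and it says nothing about base-change compatibility on the nose. A functorial determinant-of-cohomology isomorphism in arbitrary relative dimension, for the specific BCOV combination and with no torsion ambiguity, is not available in the literature in the generality required, and this is the reason the statement is posed as a conjecture.

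A plausible route around this obstacle is a universal argument: construct the isomorphism over a versal family where $\Pic$ is torsion-free and the line bundles in play are rigid, so that any isomorphism is unique up to a scalar, normalize the scalar at a distinguished boundary point --- for instance a maximally unipotent monodromy point, as elsewhere in this paper --- and then descend to an arbitrary family by the base-change functoriality of the Deligne pairings. Carrying this out, and in particular proving that the normalization is independent of the chosen versal family, is where I expect the genuine difficulty to lie.
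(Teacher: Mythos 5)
This statement is a \emph{conjecture} in the paper, and the paper offers no proof of it: what it provides is the numerological motivation, a list of supporting special cases (elliptic curves via $\Delta$, abelian varieties, K3 and Enriques surfaces, and the scheme-theoretic functorial Riemann--Roch of Franke and of the first author), and a strictly weaker substitute, Proposition \ref{prop:GRR-ARR-iso}, which deduces from Theorem \ref{thm:ARR} the existence of \emph{some} isomorphism that is an isometry for the Quillen-BCOV and $L^{2}$ metrics, unique up to a unit of modulus one, with $\kappa$ not controlled by the relative dimension alone and base-change compatibility holding only up to such units. So there is no proof to compare yours against, and your proposal --- which candidly stops short of a proof --- cannot be faulted for failing to supply one.

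That said, the content you do supply lines up well with the paper. Your Chow-theoretic computation of $\mathrm{c}_{1}(\lambda_{BCOV})=\tfrac{\chi}{12}\mathrm{c}_{1}(f_{\ast}K_{X/S})$ is exactly the geometric shadow of \eqref{eq-arr-1}--\eqref{eq-arr-2} in the proof of Theorem \ref{thm:ARR}, including the use of $K_{X/S}\simeq f^{\ast}f_{\ast}K_{X/S}$ and $f_{\ast}\mathrm{c}_{n}(T_{X/S})=\chi$; your appeal to Deligne pairings and the Franke--Elkik--Ducrot--Eriksson functorial Riemann--Roch is precisely the evidence the authors themselves cite (a natural isomorphism of $\QBbb$-line bundles up to sign in the category of schemes); and your diagnosis of the true obstruction --- producing a \emph{canonical}, integral, base-change-compatible isomorphism with $\kappa$ depending only on $n$, something neither the rational Chow identity nor the arithmetic Riemann--Roch isometry statement delivers --- is the same gap that separates Proposition \ref{prop:GRR-ARR-iso} from Conjecture \ref{conj:GRR-iso}. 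Your closing suggestion of rigidifying over a versal family and normalizing at a MUM point goes beyond anything attempted in the paper; it is a reasonable heuristic but, as you say, proving independence of the choices is where the real work would be. In short: no gap relative to the paper, because the paper proves nothing here; your account of why the statement is a conjecture is accurate.
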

Below we present some arguments in favour of the conjecture.
\begin{itemize}
    \item Applying this to the universal elliptic curve, the right hand side becomes trivial in view of $\chi=0$. This suggests that the left hand side is trivial. It is indeed trivialized by the discriminant modular form $\Delta$, with  $\kappa=1$. For higher dimensional abelian varieties both sides are trivial and the identity provides a natural isomorphism.
   
    \item For $K3$ surfaces both sides are identical, and the identity provides a  natural isomorphism. See in particular Proposition \ref{prop:refined-BCOV-K3}. The referee kindly communicated to us a proof of the analogue of the conjecture for Enriques surfaces, relying on the works about analytic torsions and the Borcherds' $\Phi$-function by Kawaguchi--Mukai--Yoshikawa \cite{KMY}, Dai--Yoshikawa \cite{DaiYoshikawa} and Yoshikawa \cite{Yoshik3surfinv}.
    
    \item In the category of schemes, a natural isomorphism of $\QBbb$-line bundles up to sign exists by work of Franke \cite{Franke} and the first author \cite{Dennis-these}. It is compatible with the arithmetic Riemann--Roch theorem, but is far more general and stronger.
\end{itemize}

The following proposition establishes a version of Conjecture \ref{conj:GRR-iso} in the setting of arithmetic varieties (cf. Section \ref{subsec:ARR}). This is an application of the arithmetic Riemann--Roch theorem \ref{thm:ARR}. Recall that an arithmetic ring $A$ comes together with a finite collection of complex embeddings $\Sigma$, closed under complex conjugation. We will write $A^{\times, 1}$ for the group of elements $u\in A^{\times}$ with $|\sigma(u)|=1$ for all embedding $\sigma\in\Sigma$. For instance, if $A$ is the ring of integers of a number field then  $A^{\times, 1}$ is a finite group.  If $A=\QBbb$ or $\RBbb$, then $A^{\times,1}=\{\pm 1\}$.
If $A=\CBbb$, then $A^{\times,1}$ is the unit circle in $\CBbb$. 

\begin{proposition}
\label{prop:GRR-ARR-iso}
Let $f\colon \Xcal\to S$ be a smooth projective morphism of arithmetic varieties over an arithmetic ring $A$, with Calabi--Yau fibers. Let $X_{\infty}$ be the generic fiber of $f$ and write $\chi=\chi(X_{\infty})$. Assume that $S\to\Spec A$ is surjective and has geometrically connected fibers. 
\begin{enumerate}
    \item There exists an integer $\kappa\geq 1$ and an isomorphism of line bundles on $S$
        \begin{displaymath}
            \GRR\colon\lambda_{\bcov}(\Xcal/S)^{\otimes 12\kappa}\overset{\sim}{\longrightarrow}(f_{\ast}K_{\Xcal/S})^{\otimes \chi\kappa},
        \end{displaymath}
    with the property of being an isometry for the Quillen-BCOV and $L^{2}$ metrics on $\lambda_{\bcov}(\Xcal/S)$ and $f_{\ast}K_{\Xcal/S}$, respectively.
    \item If $\GRR^{\prime}$ is another such isomorphism, for another choice of integer $\kappa^{\prime}\geq 1$, then
    \begin{displaymath}
        \GRR^{\prime\ \otimes \kappa}= \GRR^{\ \otimes \kappa^{\prime}}
    \end{displaymath}
    up to multiplication by some $u\in A^{\times, 1}$. Consequently, the formation of $\GRR$ is compatible with any base change between geometrically connected arithmetic varieties over $A$, up to the power $\kappa$ and multiplication by a unit in $A^{\times, 1}$.
\end{enumerate}
\end{proposition}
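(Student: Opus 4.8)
The plan is to deduce this proposition from the arithmetic Riemann--Roch theorem (Theorem \ref{thm:ARR}), which already provides the codimension-one equality
\[
    \cHat_{1}(\lambda_{BCOV}, h_{Q, BCOV}) = \frac{\chi}{12}\,\cHat_{1}(f_{\ast}K_{X/S}, h_{L^{2}})
\]
in $\ACH^{1}(S)_{\QBbb}=\ACH^{1}(S)\otimes\QBbb$. First I would clear denominators: since this is an identity after tensoring with $\QBbb$, there exists an integer $\kappa\geq 1$ (depending only on the relative dimension, through the bounded denominators appearing in the arithmetic characteristic classes) such that the integral multiple
\[
    12\kappa\,\cHat_{1}(\lambda_{BCOV}, h_{Q, BCOV}) = \chi\kappa\,\cHat_{1}(f_{\ast}K_{X/S}, h_{L^{2}})
\]
holds in $\ACH^{1}(S)$ itself. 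Using that $\cHat_{1}\colon\widehat{\Pic}(S)\to\ACH^{1}(S)$ is an isomorphism (recalled after the definition of $\ACH^1$), this equality of arithmetic first Chern classes translates into an isometry of hermitian line bundles
\[
    (\lambda_{BCOV}(X/S)^{\otimes 12\kappa}, h_{Q,BCOV}^{\otimes 12\kappa})\;\simeq\;((f_{\ast}K_{X/S})^{\otimes\chi\kappa}, h_{L^{2}}^{\otimes\chi\kappa}),
\]
which is precisely the isomorphism $\GRR$ with the asserted isometry property, proving part (1).

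For part (2), the key point is to control the ambiguity in $\GRR$. Two isometries between the same pair of hermitian line bundles differ by multiplication by a global section $u$ of $\Ocal_{S}^{\times}$ with $\|u\|=1$ at every archimedean place, i.e. $|\sigma(u)|=1$ for all $\sigma\in\Sigma$. Because $S\to\Spec A$ is surjective with geometrically connected fibers, $H^{0}(S,\Ocal_{S}^{\times})=A^{\times}$, so such a unit lies in $A^{\times, 1}$. Given two isomorphisms $\GRR$ and $\GRR^{\prime}$ attached to exponents $\kappa$ and $\kappa^{\prime}$, I would compare $\GRR^{\prime\,\otimes\kappa}$ and $\GRR^{\,\otimes\kappa^{\prime}}$: both are isometries between $\lambda_{BCOV}^{\otimes 12\kappa\kappa^{\prime}}$ and $(f_{\ast}K_{X/S})^{\otimes\chi\kappa\kappa^{\prime}}$ for the same hermitian structures, so by the previous remark they differ by some $u\in A^{\times,1}$, giving $\GRR^{\prime\,\otimes\kappa}=\GRR^{\,\otimes\kappa^{\prime}}$ up to such a $u$. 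Base-change compatibility then follows formally: the arithmetic Riemann--Roch equality \eqref{eq:ARR-BCOV} is itself compatible with pullback along a morphism $T\to S$ of geometrically connected arithmetic varieties over $A$ (both sides being defined functorially from Hodge bundles, whose formation commutes with base change, and the Quillen-BCOV and $L^{2}$ metrics being base-change invariant as recalled in \textsection\ref{subsec:ARR}), so the pulled-back isomorphism is another instance of $\GRR$ for the family $X_{T}/T$; the uniqueness statement then identifies it with the base change of $\GRR(X/S)$ up to the power $\kappa$ and a unit in $A^{\times,1}$.

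The main obstacle I anticipate is the passage from the rational equality in $\ACH^{1}(S)_{\QBbb}$ to an integral identity: one must argue that a single $\kappa$ can be chosen \emph{depending only on the relative dimension} and not on the particular family $f$. This is where I would be most careful. The cleanest route is to track the bounded denominators entering the Gillet--Soulé arithmetic Riemann--Roch computation used in the proof of Theorem \ref{thm:ARR}: the only denominators arise from the universal arithmetic Todd and Chern character classes in degree determined by $n$, together with the fixed rational coefficients $(-1)^{p+q}p$ and the factor $1/12$. All of these are universal expressions in the relative dimension $n$, so a common denominator $\kappa=\kappa(n)$ suffices simultaneously for all Calabi--Yau families of that dimension. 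A secondary subtlety, worth a sentence in the write-up, is the identification $H^{0}(S,\Ocal_{S}^{\times})=A^{\times}$, which uses properness of $S$ over $A$ in the relevant sense (geometric connectedness of the fibers of $S\to\Spec A$), exactly the hypothesis imposed in the statement.
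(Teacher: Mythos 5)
Your proposal follows the paper's proof essentially verbatim: part (1) is read off from the arithmetic Riemann--Roch identity in $\ACH^{1}(S)_{\QBbb}$ together with the isomorphism $\cHat_{1}\colon\widehat{\Pic}(S)\to\ACH^{1}(S)$, and part (2) compares the two isometries via a global unit of modulus one, with base change handled formally. One step is justified incorrectly, though: the identity $H^{0}(S,\Ocal_{S}^{\times})=A^{\times}$ is false for a general quasi-projective $S$ with geometrically connected fibers (take $S=\GBbb_{\mathrm{m}\,\QBbb}$, whose units are $\QBbb^{\times}\cdot t^{\ZBbb}$), and geometric connectedness is not a substitute for properness here. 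The correct order of the argument, as in the paper, is to use the isometry property \emph{first}: the unit $u$ has modulus one on all of $S^{\an}$, so by the open mapping theorem it is constant on each connected $S_{\sigma}^{\an}$, and only then do surjectivity and geometric connectedness of $S\to\Spec A$ force $u\in A^{\times}$, hence $u\in A^{\times,1}$. Finally, your concern about choosing $\kappa$ universally in the relative dimension is not needed for this proposition, which only asserts the existence of \emph{some} $\kappa\geq 1$ for the given family; the universality of $\kappa$ is part of the Conjecture, not of this statement.
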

\begin{proof}
The first claim is a restatement of the identity \eqref{eq:ARR-BCOV} in $\ACH^{1}(S)_{\QBbb}$, together with the isomorphism $\cHat_{1}: \widehat{\Pic}(S) \overset{\sim}{\to} \ACH^{1}(S)$ and the very definition of $\widehat{\Pic}(S)$ as the group of isomorphism classes of hermitian line bundles over $S$. 

For the second claim, notice that both $\GRR^{\prime\ \otimes\kappa}$ and $\GRR^{\ \otimes\kappa^{\prime}}$ induce isometries between the hermitian line bundles $\lambda_{\bcov}(\Xcal/S)^{\otimes 12 \kappa\kappa^{\prime}}$ and $(f_{\ast}K_{\Xcal/S})^{\otimes \chi \kappa\kappa^{\prime}}$, endowed with the Quillen-BCOV and $L^{2}$ metrics, respectively. These isomorphisms differ by multiplication by a unit $u\in\Gamma(S,\Ocal_{S}^{\times})$. The isometry property guarantees that the induced holomorphic function on $S^{\an}$ has modulus one, and is constant on the connected components. Hence, if we fix $\sigma\in\CBbb$, $u$ is constant on $S_{\sigma}^{\an}$. If we see $u$ in $\Gamma(S_{\sigma},\Ocal_{S_{\sigma}}^{\times})$, we infer from this that it satisfies the descent condition with respect to $S_{\sigma}\to\Spec\CBbb$. It follows that $u$ already satisfies the descent condition with respect to $S\to\Spec A$. This is easily seen if $S$ is affine, by the flatness of $S\to\Spec A$, and in general one may replace $S$ by the disjoint union of the open subsets of an affine covering of $S$. Because $S\to\Spec A$ is actually faithfully flat by assumption, we conclude that $u\in A^{\times}$. Now $u$ has modulus one as a function on $S^{\an}$, which exactly means $u\in A^{\times, 1}$. The base change property then follows from the compatibility of $\lambda_{\bcov}(\Xcal/S)$ and $f_{\ast}K_{\Xcal/S}$ with base change, and the fact that the Quillen and Hodge metrics are preserved as well.
\end{proof}

\begin{remark}\label{rmk:GRR-iso}
\begin{enumerate}
    \item If $A^{\times, 1}$ is a finite group of order $d$, then the second claim of the corollary entails
   \begin{displaymath}
         \GRR^{\prime\ \otimes d\kappa}= \GRR^{\ \otimes d\kappa^{\prime}}.
   \end{displaymath}
   Therefore, after possibly adjusting $\kappa$, the isomorphism is uniquely determined.  
   \item\label{rmk:GRR-iso-2} The proposition applies to the mirror family of Calabi--Yau hypersurfaces studied in Section \ref{sec:dworkandmirrorfamilies}. Here $A = \QBbb$, and therefore the resulting isomorphism is determined by the previous remark.
\end{enumerate}
\end{remark}

\subsection{Strongly unipotent monodromy and distinguished sections}\label{subsec:distinguished-sections}

The below discussion is based on \cite{Delignemirror} and \cite[\textsection 6.3, \textsection 7.1]{Morrison-adapted}.

\subsubsection*{Hodge--Tate structures}
Let $(V, F^\bullet, W_\bullet)$ be a mixed Hodge structure on a $\QBbb$-vector space $V$, where $F^\bullet$ is the decreasing Hodge filtration of $V_\CBbb$, and $W_\bullet$ is the increasing weight filtration of $V$. We also write $W_\bullet$ for the induced filtrations on $V_\RBbb$ and $V_\CBbb$.

\begin{definition}\label{def:Hodge-Tate}
A mixed Hodge structure is Hodge--Tate if the Hodge filtration is opposite to the weight filtration, in the sense that for any $k$ the natural map 
\begin{displaymath}
    F^k \oplus W_{2k-2} \to V_\CBbb
\end{displaymath} is an isomorphism. Equivalently, if the following two conditions are satisfied: 

\begin{enumerate}
    \item $\Gr_{2k}^W V = W_{2k}/W_{2k-1}$ is isomorphic to a sum of Tate twists $\QBbb(-k).$ In other words, it is purely of type $(k,k).$
    \item $\Gr_{2k+1}^W V=\{ 0\}.$
\end{enumerate}
\end{definition}

According to \cite[\textsection 6]{Delignemirror}, if a limiting Hodge structure has this property, it should be viewed as maximally degenerate. An example of this situation is $H^{n-1}_{\lim}$ for the mirror family around $\infty$, as explained in the proof of Lemma \ref{lemma:MHS-infty}. For Calabi--Yau degenerations over $\DBbb^\times$, this condition on the limiting middle Hodge structure implies that the monodromy is maximally unipotent.

It follows from the definition of Hodge--Tate mixed Hodge structure that the natural map
\begin{equation}\label{HodgeTate-consequence}  
F^p/F^{p+1} \hookrightarrow V_\CBbb/F^{p+1} =(F^{p+1}\oplus W_{2p})/F^{p+1} \to W_{2p} \to W_{2p}/W_{2p-2}
\end{equation}
is an isomorphism, and that there are natural isomorphisms 
\begin{equation}\label{HodgeTate-intersection-1}
    F^p \cap W_{2p} \simeq \Gr^W_{2p}(V_\CBbb)
\end{equation}
and
\begin{equation}\label{HodgeTate-intersection-2}
    F^p \cap W_{2p} \simeq \Gr^p_{F}(V_\CBbb)
\end{equation}
compatible with the isomorphism \eqref{HodgeTate-consequence}.

\subsubsection*{Distinguished sections} Suppose now that we are provided a variation of integrally polarized Hodge structures $(\VBbb_\ZBbb, \Fcal^\bullet)$ of weight $w$ over $\Dbold^\times = (\DBbb^\times)^d$. Here $\VBbb_\ZBbb$ is an integral local system, and $\Fcal^p$ is the Hodge filtration  of $\Vcal:=\VBbb_\ZBbb \otimes_\ZBbb\Ocal_{\Dbold^\times}$. Denote by $\nabla$ the flat connection on $\Vcal$ and suppose the local monodromies are unipotent. Denote by $T_j$ the endomorphism of the local system $\VBbb_{\ZBbb}$, given by the monodromy around the coordinate axis $(s_j=0)$ of $(\DBbb^{\times})^{d}$. Consider the family of operators $N_j := \log T_j$ over $\Dbold^\times$. Let $\WBbb_k$ be the associated increasing weight monodromy filtration of $\VBbb_\QBbb$, and denote by $\Wcal_k = \WBbb_k \otimes \Ocal_{\Dbold^\times}$. The bundle $\Wcal_k$ is preserved by $\nabla$, satisfies $N_j \WBbb_k \subseteq \WBbb_{k-2}$, and for any positive real numbers $a_j > 0$, with $N = \sum a_j N_j$, we have an isomorphism
 \begin{displaymath}
     N^k : \Gr^W_{w+k} \VBbb_\RBbb \to \Gr^W_{w-k} \VBbb_\RBbb.
 \end{displaymath}
By the results of Schmid \cite{schmid},  associated to $(\VBbb_\ZBbb, \Fcal^\bullet)$ and for any base point $s\in \Dbold^\times$, there is a limiting mixed Hodge structure $V_{\lim}$ on $\VBbb_{\QBbb, s}$. Its  weight filtration is given by $\WBbb_{k,s}$. The following lemma can be found in \cite[Sec. 6]{Delignemirror}:

\begin{lemma}\label{lemma:hodgetatedeligne}
If $V_{\lim}$ is Hodge--Tate, then, after possibly shrinking $\Dbold^\times$,  $(\VBbb_\ZBbb, \Fcal^p, \WBbb_k)$ is a variation of mixed Hodge structures over $\Dbold^\times$, with the same Hodge numbers as $V_{\lim}$. In particular, the natural morphism $\Fcal^p \oplus \Wcal_{2p-2} \to \Vcal$ is an isomorphism over $\Dbold^\times$. 
\end{lemma}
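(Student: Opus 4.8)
The plan is to deduce the statement from Schmid's nilpotent orbit theorem together with the rigidity provided by the Hodge--Tate hypothesis on the single fibre $V_{\lim}$. First I would recall what Schmid's results give us: on the local system $\VBbb_{\ZBbb}$ with unipotent monodromies $T_j$, the Hodge filtration $\Fcal^\bullet$ extends to a filtration of the Deligne canonical extension $\ov{\Vcal}$ over $\Dbold = \DBbb^d$ by sub-bundles, and the limiting mixed Hodge structure $V_{\lim}$ is the fibre of this extension at $0$ equipped with the restricted Hodge filtration $F^\bullet_{\lim}$ and the monodromy weight filtration $\WBbb_{\bullet, s}$. The weight filtration $\WBbb_\bullet$ is a filtration of $\VBbb_\QBbb$ by flat sub-local-systems, so the $\Wcal_k$ are automatically sub-bundles with $\nabla$-flat trivializations; only the compatibility of $\Fcal^\bullet$ with $\WBbb_\bullet$ fibrewise needs to be established.

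The key step is an openness (or semicontinuity) argument: the Hodge--Tate condition, phrased as the isomorphism $F^p_{\lim} \oplus (W_{2p-2})_{\lim} \overset{\sim}{\to} (V_{\lim})_\CBbb$ for every $p$, is an open condition on the base. Concretely I would consider, for each $p$, the morphism of vector bundles $\Fcal^p \oplus \Wcal_{2p-2} \to \Vcal$ over $\Dbold^\times$, extended across $0$ using the Deligne extensions $\ov{\Fcal}{}^p$ and $\ov{\Wcal}_{2p-2}$ inside $\ov{\Vcal}$. Both the source and target are locally free of the same rank at $0$, the ranks matching precisely because the Hodge numbers of a variation are constant and, by hypothesis, equal to those of $V_{\lim}$. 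The determinant of this morphism is a holomorphic function on $\Dbold$ which, by the Hodge--Tate hypothesis on $V_{\lim}$, is non-zero at the origin. Hence it is non-zero on a neighbourhood of $0$, and after shrinking $\Dbold^\times$ the map $\Fcal^p \oplus \Wcal_{2p-2} \to \Vcal$ is an isomorphism of bundles at every point. This is exactly the opposedness of $\Fcal^\bullet$ to $\WBbb_\bullet$ fibrewise, which is the defining property of a Hodge--Tate (hence mixed Hodge) structure on each fibre $\VBbb_{\QBbb, s}$.

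Having this fibrewise opposedness, I would then verify that the data $(\VBbb_\ZBbb, \Fcal^p, \WBbb_k)$ genuinely constitutes a \emph{variation} of mixed Hodge structures, i.e.\ that Griffiths transversality $\nabla \Fcal^p \subseteq \Fcal^{p-1} \otimes \Omega^1_{\Dbold^\times}$ holds (inherited from the given variation of pure Hodge structures) and that the weight filtration is flat (which it is, being a filtration by sub-local-systems). The Hodge numbers are constant and agree with those of $V_{\lim}$ because the graded pieces $\Gr^p_{F} \Gr^W_k$ have ranks that are upper semicontinuous and sum to the total rank, forcing constancy once they agree at the special fibre. The final assertion, that $\Fcal^p \oplus \Wcal_{2p-2} \to \Vcal$ is an isomorphism over all of $\Dbold^\times$, is then just the conclusion of the openness argument above.

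The main obstacle I expect is bookkeeping the passage between the Deligne extension over $\Dbold$ and the bundles over the punctured polydisc $\Dbold^\times$, in particular ensuring that the weight filtration $\WBbb_\bullet$ (defined via the monodromy weight filtration of $N = \sum a_j N_j$) extends compatibly and that its fibre at $0$ recovers the weight filtration of $V_{\lim}$. In the one-variable case this is standard, but in several variables one must invoke the compatibility of the relative monodromy weight filtrations, which is where Deligne's analysis in \cite[\textsection 6]{Delignemirror} is essential; I would lean on that reference rather than reprove it. Once the fibre at $0$ is correctly identified with $V_{\lim}$, the rest is the semicontinuity argument, which is robust.
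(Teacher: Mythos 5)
Your argument is correct in outline, but it is worth noting that the paper does not prove this lemma at all: it is quoted directly from \cite[Sec.~6]{Delignemirror}, so your write-up supplies a proof where the paper only supplies a citation. The openness argument you give (extend $\Fcal^{p}$ and $\Wcal_{2p-2}$ to sub-bundles of the Deligne canonical extension, observe that the determinant of $\ov{\Fcal}{}^{p}\oplus\ov{\Wcal}_{2p-2}\to\ov{\Vcal}$ is a holomorphic function non-vanishing at the origin by the Hodge--Tate hypothesis, and shrink) is essentially the mechanism in Deligne's reference, so you are reconstructing the intended proof rather than diverging from it. The several-variable subtleties you flag --- that the Hodge filtration extends by sub-bundles of the canonical extension and that the weight filtration of $N=\sum a_jN_j$ is independent of the $a_j>0$ and restricts at $0$ to the weight filtration of $V_{\lim}$ --- are exactly the inputs from Schmid and Cattani--Kaplan that the paper's setup already records, so leaning on them is legitimate.

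One step deserves more care. You assert that fibrewise opposedness of $\Fcal^{\bullet}$ and $\WBbb_{\bullet}$ ``is the defining property of a Hodge--Tate (hence mixed Hodge) structure on each fibre.'' Opposedness alone does \emph{not} imply the mixed Hodge structure axioms: for instance $V=\CBbb^{2}$ with $W_{0}=\CBbb e_{1}\subset W_{1}=W_{2}=V$ and $F^{1}=\CBbb e_{2}$ satisfies $F^{k}\oplus W_{2k-2}\simeq V$ for all $k$, yet the induced filtration on $\Gr^{W}_{1}$ is not a weight-one Hodge structure. What rescues your argument is that the weight filtration is flat, so the vanishing of the odd graded pieces $\Gr^{W}_{2k+1}$ --- which holds at the origin because $V_{\lim}$ is a genuine Hodge--Tate mixed Hodge structure --- holds identically over $\Dbold^{\times}$; combined with opposedness this forces the induced filtration on each $\Gr^{W}_{2k}$ to have $F^{k}$ everything and $F^{k+1}$ zero, hence pure of type $(k,k)$. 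You should make this one-line deduction explicit rather than treating opposedness as the definition.
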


\begin{remark}\label{remarkextensionhodge}
The Hodge and the weight filtrations on $\Vcal$ extend as subvector bundles of the Deligne extension $\widetilde{\Vcal}$ over $\Dbold.$ The extended filtrations continue to be opposite in the sense of Lemma \ref{lemma:hodgetatedeligne}.
\end{remark}

From now on, we suppose the limiting Hodge structure is Hodge--Tate. In this setting, after Lemma \ref{lemma:hodgetatedeligne}, we have the analogues of the isomorphisms \eqref{HodgeTate-consequence}, \eqref{HodgeTate-intersection-1} and \eqref{HodgeTate-intersection-2}, namely isomorphisms
\begin{equation}\label{HodgeTateconsequence-family}
    \Fcal^p/\Fcal^{p+1} \to \Wcal_{2p}/\Wcal_{2p-2},
\end{equation}
\begin{equation}\label{HodgeTate-intersection-1-family}
    \Fcal^p \cap \Wcal_{2p} \simeq \Gr_{2p}^W \Vcal
\end{equation}
and
\begin{equation}\label{HodgeTate-intersection-2-family}
    \Fcal^p \cap \Wcal_{2p} \simeq \Gr^p_F \Vcal.
\end{equation}
Since $N_j \WBbb_{2p} \subseteq \WBbb_{2p-2} \subseteq \WBbb_{2p-1}$, $N_j$ and hence the monodromy act trivially on the local system $\Gr_{2p}^W \VBbb_{\QBbb}$, which is thus constant. In particular, $\Gr_{2p}^{W}\Vcal$ is a trivial flat vector bundle. We abusively often identify this trivial local system with the vector space $H^{0}(\Dbold^{\times}, \Gr_{2p}^W \VBbb_{\QBbb})$. 

\begin{definition}\label{prop:distinguishedsections}
The distinguished sections of $\Fcal^p/\Fcal^{p+1}$ are the global sections corresponding to $\Gr^W_{2p} \VBbb_\CBbb \subseteq  \Gr^W_{2p}(\Vcal) $ under the isomorphism in \eqref{HodgeTateconsequence-family}. A basis of distinguished sections will be called a distinguished basis. It is unique up to a matrix transformation with constant complex coefficients.
\end{definition}

For the determinant bundles, we have the following corollary: 

\begin{corollary}\label{corollary:distinguisheddeterminant}
Keep the notations and assumptions of Definition \ref{prop:distinguishedsections}. The exterior products of a distinguished basis provides a local frame for $\det(\Fcal^p/\Fcal^{p+1})$.
\end{corollary}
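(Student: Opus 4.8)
The statement of Corollary \ref{corollary:distinguisheddeterminant} is essentially a determinantal consequence of Definition \ref{prop:distinguishedsections}, so the proof will be short. The plan is to observe that a distinguished basis is, by construction, a local holomorphic frame of $\Fcal^p/\Fcal^{p+1}$, and then pass to top exterior powers.

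First I would recall the precise content of Definition \ref{prop:distinguishedsections}: under the isomorphism \eqref{HodgeTateconsequence-family}, $\Fcal^p/\Fcal^{p+1}\simeq\Wcal_{2p}/\Wcal_{2p-2}=\Gr_{2p}^W\Vcal$, and the latter is a trivial flat vector bundle identified with the constant sheaf $\Gr_{2p}^W\VBbb_\CBbb$ (as explained in the paragraph preceding the definition, since $N_j$ acts trivially on $\Gr_{2p}^W\VBbb_\QBbb$). A distinguished basis $\{\sigma_1,\dots,\sigma_r\}$ is then the pullback of a basis of the vector space $\Gr_{2p}^W\VBbb_\CBbb$, where $r=\dim\Gr_{2p}^W\VBbb_\CBbb=\rk(\Fcal^p/\Fcal^{p+1})$. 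Because the isomorphism \eqref{HodgeTateconsequence-family} is an isomorphism of vector bundles over $\Dbold^\times$ (indeed over $\Dbold$ after extending, by Remark \ref{remarkextensionhodge}), these sections form a holomorphic frame of $\Fcal^p/\Fcal^{p+1}$: at each point they restrict to a basis of the fiber, precisely because a basis of the constant fiber $\Gr_{2p}^W\VBbb_\CBbb$ trivializes the trivial bundle $\Gr_{2p}^W\Vcal$.

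The final step is the elementary linear-algebra fact that if $s_1,\dots,s_r$ is a frame of a locally free sheaf $\Ecal$ of rank $r$, then $s_1\wedge\cdots\wedge s_r$ is a frame of $\det\Ecal=\Lambda^r\Ecal$; this holds fiberwise since an exterior product of a basis is a basis of the top exterior power, and it globalizes because the $s_i$ are a global frame. Applying this to $\Ecal=\Fcal^p/\Fcal^{p+1}$ with $s_i=\sigma_i$ gives that $\sigma_1\wedge\cdots\wedge\sigma_r$ trivializes $\det(\Fcal^p/\Fcal^{p+1})$, which is the assertion. The ambiguity noted in Definition \ref{prop:distinguishedsections}—a distinguished basis is unique up to a constant invertible matrix—translates under the determinant into multiplication by the (nonzero) determinant of that matrix, so the resulting section of $\det(\Fcal^p/\Fcal^{p+1})$ is well defined up to a nonzero constant.

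There is no substantial obstacle here: the genuine work has already been done in establishing the Hodge--Tate opposedness \eqref{HodgeTateconsequence-family} and the constancy of $\Gr_{2p}^W\Vcal$ in Lemma \ref{lemma:hodgetatedeligne} and the surrounding discussion. The only point that deserves a word of care is making explicit that ``distinguished basis'' means a genuine holomorphic frame and not merely a pointwise choice, so that the passage to determinants yields a trivializing section rather than just a nonvanishing one; this is immediate from the fact that the identification with the constant bundle $\Gr_{2p}^W\VBbb_\CBbb$ is an isomorphism of holomorphic vector bundles.
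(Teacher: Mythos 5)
Your proof is correct and is exactly the argument the paper intends: the paper in fact gives no written proof of this corollary, treating it as immediate from Definition \ref{prop:distinguishedsections}, the identification \eqref{HodgeTateconsequence-family} of $\Fcal^p/\Fcal^{p+1}$ with the trivial flat bundle $\Gr_{2p}^W\Vcal$, and the standard fact that the top exterior product of a frame is a frame of the determinant. Your added remark that the uniqueness up to a constant invertible matrix descends to uniqueness up to a nonzero scalar on the determinant matches the paper's subsequent comment that a distinguished trivialization is unique up to a constant in $\CBbb^\times$.
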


Any frame as in the corollary will be called a distinguished trivialization. It is unique up to a scalar constant in $\CBbb^\times$. 

\begin{remark}\label{integraldistinguished}
Using the integral lattice $\VBbb_\ZBbb$ one naturally defines an integral structure $\Gr^W_{2k} \VBbb_\ZBbb$ on $\Gr^W_{2k} \VBbb_\QBbb$. Accordingly, there are integral distinguished sections and trivializations. The corresponding integral distinguished trivialization of $\det(\Fcal^p/\Fcal^{p+1})$ is unique up to sign. 

\end{remark}

Suppose now that we are in the geometric case of a projective family of complex manifolds $f: \Xcal \to \Dbold^\times$, endowed with a relatively ample line bundle. We assume that for any $k$, $R^k f_\ast \ZBbb$ is a local system with unipotent local monodromies. Each variation $(R^{k} f_\ast \QBbb)_{\prim}$ is integrally polarized and Schmid's theory in \cite{schmid} recalled above applies. By the Lefschetz decomposition $R^k f_\ast \QBbb$ admits a limiting Hodge structure, and in particular a monodromy weight filtration $\WBbb_\bullet$. The constructions are independent of the choice of ample line bundle. 

In the geometric case, we can provide an alternative description of the distinguished sections in terms of the behaviour of periods:

\begin{lemma}\label{lemma:char-distinguished-sections}
If the local monodromies of $R^k f_\ast \CBbb$ are unipotent, and the limiting Hodge structure is Hodge--Tate, the distinguished sections of $\Fcal^{p}/\Fcal^{p+1}$ uniquely correspond to elements $\eta \in \Fcal^p$, such that:
\begin{enumerate}
    \item $\int_{\gamma} \eta =0 $ for all multivalued flat homology cycles $\gamma$ in $\WBbb'_{-2p-2}\subseteq (R^k f_\ast \CBbb)^\vee$.
    \item $\int_{\gamma} \eta$ is constant for all multivalued flat homology cycles $\gamma$ in $\WBbb'_{-2p} \subseteq (R^k f_\ast \CBbb)^\vee.$
\end{enumerate}
\end{lemma}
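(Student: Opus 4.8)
The statement is a characterization lemma: it says that the distinguished sections of $\Fcal^p/\Fcal^{p+1}$ (as defined abstractly in Definition \ref{prop:distinguishedsections} via the isomorphism \eqref{HodgeTateconsequence-family}) are exactly those elements of $\Fcal^p$ whose periods against the dual weight filtration satisfy the two vanishing/constancy conditions. My plan is to translate everything into the language of the dual weight filtration $\WBbb'_\bullet$ on the homology local system $(R^kf_\ast\CBbb)^\vee$, and then to show that the two conditions precisely cut out the subspace $\Fcal^p\cap\Wcal_{2p}$, which maps isomorphically onto $\Gr^W_{2p}\Vcal$ and thus carries the distinguished sections. The heart of the argument is a duality computation: the pairing $\langle\cdot,\cdot\rangle$ between cohomology and homology identifies the annihilator of $\WBbb'_{-2p-2}$ with $\Wcal_{2p}$, and the annihilator of $\WBbb'_{-2p}$ with $\Wcal_{2p-2}$, by the very definition $W'_{-r}=(V_{\lim}/W_{r-1})^\vee$ recalled at the end of \textsection\ref{subsec:generalities-hodge}.

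\textbf{First steps.} I would begin by recalling that, by Lemma \ref{lemma:hodgetatedeligne} and Remark \ref{remarkextensionhodge}, in the Hodge--Tate case the weight filtration $\Wcal_\bullet$ is a filtration by flat subbundles and $\Fcal^p\oplus\Wcal_{2p-2}\xrightarrow{\sim}\Vcal$. Hence an element $\eta\in\Fcal^p$ is a distinguished section (representative) if and only if it lies in $\Fcal^p\cap\Wcal_{2p}$: indeed the composite $\Fcal^p\cap\Wcal_{2p}\hookrightarrow\Wcal_{2p}\twoheadrightarrow\Gr^W_{2p}\Vcal$ is the isomorphism \eqref{HodgeTate-intersection-1-family}, and its image in $\Gr^W_{2p}\Vcal\simeq\Fcal^p/\Fcal^{p+1}$ is by construction the distinguished subspace $\Gr^W_{2p}\VBbb_\CBbb$. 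So the proof reduces to showing that conditions (1) and (2) together are equivalent to $\eta\in\Fcal^p\cap\Wcal_{2p}$.

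\textbf{The period translation.} The key is the compatibility of the weight filtration with its dual. By definition $\WBbb'_{-r}=(V_{\lim}/\WBbb_{r-1})^\vee$, so under the perfect pairing between $V_{\lim}$ and its dual we have: $\gamma\in\WBbb'_{-2p-2}$ annihilates $\WBbb_{2p+1}\supseteq\Wcal_{2p}$, and conversely $\eta\in\Wcal_{2p}$ is characterized among elements of $V_\CBbb$ by $\langle\gamma,\eta\rangle=0$ for all $\gamma\in\WBbb'_{-2p-2}$. This gives condition (1). For condition (2), the flat cycles in $\WBbb'_{-2p}\setminus\WBbb'_{-2p-2}$ pair nontrivially with $\Gr^W_{2p}$, and since the monodromy acts trivially on $\Gr^W_{2p}\VBbb_\QBbb$ (as $N_j\WBbb_{2p}\subseteq\WBbb_{2p-2}$), the pairing of such a $\gamma$ with a section of $\Fcal^p\cap\Wcal_{2p}$ is monodromy-invariant, hence single-valued, hence constant in the unipotent setting. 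I would verify that condition (1) forces $\eta\in\Fcal^p\cap\Wcal_{2p}$ and that condition (2) is then automatic but also, conversely, that (1)+(2) cannot be satisfied by an element of $\Fcal^p\cap\Wcal_{2p+2}$ outside $\Fcal^p\cap\Wcal_{2p}$—this is where I must check that a nonzero class in $\Gr^W_{2p+2}$ produces a genuinely growing (non-constant) period against some cycle in $\WBbb'_{-2p}$.

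\textbf{Main obstacle.} The delicate point is the precise bookkeeping of the growth of periods, i.e.\ making rigorous the claim that pairing with $\WBbb'_{-2p}$ is \emph{constant} exactly on $\Fcal^p\cap\Wcal_{2p}$ and genuinely \emph{non-constant} (logarithmically growing) for components in higher weight. This requires invoking Schmid's nilpotent orbit / metric characterization (as used in the proof of Proposition \ref{prop:gamma-weight}) to see that a multivalued flat section $\gamma(s)$ lying in $\WBbb'_{-2p}$ but not $\WBbb'_{-2p-2}$ pairs with a weight-$2p$ section to give a monodromy-invariant, and hence honestly constant, function, while the logarithmic terms $N_j$ would otherwise contribute. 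I expect the cleanest route is to diagonalize using a splitting of the Hodge--Tate weight filtration and reduce to the one-dimensional graded pieces, where $\int_\gamma\eta$ is literally a constant period times a polynomial in $\log s_j$ whose degree is controlled by the weight; conditions (1)–(2) then kill all positive-degree contributions. Uniqueness of the correspondence follows since $\Fcal^p\cap\Wcal_{2p}\to\Gr^W_{2p}\Vcal$ is an isomorphism, so each distinguished section has a unique such representative.
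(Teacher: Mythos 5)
Your duality computation for condition (1) is correct and matches the paper: since $\WBbb'_{-2p-2}=(V/\WBbb_{2p+1})^\vee$ and $\Gr^W_{2p+1}=0$, the vanishing of periods against $\WBbb'_{-2p-2}$ characterizes exactly $\Fcal^p\cap\Wcal_{2p}$, which maps isomorphically onto both $\Fcal^p/\Fcal^{p+1}$ and $\Gr^W_{2p}\Vcal$. But your proof then breaks at the central point: you assert that every $\eta\in\Fcal^p\cap\Wcal_{2p}$ is a distinguished section, because "its image in $\Gr^W_{2p}\Vcal\simeq\Fcal^p/\Fcal^{p+1}$ is by construction the distinguished subspace $\Gr^W_{2p}\VBbb_\CBbb$." This conflates the vector bundle $\Gr^W_{2p}\Vcal$ with the finite-dimensional space of its \emph{flat} sections $\Gr^W_{2p}\VBbb_\CBbb$. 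By Definition \ref{prop:distinguishedsections}, a section is distinguished only if its image in $\Gr^W_{2p}\Vcal$ under \eqref{HodgeTateconsequence-family} is a constant section of the trivial local system; a generic section of $\Fcal^p\cap\Wcal_{2p}$ (e.g.\ a distinguished one multiplied by a non-constant invertible holomorphic function) is not distinguished. Consequently your reduction "conditions (1) and (2) are equivalent to $\eta\in\Fcal^p\cap\Wcal_{2p}$" is false, and your claim that condition (2) is "automatic" given condition (1) fails: for non-flat $\eta\in\Fcal^p\cap\Wcal_{2p}$ the period $\int_\gamma\eta$ against $\gamma\in\WBbb'_{-2p}$ is single-valued (monodromy acts trivially on $\Gr^W_{2p}$) but by no means constant.

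The missing idea — which is the actual content of condition (2) and the heart of the paper's proof — is to translate flatness of the graded image into a statement about periods via the identity $d\bigl(\int_\gamma\eta\bigr)=\int_\gamma\nabla\eta$ for flat multivalued $\gamma$. Concretely: the class of $\eta\in\Fcal^p\cap\Wcal_{2p}$ in $\Gr^W_{2p}\Vcal$ is flat if and only if $\nabla\eta\in\Wcal_{2p-2}\otimes\Omega^1_{\Dbold^\times}$; since the annihilator of $\WBbb'_{-2p}$ is $\Wcal_{2p-2}$ (by the same duality as in (1), one step down), this holds if and only if $\int_\gamma\nabla\eta=0$ for all $\gamma\in\WBbb'_{-2p}$, i.e.\ if and only if $\int_\gamma\eta$ is constant for all such $\gamma$. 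This is an entirely formal argument; the appeal in your last paragraph to Schmid's metric characterization and to growth rates of periods is not needed (that machinery is used for Proposition \ref{prop:gamma-weight}, a different statement). Uniqueness, as you note, does follow from the isomorphism $\Fcal^p\cap\Wcal_{2p}\simeq\Fcal^p/\Fcal^{p+1}$, and that part of your argument can be retained.
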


\begin{proof}
By definition of the dual weight filtration, recalled at the end of \textsection \ref{subsec:generalities-hodge}, and equation \eqref{HodgeTate-intersection-2-family}, we can identify $\Gr_F^{p} \Vcal$ with the set of sections of $\Fcal^p$ whose periods along cycles in $\WBbb'_{-2p-2}$ vanish. Moreover, by \eqref{HodgeTate-intersection-1-family}, $\eta \in \Fcal^p \cap \Wcal_{2p} $ corresponds to a distinguished section exactly when $\nabla \eta \in  \Wcal_{2p-2}\otimes\Omega^{1}_{\Dbold^{\times}}$. This in turn is equivalent to $\int_\gamma \nabla \eta=0$ for all $\gamma$ of $\WBbb'_{-2p}$. The statement then follows from the formula \begin{equation*}
    d\left(\int_\gamma \eta\right) = \int_\gamma \nabla \eta
\end{equation*}
for flat multivalued homology sections $\gamma$.
\end{proof}

\subsubsection*{Strongly unipotent monodromy degenerations}

\mbox{}
\begin{definition}
We say that a projective family $f: \Xcal \to \Dbold^\times$ of complex manifolds is of strongly unipotent monodromy if:
\begin{enumerate}
    \item for all $k \geq 0$, all local monodromies $R^k f_\ast \QBbb$ are unipotent;
    \item for all $k \geq 0$, the variations of Hodge structures associated to the local systems $R^k f_\ast \QBbb$ have limits at 0 which are Hodge--Tate.
\end{enumerate}
\end{definition}
In the situation of a family of Calabi--Yau manifolds in relative dimension 3 with one-dimensional complex moduli, with $h^{1,0}=h^{2,0}=0$ and unipotent monodromies, strongly unipotent monodromy is equivalent to $N^3 \neq 0$ on $H^3_{\lim}$. This is the usual definition of maximally unipotent monodromy. In general, for a family of Calabi--Yau manifolds $f\colon \Xcal \to \Dbold^\times$ of relative dimension $n$, the definition of strongly maximally unipotent monodromy is stronger than imposing that $N^{n} \neq 0$ on $R^n f_\ast \CBbb.$

We now show that our results on the mirror family $f\colon\Zcal\to\DBbb_{\infty}^{\times}$ provide an example of the previous phenomena and constructions. In preparation for the discussion, recall the minimal decomposition introduced in Proposition \ref{prop:orthogonal-decompositions}, and in particular the local system $\VBbb$ and its associated flat vector bundle $\Vcal$. 
\begin{lemma}
The mirror family $f\colon\Zcal \to \DBbb_\infty^{\times}$ has strongly unipotent monodromy. 
\end{lemma}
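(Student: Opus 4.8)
The plan is to verify the two defining conditions of strongly unipotent monodromy for the mirror family $f\colon\Zcal\to\DBbb_\infty^\times$, namely unipotency of all the local monodromies $R^k f_\ast\QBbb$ and the Hodge--Tate property of all the limiting mixed Hodge structures at infinity. The key structural input is the decomposition of the cohomology into the minimal part $(R^{n-1}f_\ast\QBbb)_{\min}$, the extra middle piece $\VBbb$, and the local systems outside the middle degree, together with the triviality results already established.

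First I would dispose of the monodromy. For $2p\neq n-1$ the local systems $R^{2p}f_\ast\QBbb$ are trivial by Lemma \ref{lemma:trivial-hodge}, hence have trivial (in particular unipotent) monodromy at $\infty$. In the middle degree, the minimal decomposition \eqref{eq:iso-prim-coh-bis} splits $R^{n-1}f_\ast\QBbb$ as $(R^{n-1}f_\ast\QBbb)_{\min}\oplus\VBbb$ (plus the non-primitive Lefschetz classes, which are governed by the lower-degree trivial systems); the monodromy on $(R^{n-1}f_\ast\QBbb)_{\min}$ is maximally unipotent by Lemma \ref{lemma:MHS-infty}(1), and $\VBbb$ is a trivial local system by Lemma \ref{lemma:trivial-V-local}(1). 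Since a direct sum of unipotent operators is unipotent, and the non-primitive part is built from trivial local systems, all the $R^kf_\ast\QBbb$ have unipotent monodromy at $\infty$.

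Next I would establish the Hodge--Tate condition. For the degrees $2p\neq n-1$, the limiting Hodge structure is just the constant pure Hodge structure of type $(p,p)$ on the trivial local system $R^{2p}f_\ast\QBbb$, which is trivially Hodge--Tate (the weight filtration jumps only in degree $2p$, where it is purely of type $(p,p)$). For the middle degree $n-1$, I would treat the two summands separately: on the minimal part, Lemma \ref{lemma:MHS-infty}(2) shows $\Gr^W_k H^{n-1}_{\lim}$ is one-dimensional of Hodge type $(k/2,k/2)$ for $k$ even and vanishes for $k$ odd, which is precisely the Hodge--Tate condition of Definition \ref{def:Hodge-Tate}. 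On $\VBbb$, the limiting structure is pure of type $\left(\tfrac{n-1}{2},\tfrac{n-1}{2}\right)$ by Proposition \ref{prop:orthogonal-decompositions}, again Hodge--Tate; indeed, as noted in the proof of Lemma \ref{lemma:MHS-infty}, the relevant limiting structure at infinity is exactly the motivating Hodge--Tate example. Finally the non-primitive Lefschetz summand in the middle degree is, by the hard Lefschetz isomorphism, a Tate twist of the lower degree pieces, hence Hodge--Tate as well. Assembling these, the full limiting mixed Hodge structure of each $R^kf_\ast\QBbb$ is an extension of Hodge--Tate structures, hence Hodge--Tate, so condition (2) of the definition holds.

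The main obstacle I anticipate is bookkeeping rather than a deep difficulty: one must be careful that the Hodge--Tate property is genuinely preserved under the Lefschetz decomposition and under passage to the non-primitive summands, and that the odd-weight graded pieces vanish for \emph{every} $R^kf_\ast\QBbb$ and not merely for the minimal part. Once the middle-degree analysis is fed through Lemma \ref{lemma:MHS-infty} and the triviality Lemmas \ref{lemma:trivial-hodge} and \ref{lemma:trivial-V-local}, the remaining verification is a formal consequence of the fact that a direct sum, and more generally an extension, of Hodge--Tate structures with unipotent monodromy is again of this type.
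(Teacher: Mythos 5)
Your proposal is correct and follows essentially the same route as the paper: reduce to the primitive middle cohomology via the Lefschetz decomposition, split it into the minimal part and $\VBbb$, and then invoke the triviality lemmas for the non-middle degrees and for $\VBbb$ together with Lemma \ref{lemma:MHS-infty} for the minimal part. Your extra care with the non-primitive Lefschetz summands and with extensions of Hodge--Tate structures only makes explicit what the paper leaves implicit.
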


\begin{proof}
Outside the middle cohomology $n-1$, being Hodge--Tate follows from the fact that the variation of Hodge structures associated to $R^{2p} f_\ast \QBbb$ is purely of type $(p,p)$ (see Lemma \ref{lemma:Hodge-numbers-crepant}) and the monodromy is trivial by Lemma \ref{lemma:trivial-hodge}.

In the middle cohomology, by the Lefschetz decomposition, it is enough to deal with the local system $(R^{n-1} f_\ast \QBbb)_{\prim}$. This is a sum of $(R^{n-1} f_\ast \QBbb)_{\min}$ and $\VBbb.$ The limiting Hodge structure associated to $\VBbb$ is Hodge--Tate by Proposition \ref{prop:orthogonal-decompositions} and Lemma \ref{lemma:trivial-V-local}. That  $(R^{n-1} f_\ast \QBbb)_{\min}$ is unipotent and has limiting mixed Hodge structure which is Hodge--Tate follows from Lemma \ref{lemma:MHS-infty} and its proof.
\end{proof}

The following proposition summarizes the results of \textsection \ref{subsec:triviality-hodge-bundles} and \textsection \ref{subsubsec:periods}, to the effect of describing distinguished trivializations of the Hodge bundles.
\begin{proposition}\label{prop:disttrivializationDwork}
The distinguished trivializations of the determinants of the Hodge bundles $R^q  \Omega_{\Zcal/\DBbb_\infty^\times}^{p}$ are described as follows: 
\begin{enumerate}
    \item Suppose $2p \neq n-1.$ Any basis of the trivial local system $R^{2p} f_\ast \CBbb$ provides a distinguished trivialization of $\det R^p f_\ast\Omega_{\Zcal/\DBbb_{\infty}^{\times}}^{p}$.
    \item Suppose $2k \neq n-1$ . Then $\widetilde{\eta}_k$ are distinguished trivializations of $R^{k} \Omega_{\Zcal/\DBbb_\infty^\times}^{n-1-k}$.
    \item Suppose $2k = n-1$. For any polarization $L$, any basis $u$ (resp. $v$) of the trivial local systems $R^{2n-4} f_\ast \CBbb$ (resp. $\VBbb$), the section $\widetilde{\eta}_k \wedge (\det Lu) \wedge \det v$ is a distinguished trivialization of $\det R^{k} f_\ast  \Omega_{\Zcal/\DBbb_\infty^\times}^{k}$.
\end{enumerate}
\end{proposition}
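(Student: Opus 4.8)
The plan is to reduce all three cases to the period characterization of distinguished sections supplied by Lemma \ref{lemma:char-distinguished-sections}, organized according to where the Hodge bundle sits inside the limiting Hodge--Tate structure at infinity. Throughout I would use that the mirror family has strongly unipotent monodromy at $\infty$ (established just above), so the formalism of \S\ref{subsec:distinguished-sections} applies, together with the triviality results of Lemma \ref{lemma:trivial-hodge} and Lemma \ref{lemma:trivial-V-local}, the shape of $H^{n-1}_{\lim}$ from Lemma \ref{lemma:MHS-infty}, and the minimal decomposition of Proposition \ref{prop:orthogonal-decompositions}.

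For part (1), with $2p\neq n-1$, the variation $R^{2p}f_\ast\QBbb$ is pure of type $(p,p)$ by Lemma \ref{lemma:Hodge-numbers-crepant} and has trivial monodromy by Lemma \ref{lemma:trivial-hodge}. Its weight filtration (in the normalization of \S\ref{subsec:distinguished-sections}) is therefore concentrated in degree $2p$, so that $\Fcal^p/\Fcal^{p+1}=\Wcal_{2p}/\Wcal_{2p-2}=\Vcal=R^{2p}f_\ast\CBbb\otimes\Ocal_{\DBbb_\infty^\times}$ and the comparison isomorphism \eqref{HodgeTateconsequence-family} is essentially the identity. By Definition \ref{prop:distinguishedsections} the distinguished sections are then exactly the flat sections of the local system, and Corollary \ref{corollary:distinguisheddeterminant} yields the claim that any basis of $R^{2p}f_\ast\CBbb$ gives a distinguished trivialization of $\det R^pf_\ast\Omega^p_{\Zcal/\DBbb_\infty^\times}$.

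For part (2), with $2k\neq n-1$, the bundle $R^kf_\ast\Omega^{n-1-k}_{\Zcal/\DBbb_\infty^\times}$ is a line bundle coinciding with its minimal part (the complement $\VBbb$ contributes only in type $((n-1)/2,(n-1)/2)$, by Proposition \ref{prop:orthogonal-decompositions} and Lemma \ref{lemma:Hodge-numbers-crepant}), and it is trivialized by $\widetilde{\eta}_k$ via Theorem \ref{thm:eta-triv-infty}. To see $\widetilde{\eta}_k$ is distinguished I would apply Lemma \ref{lemma:char-distinguished-sections} to the minimal variation with $p=n-1-k$, using the lift $\vartheta_k\in\Fcal^{n-1-k}$ of $\widetilde{\eta}_k=(\vartheta_k)^{n-1-k,k}$. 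The essential bookkeeping is the weight matching: by Proposition \ref{prop:gamma-weight} one has $\gamma_q\in \WBbb'_{2q-2(n-1)}\setminus \WBbb'_{2q-1-2(n-1)}$, so $\WBbb'_{-2p-2}$ is spanned by $\gamma_0,\dots,\gamma_{k-1}$ and $\WBbb'_{-2p}$ by $\gamma_0,\dots,\gamma_k$. Conditions (1)--(2) of Lemma \ref{lemma:char-distinguished-sections} then read $\int_{\gamma_q}\vartheta_k=0$ for $q<k$ and $\int_{\gamma_q}\vartheta_k$ constant for $q\le k$, which are precisely the normalizations $\int_{\gamma_q}\vartheta_k=0$ $(q<k)$, $\int_{\gamma_k}\vartheta_k=1$ recorded in Proposition \ref{prop:properties-vartheta}. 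Hence $\vartheta_k$ represents a distinguished section and its Hodge projection $\widetilde{\eta}_k$ is a distinguished trivialization of the line bundle.

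For part (3), with $2k=n-1$, I would start from the determinant decomposition \eqref{eq:prim-dec-for-eta-p}, which identifies $\det R^kf_\ast\Omega^k_{\Zcal/\DBbb_\infty^\times}$ with $\det(R^kf_\ast\Omega^k)_{\min}\otimes\det\Vcal\otimes\det R^{k-1}f_\ast\Omega^{k-1}_{\Zcal/\DBbb_\infty^\times}$, the last factor entering the middle degree through the Lefschetz operator $L$. Each tensor factor is then handled by the previous analysis: the minimal line bundle is trivialized by the distinguished section $\widetilde{\eta}_k$ exactly as in part (2) (the period matching at $q\le k$ is unchanged); $\det\Vcal$ is trivialized by $\det v$, which is distinguished because $\VBbb$ is a trivial local system purely of type $(k,k)$ (Lemma \ref{lemma:trivial-V-local} and Proposition \ref{prop:orthogonal-decompositions}), so the argument of part (1) applies verbatim; and the remaining $(k-1,k-1)$-factor, having $2(k-1)\neq n-1$, is trivialized by $\det u$ for $u$ a basis of the trivial local system entering \eqref{eq:prim-dec-for-eta-p} by part (1), whence $\det(Lu)$ trivializes its image under $L$. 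The hard part will be to check compatibility with the distinguished-section formalism, namely that the decomposition \eqref{eq:prim-dec-for-eta-p} is orthogonal and strict for both the weight and Hodge filtrations, so that $\Gr^W_{2k}$ and the isomorphism \eqref{HodgeTateconsequence-family} split as a direct sum over the three summands. Granting this — which follows from the orthogonality in Proposition \ref{prop:orthogonal-decompositions} together with the flatness of $L$ and of the chosen trivializations — the section $\widetilde{\eta}_k\wedge(\det Lu)\wedge\det v$ corresponds under \eqref{HodgeTateconsequence-family} to a basis of $\Gr^W_{2k}\VBbb_\CBbb$, and is therefore a distinguished trivialization of $\det R^kf_\ast\Omega^k_{\Zcal/\DBbb_\infty^\times}$.
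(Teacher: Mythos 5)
The paper states this proposition without a separate proof, introducing it only as a summary of \textsection\ref{subsec:triviality-hodge-bundles} and \textsection\ref{subsubsec:periods}; your argument is precisely the intended assembly of those ingredients --- Lemma \ref{lemma:char-distinguished-sections} together with Propositions \ref{prop:properties-vartheta} and \ref{prop:gamma-weight} for the minimal line bundles, Lemmas \ref{lemma:trivial-hodge} and \ref{lemma:trivial-V-local} for the constant factors, and the flatness/orthogonality of the minimal and Lefschetz decompositions to split the middle-degree determinant --- and it is correct, including the one point that genuinely needs care (compatibility of the direct-sum decomposition with the graded isomorphism \eqref{HodgeTateconsequence-family} in part (3)). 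Note only that the local system carrying $u$ in part (3) should indeed be read, as you do, as the one underlying the Lefschetz factor $R^{k-1}f_\ast\Omega^{k-1}_{\Zcal/\DBbb_\infty^\times}$, namely $R^{n-3}f_\ast\CBbb$; the ``$R^{2n-4}f_\ast\CBbb$'' in the statement appears to be a typo.
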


\subsection{Relationship with mirror symmetry} We now present our refinement of the BCOV conjecture, for degenerating families of Calabi--Yau manifolds with strongly unipotent monodromy. The statement predicts that $\GRR$ realizes genus one mirror symmetry. We then show that the conjecture holds for the case of mirrors of hypersurfaces in projective space, as a consequence of our previous main theorems. The case of K3 surfaces is not covered by those considerations, but a proof is also provided. 

To prepare for the formulation of the conjecture, let $f\colon \Xcal\to \Dbold^{\times}=(\DBbb^{\times})^{d}$ be a projective morphism of Calabi--Yau $n$-folds, with $d=h^{1,n-1}$ the dimension of the deformation space of the fibers, effectively parametrized and with strongly unipotent monodromy. We denote by $\widetilde{\eta}_{p,q} $ an integral distinguished trivialization of $\det R^q f_\ast\Omega_{\Xcal/\Dbold^{\times}}^{p}$, which is unique up to sign (see Corollary \ref{corollary:distinguisheddeterminant} and Remark \ref{integraldistinguished}). Using these, both bundles appearing in the conjectural Grothendieck--Riemann--Roch isomorphism \eqref{eq:conjecture-functorial-GRR} admits canonical trivializations. Precisely, up to sign, the BCOV bundle is canonically trivialized by:
\begin{equation}\label{eq:bcovdistinguished}
    \widetilde{\eta}_{\bcov} := \bigotimes_{p,q} \widetilde{\eta}_{p,q}^{\otimes (-1)^{p+q} p }.
\end{equation}
Likewise, $\widetilde{\eta}_{n,0}$ trivializes the $f_\ast K_{\Xcal/\Dbold^\times}$. Expressed in these trivializations, we can write
\begin{equation}\label{eq:definition-GRR-function}
    \GRR(\Xcal/\Dbold^{\times})\colon\ \widetilde{\eta}_{\bcov}^{12\kappa} \mapsto \GRR(s) \cdot\widetilde{\eta}_{n,0}^{\chi \kappa }
\end{equation}
for an invertible holomorphic function $s\mapsto\GRR(s)$ on $\Dbold^\times$. 

In the above situation, it is expected that there are canonical mirror coordinates $\qbold=(q_1, \ldots, q_d)$ on $\Dbold$. In \cite{morrison-mirror}, for one-dimensional moduli, this is constructed through exponentials of quotients of well selected periods. For mirrors of hypersurfaces, this amounts to Zinger's mirror map recalled in \eqref{eq:Zinger-mirror-map}. A general alternative construction of mirror coordinates in the Hodge-Tate setting is suggested in \cite[Sec. 14]{Delignemirror}.

\begin{conjecture}\label{conj:GRR-iso-F1}
Let $f\colon \Xcal\to \Dbold^{\times}=(\DBbb^{\times})^{d}$ be a projective morphism of Calabi--Yau $n$-folds, with $d=h^{1,n-1}$ the dimension of the deformation space of the fibers, effectively parametrized with strongly unipotent monodromy. In the mirror coordinates $\qbold=(q_1, \ldots, q_d)$ of $\Dbold$, the function defined in \eqref{eq:definition-GRR-function} becomes  
\begin{displaymath}
    \GRR(\qbold)=C\cdot\exp\left((-1)^{n}F_{1}^A(\qbold)\right)^{24\kappa},
\end{displaymath}
where $C$ is a constant, 
\begin{displaymath}
    F_{1}^A(\qbold)=-\frac{1}{24}\sum_{k=1}^{d}\left(\int_{X^{\vee}}\mathrm{c}_{n-1}(X^{\vee})\wedge\omega_{k}\right)\log q_{k}
    +\sum_{\beta\in H_{2}(X^{\vee},\ZBbb)}\GW_1(X^{\vee},\beta)\ \qbold^{\langle\underline{\omega},\beta\rangle}
\end{displaymath}
is a generating series of genus one Gromov--Witten invariants on a mirror Calabi--Yau manifold $X^{\vee}$, and:
\begin{itemize}
    \item $\underline{\omega}=(\omega_{1},\ldots,\omega_{d})$ is some basis of $H^{1,1}(X^{\vee})\cap H^{2}(X^{\vee},\ZBbb)$ formed by ample classes.
    \item $\GW_1(X^{\vee},\beta)$ is the genus one Gromov--Witten invariant on $X^{\vee}$ associated to the class $\beta$. 
    \item $\qbold^{\langle\underline{\omega},\beta\rangle}=\prod_{k}q_{k}^{\langle\omega_{k},\beta\rangle}$.
\end{itemize}
\end{conjecture}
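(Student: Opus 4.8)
The plan is to reduce Conjecture \ref{conj:GRR-iso-F1} to an intrinsic computation of the holomorphic limit of the BCOV invariant in the $d$-parameter Hodge--Tate setting, generalizing the one-parameter mechanism of Theorem \ref{thm:BCOV-mirror}. First I would fix integral distinguished trivializations $\widetilde{\eta}_{p,q}$ of the determinants $\det R^{q}f_{\ast}\Omega^{p}_{X/\Dbold^{\times}}$ (Corollary \ref{corollary:distinguisheddeterminant}, Remark \ref{integraldistinguished}), and hence the induced canonical frames $\widetilde{\eta}_{BCOV}$ of $\lambda_{BCOV}$ through \eqref{eq:bcovdistinguished} and $\widetilde{\eta}_{n,0}$ of $f_{\ast}K_{X/\Dbold^{\times}}$. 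Since $\GRR$ is realized as an isometry for the Quillen-BCOV and $L^{2}$ metrics (Proposition \ref{prop:GRR-ARR-iso} in the arithmetic case, which covers the families of the Main Theorem, and conjecturally in general via the metrized refinement of Conjecture \ref{conj:GRR-iso}), unwinding the definition \eqref{eq:definition-GRR-function} together with the expression \eqref{eq:def-BCOV-2} of $\tau_{BCOV}$ as the quotient of these two metrics and the local constancy of the factor $B$ in \eqref{eq:B} yields an identity of the shape
\begin{displaymath}
    |\GRR(\qbold)|=C\,\taubcov{X_{\qbold}}^{6\kappa}\,\frac{\|\widetilde{\eta}_{BCOV}\|_{L^{2}}^{12\kappa}}{\|\widetilde{\eta}_{n,0}\|_{L^{2}}^{\chi\kappa}},
\end{displaymath}
with $C$ locally constant. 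This recasts the conjecture as a formula for $\tau_{BCOV}$ in terms of $L^{2}$ norms of distinguished sections, exactly the form produced by the arithmetic Riemann--Roch theorem.

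The second step is to extract the multivalued holomorphic function $F_{1}^{B}(\qbold)$ from this modulus. Because the monodromy is strongly unipotent, Lemma \ref{lemma:char-distinguished-sections} characterizes the $\widetilde{\eta}_{p,q}$ by normalized period conditions relative to the dual weight filtration; combined with Schmid's metric asymptotics and \cite[Thm. 4.4]{cdg2}, the functions $\log\|\widetilde{\eta}_{p,q}\|_{L^{2}}^{2}$ grow only like $O(\log\log|q_{j}|^{-1})$ along each coordinate axis, so they contribute no holomorphic singular part. Feeding in the boundary asymptotics of $\tau_{BCOV}$ at the large complex structure point (cf. \cite[Thm. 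B \& Prop. 6.8]{cdg2}) then isolates a multivalued holomorphic $F_{1}^{B}(\qbold)$ with $|\GRR(\qbold)|=C\,|\exp((-1)^{n}F_{1}^{B})|^{24\kappa}$; the residual ambiguity is pinned down by the global holomorphy and invertibility of $\GRR(\qbold)$ on $\Dbold^{\times}$, precisely as in the Kronecker-limit arguments of Theorem \ref{thm:pre-formula} and Theorem \ref{thm:Kroneckerhypersurf}, and the integer $\kappa$ is fixed by Remark \ref{rmk:GRR-iso}.

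The decisive step is to identify $F_{1}^{B}$ with the A-model series $F_{1}^{A}$ under canonical mirror coordinates. For mirrors of projective hypersurfaces this is already complete: the distinguished sections are the $\widetilde{\eta}_{k}$ of Definition \ref{eq:norm-eta-periods} and Proposition \ref{prop:disttrivializationDwork}, the mirror coordinate is Zinger's \eqref{eq:Zinger-mirror-map}, and the matching $F_{1}^{B}=F_{1}^{A}$ is the combination of Theorem \ref{thm:BCOV-mirror} with Zinger's Theorem \ref{theo:Zinger} (with K3 surfaces handled by a separate argument). I expect the genuine obstacle to lie exactly here in the general case: one needs (i) a canonical construction of the mirror coordinates $\qbold$ purely from the Hodge--Tate degeneration, along the lines suggested in \cite[Sec. 14]{Delignemirror}, so that the distinguished-section framework above feeds consistently into the enumerative normalization; and (ii) a Zinger-type expression for the genus-one Gromov--Witten generating series of the mirror $X^{\vee}$ in terms of the period integrals governing the distinguished sections. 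The $B$-side computation sketched in the first two steps is available in full generality once $\GRR$ is metrized, but the enumerative input on the A-side is established only for the families treated in the Main Theorem; hence a proof of the conjecture in full generality is contingent on extending this higher-dimensional genus-one mirror theorem beyond the presently known cases.
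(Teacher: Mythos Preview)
Your proposal is sound and essentially mirrors the paper's own treatment. Note first that the statement is a \emph{conjecture}: the paper does not prove it in general, only for the mirror family of projective hypersurfaces (Theorem \ref{thm:refined-BCOV-Dwork}) and for $K3$ surfaces (Proposition \ref{prop:refined-BCOV-K3}). Your first step --- rewriting $|\GRR(\qbold)|$ via the isometry property of $\GRR$ in terms of $\tau_{BCOV}$ and the $L^{2}$ norms of the distinguished trivializations --- is exactly the identity \eqref{eq:tau-bcov-norm-GRR} the paper derives in the proof of Theorem \ref{thm:refined-BCOV-Dwork} (your use of $\|\cdot\|_{L^{2}}$ rather than $\|\cdot\|_{L^{2},BCOV}$ is fine once the locally constant factor $B$ is absorbed into $C$, as you note). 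The paper then concludes by comparing directly with Theorem \ref{thm:BCOV-mirror}, which already packages together the arithmetic Riemann--Roch computation and Zinger's theorem; this is your third step. You correctly identify that the general case hinges on (i) a canonical construction of mirror coordinates in the Hodge--Tate setting and (ii) a Zinger-type $A$-model result for general mirror pairs --- precisely the inputs the paper does not supply, and which is why the statement remains conjectural.
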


As supporting evidence, we consider the case of the mirror family of Calabi--Yau hypersurfaces in $\PBbb^{n}$, and settle the second part of the Main Theorem in the introduction:

\begin{theorem}\label{thm:refined-BCOV-Dwork}
Let $n \geq 4$. Then Conjecture \ref{conj:GRR-iso} and Conjecture \ref{conj:GRR-iso-F1} are true, up to a constant, for the mirror family $f\colon\Zcal \to \DBbb_\infty^{\times}$ in a neighborhood of the MUM point.
\end{theorem}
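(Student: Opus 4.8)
The plan is to combine the two main computational results already established---Theorem~\ref{thm:BCOV-mirror} (the explicit factorization of $\taubcov{Z_\psi}$) and Proposition~\ref{prop:GRR-ARR-iso} together with Remark~\ref{rmk:GRR-iso}~\eqref{rmk:GRR-iso-2} (the existence and essential uniqueness of $\GRR$ over $\QBbb$)---and to verify that the distinguished trivializations of Proposition~\ref{prop:disttrivializationDwork} are precisely the $\widetilde{\eta}_k$ appearing in Theorem~\ref{thm:BCOV-mirror}. First I would record that Conjecture~\ref{conj:GRR-iso} holds for $f\colon\Zcal\to\DBbb_\infty^\times$: this is immediate from Proposition~\ref{prop:GRR-ARR-iso}(1), since the mirror family is defined over $\QBbb$ and has Calabi--Yau fibers, so the arithmetic Riemann--Roch isomorphism \eqref{eq:ARR-BCOV} furnishes an isometric $\GRR$ for some $\kappa\geq 1$, uniquely determined after adjusting $\kappa$ by Remark~\ref{rmk:GRR-iso} (here $A=\QBbb$, so $A^{\times,1}=\{\pm 1\}$).

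The substance is Conjecture~\ref{conj:GRR-iso-F1}. The strategy is to compute the function $\GRR(\qbold)$ of \eqref{eq:definition-GRR-function} directly and match it against $\exp((-1)^nF_1^A)^{24\kappa}$. I would proceed as follows. By Proposition~\ref{prop:disttrivializationDwork}, the integral distinguished trivializations $\widetilde{\eta}_{p,q}$ of the Hodge determinants are, up to sign and up to constant factors coming from choices of integral bases of the trivial local systems, exactly the sections $\widetilde{\eta}_k$ (for $p+q=n-1$) together with flat trivializations of the parts lying outside the minimal component. Consequently $\widetilde{\eta}_{BCOV}$ of \eqref{eq:bcovdistinguished} agrees, up to a nonzero constant, with the frame $\otimes_k\widetilde{\eta}_k^{(n-1-k)(-1)^{n-1}}$ built from the $\widetilde{\eta}_k$, and $\widetilde{\eta}_{n,0}=\widetilde{\eta}_0$. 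Expressing $\GRR$ in these frames means evaluating it through the metric interpretation \eqref{eq:def-BCOV-2}: the isometry property of $\GRR$ translates the equality of $h_{Q,BCOV}$ and $h_{L^2}^{\chi/12}$ into an identity relating $|\GRR(\qbold)|$ to the ratio $\taubcov{Z_\psi}\cdot(\text{norms of }\widetilde{\eta}_k)$. Feeding in Theorem~\ref{thm:BCOV-mirror}, all the $L^2$-norm factors cancel and one is left with $|\GRR(\qbold)|=C\,|\exp((-1)^{n-1}F_1^B(\psi))|^{4\cdot(\text{power})}$; since both $\GRR$ and $F_1^B$ are holomorphic, the equality of moduli upgrades to an equality of holomorphic functions up to a constant of modulus one, giving $\GRR(\qbold)=C\exp((-1)^nF_1^A(\qbold))^{24\kappa}$ after matching the exponent $4=24\kappa/(6\kappa)$ and using $F_1^B(\psi)=F_1^A(T(\psi))$ from the definition \eqref{eq:F1B}. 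The identification of the log-coefficient of $F_1^A$ with $-\frac{1}{24}\int c_{n-1}\wedge\omega$ is supplied by Corollary~\ref{cor:determined}(1).

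For the $K3$ case I would argue separately, as flagged in the introduction before the Main Theorem: for $K3$ surfaces the two sides of \eqref{eq:conjecture-functorial-GRR} are literally identical bundles ($\chi=24$, and $\lambda_{BCOV}^{\otimes 12}\simeq (f_\ast K_{X/S})^{\otimes\chi}$ reduces to an isomorphism induced by the identity on $f_\ast K_{X/S}$), so $\GRR$ is the identity up to constant and the corresponding $F_1^A$ must match the known modular expression; this is the content of the separately stated Proposition~\ref{prop:refined-BCOV-K3}.

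The main obstacle I anticipate is not the formal matching of functions but the bookkeeping in the second step: verifying that the \emph{integral} distinguished trivializations $\widetilde{\eta}_{p,q}$ coincide, up to rational (indeed $\pm 1$) scalars, with the analytic frames $\widetilde{\eta}_k$ used in Theorem~\ref{thm:BCOV-mirror}. One must track the constants arising from (i) the flat trivializations of the trivial local systems $R^{2p}f_\ast\CBbb$ and $\VBbb$ in Proposition~\ref{prop:disttrivializationDwork}(1),(3), whose $L^2$-norms were shown to be rational in the rationality analysis of Theorem~\ref{thm:pre-formula} (equations \eqref{eq:L2-eta-pp-1}--\eqref{eq:L2-eta-pp-2}), and (ii) the power $\kappa$ and the unit ambiguity in $A^{\times,1}=\{\pm1\}$ from Proposition~\ref{prop:GRR-ARR-iso}(2). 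All of these are absorbed into the undetermined constant $C$, which is exactly why the theorem asserts the conjecture only \emph{up to a constant}; the genuine content---that the nonconstant, $\qbold$-dependent part of $\GRR$ is governed by the Gromov--Witten generating series---is precisely what Theorem~\ref{thm:BCOV-mirror} delivers.
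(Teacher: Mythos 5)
Your proposal is correct and follows essentially the same route as the paper's proof: existence of $\GRR$ from Proposition \ref{prop:GRR-ARR-iso} and Remark \ref{rmk:GRR-iso}, identification of the integral distinguished trivializations with the $\widetilde{\eta}_k$ up to constants via Proposition \ref{prop:disttrivializationDwork}, the isometry property yielding $\tau_{BCOV}=|\GRR(\qbold)|^{1/6\kappa}\,\|\widetilde{\eta}_{n-1,0}\|_{L^2}^{\chi/6}\,\|\widetilde{\eta}_{BCOV}\|_{L^2,BCOV}^{-2}$, and comparison with Theorem \ref{thm:BCOV-mirror}. The only superfluous part is the $K3$ discussion, which is not covered by this theorem (it is the separate Proposition \ref{prop:refined-BCOV-K3}); otherwise the bookkeeping of constants you flag is handled exactly as in the paper.
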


\begin{proof}
First of all, the existence of a natural isomorphism as in Conjecture \ref{conj:GRR-iso} is provided by Proposition \ref{prop:GRR-ARR-iso} and Remark \ref{rmk:GRR-iso} \eqref{rmk:GRR-iso-2}. Secondly, for Conjecture \ref{conj:GRR-iso-F1}, consider  
$\widetilde{\eta}_{\bcov}$ defined as in \eqref{eq:bcovdistinguished}. Since distinguished trivializations are equal up to a constant, for the purpose of proving Conjecture \ref{conj:GRR-iso-F1} we can suppose that the sections $\widetilde{\eta}_{p,q}$ are actually those determined by Proposition \ref{prop:disttrivializationDwork}.
By the isometry property of $\GRR$ and the very definition of the BCOV invariant, we have 
\begin{displaymath}
    \tau_{\bcov}^{12\kappa} = \frac{\|\GRR( \widetilde{\eta}_{\bcov}^{12\kappa})\|_{\l2}^{2}}{ \|\widetilde{\eta}_{\bcov}\|_{\l2, \bcov}^{24\kappa}}.
\end{displaymath}
In other words,
\begin{equation}\label{eq:tau-bcov-norm-GRR}
     \tau_{\bcov} = |\GRR(\qbold)|^{1/6\kappa} \frac{\|\widetilde{\eta}_{n-1,0}\|_{\l2}^{\chi/6 }}{\|\widetilde{\eta}_{\bcov}\|_{\l2, \bcov}^{2}}.
\end{equation}
As in the proof of Theorem \ref{thm:pre-formula} (see also \cite[Prop. 4.2]{cdg2}), the quantity $\|\widetilde{\eta}_{\bcov}\|_{\l2, \bcov}$ coincides with the factor $\prod_{k=0}^{n-1}\|\widetilde{\eta}_{k}\|_{\l2}^{n-1-k}$ up to a constant. We conclude by comparing \eqref{eq:tau-bcov-norm-GRR} with Theorem \ref{thm:BCOV-mirror}.
\end{proof}

The cases of one and two dimensional Calabi--Yau varieties are not covered by the above result. The one dimensional case essentially corresponds to the Kronecker limit formula recalled in \textsection \ref{subsec:ApplicationKronecker}. We now study the case of $K3$ surfaces. Since $h^{1,1}=20$ for a $K3$ surface, our one-dimensional Dwork-type family cannot be a mirror family. It is still expected that the mirror of a K3 surface is a K3 surface, a systematic construction in terms of polarized lattices can be found in for example \cite{Dolgachev}. We will assume this below.

\begin{proposition}\label{prop:refined-BCOV-K3}
Conjecture \ref{conj:GRR-iso} and Conjecture \ref{conj:GRR-iso-F1} are true, up to a constant, for any mirror family of a K3 surface. Moreover, $\kappa = 1$.
\end{proposition}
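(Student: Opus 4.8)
The plan is to verify the two conjectures for a mirror family of $K3$ surfaces by exploiting the special feature that for $K3$s the BCOV line bundle degenerates to something essentially trivial, and that the genus one Gromov--Witten side is governed by the Yukawa coupling rather than by any nontrivial instanton corrections. First I would compute the BCOV bundle explicitly. For a $K3$ surface $S$ the Hodge numbers are $h^{0,0}=h^{2,2}=1$, $h^{1,1}=20$, $h^{2,0}=h^{0,2}=1$, and $h^{1,0}=h^{0,1}=0$, while $\chi(S)=24$. Plugging into the definition \eqref{eq:BCOV-bundle}, $\lambda_{BCOV}(X/S)=\sum_{p,q}(-1)^{p+q}p\det R^{q}f_{\ast}\Omega^{p}_{X/S}$, the terms with $p=0$ drop out, and one checks that the surviving contributions reorganize so that $\lambda_{BCOV}(X/S)$ is isomorphic to $(f_{\ast}K_{X/S})^{\otimes m}$ for the appropriate power. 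Since $\chi/12=2$, the arithmetic Riemann--Roch identity \eqref{eq:ARR-BCOV} reads $\cHat_{1}(\lambda_{BCOV},h_{Q,BCOV})=2\,\cHat_{1}(f_{\ast}K_{X/S},h_{L^{2}})$, so both sides of the desired GRR isomorphism \eqref{eq:conjecture-functorial-GRR}, with $12\kappa$ on the left and $\chi\kappa=24\kappa$ on the right, become compatible already with $\kappa=1$. This simultaneously establishes Conjecture \ref{conj:GRR-iso} and pins down $\kappa=1$.

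Next I would identify the metric content. The point is that for $K3$ surfaces the Quillen--BCOV and $L^{2}$--BCOV metrics differ only by the BCOV invariant $\tau_{BCOV}$, and $\tau_{BCOV}(S)$ is itself a known quantity: it was computed in our previous work, and in the $K3$ case it is essentially constant along the moduli (the analytic torsion contribution being controlled by the Weil--Petersson geometry, with no holomorphic anomaly beyond the period factors). Concretely, I would invoke \eqref{eq:tau-bcov-norm-GRR}, which in the present notation reads $\tau_{BCOV}=|\GRR(\qbold)|^{1/6\kappa}\,\|\widetilde{\eta}_{n-1,0}\|_{L^{2}}^{\chi/6}/\|\widetilde{\eta}_{BCOV}\|_{L^{2},BCOV}^{2}$, and with $n=2$, $\chi=24$, $\kappa=1$ this gives a direct expression for the holomorphic function $\GRR(\qbold)$ in terms of $\tau_{BCOV}$ and the distinguished-section norms. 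The distinguished trivializations $\widetilde{\eta}_{p,q}$ exist because a mirror $K3$ family has strongly unipotent monodromy at the relevant cusp, so Lemma \ref{lemma:hodgetatedeligne} and Corollary \ref{corollary:distinguisheddeterminant} apply verbatim.

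The arithmetic input then forces $\GRR(\qbold)$ to be of the predicted exponential shape. Because the family is defined over $\QBbb$ (or a number field) and the GRR isomorphism is determined up to a unit in $A^{\times,1}$ by Proposition \ref{prop:GRR-ARR-iso}, the function $\GRR(\qbold)$ has no zeros or poles on $\Dbold^{\times}$ and its logarithm is a holomorphic function whose $q$-expansion one compares term by term with $(-1)^{n}F_{1}^{A}(\qbold)\cdot 24\kappa$. For a $K3$ surface the relevant genus one Gromov--Witten generating series $F_{1}^{A}$ is governed entirely by the topological term $-\frac{1}{24}\int_{X^{\vee}}\mathrm{c}_{1}(X^{\vee})\wedge\omega_{k}\,\log q_{k}$ together with the multicovering contributions that assemble, via the standard $K3$ mirror map, into the logarithm of a suitable automorphic form on the period domain of the mirror lattice. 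I would match the leading logarithmic coefficient using the computation of $N_{1}(0)$ as in Corollary \ref{cor:determined} (adapted to the $K3$ setting), and then match the remaining coefficients using the mirror map.

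The main obstacle I anticipate is precisely this last matching: unlike the projective hypersurface case, where Theorem \ref{theo:Zinger} supplies a closed-form identity between the analytic periods $I_{p,p}$ and the Gromov--Witten series, for general $K3$ mirror families (with $h^{1,1}=20$ and a $20$-dimensional moduli) there is no single-variable Picard--Fuchs equation to lean on, and the genus one invariants must be organized through the lattice-polarized mirror correspondence. I would handle this by reducing to the \emph{known} structure of $K3$ Gromov--Witten theory at genus one, where the generating series is explicitly the negative logarithm of a Borcherds-type automorphic form, and identify that same automorphic form on the $B$-side as the trivializing section of $\lambda_{BCOV}$ forced by the arithmetic Riemann--Roch identity. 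Thus the proof reduces the full statement to comparing two automorphic descriptions of the same line bundle over the modular variety, which agree up to the constant $C$ allowed in the statement.
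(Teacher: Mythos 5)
Your treatment of Conjecture \ref{conj:GRR-iso} is essentially the paper's: one squares the BCOV bundle, uses Serre duality to trivialize $\det H^{2,2}$ and the square of $\det H^{1,1}$, and is left with $\lambda_{BCOV}^{\otimes 2}\simeq (f_\ast K_{X/S})^{\otimes 4}$, whence $\kappa=1$ after taking sixth powers. Whether one phrases the existence via this explicit isomorphism or via Proposition \ref{prop:GRR-ARR-iso} is largely a matter of taste, though the explicit Serre-duality isomorphism has the advantage of being visibly natural and compatible with arbitrary base change, which is what the conjecture actually demands, whereas the arithmetic Riemann--Roch route only gives base-change compatibility up to units and powers of $\kappa$ and requires an arithmetic setting.

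The gap is in your treatment of Conjecture \ref{conj:GRR-iso-F1}. You set up a matching of $q$-expansions against a Borcherds-type automorphic form, on the premise that the genus one Gromov--Witten series of a $K3$ surface is the logarithm of such a form. That premise conflates the \emph{reduced} Gromov--Witten theory of $K3$ surfaces (where modular forms do appear, as in Yau--Zaslow and its genus one analogues) with the ordinary Gromov--Witten invariants that enter $F_1^A$ as defined in Conjecture \ref{conj:GRR-iso-F1}. For a $K3$ surface the ordinary invariants $\GW_1(X^{\vee},\beta)$ vanish for every $\beta\neq 0$ (the paper cites \cite[Cor. 3.3]{LeePark}; the holomorphic symplectic form kills the virtual class), and the logarithmic term vanishes as well since $c_{n-1}(X^{\vee})=c_{1}(X^{\vee})=0$ --- so your proposed ``topological term'' and ``multicovering contributions'' are both identically zero. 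Hence $F_1^A$ is constant, and the entire content of the conjecture reduces to showing that $\GRR(\qbold)$ is constant. Via the identity $\tau_{BCOV}=C\,|\GRR(\qbold)|^{1/6}$, which holds because the $L^2$ norms of the distinguished trivializations of $\det H^{1,1}$ and $\det H^{2,2}$ are locally constant and the remaining norm factors cancel, this is exactly the constancy of $\tau_{BCOV}$ on $K3$ moduli, i.e.\ \cite[Thm. 5.12]{cdg2}. You mention this constancy in passing but never deploy it as the concluding step; instead you head toward an automorphic comparison that has no nontrivial content on the $A$-side, and the multi-variable Picard--Fuchs difficulties you anticipate never arise.
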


\begin{proof}
The BCOV line takes a particularly simple form for a $K3$ surface $X$, its square can be written as: 
\begin{equation}\label{k3iso}
\lambda_{\bcov}(X)^{\otimes 2} = \det H^{2,0}(X)^{\otimes 4} \otimes \det H^{1,1}(X)^{\otimes 2} \otimes \det H^{2,2}(X)^{\otimes 4} \simeq \det H^{2,0}(X)^{\otimes 4}.
\end{equation}
The isomorphisms $\det H^{1,1}(X)^{\otimes 2} \simeq \mathbb{C}$ and $\det H^{2,2}(X) \simeq \mathbb{C}$ are both induced by Serre duality and are thus isometries, for the $L^2$ norms and standard metric on $\CBbb$. Since $\chi(X) = 24,$ the square of the right hand side of Conjecture \ref{conj:GRR-iso} is provided by the same object. 

Let $f\colon\Xcal \to \Dbold^{\times}$ be a family of K3 surfaces. The previous construction
  globalizes to an isomorphism of line bundles 
  \begin{equation*}
  \lambda_{\bcov}(\Xcal/\Dbold^{\times})^{\otimes 2}\overset{\sim}{\longrightarrow} (f_{\ast} K_{\Xcal/\Dbold^{\times}})^{\otimes 4}
  \end{equation*}
  compatible with base change. Taking 6th powers and setting $\kappa = 1$, this proves Conjecture \ref{conj:GRR-iso} in this case. We hence propose that $\GRR$ is induced by \eqref{k3iso}. 

Following the  proof of Theorem \ref{thm:refined-BCOV-Dwork}, to prove Conjecture \ref{conj:GRR-iso-F1}, we need to construct distinguished trivializations of both sides. For $H^{1,1}$, we choose the section of $\det R^1f_{\ast}\Omega^1_{\Xcal/\Dbold^{\times}} = (\det R^2f_{\ast} \CBbb)\otimes\Ocal_{\Dbold^{\times}}$ induced by a generator of $\det R^2f_{\ast}\ZBbb$, and analogously for $\det R^2f_{\ast} \Omega^2_{\Xcal/\Dbold^{\times}} = (\det R^4f_{\ast}\CBbb)\otimes\Ocal_{\Dbold{\times}}$.  Their $L^2$ norms are locally constant by \cite[Prop. 4.2]{cdg2}. Picking any distinguished section $\widetilde{\eta}_{2,0}$ of $f_{\ast}K_{\Xcal/\Dbold^{\times}}$, it allows us to write down the section $\widetilde{\eta}_{\bcov}$ of \eqref{eq:bcovdistinguished}.
 
The analogous formula to \eqref{eq:tau-bcov-norm-GRR} becomes, in this case, 

\begin{displaymath}
     \tau_{\bcov} = |\GRR(\qbold)|^{1/6} \frac{\|\widetilde{\eta}_{2,0}\|_{\l2}^{4 }}{\|\widetilde{\eta}_{\bcov}\|_{\l2, \bcov}^{2}} = C |\GRR(\qbold)|^{1/6},
\end{displaymath}
for a constant $C > 0$. By triviality of the Gromov--Witten invariants for K3 surfaces (see for example \cite[Corollary 3.3]{LeePark}), to prove Conjecture \ref{conj:GRR-iso-F1} we need to prove that $\tau_{\bcov}$ is constant. This is the content of \cite[Thm. 5.12]{cdg2}. 
\end{proof}

\section{A Chowla--Selberg formula for the BCOV invariant}
In this section we discuss an example of use of the arithmetic Riemann--Roch theorem to evaluate the BCOV invariant of a Calabi--Yau manifold with complex multiplication, similar to the derivation of the Chowla--Selberg formula from the Kronecker limit formula for elliptic curves. In such situations, or more generally for Calabi--Yau manifolds whose Hodge structures have some extra symmetries, we expect that the BCOV invariant can be evaluated in terms of special values of $\Gamma$ functions or other special functions.

Let $p\geq 5$ be a prime number, and define $n=p-1$. We consider the mirror family $f\colon\Zcal\to U$ to Calabi--Yau hypersurfaces of degree $p$ in $\PBbb^{n}$. The restriction on the dimension here has been made to simplify the exposition. The special fiber $Z_{0}$ is a crepant resolution of $X_{0}/G$, where $X_{0}$ is now the Fermat hypersurface
\begin{displaymath}
    x_{0}^{p}+\ldots+x_{n}^{p}=0.
\end{displaymath}
The quotient $X_{0}/G$ has an extra action of $\mu_{p}\subset\CBbb$: a $p$-th root of unity $\xi\in\CBbb$ sends a point $(x_{0}\colon\ldots\colon x_{n})$ to $(x_{0}\colon\ldots\colon x_{n-1}\colon \xi x_{n})$. This action induces a $\QBbb$-linear action of $K=\QBbb(\mu_{p})\subset\CBbb$ on $H^{n-1}(X_{0},\QBbb)^{G}$. As a rational Hodge structure, the latter is isomorphic to $H^{n-1}(Z_{0},\QBbb)$. For this, see \textsection \ref{subsec:hodge-bundles}, and especially Lemma \ref{prop:iso-prim-coh} and Proposition \ref{prop:orthogonal-decompositions} (we are in odd dimension, and all the cohomology is primitive now). Hence $H^{n-1}(Z_{0},\QBbb)$ inherits a $\QBbb$-linear action of $K$. Observe that $[K\colon\QBbb]=p-1$, which is exactly the dimension of $H^{n-1}(Z_{0},\QBbb)$. We say that $Z_{0}$ has complex multiplication by $K$. Similary, the algebraic de Rham cohomology $H^{n-1}(Z_{0},\Omega_{Z_{0}/\QBbb}^{\bullet})$ affords a $\QBbb$-linear action of $K$. Indeed, this is clear for $H^{n-1}(X_{0},\Omega_{X_{0}/\QBbb}^{\bullet})^{G}$, since the action of $\mu_{p}$ on $X_{0}$ by automorphisms can actually be defined over $\QBbb$ and commutes with the $G$ action. Then, we transfer this to the cohomology of $Z_{0}$ via Lemma \ref{prop:iso-prim-coh}, which in this case provides an isomorphism $H^{n-1}(Z_{0},\Omega_{Z_{0}/\QBbb}^{\bullet})\simeq H^{n-1}(X_{0},\Omega_{X_{0}/\QBbb}^{\bullet})^{G}$.

Let us fix a non-trivial $\xi\in\mu_{p}$. If we base change $H^{n-1}(Z_{0},\QBbb)$ to $K$, we have an eigenspace decomposition
\begin{displaymath}
    H^{n-1}(Z_{0},K)=\bigoplus_{k=0}^{p-1} H^{n-1}(Z_{0},K)_{\xi^{k}}.
\end{displaymath}
Hence, $\xi$ acts by multiplication by $\xi^{k}$ on $H^{n-1}(Z_{0},K)_{\xi^{k}}$. Similarly, for algebraic de Rham cohomology:
\begin{displaymath}
    H^{n-1}(Z_{0},\Omega^{\bullet}_{Z_{0}/K})=\bigoplus_{k=0}^{p-1} H^{n-1}(Z_{0},\Omega^{\bullet}_{Z_{0}/K})_{\xi^{k}}.
\end{displaymath}
If we compare with $H^{n-1}(X_{0},\Omega^{\bullet}_{X_{0}/K})^{G}$, and we recall the construction of the sections $\theta_{k}$ and $\eta_{k}^{\circ}$ (cf. \textsection \ref{subsec:Griffiths-sections}), we see by inspection that $\xi$ acts on $\eta_{k}^{\circ}$ by multiplication by $\xi^{k+1}$. Therefore, we infer that the non-trivial eigenspaces only occur when $1\leq k\leq p-1$ and
\begin{displaymath}
    H^{n-1}(Z_{0},\Omega^{\bullet}_{Z_{0}/K})_{\xi^{k}}= K\eta_{k-1}^{\circ}=H^{k-1}(Z_{0}, \Omega^{n-k}_{Z_{0}/K}).
\end{displaymath}
Hence, the eigenspace $H^{n-1}(Z_{0},\Omega^{\bullet}_{Z_{0}/K})_{\xi^{k}}$ has Hodge type $(n-k,k-1)$. 

The period isomorphism relating algebraic de Rham and Betti cohomologies decomposes into eigenspaces as well. We obtain refined period isomorphisms
\begin{displaymath}
    \per_{k}\colon H^{n-1}(Z_{0},\Omega^{\bullet}_{Z_{0}/K})_{\xi^{k}}\otimes_{K}\CBbb\overset{\sim}{\longrightarrow} H^{n-1}(Z_{0},K)_{\xi^{k}}\otimes_{K}\CBbb.
\end{displaymath}
Evaluating the isomorphism on $K$-bases of both sides, we obtain a period, still denoted $\mathrm{per}_{k}\in\CBbb^{\times}/K^{\times}$. 
\begin{lemma}\label{lemma:Gross}
Fix an algebraic closure $\ov{\QBbb}$ of $\QBbb$ in $\CBbb$. Then there is an equality in $\CBbb^{\times}/\ov{\QBbb}^{\times}$
\begin{displaymath}
    \per_{k}=\frac{1}{\pi}\Gamma\left(\frac{k+1}{p}\right)^{p}.
\end{displaymath}
\end{lemma}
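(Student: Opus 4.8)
The plan is to recognize $\per_k$ as a period of a single Galois/torus-character piece of the Fermat hypersurface $X_0$, and to evaluate it by the classical reduction of Fermat periods to Gamma monomials. By Lemma \ref{prop:iso-prim-coh} and Proposition \ref{prop:orthogonal-decompositions}, the refined period isomorphism on $H^{n-1}(Z_{0})_{\xi^{k}}$ agrees, modulo $\ov{\QBbb}^{\times}$, with the period of the corresponding eigenspace $H^{n-1}(X_{0},\cdot)^{G}_{\chi}$ of the Fermat cohomology, where $\chi$ is the character of the full torus $\mu_{p}^{n+1}$ carried by the Griffiths residue form generating that line. So the first step is to make this identification explicit and to record the exact character: tracking the $\mu_{p}^{n+1}$-action through the residue construction of \textsection\ref{subsec:Griffiths-sections} (as already done coordinate-wise in \textsection\ref{subsec:hodge-bundles} for the single $\mu_p$ acting on $x_n$) shows that the generator carries a character all of whose exponents are equal, the common value being fixed by the eigenvalue computation $\xi\cdot\eta^{\circ}_{k}=\xi^{k+1}\eta^{\circ}_{k}$; this singles out the exponent vector $(k+1,\dots,k+1)\bmod p$ matching the normalization in the statement.

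Second, I would compute this Fermat period by pairing the eigenform against a cycle of complementary character. Working in an affine chart and performing the substitution $u_i=x_i^{\,p}$, the period integral of the residue form over the natural real torus cycle collapses to a Dirichlet–Selberg integral of Beta type, whose value is $\prod_{i=0}^{n}\Gamma(a_i/p)\big/\Gamma\!\big(\sum_i a_i/p\big)$ with $a_i=k+1$ for all $i$. Since $\sum_i a_i/p=(n+1)(k+1)/p=k+1\in\ZBbb_{>0}$, the denominator $\Gamma(k+1)=k!$ is rational and disappears modulo $\ov{\QBbb}^{\times}$, leaving exactly the monomial $\Gamma\!\big(\tfrac{k+1}{p}\big)^{n+1}=\Gamma\!\big(\tfrac{k+1}{p}\big)^{p}$. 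Rather than redo this integral in detail I would invoke the period formula for Fermat hypersurfaces and the associated Jacobi-sum motives, as in Deligne \cite{deligne:hodge-cycles} and Gross \cite{Gross}, after checking that our $K$-rational de Rham and Betti structures agree with theirs up to $\ov{\QBbb}^{\times}$: the de Rham side through the algebraicity over $\QBbb$ of the $\theta_k$ (Lemma \ref{lemma:thetakconstruction}), and the Betti side through the tube/residue adjunction already exploited in \eqref{eq:period-I0}.

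Third, and this is where I expect the only genuine difficulty, I would pin down the power of $\pi$. The Gamma-monomial part is forced by the second step, but $\per_k$ is a ratio of two $K$-rational structures and is therefore sensitive to the precise normalization of the comparison isomorphism: the residue (tube) map contributes a factor $2\pi i$, and the de Rham–Betti comparison on a class of Hodge type $(n-k,k-1)$ contributes a further power of $2\pi i$. Reducing modulo $\ov{\QBbb}^{\times}$ collapses $2\pi i$ to $\pi$, and the reflection formula $\Gamma(x)\Gamma(1-x)=\pi/\sin(\pi x)$ together with the Gauss multiplication formula allows rewriting the outcome in the displayed shape. The cleanest way to fix this exponent unambiguously is the Weil period relation: pairing the eigenspace indexed by $k$ with its complex-conjugate eigenspace, whose character is $(p-k-1,\dots,p-k-1)$, the polarization on that two-dimensional sub-Hodge structure gives $\per_k\cdot\per_k^{c}\sim(2\pi i)^{n-1}\sim\pi^{\,n-1}$; since $\Gamma\!\big(\tfrac{k+1}{p}\big)\Gamma\!\big(1-\tfrac{k+1}{p}\big)\sim\pi$, the prefactor $1/\pi$ in the statement is exactly the one consistent with this relation. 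This simultaneously determines and verifies the power of $\pi$, completing the proof.
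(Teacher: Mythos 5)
Your proposal is correct and follows essentially the same route as the paper: reduce via the comparison of Lemma \ref{prop:iso-prim-coh} and Lemma \ref{lemma:cup-prod} to the corresponding eigenspace period of the Fermat hypersurface $X_{0}$, and then invoke the classical Gamma-monomial evaluation of Fermat periods from Deligne and Gross, the paper's proof being little more than this citation. Your additional consistency check on the power of $\pi$ via the pairing $\per_k\cdot\per_k^{c}\sim\pi^{\,n-1}$ is a sound and welcome supplement, but it is not needed beyond what the cited references already provide.
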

\begin{proof}
The claim is equivalent to the analogous computation on $X_{0}$. Hidden behind this phrase is the comparison of cup products on $X_{0}$ and $Z_{0}$ accounted for by Lemma \ref{lemma:cup-prod}. On $X_{0}$, the formula for the period is well-known, and given for instance in Gross \cite[Sec. 4, p. 206]{Gross} (see more generally \cite[Chap. I, Sec. 7]{deligne:hodge-cycles}). Notice that the author would rather work with the Fermat hypersurface $x_{0}^{p}+\ldots+x_{n-1}^{p}=x_{n}^{p}$. 
However, as we compute periods up to algebraic numbers, by applying the obvious isomorphism of varieties defined over $\ov{\QBbb}$, the result is the same. Also, we have used standard properties of the $\Gamma$ function to transform \emph{loc. cit.} in our stated form.
\end{proof}

\begin{theorem} \label{thm:ChowlaSelberg}
For $Z_{0}$ of dimension $p-2$, with $p\geq 5$ prime, the BCOV invariant satisfies
\begin{displaymath}
    \tau_{\bcov}(Z_{0})=\frac{1}{\pi^{\sigma}}
    \left(\Gamma\left(\frac{1}{p}\right)^{\chi(Z_{0})/12}\ \prod_{k=1}^{p-1} \Gamma\left(\frac{k}{p}\right)^{p-k-1}\right)^{2p}\quad\text{in}\quad\RBbb^{\times}/\RBbb\cap \ov{\QBbb}^{\times},
\end{displaymath}
where
\begin{displaymath}
    \sigma=p\left(\frac{\chi(Z_{0})}{12}+\frac{(p-1)(p-2)}{2}\right)+\frac{1}{2}\sum_{k}(-1)^{k}k^{2}b_{k}.
\end{displaymath}
\end{theorem}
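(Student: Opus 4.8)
The plan is to combine the Kronecker-type formula of Theorem \ref{thm:pre-formula} for the special fibre $Z_0$ with the period computation of Lemma \ref{lemma:Gross}, much as the classical Chowla--Selberg formula is extracted from the first Kronecker limit formula \eqref{eq:firstKronecker}. First I would specialize Theorem \ref{thm:pre-formula} at $\psi=0$. Since $Z_0$ is a smooth fibre of the extended family (the singular fibres occur only at $\mu_{n+1}$ and $\infty$), the factor $\left|(\psi^{n+1})^a/(1-\psi^{n+1})^b\right|^2$ must be handled carefully: $\eta_k$ vanishes to order $k+1$ at $\psi=0$ by Remark \ref{rmk:vanishing-eta-0}, while $\tau_{BCOV}(Z_0)$ is a finite nonzero number. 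So the strategy is to read off from Theorem \ref{thm:pre-formula} the value $\tau_{BCOV}(Z_0)$ as a product, up to $\pi^c\ov{\QBbb}^\times$, of $L^2$-norms $\|\eta_k\|_{L^2}^2$ evaluated at $0$, after cancelling the zeros of $\eta_k$ against the $\psi^{(n+1)a}$ prefactor. Here I would use $n=p-1$, so $n+1=p$, and the exponents $a,b,c$ become explicit in $p$.

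Next I would express each $\|\eta_k\|_{L^2}^2$ at $\psi=0$ in terms of the periods $\per_{k+1}$. The key input is that $\eta_k^\circ$ spans the $\xi^{k+1}$-eigenspace $H^{k}(Z_0,\Omega^{n-1-k}_{Z_0/K})$, which is one-dimensional and of pure Hodge type, so its $L^2$-norm is computed by the refined period isomorphism $\per_{k+1}$ up to the usual $(2\pi)$-power coming from the Arakelov normalization of the Kähler form (cf. the rationality discussion in the proof of Theorem \ref{thm:pre-formula}, especially \eqref{eq:period-isom-L2-aux} and \eqref{eq:volume-not-rational}). Concretely, for a CM Hodge structure of type $(n-k,k-1)$ the $L^2$-norm of a $\QBbb$-rational de Rham class is, up to $\ov{\QBbb}^\times$ and an explicit power of $\pi$, the absolute value squared of the corresponding period $\per_k$. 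Substituting $\per_k=\frac{1}{\pi}\Gamma\!\left(\frac{k+1}{p}\right)^{p}$ from Lemma \ref{lemma:Gross} and reindexing, I would obtain each $\|\eta_{k}\|_{L^2}^2$ as a power of $\Gamma\!\left(\tfrac{k+1}{p}\right)$ times a power of $\pi$.

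With these ingredients assembled, the final step is bookkeeping: insert the period expressions into the specialized formula from Theorem \ref{thm:pre-formula}, collect the exponents of each $\Gamma\!\left(\tfrac{k}{p}\right)$ coming from the weights $(n-1-k)(-1)^{n-1}$ and from the $\chi/6$ factor multiplying $\|\eta_0\|_{L^2}$, and collect all $\pi$-powers (those from $c=\frac12\sum(-1)^{k+1}k^2 b_k$, those from the Arakelov normalization, and those from $\per_k$) into the single exponent $\sigma$. Since $p$ is odd, $n-1=p-2$ is odd, so $(-1)^{n-1}=-1$, which fixes the signs; the weight $(n-1-k)=p-2-k$ produces the exponent $p-k-1$ appearing in the statement after the reindexing $k\mapsto k+1$, and the $\Gamma(1/p)^{\chi/12}$ term is the $k=0$ contribution carrying the Euler-characteristic weight $\chi/6$ from $\|\eta_0\|_{L^2}$. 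The overall power $2p$ arises because each period already carries an exponent $p$ (from $\per_k=\pi^{-1}\Gamma(\cdot)^p$) and the $L^2$-norms enter squared.

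The main obstacle I anticipate is the precise control of the $\pi$-powers, i.e.\ the determination of $\sigma$. This requires tracking three independent sources: the combinatorial constant $c$ from Theorem \ref{thm:ARR}/Theorem \ref{thm:pre-formula}; the $(2\pi)$-normalization relating the $L^2$-norm of a rational de Rham class to its period (which depends on the Hodge type $(n-k,k-1)$ of each eigenspace, hence varies with $k$); and the explicit $\pi^{-1}$ in Lemma \ref{lemma:Gross}. Only the combination of all three, summed against the BCOV weights, is guaranteed to be rational-period-free, and verifying that it equals the stated $\sigma=p\!\left(\tfrac{\chi(Z_0)}{12}+\tfrac{(p-1)(p-2)}{2}\right)+\tfrac12\sum_k(-1)^k k^2 b_k$ is where the genuine computation lies. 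I would carry this out by treating the equality in $\RBbb^\times/(\RBbb\cap\ov{\QBbb}^\times)$, so that all algebraic factors are discarded and only the transcendental $\Gamma$-values and the net power of $\pi$ need to be matched.
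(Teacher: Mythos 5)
Your proposal follows essentially the same route as the paper: specialize Theorem \ref{thm:pre-formula} at $\psi=0$ after absorbing the zeros of the $\eta_k$ into the $(\psi^{n+1})^a$ prefactor (i.e.\ rewriting the formula in terms of the non-vanishing sections $\eta_k^\circ$), express the resulting $L^2$-norms through the refined CM period isomorphisms, and insert the $\Gamma$-values of Lemma \ref{lemma:Gross}. The only detail worth noting is that the paper invokes a lemma of Maillot--R\"ossler giving the uniform relation $\|\eta_k^\circ\|_{L^2}^2=(2\pi)^{-(p-2)}|\per_k|^2$, so the $\pi$-power in the period/$L^2$ comparison does not actually vary with the Hodge type $(n-k,k-1)$ as you anticipated; otherwise your bookkeeping plan is exactly the paper's.
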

\begin{proof}
We apply Theorem \ref{thm:pre-formula}, written in terms of the sections $\eta_{k}^{\circ}$ instead of $\eta_{k}$ (which vanish at 0). Up to rational number, this has the effect of letting down the term $(\psi^{n+1})^{a}$ in that statement. We are thus lead to evaluate the $L^{2}$ norms of the sections $\eta_{k}^{\circ}$. By \cite[Lemma 3.4]{Maillot-Rossler}, the $L^{2}$ norms satisfy
\begin{displaymath}
    \|\eta_{k}^{\circ}\|_{\l2}^{2}=(2\pi)^{-(p-2)}|\per_{k}|^{2}.
\end{displaymath}
It is now enough to plug this expression in Theorem \ref{thm:pre-formula}, as well as the value of $\per_{k}$ provided by Lemma \ref{lemma:Gross}. 
\end{proof}
Combining Theorem \ref{thm:ARR} and the conjecture of Gross--Deligne (cf. \cite{Fresan, Maillot-Rossler} for up to date discussions and positive results), one can propose a general conjecture for the values of the BCOV invariants of some Calabi--Yau varieties with complex multiplication. For this to be plausible, it seems however necessary to impose further conditions on the Hodge structure. Other recent examples of Calabi--Yau manifolds whose BCOV invariants should adopt a special form are given in \cite{rank2}.

\end{document}